\numberwithin{equation}{section}
\theoremstyle{plain}
  \newtheorem{theorem}{Theorem}[section]
  \newtheorem{corollary}[theorem]{Corollary}
  \newtheorem{lemma}[theorem]{Lemma}
  \newtheorem{proposition}[theorem]{Proposition}
\theoremstyle{definition}
  \newtheorem{definition}[theorem]{Definition}
  \newtheorem{example}[theorem]{Example}
\theoremstyle{remark}
  \newtheorem{remark}[theorem]{Remark}
  \newtheorem*{notation}{Notation}
\newcommand{\doesnotdivide}{\not\hspace{2.5pt}\mid}
\let\noi=\noindent
\title{Key polynomials for simple extensions of valued fields}
\author{F. J. Herrera Govantes\\
Departamento de \'Algebra\\
Facultad de Matem\'aticas\\
Avda. Reina Mercedes, s/n\\
Universidad de Sevilla\\
41012 Sevilla, Spain \\ email: jherrera@algebra.us.es \and
W. Mahboub\\
Institut de Math\'ematiques de Toulouse\\
UMR 5219 du CNRS,\\
Universit\'e Paul Sabatier\\
118, route de Narbonne\\
31062 Toulouse cedex 9, France\\email:
wael.mahboub@math.univ-toulouse.fr \and
M. A. Olalla Acosta\footnote{Partially suported by PID2020-114613GB-I00 (MCIN/AEI/10.13039/501100011033) and P20-01056 (Junta de Andaluc\'{\i}a and ERDF)}\\
Departamento de \'Algebra\\
Facultad de Matem\'aticas\\
Avda. Reina Mercedes, s/n\\
Universidad de Sevilla\\
41012 Sevilla, Spain\\
email: miguelolalla@aus.es \and
M. Spivakovsky$^*$\\
Institut de Math\'ematiques de Toulouse\\
UMR 5219 du CNRS,\\
Universit\'e Paul Sabatier\\
118, route de Narbonne\\
31062 Toulouse cedex 9, France and\\
UMI CNRS 2001 LaSol, UNAM\\email:
mark.spivakovsky@math.univ-toulouse.fr }
\begin{document}

\newcommand{\g}{\Gamma}
\newcommand{\h}{\Phi}
\newcommand{\ch}{{\operatorname{char}}}
\newcommand{\ini}{{\operatorname{in}}}
\newcommand{\ord}{{\operatorname{ord}}}
\newcommand{\gr}{{\operatorname{gr}}}
\newcommand{\rk}{{\operatorname{rank}}}
\newcommand{\md}{{\operatorname{mod}}}

\renewcommand{\do}{,\ldots,}

%\renewcommand{\theequation}{%
%\arabic{section}.\arabic{equation}%
%}
%\dedicatory{Roberto}

\maketitle

\textit{Dedicated to the memory of Roberto Callejas Bedregal.}
\bigskip

\begin{abstract}
In this paper we present a refined version of MacLane's theory of key polynomials, similar to those considered by M. Vaqui\'e and reminiscent of approximate roots of Abhyankar and Moh.

Given a simple transcendental extension of valued fields, we associate to it a countable well ordered set of polynomials called key polynomials. We define limit key polynomials and give explicit formulae for them. We give an explicit bound on the order type of the set of key polynomials.
\end{abstract}
\medskip

\section{Introduction}
\label{In}

Let $\iota:(K,\nu_K)\hookrightarrow(K(x),\nu)$ be a simple transcendental extension of valued fields such that $\text{rank}\ \nu_K=1$.  In the case when $\nu$ is discrete (that is, the value group of $\nu$ is $\mathbb Z$), S. Mac Lane described all such extensions in the nineteen thirties. His description was based on the theory of {\bf key polynomials} associated to $\iota$ that he introduced for that purpose (\cite{57}--\cite{59}).

This seminal work of Mac Lane is connected to the resolution of fundamental arithmetic--geometric problems like the computation of prime ideal decomposition in number fields or the design of polynomial factorization algorithms over henselian fields. On the geometric side, limit key polynomials are linked to the defect of the extensions of valued fields, which is a well-known obstacle for the solution of the local uniformization problem in positive characteristic.

Let $(R_{\nu_K},M_{\nu_K},k_{\nu_K})$ denote the valuation ring of $\nu_K$. The purpose of this paper is to present a refined version of MacLane's theory of key polynomials \cite{57}, \cite{58}, similar to those considered by M. Vaqui\'e \cite{62}--\cite{72}, and reminiscent of related objects studied by M. Lejeune-Jalabert \cite{LJ}, Abhyankar and Moh (approximate roots \cite{5}, \cite{6}) and T.C. Kuo \cite{50}, \cite{51}. Related questions were studied by Ron Brown \cite{RB1}--\cite{RB2}, Alexandru--Popescu--Zaharescu \cite{APZ2}--\cite{APZ3}, S. K. Khanduja \cite{KH3}, \cite{KH}, F.-V. Kuhlmann \cite{FV1} and Moyls \cite{Mo}. The study of key polynomials continues to be a vibrant subject and since the first version of the present paper appeared on the arxiv in 2014 several other works on the topic were published by various authors: \cite{Nart0}, \cite{CMT}, \cite{MN}, \cite{DMS}, \cite{BN}, \cite{Nart1}, \cite{Nart2}, \cite{N1}, \cite{NS4}. 

Precisely, we associate to $\iota$ a countable well ordered set
$$
\mathbf{Q}=\{Q_i\}_{i\in\Lambda}\subset K[x],
$$
where $\Lambda$ is an index set whose order type will be explicitly bounded later in the paper; the $Q_i$ are called {\bf key polynomials}. Key polynomials $Q_i$ that have no immediate predecessor are called {\bf limit key polynomials}

We consider the main achievements of this paper compared to the earlier works on the subject to be the following.

(1) Explicit formulae for each key polynomial $Q_i$, particularly for limit key polynomials (Proposition \ref{Proposition13.1}), in terms of the key polynomials preceding $Q_i$. If there exists an infinite sequence of linear key polynomials then the first limit key polynomial (if it exists) can always be chosen to be a $p$-polynomial in Kaplansky's terminology ($p$-polynomials may be viewed as a generalization of Artin--Schreier polynomials).

(2) An upper bound on the order type of the set of key polynomials. Namely,  we show that the order type of the set of key polynomials is bounded by $\omega\times\omega$, where $\omega$ stands for the first infinite ordinal. If $\ch\ k_{\nu_K}=0$, the order type is bounded by $\omega+1$. If $\ch\ k_{\nu_K}=0$ and $\text{rank}\ \nu=1$, the set of key polynomials has order type at most
$\omega$.

The results comparing the value of a polynomial $f\in K[x]$ with the values of its Hasse derivative $\partial_bf$ (the definition of
$\partial_b$ is recalled later in the Introduction) proved in \S\ref{Keypolanddiffop}  led to an axiomatic characterization of key polynomials  in \cite{DMS}. As well, this approach was used in \cite{NS4} to obtain another proof of the existence of a complete set of key polynomials.

The main application of the theory of key polynomials (particularly, of point (1) above) that we have in mind is proving the local uniformization theorem for quasi-excellent noetherian schemes in positive and mixed characteristic. It has been shown recently that to prove the local uniformization theorem in the positive equicharacteristic case, assuming local uniformization in lower dimensions, it is sufficient to monomialize the first limit key polynomial of a certain explicitly defined simple field extension $K\hookrightarrow K(x)$ (see \cite{JCSS0}, Chapter IV, \cite{JCSS}, Theorem 6.5 and S. D. Cutkosky--H. Mourtada \cite{CM}). In Chapter V of his Ph.D. thesis (\cite{JCSS0}, Institut de Math\'ematiques de Toulouse, 2013), J.-C. San Saturnino proved a similar reduction for local uniformization in the case of mixed characteristic, but under somewhat restrictive additional hypotheses.

Chapter 3 of the Ph.D. thesis of W. Mahboub (Institut de Math\'ematiques de Toulouse, 2013) develops the theory of key polynomials for valuations of arbitrary rank. Here we limit ourselves to the case $\text{rank}\ \nu_K=1$.

The particular importance of the case $\text{rank}\ \nu_K=1$ is witnessed by a theorem of Novacoski--Spivakovsky that says that local uniformization along rank one valuations implies local uniformization in its full generality \cite{NS}--\cite{NS1}.

Let $\g_0$ (resp. $\g$) denote the value group of $\nu_K$ (resp. $\nu$). Let $\tilde\Gamma_0:=\g_{0}\otimes_{\mathbb Z}\mathbb
Q$ (the group $\tilde\g_0$ is called the {\bf divisible hull} of $\g_0$). Fix an embedding $\tilde\g_0\hookrightarrow\mathbb R$ once and for all. In this sense, we will talk about the supremum of a certain subset of $\tilde\g_0$ (the supremum can be either a real number or infinity) or about a certain sequence of elements of $\tilde\g_0$ tending to infinity.

For an ordered abelian group $\Delta$, the notation $\Delta_+$ will stand for the semigroup formed by all the non-negative elements of
$\Delta$.

The well ordered set $\mathbf{Q}=\{Q_i\}_{i\in\Lambda}$ of key polynomials of $\nu$ will be defined recursively in $i$.
\medskip

\noindent{\bf Notation.} We will use the notation $\mathbb{N}$ for the set of strictly positive integers and $\mathbb{N}_0$ for the set of non-negative integers.

For an element $\ell\in\Lambda$, we will denote by $\ell+1$ the immediate successor of $\ell$ in $\Lambda$. The immediate predecessor of $\ell$, when it exists, will be denoted by $\ell-1$. For a strictly positive integer $t$, $\ell+t$ will denote the immediate successor of
$\ell+(t-1)$. For an element $\ell\in\Lambda$, the initial segment $\{Q_i\}_{i<\ell}$ of the set of key polynomials will be denoted by
$\mathbf{Q}_\ell$. For the rest of this paper, we let $p=\ch\ k_{\nu_K}$ if $\ch\ k_{\nu_K}>0$ and $p=1$ if $\ch\
k_{\nu_K}=0$. For an element $\beta\in\g\cup\tilde\g_0$, let
$$
\begin{array}{rcl}
\mathbf{P}_\beta&=\left\{y\in K(x)\ \left|\ \nu(y)\ge\beta\right.\right\}\cup\{0\}\\
\mathbf{P}_{\beta+}&=\left\{y\in K(x)\ \left|\ \nu(y)>\beta\right.\right\}\cup\{0\}.
\end{array}
$$
Put
\begin{equation}
G_{\nu}=\bigoplus\limits_{\beta\in\g}\frac{\mathbf{P}_\beta}{\mathbf{P}_{\beta+}}\label{eq:Gnu}
\end{equation}
and
\begin{equation}
\tilde G_\nu=\bigoplus\limits_{\beta\in\tilde\g_0}\frac{\mathbf{P}_\beta}{\mathbf{P}_{\beta+}}.\label{eq:tildeGnu}
\end{equation}
We regard $G_{\nu}$ and $\tilde G_\nu$ as $k_{\nu}$-algebras. Note that even though $\tilde\g_0$ need not be contained in $\g$, we have $\tilde G_\nu\subset G_\nu$ since for each $\beta\in\tilde\g_0\setminus\g$ the corresponding summand in (\ref{eq:tildeGnu}) is 0. For $y\in K(x)^*$, let $\ini_{\nu}y$ denote the natural image of $y$ in $\frac{\mathbf{P}_\beta}{\mathbf{P}_{\beta+}}\subset
G_{\nu}$, where $\beta=\nu(y)$.
\smallskip

Let $\Delta x$ be an independent variable. For $f\in K[x]$ and $j\in\mathbb N$ let $\partial_jf$ denote the\linebreak$j$-th \textbf{formal (or Hasse) derivative} of $f$ with respect to $x$. The polynomials $\partial_jf$ are, by definition, the coefficients appearing in the Taylor expansion of $f$: $f(x+\Delta x)=\sum\limits_j\partial_jf(x)\Delta x^j$. In papers on local uniformization the Hasse derivatives $\partial_j$ are often denoted by $\frac1{j!}\frac{\partial^j}{\partial x^j}$; this notation is regarded as one indivisible symbol; its parts such as $\frac1{j!}$ do not make sense on their own.
\medskip

Details about Hasse derivatives can be found in \cite{41}, Chapter 24.10, starting on p. 701, as well as in \cite{46}.

For an ordinal $\ell\in\Lambda$ we will use the following multi-index notation: $\bar\gamma_{\ell +1}=(\gamma_i)_{i\le\ell }$, where the $\gamma_i$ are non-negative integers, all but finitely many of which are equal to 0, and
\begin{equation}
\mathbf{Q}_{\ell +1}^{{\bar\gamma}_{\ell +1}}=\prod\limits_{i\le\ell }Q_i^{\gamma_i}.\label{tag1-7}
\end{equation}
An $\ell${\bf-standard monomial} in $\mathbf{Q}_{\ell +1}$ is a product of the form
\begin{equation}
c_{\bar\gamma_{\ell +1}}\mathbf{Q}_{\ell +1}^{\bar\gamma_{\ell +1}},\label{tag1-8}
\end{equation}
where $c_{{\bar\gamma}_{\ell +1}}\in K$ and the multiindex ${\bar\gamma}_{\ell +1}$ satisfies certain additional conditions to ensure a form of uniqueness (see Definition \ref{de9.9}). An $\ell$-{\bf standard expansion} is a finite sum of $\ell$-standard monomials satisfying a mild additional condition. A $Q_\ell$-{\bf expansion} is an expression of the form
\begin{equation}
\sum\limits_{j=0}^{s_\ell }c_{j,\ell}Q_\ell ^j,\label{eq:weakstexp}
\end{equation}
where $c_{j,\ell}\in K[x]$ and $\deg_xc_{j,\ell}<\deg_xQ_\ell$ for all $j$.
\medskip

\noi{\bf Note.} Starting with an $\ell$-standard expansion and grouping together all the terms involving $Q_\ell^j$ for each exponent $j$ produces a $Q_\ell$-expansion.
\medskip

Iterating Euclidean division of $f$ by $Q_\ell$, it is easy to see that every $f\in K[x]$ admits a unique $Q_\ell$-expansion
\begin{equation}
f=\sum\limits_{j=0}^{s_\ell }c_{j,\ell}Q_\ell ^j.\label{tag1-9}
\end{equation}
In \S\ref{SetsofKeyPols} we will show that for all $\ell\in\Lambda$ and all $f\in K[x]$ the element $f$ admits an $\ell$-standard expansion.

Let $\beta_i=\nu(Q_i)$.

A product of the form $a\prod\limits_{j=1}^sQ_{i_j}^{\gamma_j}$, where $a\in K$, $i_j\in\Lambda$ and $\gamma_j\in\mathbb{N}$ is said to be a {\bf standard monomial} if it is $\ell$-standard for some $\ell\in\Lambda$.

A set of key polynomials is said to be {\bf complete} if for every $\beta\in\g$ the additive group $\mathbf P_{\beta}\cap K[x]$ is generated by standard monomials, contained in $\mathbf P_{\beta}\cap K[x]$. It is said to be $\tilde\g_0$-{\bf complete} if the above condition holds for all $\beta\in\tilde\g_0$, in other words, if for all $\beta\in\tilde\g_0$ every polynomial $f\in K[x]$ with
$\nu(f)=\beta$ belongs to the additive group generated by standard monomials $a\prod\limits_{j=1}^sQ_{i_j}^{\gamma_j}$ such that $\sum\limits_{j=1}^s\gamma_j\beta_{i_j}+\nu_K(a)\ge\beta$.

\begin{remark} If $\mathbf Q=\{Q_i\}_{i\in\Lambda}$ is a complete set of key polynomials, the data $\{Q_i,\beta_i\}$ completely determines the ideals $\mathbf P_\beta$ for all $\beta\in\g$, hence also all the ideals $\mathbf P_{\beta+}$, since $\mathbf
P_{\beta+}=\bigcup\limits_{\tilde\beta>\beta}\mathbf P_{\tilde\beta}$. For an element $y\in K(x)$ we have $\nu(y)=\beta$ if and only if $y\in\mathbf P_\beta\setminus\mathbf P_{\beta+}$. Thus the valuation $\nu$ is completely determined by the data
$\{Q_i,\beta_i\}$.
\end{remark}
\medskip

We define the $\ell$-\textbf{truncation} $\nu_\ell$ of $\nu$ by $\nu_\ell(f)=\min\limits_{0\le j\le s_\ell }\{\nu(c_{j,\ell})+j\beta_\ell\}$ for each $f\in K[x]$ with $Q_\ell$-expansion (\ref{tag1-9}). By the ultrametric triangle law, we have
\begin{equation}
\nu(f)\ge\nu_\ell (f)\label{tag1-10}
\end{equation}
for all $f\in K[x]$. Then the statement that $\mathbf{Q}$ is a complete set of key polynomials can be expressed as follows: for all $f\in K[x]$ there exists $\ell\in\Lambda$ such that equality holds in (\ref{tag1-10}); see Remark \ref{remark_complete} for details.

The paper is organized as follows. \S\ref{S2} is devoted to generalities on algebras, graded by ordered semigroups. There we define the notion of the saturation $G^*$ of a graded algebra $G$ (Definition \ref{de4-1}). We consider an extension $G\subset G'$ of graded algebras and a homogeneous element $x\in G'$. We study the condition that $x$ be algebraic over $G$. We note that $x$ is algebraic over $G$ if and only if it is integral over $G^*$. We show that if $x$ is algebraic over $G$ then the algebra $G^*[x]$ is saturated (Lemma \ref{el4.4}). Finally, we prove the simple but useful characterization of the strict inequality
$\nu\left(\sum\limits_{i=1} ^sy_i\right)>\min\limits_{1\le i\le s}\left\{\nu(y_i)\right\}$ in terms of the elements $\ini_\nu y_i$.

In \S\ref{SetsofKeyPols} we define the notion of a set of key polynomials (not necessarily complete) and study its properties. By definition, letting $Q_0=x$, the one element set $\{x\}=\{Q_0\}$ is a set of key polynomials. We remark that if a set of key polynomials is complete then the images of the key polynomials in $G_{\nu}$ generate the field of fractions of $G_{\nu}$ over the field of fractions of $G_{\nu_K}$. The element $\ini_\nu Q_\ell$ is algebraic over $G_{\nu_K}[\ini_\nu\mathbf{Q}_\ell]$, where
$\mathbf{Q}_\ell=\{Q_i\}_{i<\ell}$ (in particular, $\beta_i\in\tilde\g_0$) whenever $i$ is not a maximal element of $\Lambda$ (Propositions \ref{notingamma} and \ref{transcendentalcomplete}).

In \S\ref{numchardeltai} we associate to each pair $h\in K[x]$, $i\in\Lambda$, a positive integer numerical character
$\delta_i(h)\le\frac{\deg_xh}{\deg_xQ_i}$ to be used in later sections and study its properties. Among other things we prove that
\begin{equation}
\delta_i(h)\ge\delta_{i'}(h)\qquad\text{whenever }i\le i';\label{eq:deltanonincreasingintro}
\end{equation}
in particular, $\delta_i(h)$ stabilizes for $i$ sufficiently large. We also show that the equality in (\ref{eq:deltanonincreasingintro})
imposes strong restrictions on $h$. The numerical character $\delta_i(h)$ helps analyze infinite ascending sequences
of key polynomials in \S\S\ref{Infsequenceskeypolynom}--\ref{Section7} and is crucial for applications to Local Uniformization.

The main step of our recursive construction of sets of key polynomials is carried out in \S\ref{Augmenting}. Namely, we start with a set
$\{Q_i\}_{i\in\Lambda}$ of key polynomials that is not complete and enlarge it to a strictly greater set $\{Q_i\}_{i\in\Lambda_+}$ of key polynomials. The well ordered set $\Lambda_+$ is a union of $\Lambda$ with a sequence that may be finite or infinite, depending on the situation. Roughly speaking, $Q_{\ell +1}$ is defined to be a lifting to $K[x]$ of the monic minimal polynomial, satisfied by
$\ini_{\nu}Q_\ell $ over the graded algebra $G_{\nu_K}\left[\ini_{\nu}\mathbf{Q}_\ell \right]$. This gives rise to explicit formulae describing each \textit{non-limit} key polynomial in terms of the preceding key polynomials.

In \S\ref{Section3} we iterate the procedure of \S\ref{Augmenting} until the resulting set of key polynomials is complete. Namely, we start our recursive construction of the $Q_i$ by putting $Q_0:=x$. We assume, inductively, that a set $\mathbf{Q}=\{Q_\ell\}_{\ell\in\Lambda}$ of key polynomials is already defined. If the set $\mathbf Q$ of key polynomials is complete, the algorithm stops here. In particular, this occurs whenever our algorithm produces a key polynomial $Q_i$ whose value does not lie in $\tilde\g_0$ or, more generally, such that $\ini_\nu Q_i$ is transcendental over $G_{\nu_K}[\ini_{\nu}\mathbf Q_i]$ (Propositions \ref{notingamma} and \ref{transcendentalcomplete}). If $\mathbf Q$ is not complete, we replace it by the set $\{Q_i\}_{i\in\Lambda_+}$ constructed in \S\ref{Augmenting} and repeat the procedure. We remark (Remark \ref{omegatimesomega}) that the well ordered set $\bar\Lambda$ resulting from this construction has order type at most $\omega\times\omega$. The set $\bar\Lambda$ contains a maximal element
$\ell$ if and only if it contains an element $\ell$ such that $\ini_\nu Q_\ell$ is transcendental over $G_{\nu_K}[\ini_\nu\mathbf{Q}_\ell]$, where $\mathbf{Q}_\ell=\{Q_i\}_{i<\ell}$ (Propositions \ref{notingamma} and \ref{transcendentalcomplete}).

\S\ref{Keypolanddiffop} is auxiliary, to be used in \S\S\ref{Infsequenceskeypolynom}--\ref{Section7}. We study the effect of Hasse derivatives $\partial_j$ on key polynomials and standard expansions. Let $b_i$ denote the smallest element $b$ of $\mathbb{N}$ which maximizes the quantity $\frac{\beta_i-\nu\left(\partial_bQ_i\right)}b$. We show that $b_i$ is of the form
\begin{equation}
b_i=p^{e_i}\quad\text{ for some }e_i\in\mathbb{N}_0\label{tag1-11}
\end{equation}
(Corollary \ref{Corollary10.6}). The non-negative integers $e_i$, $i\in\Lambda$, are important numerical characters of the extension
$\iota:(K,\nu_K)\hookrightarrow(K(x),\nu)$ of valued fields. Most importantly, given an $\ell$-standard monomial
$c_{\bar\gamma_{\ell +1}}\mathbf{Q}_{\ell +1}^{\bar\gamma_{\ell +1}}$, we prove the equality
\begin{equation}\label{comparing_derivatives}
\nu\left(\partial_{p^e}c_{\bar\gamma_{\ell +1}}\mathbf{Q}_{\ell +1}^{\bar\gamma_{\ell +1}}\right)=
\nu_\ell \left(\partial_{p^e}c_{\bar\gamma_{\ell +1}}\mathbf{Q}_{\ell +1}^{\bar\gamma_{\ell +1}}\right),
\end{equation}
and derive an explicit formula for the quantity
$\nu\left(\partial_{p^e}c_{\bar\gamma_{\ell +1}}\mathbf{Q}_{\ell +1}^{\bar\gamma_{\ell +1}}\right)$, for integers $e\ge e_i$, and under certain additional conditions. Also, for every $\ell$-standard expansion $f$ and every integer $e\ge e_i$, we derive a formula for
$\nu_\ell\left(\partial_{p^e}f\right)$ (Proposition \ref{Proposition10.1}).

The importance of this type of explicit formulae can be explained as follows. The importance of differential operators for resolution of singularities is well known. One difficulty with dealing with differential operators up to now has been the fact that they obey no simple transformation law under blowing up. Since key polynomials become coordinates after blowing up, formulae (\ref{comparing_derivatives}) can be viewed as comparison results for derivatives of the defining equations of a singularity before and after blowing up.

The main subject of study of \S\S\ref{Infsequenceskeypolynom}--\ref{Section7} are infinite sequences
$\{Q_{\ell+t}\}_{t\in\mathbb N}$ of key polynomials of a fixed degree and the corresponding limit key polynomials $Q_{\ell+\omega}$.

In \S\ref{Infsequenceskeypolynom} we let $\delta$ denote the stable value of $\delta_{\ell+t}(Q_{\ell+\omega})$ for a sufficiently large positive integer $t$ (such a stable value exists by (\ref{eq:deltanonincreasingintro})). We use the results of \S\ref{Keypolanddiffop} to show that $\delta$ must be of the form $\delta=p^e$ for some $e\in\mathbb{N}_0$ (Propositon \ref{Proposition12.5}).

Next, we assume that $\ch\ k_\nu=\ch\ K$, that the sequence $\left\{\nu\left(Q_{t}\right)\right\}_{t\in\mathbb N}$ is unbounded in
$\tilde\g_0$ and that $\deg_xQ_t=1$ for all $t\in\mathbb N$. We show that $Q_{\omega}\in K\left[x^\delta\right]$ (Remark \ref{Remark12.6}).

The third set of main results of \S\ref{Infsequenceskeypolynom} starts with Proposition \ref{Proposition12.8} which asserts that if
$$
\ch\ k_\nu=0,
$$
then for an infinite sequence $\{Q_t\}_{t\in\mathbb N_0}$ the sequence $\{\beta_t\}_{t\in\mathbb N_0}$ of their values is unbounded in $\tilde\g_0$. In particular, there are no limit key polynomials $Q_i$ such that $\beta_i\in\tilde\g_0$.  This explains why
$\bar\Lambda\le\omega+1$ whenever $\ch\ k_\nu=0$.

The main goal of \S\ref{Section7} is to derive explicit formulae for limit key polynomials in terms of the preceding key polynomials.
We assume that $\ch\ k_\nu=p>0$ and consider a limit ordinal $\ell+\omega\in\bar\Lambda$. We assume that the sequence
$\left\{\nu\left(Q_{\ell+t}\right)\right\}_{t\in\mathbb N}$ is bounded in $\tilde\g_0$. We prove that $Q_{\ell+\omega}$ can be chosen in such a way that for some $t\in\mathbb N$ the $Q_{\ell+t}$-standard expansion of $Q_{\ell+\omega}$ is weakly affine. By definition, this means that
\begin{equation}
Q_{\ell+\omega}=Q_i^{p^{e_{\ell+\omega}}}+\sum\limits_{j=0}^{e_{\ell+\omega}-1}c_{p^j,i}Q_i^{p^j}+c_{0i},\label{tag1-12}
\end{equation}
where $i=\ell+t$ and $c_{0,i}$ and $c_{p^j,i}$ are $Q_i$-free $i$-standard expansions (See Definition \ref{defQlfree} for the notion of ``$Q_i$-free'').

The results of this paper are related to those contained in the paper \cite{36} (see also \cite{72}).  However, there are some important differences, which we now explain. We chose to rewrite the whole theory from scratch for the following reasons.
\begin{enumerate}
\item In \cite{36} we work with an algebraic extension $\iota$ while for local uniformization we need to consider purely transcendental extensions. We note that the case of algebraic extensions can easily be reduced to that of transcendental ones using composition of valuations. Indeed, let $\iota_-:(K,\nu_K)\hookrightarrow(K(x),\nu)$ be a simple algebraic extension of valued fields. Write $K(x)=\frac{K[X]}{(f)}$, where $f$ is the minimal polynomial of $x$ over $K$. Let $\nu_f$ denote the $(f)$-adic valuation of $K[X]$ and put $\nu^*:=\nu_f\circ\nu$ (the composition of $\nu_f$ with $\nu$). Complete sets $\{Q_i\}_{i\in\Lambda}$ of key polynomials of the transcendental extension $\iota:(K,\nu_K)\rightarrow (K(X),\nu^*)$ constructed in the present paper are very closely related to complete sets $\{Q^-_i\}_{i\in\Lambda_-}$ of key polynomials of the algebraic extension $(K,\nu_K)\rightarrow(K(x),\nu)$, constructed in \cite{36}. Namely, we have $\Lambda=\Lambda_-\cup\{\Lambda_-\}$ (extension by one element), $Q^-_i$ is the image of $Q_i$ under the natural map $K[X]\rightarrow K(x)$ and $Q_{\Lambda_-}=f$. In other words, a complete set $\{Q_i\}$ of key polynomials for $\iota$ can be obtained from that of $\iota_-$ by lifting each key polynomial $Q_i^-$ to $K[X]$ and then adding one final key polynomial $f$. In this sense the theory presented here can be viewed as a generalization of \cite{36}.
\item Our main interest in \cite{36} was to classify all the possible extensions $\nu$ of a given $\nu_K$; in the present paper we content
ourselves with a fixed $\nu$.
\item The crucial formulae for $\nu_\ell (\partial_{p^b}f)$ were not
made explicit in \cite{36}.
\item We take this opportunity to correct numerous mistakes which, unfortunately, made their way into the paper \cite{36}: an inaccuracy in the definition of complete set of key polynomials, the failure to take into account the case of mixed characteristic, a mistake in the definition of the numerical characters $e_i$ and many others which made the paper \cite{36} unreadable.
\end{enumerate}
\medskip

\noindent\textbf{Acknowledgements.} We thank Anna Blaszczok, Julie Decaup, Franz-Viktor Kuhlmann, G\'erard Leloup and all the anonymous referees of the earlier versions of the paper for many useful comments and suggestions and for pointing out errors.

\section{Algebras graded by ordered semigroups}
\label{S2}

Graded algebras associated to valuations will play a crucial role in this paper. In this section, we give some basic definitions and prove several easy results about graded algebras. Throughout this paper, a ``graded algebra'' will mean ``an algebra without zero divisors, graded by an ordered semigroup''. As usual, for a graded algebra $G$, $\ord$ will denote the natural valuation of $G$, given by the grading.

Let $G=\bigoplus\limits_{\alpha\in\Gamma}G_{\alpha}$ be a graded algebra, where $\Gamma$ is an ordered abelian semigroup.

\begin{definition} An element $x\in G$ is said to be \textbf{homogeneous} if there exists $\alpha\in\Gamma$ such that $x\in G_\alpha$.
\end{definition}

For a homogeneous element $x\in G_\alpha\subset G$ we will write $\ord\ x=\alpha$.

Now let $X$ be an independent variable and consider the ring $G[X]$. Fix a polynomial $f=\sum\limits_{i=0}^{d}a_iX^i\in G[X]$ such that $a_i$ is a homogeneous element of $G$ for all $i\in\{0,\dots,d\}$. Fix an element $\beta\in\Gamma$.

\begin{definition} We say that $f$ is \textbf{quasi-homogeneous with} $w(X)=\beta$ if for all $i,j\in\{0,\dots,d\}$ we have $i\beta+\ord\ a_i=j\beta+\ord\ a_j$. In this situation we will also say that $\beta$ is the \textbf{weight assigned to }$X$.
\end{definition}

\begin{definition}{\label{de4-1}} Let $G$ be a graded algebra without zero divisors. The {\bf saturation} of $G$, denoted by $G^*$, is the graded algebra
$$
G^*=\left\{\left.\frac gh\ \right| \ g,h\in G,\
h\text{ homogeneous},\ h\ne0\right\}.
$$
$G$ is said to be {\bf saturated} if $G=G^*$.
\end{definition}

An element $\frac gh\in G^*$ is homogeneous in $G^*$ if and only if $g$ is homogeneous in $G$. If $f_1=\frac{g_1}{h_1}$ and $f_2=\frac{g_2}{h_2}$ are two non-zero elements of $G^*$, where $h_1,h_2,g_1,g_2$ are non-zero elements of $G$ with $h_1,h_2,g_2$ homogeneous,  then $\frac{f_1}{f_2}=\frac{g_1h_2}{g_2h_1}\in G^*$. Thus $G^*=(G^*)^*$ for any graded algebra $G$, so that $G^*$ is always saturated.

\begin{example}{\label{Example4-2}} The main example of graded algebras we are interested in this paper are graded algebras associated to valuations, centered in prime ideals of integral domains. Namely, let $R$ be a domain, $K$ its field of fractions and
$\nu:K^*\rightarrow \g$ a valuation of $K$, centered at a prime ideal $P$ of $R$ (this means, by definition, that $R\subset R_\nu$ and $P=M_\nu\cap R$, where $(R_\nu,M_\nu)$ denotes the valuation ring of $\nu$). Let $\h=\nu(R\setminus\{0\})$. For each $\beta\in\h$, consider the ideals
\begin{equation}
\begin{aligned}
I_\beta:&=\{x\in R\ |\ \nu(x)\ge\beta\}\cup\{0\}\quad\text{and}\\
I_{\beta+}:&=\{x\in R\ |\ \nu(x)>\beta\}\cup\{0\}.
\end{aligned}\label{tag3.1}
\end{equation}
$I_\beta$ is called the $\nu$-{\bf ideal} of $R$ of value $\beta$.

If $\beta_1>\beta_2>\dots$ is an infinite descending sequence of elements of $\h$ then $I_{\beta_1}\subsetneqq
I_{\beta_2}\subsetneqq\dots$ is an infinite ascending chain of ideals of $R$. Thus if $R$ is noetherian then the ordered set $\nu(R)$ contains no infinite descending sequences, that is, $\nu(R)$ is well ordered.

If $I$ is an ideal in a noetherian ring $R$ and $\nu$ a valuation of $R$, $\nu(I)$ will denote
$$
\min\{\nu(x)\ |\ x\in I\}.
$$
We can now define the graded algebra, associated to the valuation $\nu$. Let $R$, $\nu$ and $\h$ be as above. For $\beta\in\h$,
let $I_\beta$ and $I_{\beta+}$ be as in (\ref{tag3.1}). We define
$$
\gr_\nu R=\bigoplus_{\beta\in\h}\frac{I_\beta}{I_{\beta+}}.
$$
The algebra $\gr_\nu R$ is an integral domain. For any element $x\in R$ with $\nu(x)=\beta$, we may consider the natural image of $x$ in$\frac{P_\beta}{P_{\beta+}}\subset\gr_\nu R$. This image is a homogeneous element of $\gr_\nu R$ of degree $\beta$, which we will denote by $\ini_\nu x$. The grading induces an obvious valuation on $\gr_\nu R$ with values in $\h$; this valuation will be denoted by $\ord$.

Next, suppose that $(R,M,k)$ is a local domain and $\nu$ is a valuation with value group $\g$, centered at $R$. Let $K$ denote the field of fractions of $R$. Let $(R_\nu,M_\nu,k_\nu)$ denote the valuation ring of $\nu$. For $\beta\in\g$, consider the following
$R_\nu$-submodules of $K$:
\begin{equation}
\begin{aligned}
\mathbf{I}_\beta&=\{x\in K\ |\ \nu(x)\ge\beta\},\\
\mathbf{I}_{\beta+}&=\{x\in K\ |\ \nu(x)>\beta\}.
\end{aligned}\label{tag3.2}
\end{equation}
We define
$$
G_\nu=\bigoplus_{\beta\in\g}\frac{\mathbf{I}_\beta}{\mathbf{I}_{\beta+}}.
$$
Again, given $x\in K$, we may speak of the natural image of $x$ in $G_\nu$, also denoted by $\ini_\nu x$ (since $\gr_\nu R$ is naturally a graded subalgebra of $G_\nu$, there is no danger of confusion). Then $\ord$ is a valuation of the common field of fractions
of $\gr_\nu R$ and $G_\nu$, with values in $\g$.

We have $G_\nu=(\gr_\nu R)^*$; in particular, $G_\nu$ is saturated.
\end{example}

\begin{remark}{\label{Remark 2.3}} Let $G,G'$ be two graded algebras without zero divisors, with $G\subset G'$. Let $x$ be a homogeneous element of $G'$, satisfying an algebraic dependence relation
\begin{equation}
a_0x^n+a_1x^{n-1}+\dots+a_n=0\label{tag2.3}
\end{equation}
over $G$ (here $a_j\in G$ for $0\le j\le n$). Without loss of generality, we may assume that the integer $n$ is the smallest possible.
\smallskip

\noindent\textbf{Claim.} Without loss of generality, we may further assume that (\ref{tag2.3}) is homogeneous (that is, all the $a_j$ are homogeneous and the quantity $j\ \ord\ x+\ord\ a_j$ is constant for $0\le j\le n$ such that $a_j\ne0$).
\smallskip

\noindent\textit{Proof of Claim.} Let $\mu:=\min\limits_{0\le j\le n}\{j\ \ord\ x+\ord\ a_j\}$. Then each $a_j$ can be written as a finite sum of homogeneous elements of $G$, all of orders greater than or equal to $\mu-j\ \ord\ x$. For $j\in\{0,\dots,n\}$ write
$a_j=a_{0j}+\tilde a_j$, where $a_{0j}=0$ or $\ord\ a_{0j}=\mu-j\ \ord\ x$, and $\tilde a_j$ is a sum of homogeneous elements of $G$ of orders strictly greater than $\mu-j\ \ord\ x$ (note that there exist at least two different values of $j$ for which $a_{0j}\ne0$). Now, $x$ satisfies the equation
\begin{equation}
a_{00}x^n+a_{01}x^{n-1}+\dots+a_{0n}=0.\label{tag2.4}
\end{equation}
This proves the Claim.
From now on we will always take the coefficients $a_j$ to be homogeneous without mentioning it explicitly.
\smallskip

Dividing (\ref{tag2.3}) by $a_0$, we see that $x$ satisfies a {\it monic} homogeneous relation over $G^*$ of degree $n$ and no algebraic relation of degree less than $n$. In other words, $x$ is {\it algebraic} over $G$ if and only if it is {\it integral} over $G^*$; the conditions of being ``algebraic over $G^*$'' and ``integral over $G^*$'' are one and the same thing (as usual, ``integral'' means ``satisfying a monic polynomial relation'').
\end{remark}

Let $G\subset G'$, let $x$ be as above and let $G[x]$ denote the graded subalgebra of $G'$, generated by $x$ over $G$. By Remark \ref{Remark 2.3}, we may assume that $x$ satisfies a homogeneous integral relation
\begin{equation}
x^n+a_1x^{n-1}+\dots+a_n=0\label{tag4.2}
\end{equation}
over $G^*$ and no algebraic relations over $G^*$ of degree smaller than $n$.

\begin{lemma}{\label{el4.4}} Every element of $(G[x])^*$ can be written uniquely as a polynomial in $x$ with coefficients in $G^*$, of
degree strictly less than $n$.
\end{lemma}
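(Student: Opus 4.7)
The plan is to prove the stronger assertion that the graded subalgebra $G^*[x]\subseteq(G')^*$ is itself saturated. Granting this, the inclusion $(G[x])^*\subseteq(G^*[x])^*=G^*[x]$ reduces the lemma to a normal-form statement inside $G^*[x]$.

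I would first establish that $\{1,x,\ldots,x^{\alpha-1}\}$ is a free $G^*$-module basis of $G^*[x]$. Existence of a representation of any element as a polynomial of degree $<\alpha$ follows by repeated use of (\ref{tag4.2}) to reduce higher powers of $x$, keeping coefficients in $G^*$ throughout. For uniqueness, I would take the difference of two such representations, decompose each coefficient into its homogeneous components in $G^*$, and regroup terms by total degree in $(G')^*$; this produces a family of \emph{homogeneous} algebraic relations satisfied by $x$ over $G^*$ of degree $<\alpha$, each of which must be trivial by the minimality clause of Remark \ref{Remark 2.3}.

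The main step is to show that every nonzero homogeneous $h\in G^*[x]$ is invertible in $G^*[x]$. Multiplication by $h$ defines a $G^*$-linear endomorphism of $G^*[x]$ whose matrix $M_h$ in the above basis has entries in $G^*$; since $(G')^*$ has no zero divisors, this endomorphism is injective, so $N(h):=\det M_h\in G^*$ is nonzero. The hard point is a degree bookkeeping: using the homogeneity of $h$ together with the homogeneity of (\ref{tag4.2}) (cf.\ Remark \ref{Remark 2.3}), one checks that each entry of $M_h$ is itself homogeneous of the appropriate degree, whence $N(h)$ is homogeneous of degree $\alpha\cdot\ord\ h$ in $G^*$. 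The adjugate of $M_h$ then furnishes $h^\vee\in G^*[x]$ with $h\,h^\vee=N(h)$; since $N(h)$ is homogeneous and $G^*=(G^*)^*$, we have $1/N(h)\in G^*$, hence $h^{-1}=h^\vee/N(h)\in G^*[x]$. Any $f/h\in(G^*[x])^*$ therefore equals $f\cdot h^{-1}\in G^*[x]$, proving saturation.

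Combining the steps, an arbitrary $y\in(G[x])^*$ is of the form $f/h$ with $f,h\in G[x]\subseteq G^*[x]$ and $h$ nonzero homogeneous; saturation yields $y\in G^*[x]$, and the first step supplies the unique representation of degree $<\alpha$. The principal obstacle throughout is the degree computation that shows $N(h)$ is a homogeneous element of $G^*$; once this is in place, the tautological saturation of $G^*$ closes the argument.
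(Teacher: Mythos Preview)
Your proof is correct and, like the paper, reduces the lemma to the identity $(G[x])^*=G^*[x]$ together with Euclidean reduction modulo (\ref{tag4.2}); the difference is in how a nonzero homogeneous $h\in G^*[x]$ gets inverted. You use the regular representation: the multiplication-by-$h$ matrix on the free $G^*$-module with basis $1,x,\ldots,x^{\alpha-1}$ has homogeneous entries (because (\ref{tag4.2}) is homogeneous), its determinant $N(h)$ is a nonzero homogeneous element of $G^*$ of degree $\alpha\cdot\ord\ h$, and the adjugate furnishes $h^\vee$ with $hh^\vee=N(h)$, whence $h^{-1}\in G^*[x]$. The paper bypasses matrices entirely: since $x$ is integral over $G^*$, so is any homogeneous $y\in G[x]$, and from a homogeneous integral relation $y^\gamma+b_1y^{\gamma-1}+\cdots+b_\gamma=0$ with $b_\gamma\ne0$ one reads off $\frac1y=-\frac1{b_\gamma}(y^{\gamma-1}+b_1y^{\gamma-2}+\cdots+b_{\gamma-1})\in G^*[x]$ directly. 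Your argument is the classical norm construction and makes the degree of $N(h)$ explicit, at the cost of the matrix bookkeeping you yourself flag as the main obstacle; the paper's argument is shorter and needs no linear algebra, only the observation that integrality of $x$ propagates to all of $G[x]$.
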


\begin{proof} Let $y$ be a homogeneous element of $G[x]$. Since $x$ is integral over $G^*$, so is $y$ (\cite{AM}, p. 59, Proposition 5.1, implications (i)$\iff$(ii)$\iff$(iii)). Let
\begin{equation}
y^m+b_1y^{m-1}+\dots+b_m=0\label{tag4.3}
\end{equation}
with $b_j\in G^*$, be an integral dependence relation of $y$ over $G^*$, with $b_j$ homogeneous elements of $G^*$, $b_m\ne0$, where $j\ \ord\ y+\ord\ b_j$ is constant for all $j$ such that $b_j\ne0$. By (\ref{tag4.3}),
$$
\frac1y=-\frac1{b_m}(y^{m-1}+b_1y^{m-2}+\dots+b_{m-1}).
$$
Thus, for any $z\in G[x]$, we have
\begin{equation}
\frac zy\in G^*[x].\label{tag4.4}
\end{equation}
Since $y$ was an arbitrary homogeneous element of $G[x]$, we have proved that
$$
(G[x])^*=G^*[x].
$$
Now, for every element $y\in G^*[x]$ we can add a multiple of (\ref{tag4.2}) to $y$ so as to express $y$ as a polynomial in $x$ of degree strictly less than $n$. Moreover, this expression is unique because $x$ does not satisfy any algebraic relation over $G^*$ of degree smaller than $n$.
\end{proof}

\begin{notation} If $\Delta\subset\Delta'$ are ordered semigroups and $\beta$ is an element of $\Delta'$, then $\Delta:\beta$ will denote the positive integer defined by
$$
\Delta:\beta=\min\{n\in\mathbb{N}\ |\ n\beta\in\Delta\}.
$$
If the set on the right hand side is empty, we put $\Delta:\beta=\infty$.\\
Note that $\beta\in\Delta$ if and only if $\Delta:\beta=1$.
\end{notation}

\begin{lemma}{\label{el4.5}} Let $G$, $G'$ be as in Remark \ref{Remark 2.3} and $x$ a homogeneous element of $G'$. Assume that the degree 0 part of $G$ (that is, the subring of $G$ consisting of all the elements of degree 0) contains a field $k$ and that $G$ is generated as a $k$-algebra by homogeneous elements $w_1\do w_r$. Let
\[
\beta_j=\ord\ w_j,\quad1\le j\le r,
\]
and let $\Delta$ denote the group $\Delta=\{\ord\ y\ |\ y\in G^*\}=\left\{\left.\sum\limits_{j=1}^rn_j\beta_j\ \right|\
n_j\in\mathbb{Z}\right\}$. Assume that the following two conditions hold:
\begin{enumerate}
\item[(1)] $\Delta:(\ord\ x)<\infty$
\item[(2)] Let $\tilde n:=\Delta:(\ord\ x)$. Let $n_1\do n_r\in\mathbb{Z}$
be such that
\begin{equation}
\tilde n\ \ord\ x=\sum\limits_{j=1}^rn_j\beta_j.\label{tag4.5}
\end{equation}
Let $y=\prod\limits_{j=1}^rw_j^{n_j}$. Assume that the
element
\begin{equation}
z:=\frac{x^{\tilde n}}y\in(G')^*\label{tag2.8}
\end{equation}
is algebraic over $k$.
\end{enumerate}
Then $x$ is integral over $G^*$. An integral dependence relation of $x$ over $G^*$ can be described as follows. Let $z$ be as in (\ref{tag2.8}). Let $Z$ be an independent variable and let
\begin{equation}
f(Z)=Z^d+\sum\limits_{i=0}^{d-1}c_iZ^i\label{tag4.7}
\end{equation}
denote the minimal polynomial of $z$ over $k$. Then $x$ is a root of the polynomial
\begin{equation}
X^{d\tilde n}+\sum\limits_{i=0}^{d-1}c_iy^{d-i}X^{i\tilde n}=0.\label{tag2.10}
\end{equation}
Conversely, suppose $x$ is integral over $G^*$. Then (1) holds. Suppose, furthermore, that $\beta_1\do\beta_r$ are
$\mathbb{Z}$-linearly independent. Then (2) also holds. In this case, (\ref{tag2.10}) is the minimal polynomial of $x$ over $G^*$. In particular, the degree $n$ of the minimal polynomial of $x$ over $G^*$ is given by
\begin{equation}
n=d\tilde n.\label{tag4.9}
\end{equation}
\end{lemma}

\begin{proof} If (1) and (2) hold, $x$ is integral over $G^*$ because it is a root of the polynomial (\ref{tag2.10}) (this is verified immediately by substituting (\ref{tag2.8}) for $Z$ in (\ref{tag4.7}) and multiplying through by $y^d$). In particular, if $n$ denotes the degree of $x$ over $G^*$, the equation
\begin{equation}
x^{d\tilde n}+\sum\limits_{i=0}^{d-1}c_iy^{d-i}x^{i\tilde n}=0.\label{tag2.10bis}
\end{equation}
shows that
\begin{equation}
n\le d\tilde n.\label{tag4.10}
\end{equation}
Conversely, suppose $x$ is integral over $G^*$. Then $x$ satisfies a homogeneous integral relation of the form (\ref{tag4.2}) with $a_n\ne0$. Since (\ref{tag4.2}) is homogeneous, we have the equality
\begin{equation}
i\ \ord\ x+\ord\ a_{n-i}=\ord\ a_n\text{ for all }i\text{ such that}\ 0\le i\le n\text{ and }a_{n-i}\ne0.\label{eq:fullhomogeneity}
\end{equation}
Hence
\begin{equation}
n\ \ord\ x=\ord\ a_n.\label{tag4.11}
\end{equation}
Now, $a_n\in G^*$ so that
\begin{equation}
\ord\ a_n\in\Delta .\label{tag4.12}
\end{equation}
Putting together (\ref{tag4.11}) and (\ref{tag4.12}), we obtain (1) of the Lemma.

Now, assume that $\beta_1\do\beta_r$ are $\mathbb{Z}$-linearly independent. We wish to prove (2). Since $\beta_1\do\beta_r$ are $\mathbb{Z}$-linearly independent, all the monomials $w_1^{\gamma_1}\dots w_r^{\gamma_r}$, $\gamma_j\in\mathbb{Z}$, have different values with respect to ord. Since (\ref{tag4.2}) is homogeneous with respect to $\ord$, each $a_i$ must be a {\it
monomial} in the $w_j$ with (not necessarily positive) integer exponents. Also by the $\mathbb{Z}$-linear independence of
$\beta_1\do\beta_r$, the coefficients $n_1\do n_r$ in (\ref{tag4.5}) are uniquely determined. Moreover, any relation of the form
\begin{equation}
i\ \ord\ x-\sum\limits_{j=1}^rn'_j\beta_j=0,\quad i\in\mathbb{N},\ n'_1\do n'_r\in\mathbb{Z}\label{tag4.13}
\end{equation}
is a positive integer multiple of the relation
\begin{equation}
\tilde n\ \ord\ x-\sum\limits_{j=1}^rn_j\beta_j=0.\label{tag4.14}
\end{equation}
By (\ref{eq:fullhomogeneity}), if a term $a_{n-i}x^i$ appears in (\ref{tag4.2}), we have $i\ \ord\ x=\ord\ a_n-\ord\ a_{n-i}\in\Delta$. This proves that $x^i$ may appear in (\ref{tag4.2}) only if $\tilde n\ |\ i$; in particular, $\tilde n\ |\ n$. Let $d':=\frac n{\tilde n}$. Let $0\le i<d'$. To find each nonzero coefficient $a_{n-i\tilde n}$ in (\ref{tag4.2}), note
that
$$
n\ \ord\ x=d'\ \tilde n\ \ord\ x=i\ \tilde n\ \ord\ x+\ord\ a_{n-i\tilde n},
$$
so that
\begin{equation}
(d'-i)\ \tilde n\ \ord\ x=\ord\ a_{n-i\tilde n}.\label{tag4.15}
\end{equation}
Since $a_{n-i\tilde n}$ is a monomial in $w_1\do w_r$, (\ref{tag4.15}) gives rise to a $\mathbb{Z}$-linear dependence relation of the form (\ref{tag4.13}), which therefore must be equal to (\ref{tag4.14}) multiplied by $d'-i$. This determines the monomial $a_{n-i\tilde n}$ uniquely up to multiplication by an element of $k$: we must have $a_{n-i\tilde n}=c_iy^{d'-i}$, where $c_i\in k$. Then $z=\frac{x^{\tilde n}}y$ satisfies the algebraic dependence relation
\begin{equation}
z^{d'}+\sum\limits_{i=0}^{d'-1}c_iz^i=0.\label{tag4.16}
\end{equation}
This proves (2) of the Lemma. Now, we have shown that, under the hypothesis of linear independence of the $\beta_j$, if $x$ has degree $n$ over $G^*$ then $\tilde n\ |\ n$ and $z$ is a root of a polynomial of degree $d'=\frac n{\tilde n}$. Letting $d$ denote the degree of $z$ over $k$, as above, we obtain
\begin{equation}
d'=\frac n{\tilde n}\ge d.\label{tag4.17}
\end{equation}
Combining (\ref{tag4.17}) with (\ref{tag4.10}), we obtain (\ref{tag4.9}); in particular, (\ref{tag2.10bis}) is the smallest degree algebraic relation satisfied by $x$ over $G$. This completes the proof of Lemma \ref{el4.5}.
\end{proof}

\begin{corollary}{\label{Corollary4.6}} Let $G$, $w_1\do w_r$ and $\beta_1\do\beta_r$ be as in Lemma \ref{el4.5}. If
$\beta_1\do\beta_r$ are $\mathbb{Z}$-linearly independent in $\Delta$ then $w_1\do w_r$ are algebraically independent over $k$.
\end{corollary}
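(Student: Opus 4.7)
The plan is to give a direct proof exploiting the fact that homogeneity together with $\mathbb{Z}$-linear independence of the generators' orders forces any algebraic relation to split into its individual monomials. Suppose, for contradiction, that $w_1 \do w_r$ are algebraically dependent over $k$, and fix a nonzero polynomial
$$
P(T_1\do T_r)=\sum_{\bar\gamma}c_{\bar\gamma}T_1^{\gamma_1}\cdots T_r^{\gamma_r}\in k[T_1\do T_r]
$$
such that $P(w_1\do w_r)=0$ in $G$.

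Each monomial $\prod_{j=1}^r w_j^{\gamma_j}$ is homogeneous in $G$ with $\ord=\sum_{j=1}^r\gamma_j\beta_j$. By the $\mathbb{Z}$-linear independence of $\beta_1\do\beta_r$, the map $\bar\gamma\mapsto\sum_j\gamma_j\beta_j$ is injective on $\mathbb{N}_0^r$, so the terms $c_{\bar\gamma}\prod_j w_j^{\gamma_j}$ appearing in $P(w_1\do w_r)$ lie in pairwise distinct graded components of $G$. Since their sum vanishes, each such term must itself be zero; as $G$ has no zero divisors (the standing assumption on graded algebras) and every $w_j$ is nonzero, this forces $c_{\bar\gamma}=0$ for every $\bar\gamma$ in the support of $P$, contradicting $P\ne0$.

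As an alternative route, more in the spirit of the label ``corollary'', one can argue inductively using the converse direction of Lemma \ref{el4.5}. Assuming $w_1\do w_{r-1}$ are already known to be algebraically independent over $k$, set $G_0:=k[w_1\do w_{r-1}]$, so that $\ord(G_0^*\setminus\{0\})=\Delta_0$ where $\Delta_0:=\sum_{j=1}^{r-1}\mathbb{Z}\beta_j$. The $\mathbb{Z}$-linear independence of $\beta_1\do\beta_r$ forces $\Delta_0:\beta_r=\infty$, so by Lemma \ref{el4.5} applied with $x=w_r$, the element $w_r$ cannot be integral over $G_0^*$, and hence by Remark \ref{Remark 2.3} not algebraic over $G_0$. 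Either way, no genuine obstacle arises: the entire content of the corollary is that $\mathbb{Z}$-linear independence of the $\beta_j$ prevents any cancellation among distinct monomials in the $w_j$.
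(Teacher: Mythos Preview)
Your proposal is correct and essentially coincides with the paper's proof: the paper also presents both arguments, first the induction via Lemma~\ref{el4.5} (where algebraicity of $w_{i+1}$ over $k[w_1\do w_i]$ would force $(\beta_1\do\beta_i):\beta_{i+1}<\infty$), and then the direct observation that $\mathbb{Z}$-linear independence of the $\beta_j$ gives all monomials distinct degrees, so no nontrivial relation can hold. You have merely reversed the order of presentation.
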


\begin{proof} Induction on $r$. For $r=1$ there is nothing to prove. For the induction step, assume that the Corollary is true for $r=i$. If $w_{i+1}$ were algebraic over $k[w_1\do w_i]$, we would have
\begin{equation}
(\beta_1\do\beta_i):\beta_{i+1}<\infty\label{tag4.18}
\end{equation}
by Lemma \ref{el4.5}, applied to the graded algebra $k[w_1\do w_i]$ and the element $w_{i+1}$. (\ref{tag4.18}) contradicts the linear independence of $\beta_1\do\beta_r$, and we are done. Alternatively, the Corollary can be proved by observing that
by linear independence of $\beta_1\do\beta_r$, all the monomials in $w_1\do w_r$ have different degrees, thus any polynomial in $w_1\do w_r$ over $k$ contains a unique monomial of smallest degree. Hence it cannot vanish by the ultrametric triangle law.
\end{proof}

\begin{definition}{\label{de4.7}} Let $G$ be a graded algebra and $x_\Lambda:=\{x_\lambda\}_{\lambda\in\Lambda}$ a collection of homogeneous elements of $G$. Let $k$ be a field, contained in the degree 0 part of $G$. Let $k[x_\Lambda]$ denote the
$k$-subalgebra of $G$, generated by $x_\Lambda$. We say that $x_\Lambda$ {\bf rationally generate} $G$ over $k$ if $G^*=k[x_\Lambda]^*$.
\end{definition}

The following result is an immediate consequence of definitions:
\begin{proposition}{\label{Proposition4.8}} Let $G_\nu$ be the graded algebra associated to a valuation
$\nu:K\rightarrow\Gamma$, as above. Consider a sum of the form $y=\sum\limits_{i=1}^sy_i$, with $y_i\in K$. Let
$\beta=\min\limits_{1\le i\le s}\nu(y_i)$ and
$$
S=\left\{\left.i\in\{1,\dots,n\}\ \right|\ \nu(y_i)=\beta\right\}.
$$
The following two conditions are equivalent:
\begin{enumerate}
\item $\nu(y)=\beta$
\item $\sum\limits_{i\in S}\ini_\nu y_i\ne0$.
\end{enumerate}
\end{proposition}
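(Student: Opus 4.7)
The plan is to unpack the definitions of $\mathbf{P}_\beta$, $\mathbf{P}_{\beta+}$, and $\ini_\nu$, and observe that $\nu(y)=\beta$ is logically equivalent to $y$ having a nonzero image in the homogeneous component $\mathbf{P}_\beta/\mathbf{P}_{\beta+}$ of $G_\nu$. Once this is spelled out, the content of the proposition is that the image of $y$ in that component is exactly $\sum_{i\in S}\ini_\nu y_i$, which is true by discarding the terms of strictly higher value.

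First, I would observe that by the ultrametric inequality for $\nu$ together with the definition of $\beta$, every $y_i$ belongs to $\mathbf{P}_\beta$, hence $y \in \mathbf{P}_\beta$, so $\nu(y)\ge\beta$. Therefore the equality $\nu(y)=\beta$ is equivalent to $y\notin\mathbf{P}_{\beta+}$, i.e.\ to the natural image $\overline{y}$ of $y$ in $\mathbf{P}_\beta/\mathbf{P}_{\beta+}\subset G_\nu$ being nonzero.

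Second, I would compute $\overline{y}$. Write $y=\sum_{i\in S}y_i+\sum_{i\notin S}y_i$. For $i\notin S$ we have $\nu(y_i)>\beta$ by definition of $S$, hence $\sum_{i\notin S}y_i\in\mathbf{P}_{\beta+}$. Thus modulo $\mathbf{P}_{\beta+}$ we obtain
$$
\overline{y}=\sum_{i\in S}\overline{y_i}=\sum_{i\in S}\ini_\nu y_i,
$$
the last equality holding because $\nu(y_i)=\beta$ for $i\in S$, so the class of $y_i$ in $\mathbf{P}_\beta/\mathbf{P}_{\beta+}$ is precisely $\ini_\nu y_i$ by definition of the initial form.

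Combining the two steps, $\nu(y)=\beta$ iff $\overline{y}\neq 0$ iff $\sum_{i\in S}\ini_\nu y_i\neq 0$, which is the desired equivalence. There is no real obstacle here; the only thing one must be careful about is not to confuse $\ini_\nu y_i$ (defined only when one fixes its degree) with the image of $y_i$ in $G_\nu$ more generally: this is why the reduction to the single homogeneous component of degree $\beta$ is necessary, and it is precisely the role played by restricting the sum to the index set $S$.
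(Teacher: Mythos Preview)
Your proof is correct and is exactly what the paper has in mind: the proposition is stated there as ``an immediate consequence of definitions'' with no proof given, and your argument simply makes explicit the unwinding of the definitions of $\mathbf{P}_\beta$, $\mathbf{P}_{\beta+}$, and $\ini_\nu$.
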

 Let $G$ be a saturated graded algebra, graded by a group $\g$, $L$ the field of fractions of $G$, $T$ an independent variable and
 $\beta\in\tilde\g_+:=(\g\otimes_{\mathbb Z}\mathbb Q)_+$. We regard $G[T]$ as an algebra graded by $\g+\beta\mathbb{Z}$, where $\beta$ is the weight assigned to $T$.
\begin{definition} Take an element $a\in G$. Write $a=\sum\limits_{j=0} ^ta_j$, where each $a_j$ is a homogeneous element of $G$ and $\ord\ a_0<\ord\ a_1<\dots<\ord\ a_t$. The element $a_0$ is called the {\bf initial form} of $a$ and will be denoted by $\bar a$.
\end{definition}
\begin{definition} Take a polynomial $g=\sum\limits_{j=0}^da_jT^j\in G[T]$. Let
$$
S(g):=\left\{j\in\{0\do d\}\ \left|\ \ord\left(a_jT^j\right)=ord\ g\right.\right\}.
$$
The {\bf homogeneization} of $g$ is the polynomial $\bar g:=\sum\limits_{j\in S(g)} a_jT^j$.
\end{definition}
\begin{remark}\label{Gauss} A polynomial $f$ in $G[T]$ is said to be {\bf irreducible} if it cannot be factored as a product of two polynomials, both of which have degrees strictly smaller than $\deg_Tf$. Every quasi-homogeneous polynomial $f$ admits a unique factorization of the form
\begin{equation}
f=a\prod\limits_{j=1}^tg_j^{\gamma_j},\label{eq:Gauss}
\end{equation}
where $a$ is the leading coefficient of $f$, the $\gamma_j$ are strictly positive integers and the $g_j$ are irreducible monic
quasi-homogeneous polynomials of strictly positive degrees (we allow the possibility $t=0$, in which case our claim holds trivially). Indeed, the factorization (\ref{eq:Gauss}) exists in $L[T]$. Clearing denominators in (\ref{eq:Gauss}), we can write
\begin{equation}
bf=a\prod\limits_{j=1}^t\tilde g_j^{\gamma_j},\label{eq:Gaussbis}
\end{equation}
where $b\in G$ and $\tilde g_j\in G[T]$ for all $j$. Replace $b$ by $\bar b$ and each of the $\tilde g_j$ by its homogeneization
$\bar{\tilde g}_j$. Since $a$ is homogeneous and $f$ quasi-homogeneous, this operation does not affect the truth of the equality (\ref{eq:Gaussbis}). In other words, without loss of generality we may assume that $b$ is homogeneous and all the $\tilde g_j$
quasi-homogeneous. Replacing $\tilde g_1$ by $\frac{\tilde g_1}b$, we may assume that $b=1$. Dividing each $\tilde g_j$ by its leading coefficient and modifying $a$ accordingly, we arrive at the situation where $\tilde g_j\in G[T]$ for all $j$. This proves the existence of the factorization (\ref{eq:Gauss}). The uniqueness of the factorization follows from its uniqueness in $L[T]$.
\end{remark}

\section{Sets of Key Polynomials}\label{SetsofKeyPols}

Let $K\rightarrow K(x)$ be a simple transcendental field extension, $\nu$ a valuation of $K(x)$ and $\nu_K$ the restriction of $\nu$ to $K$. We will assume that $\text{rank}\ \nu_K=1$ and
\begin{equation}
\nu(x)>0.\label{tag9.1}
\end{equation}

Let $\Lambda$ be a countable well ordered set.

Let $\{\alpha_{i}\}_{i\in\Lambda}$ be a set of strictly positive integers with
\begin{equation}
\alpha_0=1.\label{eq:alpha0=1}
\end{equation}
For an ordinal $\ell\in\Lambda$, we use the notation $\boldsymbol{\alpha}_{\ell +1}:=\{\alpha_i\}_{0\le i\le\ell }$ and
$\bar\gamma_{\ell+1}=\{\gamma_i\}_{0\le i\le\ell }$, where all but finitely many $\gamma_i$ are equal to 0.

\begin{definition}\label{inessential}
Let $i\in\Lambda$. We say that $i$ is {\bf inessential} if $i+\omega\in\Lambda$ and $\alpha_{i+t}=1$ for all $t\in\mathbb{N}_0$.
\end{definition}

\noindent{\bf Notation.} For $i\in\Lambda$ with $i$ not the maximal element of $\Lambda$, let
\begin{align*}
i+&=i+\omega\qquad\,\text{if }i\text{ is inessential}\\
&=i+1\qquad\text{ otherwise}.
\end{align*}
\begin{remark} In the sequel $\Lambda$ will be an index set for a set of key polynomials. By a recursive construction we will increase
$\Lambda$ and the set of key polynomials indexed by it. It is important to note that, given two totally ordered sets
$\Lambda\subset\Lambda'$, with $\Lambda$ an initial segment of $\Lambda'$ it may happen that an index $i\in\Lambda$ is essential in
$\Lambda$ but inessential in $\Lambda'$. By the same token, the meaning of $i+$ may depend on whether we view $i$ as an element of $\Lambda$ or of $\Lambda'$.
\end{remark}
Our next goal is to define the notion of {\bf a set of key polynomials}. We start with some preliminary definitions and notation, before giving the main definition (Definition \ref{DefKeyPolynom}).
\begin{definition}\label{prekeypolynom} A set $\{Q_{i}\}_{i\in \Lambda}\subset K[x]$ of monic polynomials with
\begin{equation}
Q_0=x,\label{eq:Q0=x}
\end{equation}
is said to be a {\bf set of pre-key polynomials} if for all non-maximal $i\geq1$ and $i_0<i$ such that $i_0+=i$ we have
$$
\deg_x Q_{i}=\alpha_{i}\cdot\deg_x Q_{i_0}.
$$
\end{definition}
\medskip

We will use the following notation: for $\ell\in\Lambda$,
$\mathbf{Q}_{\ell+1}^{{\bar\gamma}_{\ell +1}}=\prod\limits_{i\le\ell}Q_i^{\gamma_i}$. We let $\beta_i=\nu(Q_i)$ for each $i\in\Lambda$.

\begin{definition}{\label{de9.9}} Let $\ell\in\Lambda$. A multiindex ${\bar\gamma}_{\ell +1}$ is said to be {\bf standard with respect to} $\boldsymbol{\alpha}_{\ell +1}$ if
\begin{equation}
0\le\gamma_i<\alpha_{i+}\text{ for }i\le\ell ,\label{tag9.6}
\end{equation}
and if $i$ is inessential then the set $\left\{j<i+\ \left|\ j+=i+\text{ and }\gamma_j\ne0\right.\right\}$ has cardinality at most one. An
$\ell${\bf-standard monomial in the elements} $\mathbf{Q}_{\ell +1}$ (resp. an $\ell${\bf-standard monomial in the elements}
$\ini_{\nu}\mathbf{Q}_{\ell +1}$) is a product of the form $c_{{\bar\gamma}_{\ell +1}}\mathbf{Q}_{\ell +1}^{{\bar\gamma}_{\ell +1}}$, (resp. $c_{{\bar\gamma}_{\ell +1}}\ini_{\nu}\mathbf{Q}_{\ell +1}^{{\bar\gamma}_{\ell +1}}$) where
$c_{{\bar\gamma}_{\ell +1}}\in K$ (resp. $c_{{\bar\gamma}_{\ell +1}}$ is a homogeneous element of $G_{\nu_K}$) and the multiindex ${\bar\gamma}_{\ell +1}$ is standard with respect to $\boldsymbol{\alpha}_{\ell +1}$.
\end{definition}

Keep the notation of Definitions \ref{prekeypolynom}--\ref{de9.9} and let $\{Q_{i}\}_{i\in \Lambda}\subset K[x]$ be a set of pre-key polynomials.

\begin{definition}\label{defQlfree} An $\ell$-standard monomial
$c_{{\bar\gamma}_{\ell +1}}\mathbf{Q}_{\ell +1}^{{\bar\gamma}_{\ell +1}}$ is said to be $Q_\ell$-{\bf free} if it does not involve $Q_\ell$, that is, if $\gamma_\ell=0$.
\end{definition}

\begin{definition}{\label{de9.11}} A $Q_\ell$-{\bf free} $\ell$-{\bf standard expansion} is a finite sum of $Q_\ell$-free $\ell$-standard monomials whose $\nu$-value equals the minimum of the $\nu$-values of the monomials. An $\ell$-{\bf standard expansion} of an element $g\in K[x]$  is an expression of the form $g=\sum\limits_{j=0}^sc_jQ_\ell ^j$, where each $c_j$ is a $Q_\ell$-free
$\ell$-standard expansion. 
\begin{remark} Starting with an $\ell$-standard expansion and grouping together all the terms involving $Q_\ell^j$ for each exponent $j$ produces a $Q_\ell$-expansion.
\end{remark}

For an element $y\in G_{\nu}$, an expression of the form $y=\sum\limits_{{\bar\gamma}}\bar
c_{{\bar\gamma}}\ini_{\nu}\mathbf{Q}_{\ell +1}^{{\bar\gamma}}$, where each $\bar c_{{\bar\gamma}}$ is a homogeneous element of $G_{\nu_K}$ and each $\mathbf{Q}_{\ell +1}^{{\bar\gamma}}$ is an $\ell$-standard monomial, will be called an $\ell$-{\bf standard expansion of }$y$.
\end{definition}

\begin{remark} We note that a $Q_\ell$-free $\ell$-standard expansion is not just an $i$-standard expansion for some $i<\ell$. Namely, it is required, in addition, that the exponent of the last appearing key polynomial $Q_i$ be strictly less than $\alpha_{i+}$.
\end{remark}

\begin{definition}{\label{de9.14}} Let $\sum\limits_{{\bar\gamma}}\bar
c_{{\bar\gamma}}\ini_{\nu}\mathbf{Q}_{\ell +1}^{{\bar\gamma}}$ be an $\ell$-standard expansion, where the $\bar c_{{\bar\gamma}}$ are homogeneous elements of $G_{\nu_K}$. A {\bf lifting} of $\sum\limits_{{\bar\gamma}}\bar c_{{\bar\gamma}}\ini_{\nu}\mathbf{Q}_{\ell +1}^{{\bar\gamma}}$ to $K[x]$ is an $\ell$-standard expansion $\sum\limits_{{\bar\gamma}}c_{{\bar\gamma}}\mathbf{Q}_{\ell +1}^{{\bar\gamma}}$, where $c_{{\bar\gamma}}$ is a representative of $\bar
c_{{\bar\gamma}}$ in $K$.
\end{definition}

\begin{definition}{\label{de9.15}} Assume that $\ch\ k_\nu=p>0$. An $\ell$-standard expansion $\sum\limits_jc_jQ_\ell^j$ is said to be {\bf weakly affine} if $c_j=0$ whenever $j>0$ and $j$ is not of the form $p^e$ for some $e\in\mathbb{N}_0$.
\end{definition}
Before plunging into the technical definition of key polynomials we say a few informal words to motivate it. Roughly speaking, the 0-th key polynomial $Q_0$ is equal to $x$ and the key polynomials $Q_i$ with $i>0$ are elements of $K[x]$ with ``unexpected'' or ``jumping'' values. More precisely, for $f=\sum\limits_j d_jx^j\in K[x]$ define $\nu_0(f)=\min\limits_j\{\nu(d_jx^j)\}$. We have
\begin{equation}\label{eq:ulttriangle0}
\nu(f)\ge\nu_0(f)
\end{equation}
by the ultrametric triangle inequality. The first key polyomial $Q_1$ measures the fact that the inequality (\ref{eq:ulttriangle0}) may be strict; in fact, $Q_1$ is the smallest degree polynomial for which (\ref{eq:ulttriangle0}) is strict. Once $Q_1$ is defined, we define the $Q_1$-expansion of $f$ for each $f$ and the valuation $\nu_1$ satisfying
\begin{equation}
\nu_0(f)\le\nu_1(f)\le\nu(f).\label{eq:ulttriangle1}
\end{equation}
If the second inequality in (\ref{eq:ulttriangle1}) is strict for some $f$, the key polynomial $Q_2$ is the smallest degree polynomial for which (\ref{eq:ulttriangle1}) is strict. We iterate this procedure to construct a (possibly infinite) sequence $Q_0,Q_1,Q_2,\dots$ and the corresponding sequence $\nu_0,\nu_1,\dots$ of truncations of $\nu$, satisfying $\nu_{i-1}(f)\le\nu_i(f)\le\nu(f)$ for all $f$ and all strictly positive integers $i$. The passage from $Q_{i-1}$ to $Q_i$ corresponds to the successor case in the definition below. It may happen that even this infinite process does not describe the valuation $\nu$ completely, that is, there exists $f\in K[x]$ such that
\begin{equation}
\nu_i(f)<\nu(f)\qquad\text{ for all }i\in\mathbb N.\label{eq:ulttriangleinfty}
\end{equation}
A polynomial $f$ of smallest degree satisfying (\ref{eq:ulttriangleinfty}) is the first limit key polynomal $Q_\omega$. The passage from
$\{Q_i\}_{i\in\mathbb N}$ to $Q_\omega$ is the first instance of the limit case in the definition below. More generally, if $\ell$ is an ordinal such that the key polynomials $Q_{\ell+i},Q_{\ell+\omega}$ are defined, $(Q_{\ell+i})_{i\in\mathbb N}$ is a pseudo-Cauchy sequence of algebraic type and $Q_{\ell+\omega}$ is the minimal polynomial of a pseudo-limit of $(Q_{\ell+i})_{i\in\mathbb N}$. Now for the formal definition.
\begin{definition}\label{DefKeyPolynom} We say that the set $\{Q_{i}\}_{i\in\Lambda}$ of pre-key polynomials is a \textbf{set of key polynomials} for $\nu$ if it satisfies the following conditions (throughout this definition, $i$ stands for an element of $\Lambda$).

\noi(a)
\begin{equation}
\beta_i\in\tilde\g_0\quad\text{whenever }i\ne\max\ \Lambda.\label{eq:notmax<infty}
\end{equation}
\noi(b) {\bf The successor case.} For each $i\geq1$, if $i$ has an immediate predecessor $i-1$, the $(i-1)$-standard expansion of $Q_i$ has the form
\begin{equation}
Q_i=Q_{i-1}^{\alpha_i}+\sum\limits_{j=0}^{\alpha_i-1}\left(\sum\limits_{{\bar\gamma}_{i-1}}
c_{j,i,{\bar\gamma}_{i-1}}\mathbf{Q}_{i-1}^{{\bar\gamma}_{i-1}}\right)Q_{i-1}^j,\label{tag9.9}
\end{equation}
where:

\noindent (1) if $i=1$, we have $\mathbf{Q}_{i-1}=\mathbf{Q}_0=\emptyset$, $\bar\gamma_{i-1}=\bar\gamma_0=\emptyset$ by convention, $Q_0=x$ and the coefficients in parentheses in (\ref{tag9.9}) are elements of $K$ 

\noindent (2) if $i\ge2$, $\bar\gamma_{i-1}=\left(\gamma_{i'}\right)_{0\le i'<i-1}$, where all but finitely many $\gamma_{i'}$ are equal to 0

\noindent (3) each $c_{j,i,{\bar\gamma}_{i-1}}\mathbf{Q}_{i-1}^{{\bar\gamma}_{i-1}}$ is an $(i-1)$-standard monomial (which is, by definition, $Q_{i-1}$-free)

\noindent (4) the quantity $\nu_K\left(c_{j,i,{\bar\gamma}_{i-1}}\right)+\sum\limits_{q<i-1}\gamma_q\beta_q+j\beta_{i-1}$ is constant for all the monomials
$$
\left(c_{j,i,{\bar\gamma}_{i-1}}\mathbf{Q}_{i-1}^{{\bar\gamma}_{i-1}}\right)Q_{i-1}^j
$$
appearing on the right hand side of (\ref{tag9.9})

\noindent (5) the equation
\begin{equation}
\ini_{\nu}Q_{i-1}^{\alpha_i}+\sum\limits_{j=0}^{\alpha_i-1}\left(\sum\limits_{{\bar\gamma}_{i-1}}\ini_{\nu_K}
c_{j,i,{\bar\gamma}_{i-1}}\ini_{\nu}\mathbf{Q}_{i-1}^{{\bar\gamma}_{i-1}}\right)\ini_{\nu}Q_{i-1}^j=0\label{tag9.10}
\end{equation}
is the minimal algebraic relation satisfied by $\ini_{\nu}Q_{i-1}$ over $G_{\nu_K}[\ini_{\nu}\mathbf{Q}_{i-1}]^*$.
\smallskip

\noi(c) {\bf The limit case.} If $i$ does not have an immediate predecessor then there exists $i_0$ such that $i=i_0+$ and for every such $i_0$ there exists an $i_0$-standard expansion
\begin{equation}
Q_i=\sum\limits_{j=0}^{\alpha_i}c_{j,i_0}Q_{i_0}^{j}.\label{tag9.11}
\end{equation}
Every $i_0$-standard expansion (\ref{tag9.11}) satisfies
\begin{equation}
\nu\left(Q_i\right)>\min\limits_{0\le j\le\alpha_i}\left\{\nu\left(c_{j,i_0}Q_{i_0}^{j}\right)\right\}.\label{eq:almosthomogeneous}
\end{equation}
Moreover, the polynomial $Q_i$ is of the smallest degree among those satisfying (\ref{eq:almosthomogeneous}) for all $i_0$ with $i_0+=i$.
\medskip

If
\begin{equation}
\sup\left\{\left.\beta_{i'}\ \right|\ i'<i\right\}<\infty\label{eq:sup<infty}
\end{equation}
(in particular, whenever $i$ is not the maximal element of $\Lambda$), then
\begin{equation}
c_{\alpha_i,i_0}=1\label{eq:limitmonic}
\end{equation}
and
\begin{equation}
\min\limits_{0\le j\le\alpha_i}\left\{\nu\left(c_{j,i_0}Q_{i_0}^{j}\right)\right\}=\alpha_i\beta_{i_0}=
\nu\left(c_{0,i_0}\right).\label{eq:nui0ofQi}
\end{equation}
\medskip

\noi(d) If for a certain degree $n$, the set $\{i\in\Lambda\ \mid\ \deg_x(Q_i)=n\}$ is infinite, then the set
$$
\{\nu(Q_i) \mid\ i\in\Lambda,\ \deg_x(Q_i)=n\}
$$
is cofinal in $\{\nu(f) \mid\ f\in K[x],\ f\ \text{is\ monic},\ \deg_x(f)=n\}$.
\medskip

An element $Q_i$ of the set $\{Q_{i}\}_{i\in\Lambda}$ is called a \textbf{key polynomial}.
\end{definition}
\begin{remark} It follows from Definition \ref{DefKeyPolynom} that the elements $\beta_i$ are strictly increasing with $i$.
\end{remark}
\begin{proposition}{\label{Proposition9.12}} Let $i$ be an ordinal and $t$ a positive integer. Assume that $i+t\in\Lambda $, so that the key polynomials $\mathbf{Q}_{i+t+1}$ are defined, and that $\alpha_i=\dots=\alpha_{i+t}=1$. Then every $(i+t)$-standard expansion does not involve any $Q_q$ with $i\le q<i+t$. In particular, a $Q_i$-free $i$-standard expansion is the same thing as a $Q_{i+t}$-free $(i+t)$-standard expansion.
\end{proposition}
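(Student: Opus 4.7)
The plan is to show that the standardness condition on any multiindex $\bar\gamma_{i+t+1}$ forces $\gamma_q = 0$ for every $q$ in the range $i \le q < i+t$, after which the ``in particular'' statement follows by matching the standardness conditions on the remaining indices $j < i$.

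First, fix $q$ with $i \le q < i+t$. The key observation is that $q$ is $(i+t)$-essential: the positive integer $s := i+t-q$ satisfies $q+s = i+t$, which is the first alternative in Definition \ref{de9.8}. Hence $q+ = q+1$ in the $(i+t)$-context, and by Remark \ref{Remark9.10} the standardness condition (\ref{tag9.6}) for $q$ reads $0 \le \gamma_q < \alpha_{q+1}$. Since $q+1$ lies in $\{i+1, \ldots, i+t\}$, the hypothesis gives $\alpha_{q+1} = 1$, forcing $\gamma_q = 0$. Therefore no $(i+t)$-standard monomial involves any $Q_q$ with $i \le q < i+t$.

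For the ``in particular'' assertion, I would verify that for each $j < i$ the predicates ``$j$ is $i$-essential'' and ``$j$ is $(i+t)$-essential'' are equivalent, so that $j+$ (hence the bound $\alpha_{j+}$ and the auxiliary cardinality clause in Definition \ref{de9.9}) coincides in the two contexts. One direction is immediate: if $j+s=i$ with $s$ a positive integer then $j+(s+t)=i+t$, and if $j+s<i$ with $\alpha_{j+s}>1$ the same $s$ still works for $i+t$. For the converse, if $j+s=i+t$ with $s$ a positive integer, writing $i = j + \rho$ for some nonzero ordinal $\rho$ yields $s = \rho + t$ by left-cancellation, and for $s$ to be a positive integer $\rho$ must itself be a positive integer, making $j$ $i$-essential. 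Similarly, if $j+s < i+t$ with $\alpha_{j+s}>1$, the hypothesis $\alpha_i = \cdots = \alpha_{i+t}=1$ rules out $j+s \in \{i,\ldots,i+t-1\}$, so $j+s < i$. Once this coincidence is established, an $(i+t)$-standard monomial not involving $Q_{i+t}$, whose middle exponents have just been shown to vanish, reduces exactly to an $i$-standard monomial not involving $Q_i$, and vice versa; passing to finite sums then identifies the two classes of standard expansions.

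The main obstacle is clerical rather than conceptual: one must handle the ordinal arithmetic carefully to check that the essential/inessential dichotomy is invariant when $\ell$ is moved from $i$ to $i+t$, since this invariance is what makes the standardness conditions on the surviving indices $j < i$ match on the nose.
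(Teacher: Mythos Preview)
Your argument is correct and follows the same route as the paper: the paper's proof consists of the single observation that (\ref{tag9.6}) forces $\gamma_q=0$ for $i\le q<i+t$, which is exactly your first paragraph. You go further than the paper by carefully justifying the ``in particular'' clause via the invariance of the essential/inessential dichotomy for indices $j<i$, a point the paper leaves entirely implicit; this extra care is sound and arguably fills a gap the paper glosses over.
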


\begin{proof} (\ref{tag9.6}) implies that for $i\le q< i+t$, $Q_q$ cannot appear in an $(i+t)$-standard expansion with a positive exponent.
\end{proof}

We will frequently use this fact in the sequel without mentioning it explicitly.
\medskip

For $i\in\Lambda$ and $\alpha\in\mathbb N$, let $G_{<\alpha}$ denote the $G_{\nu_K}$-subalgebra of $G_\nu$, generated by elements of the form $\ini_\nu f$, $\deg_xf<\alpha$.

For $i\in\Lambda$, put $\bar\alpha_i:=\deg_xQ_i$.

For the rest of this section, we assume that we have a set of key polynomials $\{Q_{i}\}_{i\in\Lambda}$ for $\nu$ as above and derive some properties of this set.
\begin{proposition}{\label{Proposition9.17}} For each $i\in\Lambda$ we have:
\begin{enumerate}
\item[(1)] If $\sup\left\{\left.\beta_q\ \right|\ q<i\right\}<\infty$ (in particular, whenever $i\ne\max\Lambda$) then
\begin{equation}
\bar\alpha_i=\prod\limits_{j\le i}\alpha_j.\label{tag9.18}
\end{equation}
\item[(2)] Take an element $z\in K[x]$. Assume that $z$ admits a $Q_{i}$-free $i$-standard expansion. Then
\begin{equation}
\deg_xz<\bar\alpha_i.\label{tag9.19}
\end{equation}
\end{enumerate}
\end{proposition}
\begin{proof} We use transfinite induction on $i$. For the base of the induction, consider the case $i=0$. (\ref{tag9.18}) says that
\begin{equation}
\bar\alpha_0=\deg_xx=\alpha_0=1;\label{eq:degx=1}
\end{equation}
this follows immediately from (\ref{eq:alpha0=1}) and (\ref{eq:Q0=x}). By (\ref{eq:degx=1}) and (\ref{tag9.6}), every monomial appearing in the 0-standard expansion $z$ is of degree strictly less than $1=\bar\alpha_0$, that is, is an element of $K$. This proves (\ref{tag9.19}) in the case $i=0$.

Assume given an ordinal $i>0$. Assume that (\ref{tag9.18}) and (\ref{tag9.19}) are known for some ordinal $i_0$ such that $i_0+=i$. If $i=i_0+\omega$ then $i_0$ is inessential, so $\alpha_{i_0+t}=1$ for all $t\in\mathbb N_0$ by Definition \ref{inessential}. Thus in all the cases we have
\begin{equation}
\bar\alpha_{i_0}=\prod\limits_{j<i}\alpha_j.\label{eq:indhypi0}
\end{equation}
By assumption, (\ref{eq:sup<infty}) holds, hence so does (\ref{eq:limitmonic}) by condition (c) of Definition \ref{DefKeyPolynom}.
Thus by (\ref{tag9.9}), (\ref{tag9.11})  and (\ref{tag9.19}) applied to $i_0$ instead of $i$, the term $Q_{i_0}^{\alpha_i}$ in the $i_0$-standard expansion of $Q_i$ has strictly greater degree than all the other terms. Hence
\begin{equation}
\bar\alpha_i=\deg_xQ_{i_0}^{\alpha_i}=\alpha_i\bar\alpha_{i_0}.\label{baralphai=alphaibaralphai0}
\end{equation}
We obtain
$$
\bar\alpha_i=\alpha_i\bar\alpha_{i_0}=\alpha_i\prod\limits_{j<i}\alpha_j=\prod\limits_{j\le i}\alpha_j,
$$
which proves (\ref{tag9.18}).

Let $z$ be as in part (2) of the Proposition. Fix a $Q_i$-free $i$-standard expansion of $z$ and let $i_0$ denote the greatest ordinal such that $Q_{i_0}$ appears in this expansion (by definition of $Q_i$-free, we have $i_0<i$): $z=\sum\limits_{j=0}^{\alpha_i-1}d_{j,i}Q_{i_0}^j$, where each $d_{j,i}$ is a $Q_{i_0}$-free $i_0$-standard expansion. By the induction assumption, we have
$\deg_xd_{j,i}<\bar\alpha_{i_0}$ for all $j$. Combining this with (\ref{baralphai=alphaibaralphai0}), we obtain
$$
\deg_xz<\max\limits_{0\le j<\alpha_i-1}\left\{(j+1)\bar\alpha_{i_0}\right\}\le\alpha_i\bar\alpha_{i_0}=\bar\alpha_i,
$$
as desired.
\end{proof}
\begin{corollary}\label{degreebound} Let $f$ be an $i$-standard expansion of an element of $K[x]$. Then $f$ is $Q_i$-free if and only if
$\deg_xf<\bar\alpha_i$.
\end{corollary}
\begin{proof} Straightforward transfinite induction on $i$.
\end{proof}

\begin{remark}\label{inclusionlimord} Assume that $i\in\Lambda$ is a limit ordinal and take an $i_0\in\Lambda$ such that $i=i_0+$. Since $\alpha_{i_0+1}=1$ by definition, we have $Q_{i_0+1}=Q_{i_0}+z$, where $\ini_\nu z=-\ini_\nu Q_{i_0}$ and
$\deg_xz<\bar\alpha_{i_0}$. Hence
\begin{equation}
G_{\nu_K}\left[\ini_{\nu}\mathbf{Q}_i\right]\subset G_{<\bar\alpha_{i_0}}.\label{eq:G<baralphabis}
\end{equation}
Below (Proposition \ref{GradedAlgebras}) we will show that this inclusion is, in fact, an equality.
\end{remark}
\begin{remark} In \S\ref{Section7} we will show, assuming that $i$ is a limit ordinal and that the set $\{\nu(Q_{i_0})\}_{i=i_0+}$ is bounded in $\tilde\g_0$, that we can choose $i_0$ and $Q_i$ so that $Q_i$ is a weakly affine monic $i_0$-standard expansion of degree $\alpha_i=p^{e_i}$ for a certain integer $e_i$ and, moreover, that there exists a positive element $\bar\beta_i\in\mathbb R$ such that
\begin{align}
\bar\beta_i&>\beta_q\qquad\text{ for all }q<i,\label{tag9.12}\\
\beta_i&\ge \alpha_i\bar\beta_i\quad\text{ and}\label{tag9.13}\\
p^j\bar\beta_i+\nu\left(c_{p^j,i_0}\right)&=\alpha_i\bar\beta_i\quad\text{ for }0\le j\le e_i.\label{tag9.14}
\end{align}
\end{remark}

\begin{remark}\label{eq:coefficientsunique} Take an element $h\in K[x]$. Then $h$ admits a unique $Q_i$-expansion
\begin{equation}
h=\sum\limits_{j=0}^{s_i}d_{j,i}Q_i^j.\label{eq:weakstexph}
\end{equation}
This can be shown by induction on $\deg_xh$. Indeed, let $h=qQ_i+r$ be the Euclidean division of $h$ by $Q_i$. Then $d_{0,i}=r$ and for $j>0$ the coefficients $d_{j,i}$ are nothing but the coefficients of powers of $Q_i$ in the $Q_i$-expansion of $q$.
\end{remark}
\begin{proposition}\label{Qlislstandard} Assume that the set $\{\beta_q\ |\ q<i\}$ is bounded in $\tilde\g_0$ (so that the hypothesis of Proposition \ref{Proposition9.17} (1) is satisfied). Every coefficient $d_{j,i}$ in (\ref{eq:weakstexph}) admits a $Q_i$-free $i$-standard expansion. Writing each coefficient $d_{j,i}$ in this way produces an $i$-standard expansion of $h$. In particular, every element of $K[x]$ admits an $i$-standard expansion.
\end{proposition}
\begin{proof} We use transfinite induction on $i$. For $i=0$ the result is obvious. Assume that $i>0$ and that the Proposition holds for all $i'<i$. By definition of $Q_i$-expansion we have
\begin{equation}
\deg_xd_{j,i}<\bar\alpha_i\quad\text{ for all }j.\label{eq:dij<baralpha}
\end{equation}
Take an ordinal $i_0$ such that $i=i_0+$. By the induction assumption applied to $i_0$ the $Q_{i_0}$-expansion of
\begin{equation}
d_{j,i}=\sum\limits_qd_{q,j,i}Q_{i_0}^q\label{eq:dqji}
\end{equation}
can be made into an $i_0$-standard expansion by writing out a $Q_{i_0}$-free $i_0$-standard expansion of each coefficient
$d_{q,j,i}$. Moreover, by Proposition \ref{Proposition9.17} (1) and (\ref{eq:dij<baralpha}), all the monomials $Q_{i_0}^j$ appearing in (\ref{eq:dqji}) satisfy $j<\alpha_i$. This makes (\ref{eq:dqji}) into a $Q_i$-free $i$-standard expansion of $d_{j,i}$, so (\ref{eq:weakstexph}) can be made into an $i$-standard expansion of $h$, as desired.
\end{proof}
\begin{remark}\label{lstandardnotunique} The $i$-standard expansions (\ref{eq:dqji}) of the $d_{j,i}$ need not, in general, be unique. For example, if $i$ is a limit ordinal, $d_{j,i}$ admits an $i_0$-standard expansion (\ref{eq:dqji}) for each
\begin{equation}
i_0<i\quad\text{ such that }i=i_0+,\label{eq:i0<ii=i_0+}
\end{equation}
but there are countably many choices of $i_0$ satisfying (\ref{eq:i0<ii=i_0+}). If $i\in\mathbb{N}_0$, then the $i$-standard expansion of $h$ is unique. This follows from Remark \ref{eq:coefficientsunique} by induction on $i$. Part of the point of Remarks \ref{eq:coefficientsunique} and \ref{lstandardnotunique} and Proposition \ref{Qlislstandard} is that an $i$-standard expansion is a $Q_i$-expansion {\it together with an additional set of data}, namely, a $Q_i$-free $i$-standard expansion of the coefficients
$d_{j,i}$.
\end{remark}
For each $i\in\Lambda$ we define a map $\nu_i:K(x)^*\rightarrow\Gamma$ as follows. Given a $Q_i$-expansion
\begin{equation}
f=\sum\limits_{j=0}^{s_i}d_{j,i}Q_i^j,\label{eq:istandexpofh}
\end{equation}
put
\begin{equation}
\nu_i(f)=\min\limits_{0\le j\le s_i}\{j\beta_i+\nu(d_{j,i})\}.\label{tag9.15}
\end{equation}
By Remark \ref{eq:coefficientsunique}, the elements $d_{j,i}\in K[x]$ are uniquely determined by $f$, so $\nu_i$ is well defined. We extend $\nu_i$ to all of $K(x)$ by additivity.
\begin{remark}\label{nui<nu} We have
\begin{equation}
\nu_i(f)\le\nu(f)\label{tag9.16}
\end{equation}
by the ultrametric triangle law.
\end{remark}
\begin{remark} It is clear from the definition that the map $\nu_i$ satisfies the ultrametric triangle law.  Below, we will show that it is, in fact, a valuation.
\end{remark}
Next, we give criteria for when the inequality (\ref{tag9.16}) is strict and when it is an equality.
\begin{remark}\label{degreestrictlyless} If $\deg_xf<\deg_xQ_i$ then we have $s_i=0$ in the $i$-standard expansion (\ref{tag9.15}), so $\nu_i(f)=\nu(d_{0,i})=\nu(f)$.
\end{remark}

\begin{remark}\label{nui0qi} Consider ordinals $i_0<i$ and $t\in\mathbb N_0$ such that $i=i_0+$, $i+t\in\Lambda$ and for each ordinal $i'$ such that $i<i'\le i+t$ we have
\begin{equation}
\alpha_{i'}=1.\label{alphai'=1}
\end{equation}

(1) If (\ref{eq:sup<infty}) holds, we have $\nu_{i_0}(Q_i)=\alpha_i\beta_{i_0}$ by (4) of Definition \ref{DefKeyPolynom} (b) and (\ref{eq:nui0ofQi}) (recall that (\ref{eq:sup<infty}) can only fail if $i=\max\ \Lambda$).

(2) By (\ref{tag9.9}), (\ref{alphai'=1}) and induction on $t$,
\begin{equation}
Q_{i+t}=Q_i+\sum\limits_{q=0}^{t-1}z_{i+q},\quad\text{ where }\deg\ z_{i+q}<\bar\alpha_i.\label{eq:iexpansionofQi+t}
\end{equation}
Hence every $i_0$-standard expansion of $Q_{i+t}$ contains the term $Q_{i_0}^{\alpha_i}$. We obtain $\nu_{i_0}(Q_{i+t})\le\alpha_i\beta_{i_0}$.
\medskip

Below (Corollary \ref{nui0Qi+t}) we will see that under a mild additional hypothesis on $i_0$ the last inequality is, in fact, an equality.
\end{remark}
\begin{remark}\label{nuQi=nuellQi} Consider ordinals $i,\ell\in\Lambda$, $i<\ell$. Then $\nu_\ell(Q_i)=\nu_i(Q_i)=\nu(Q_i)$. Indeed, if $\bar\alpha_i<\bar\alpha_\ell$ our statement holds by definition of $\nu_\ell$.

Assume that $\bar\alpha_i=\bar\alpha_\ell$. Then
\begin{equation}
Q_\ell=Q_i+z\label{Qell=Qi+z}
\end{equation}
where $\deg_xz<\bar\alpha_i$ and $\nu(z)=\beta_i<\beta_\ell$. Now, $Q_i=Q_\ell-z$ is an $\ell$-expansion of $Q_i$, so
$\nu_\ell(Q_i)=\min\{\beta_i,\nu(z)\}=\nu(z)=\beta_i$, as desired.
\end{remark}
\begin{proposition}{\label{Proposition9.18}} For a pair of ordinals $i_0<i $ such that $i=i_0+$ we have
\begin{equation}
\nu_{i_0}(Q_{i})<\beta_i.\label{eq:nu0Qi<nuQi}
\end{equation}
\end{proposition}
\begin{proof} Let
\begin{equation}
Q_i=\sum\limits_{j=0}^{\alpha_i}c_{j,i_0}Q_{i_0}^j\label{eq:i0standardQi}
\end{equation}
be the $i_0$-standard expansion of $Q_i$. If $i_0=i-1$ is the immediate predecessor of $i$ then by (\ref{tag9.9})--(\ref{tag9.10}) the quantity $\nu(Q_i)=\nu_i(Q_i)$ is strictly greater than the common value of $\nu\left(c_{j,i_0}Q_{i_0}^j\right)$ (cf. (4) of Definition \ref{DefKeyPolynom}), where $j$ ranges over the elements of $\{0\do\alpha_i\}$ for which $c_{j,i_0}\ne0$. This common value is, by definition, $\nu_{i_0}(Q_i)$. If $i$ does not admit an immediate predecessor, the desired strict inequality is nothing but (\ref{eq:almosthomogeneous}).
\end{proof}

\begin{corollary}\label{notcomplete} If $i\in\Lambda$ is not the maximal element of $\Lambda$, then there exists $f\in K[x]$ such that
$\nu_i(f)<\nu(f)$.
\end{corollary}
\begin{proposition}\label{monom<betai} Fix an ordinal $i\in\Lambda$ and let $\theta=\mathbf{Q}_i^{{\bar\gamma}_i}$ be a
$Q_i$-free $i$-standard monomial. Then
\begin{equation}
\nu\left(\theta\right)<\beta_i.
\end{equation}
\end{proposition}
\begin{proof} If (\ref{eq:sup<infty}) does not hold, we have $\beta_i>\tilde\g_0$ and the result follows immediately. Therefore we may assume that (\ref{eq:sup<infty}) holds. We proceed by transfinite induction on $i$. If $i=0$ then $\theta=1$ and $\beta_0=\nu(x)>0$, so the result holds. Assume that $i>0$ and the Proposition holds for all the ordinals $i'<i$. Let $i_0$ be the smallest ordinal such that $i_0+=i$ and $\gamma_{i'}=0$ for all $i'$ such that $i_0<i'<i$. Write $\theta=\mathbf{Q}_{i_0}^{{\bar\gamma}_{i_0}}Q_{i_0}^{\gamma_{i_0}}$ where
\begin{equation}
\gamma_{i_0}<\alpha_{i}\label{eq:alphai0<gammai0}
\end{equation}
and $\mathbf{Q}_{i_0}^{{\bar\gamma}_{i_0}}$ is a $Q_{i_0}$-free $i_0$-standard monomial. By the induction assumption,
$\nu\left(\mathbf{Q}_{i_0}^{{\bar\gamma}_{i_0}}\right)<\beta_{i_0}$. Combining this with (\ref{eq:alphai0<gammai0}), Remark \ref{nui0qi} and Proposition \ref{Proposition9.18}, we obtain
$$
\nu\left(\theta\right)<\beta_{i_0}+\gamma_{i_0}\beta_{i_0}\le\alpha_i\beta_{i_0}<\beta_i,
$$
as desired.
\end{proof}

\begin{proposition}\label{QiinRnux} Take an ordinal $i\in\Lambda$ and assume that (\ref{eq:sup<infty}) holds. Then $Q_i\in
R_{\nu_K}[x]$.
\end{proposition}
\begin{proof} We use transfinite induction on $i$. For $i=0$ we have $Q_0=x$ and the result is clear. Assume that $i>0$ and the Proposition holds for all the ordinals $i'<i$. Take an ordinal $i_0$ such that $i=i_0+$ and let
\begin{equation}
Q_i=Q_{i_0}^{\alpha_i}+\sum\limits_{j=0}^{\alpha_i-1}\left(\sum\limits_{{\bar\gamma}_{i_0}}
c_{j,i,{\bar\gamma}_{i_0}}\mathbf{Q}_{i_0}^{{\bar\gamma}_{i_0}}\right)Q_{i_0}^j,\label{i0standardofQi}
\end{equation}
be an $i_0$-standard expansion of $Q_i$. By the induction assumption, it is enough to prove that
\begin{equation}
\nu\left(c_{j,i,{\bar\gamma}_{i_0}}\right)\ge0\label{eq:coefficients>0}
\end{equation}
for all the choices of $j,i,{\bar\gamma}_{i_0}$. Fix $j,i,{\bar\gamma}_{i_0}$ appearing in one of the terms in (\ref{i0standardofQi}). We have
\begin{equation}
\nu\left(\left(\sum\limits_{{\bar\gamma}_{i_0}}c_{j,i,{\bar\gamma}_{i_0}}\mathbf{Q}_{i_0}^{{\bar\gamma}_{i_0}}\right)
Q_{i_0}^j\right)\ge\nu\left(Q_{i_0}^{\alpha_i}\right)\label{eq:monomiallowerbound}
\end{equation}
by Definition \ref{DefKeyPolynom} (b) (4) and (\ref{eq:nui0ofQi}) and
$\nu\left(\mathbf{Q}_{i_0}^{{\bar\gamma}_{i_0}}\right)<\beta_{i_0}$ by Proposition \ref{monom<betai} with $i$ replaced by $i_0$. Combining the last inequality with (\ref{eq:monomiallowerbound}), we obtain
$\nu\left(c_{j,i,{\bar\gamma}_{i_0}}\right)\ge\alpha_i\beta_{i_0}-(j+1)\beta_{i_0}\ge0$, as desired.
\end{proof}
Let $\beta^{*} $ be a non-negative element of $\g$. Keep the notation of (\ref{eq:istandexpofh}). We denote
\[
S_i(\beta^{*}, f):=\left\{j\in\{0,\dots,s_i \}\ \left|\
j\beta^{*}+\nu(d_{j,i})=\min\limits_{0\le k\le s_i}\left\{k\beta^{*}+\nu(d_{k,i})\right\}\right.\right\}.
\]
Let $\bar{Q}_i$ be a new variable.
\smallskip

\noindent{\bf Notation:}\begin{align}
S_i (f):&=S_i (\beta_i,f).\label{tag9.24}\\
\delta_i(f):&=\max\ S_i(f)\\
\ini_if:&=\sum\limits_{j\in S_i (f)}\ini_{\nu}d_{j,i}\bar Q_i ^j;\label{tag9.25}
\end{align}
the polynomial $\ini_i f$ is quasi-homogeneous in $G_{\nu_K}\left[\ini_{\nu}\mathbf{Q}_i ,\bar Q_i \right]$, where the weight assigned to $\bar Q_i$ is $\beta_i$.
\begin{remark}\label{delta0nu=nui} If $\delta_i(f)=0$ then $\nu\left(d_{0,i}\right)<\nu\left(d_{j,i}Q^j_i\right)$ for $j>0$, so
$\nu(f)=\nu(d_{0,i})=\nu_i(f)$, where the first equality holds by the ultrametric triangle law.
\end{remark}
\begin{proposition}\label{GradedAlgebras} (1) For each ordinal $i\in\Lambda$ we have
\begin{equation}
G_{<\bar\alpha_i}=G_{\nu_K}\left[\ini_{\nu}\mathbf{Q}_i\right].\label{eq:G<baralpha}
\end{equation}

(2) The elements $\ini_\nu f$, $\deg_xf<\bar\alpha_i$, generate $G_{<\bar\alpha_i}$ as a $G_{\nu_K}$-module.
\end{proposition}
\begin{proof} We use transfinite induction on $i$. For $i=0$ we have $\bar\alpha_0=1$, both sides of (\ref{eq:G<baralpha}) coincide with $G_{\nu_K}$ and all the statements are clear. Assume that $i>0$ and that the result is known for all the ordinals strictly smaller than $i$. To prove that
\begin{equation}
G_{\nu_K}\left[\ini_{\nu}\mathbf{Q}_i\right]\subset G_{<\bar\alpha_i},\label{eq:G<baralphathird}
\end{equation}
we distinguish two cases. If $i$ is a limit ordinal, (\ref{eq:G<baralphathird}) follows immediately from (\ref{eq:G<baralphabis}).

Suppose that $i$ is not a limit ordinal. We have
$$
G_{\nu_K}\left[\ini_{\nu}\mathbf{Q}_i\right]=G_{\nu_K}\left[\ini_{\nu}\mathbf{Q}_{i-1}\right]\left[\ini_{\nu}Q_{i-1}\right]\cong G_{<\bar\alpha_{i-1}}\left[\ini_{\nu}Q_{i-1}\right],
$$
where the last isomorphism is given by the induction assumption. Since $\alpha_{i-1}$ is the degree of the minimal polynomial of
$\ini_{\nu}Q_{i-1}$ over $G_{\nu_K}\left[\ini_{\nu}\mathbf{Q}_{i-1}\right]$ (Definition \ref{DefKeyPolynom} (b) (5)), every element of
$G_{<\bar\alpha_{i-1}}\left[\ini_{\nu}Q_{i-1}\right]$ admits an $(i-1)$ standard expansion with all the exponents of $Q_{i-1}$ strictly smaller than $\alpha_{i-1}$ (by Lemma \ref{el4.4}, with $\ini_\nu Q_{i-1}$ playing the role of $x$). Such an element is represented by a polynomial $f\in K[x]$ admitting a $Q_i$-free $i$-standard expansion, that is, a polynomial such that $\deg_xf<\bar\alpha_i$ (Corollary \ref{degreebound}). This proves the inclusion (\ref{eq:G<baralphathird}).

To prove the opposite inclusion, take $f\in K[x]$ with
\begin{equation}
\deg_xf<\bar\alpha_i.\label{eq:degeta<baralpha}
\end{equation}
By Corollary \ref{degreebound} $f$ admits a $Q_i$-free $i$-standard expansion $f=\sum\limits_{j=1}^t\eta_j$, where
$\eta_1\do\eta_t$ are $Q_i$-free $i$-standard monomials. Let $\beta=\min\limits_j\{\nu(\eta_j)\}$ and $\{\theta_1\do\theta_s\}=\left\{\eta_j\ \left|\ j\in\{1\do t\},\nu(\eta_j)=\beta\right.\right\}$.

If $i$ is not a limit ordinal, $\alpha_i$ is the  degree of the minimal polynomial of $\ini_{\nu}Q_{i-1}$ over
$G_{\nu_K}\left[\ini_{\nu}\mathbf{Q}_{i-1}\right]$ (Definition \ref{DefKeyPolynom} (b) (5)). By (\ref{eq:degeta<baralpha}) we have
$\sum\limits_j\ini_\nu\theta_j\ne0$ in $G_\nu$ (otherwise we would have an algebraic relation satisfied by $\ini_{\nu}Q_{i-1}$ over $G_{\nu_K}\left[\ini_{\nu}\mathbf{Q}_{i-1}\right]$ of degree strictly less than $\alpha_i$), hence $\nu(f)=\nu(\theta_j)$ (for all $j$) and $\sum\limits_j\ini_\nu\theta_j=\ini_\nu f$ (see Proposition \ref{Proposition4.8}). If $i$ is a limit ordinal, by Definition \ref{DefKeyPolynom} (c) there exists an ordinal $i_0$ satisfying $i_0+=i$ such that, letting
$f=\sum\limits_{j=0}^{s_i}c_{j,i_0}Q_{i_0}^{j}$ be an $i_0$-standard expansion of $f$ (where $s_i<\alpha_i$), we have
\begin{equation}
\nu\left(f\right)=\min\limits_{0\le j\le s_i}\left\{\nu\left(c_{j,i_0}Q_{i_0}^{j}\right)\right\},\label{eq:almosthomogeneousbis}
\end{equation}
so
\begin{equation}
\ini_\nu f=\sum\limits_{j\in S_{i_0}(f)}\ini_\nu\left(c_{j,i_0}Q_{i_0}^{j}\right)\label{eq:almosthomogeneousthird}
\end{equation}
by Proposition \ref{Proposition4.8}. In both cases we have proved that the left hand side of (\ref{eq:G<baralpha}) is contained in the right hand side.

To prove (2) of the Proposition, first assume that $i$ is not a limit ordinal. By the induction assumption elements of the form $\ini_xg$, $g\in K[x]$, $\deg_xg<\bar\alpha_{i-1}$, generate $G_{<\bar\alpha_{i-1}}$ as a $G_{\nu_K}$-module. Elements $\ini_\nu f$,
$\deg_xf<\bar\alpha_i$ are precisely those that admit homogeneous $i$-standard expansions where $Q_{i-1}$ appears with exponents strictly smaller than $\alpha_i$. By part (1) of this Proposition such standard expansions generate $G_{<\bar\alpha_i}$ as a $G_{\nu_K}$-algebra, hence also as a $G_{\nu_K}$-module by Lemma \ref{el4.4} (since $\alpha_i$ is the degree of the minimal polynomial of $\ini_\nu Q_{i-1}$ over $G_{\nu_K}\left[\ini_{\nu}\mathbf{Q}_{i-1}\right]$).

It remains to consider the case when $i$ is a limit ordinal. Take an ordinal $i_0$ such that $i=i_0+$. We have
\begin{equation}
G_{\nu_K}\left[\ini_{\nu}\mathbf{Q}_i\right]\subset G_{<\bar\alpha_{i_0}}\subset G_{<\bar\alpha_i}\label{eq:iequalsi0},
\end{equation}
by Remark \ref{inclusionlimord} hence both inclusions are equalities by part (1) of this Proposition. By the induction assumption elements of the form $\ini_xg$, $g\in K[x]$, $\deg_xg<\bar\alpha_{i-1}$, generate $G_{<\bar\alpha_{i_0}}=G_{<\bar\alpha_i}$ as a $G_{\nu_K}$-module, which completes the proof of (2).
\end{proof}
\medskip

\begin{proposition}\label{Proposition9.26b} Take an element $h$ of $K[x]$ and an ordinal $i\in\Lambda\setminus\{0\}$. Assume that
\begin{equation}
\deg_xh<\bar\alpha_i.\label{eq:degh<alphai}
\end{equation}
Then there exists an ordinal $i_0$ such that $i\in\{i_0+1,i_0+\}$ and $\nu(h)=\nu_{i_0}(h)$.
\end{proposition}
\begin{proof} For an ordinal $i_0<i$ we will denote by
\begin{equation}
h=\sum\limits_{j=0}^sc_{j,i_0}Q_{i_0}^j,\label{tag9.32bis}
\end{equation}
the $i_0$-standard expansion of $h$. Let $S_{i_0}(h)$ be as defined in (\ref{tag9.24}). First, assume that $i$ is not a limit ordinal; put $i_0=i-1$. By (\ref{eq:degh<alphai}) we have $s<\alpha_i$ in (\ref{tag9.32bis}). Since the degree of $\ini_{\nu}Q_{i-1}$ over $G_{\nu_K}[\ini_{\nu}\mathbf{Q}_{i-1}]^*$ is $\alpha_i$ by (\ref{tag9.10}) and $\sum\limits_{j\in S_{i-1}(h)}\ini_{\nu}c_{j,i-1}\bar Q_{i-1}^j$ is a polynomial of degree strictly less than $\alpha_i$ in $\bar Q_{i-1}$, we see that
$\sum\limits_{j\in S_{i-1}(h)}\ini_{\nu}c_{j,i-1}\ini_{\nu}Q_{i-1}^j\ne0$ in $G_{\nu}$. The result now follows from Proposition \ref{Proposition4.8}.

If $i$ is a limit ordinal, the results follows immediately from the fact that by Definition \ref{DefKeyPolynom} $Q_i$ is of the smallest degree among those polynomials that satisfy (\ref{eq:almosthomogeneous}).
\end{proof}
\begin{definition}{\label{de9.1}} Let $\{Q_{i}\}_{i\in\Lambda}$ be a set of key polynomials. We say that $\{Q_{i}\}_{i\in\Lambda}$ is {\bf complete} for $\nu$ (or that  $\{Q_{i}\}_{i\in\Lambda}$ is a {\bf complete} set of key polynomials  for $\nu$) if for each
$\beta\in\g$ the additive group $\mathbf{P}_\beta\cap K[x]$ is generated by standard monomials of the form
$a\prod\limits_{j=1}^sQ_{i_j}^{\gamma_j}$, $a\in K$, such that $\sum\limits_{j=1}^s\gamma_j\nu(Q_{i_j})+\nu_K(a)\ge\beta$. The collection $\mathbf{Q}=\{Q_i\}_{i\in\Lambda}$ is said to be $\tilde{\g}_0$-{\bf complete} if for all $\beta\in\tilde\g_0$ every polynomial $f\in K[x]$ with $\nu(f)=\beta$ belongs to the additive group generated by standard monomials of the form
$a\prod\limits_{j=1}^sQ_{i_j}^{\gamma_j}$, $a\in K$, such that $\sum\limits_{j=1}^s\gamma_j\nu(Q_{i_j})+\nu_K(a)\ge\beta$.
\end{definition}

Note, in particular, that if $\mathbf{Q}$ is a complete set of key polynomials then their images $\ini_{\nu}Q_i\in G_{\nu}$ rationally generate $G_{\nu}$ over $G_{\nu_K}$; if $\mathbf{Q}$ is a $\tilde{\g}_0$-complete set of key polynomials then their images
$\ini_{\nu}Q_i\in G_{\nu}$ rationally generate $\tilde G_\nu$ over $G_{\nu_K}$ (see page 3 for the definition of $\tilde G_\nu$).
\medskip

\begin{remark}\label{remark_complete} The set $\mathbf Q$ is a complete set of key polynomials if and only if for each polynomial $f\in K[x]$ there exists an ordinal $\ell$ such that $\nu(f)=\nu_\ell(f)$. Indeed, assume that such an ordinal exists for all $f$. Take any
$\beta\in\g$ and let $f\in \mathbf{P}_\beta\cap K[x]$. Put $\beta'=\nu(f)$ and let $\ell$ be such that $\beta'=\nu(f)=\nu_{\ell}(f)$. Write $f=\sum\limits_{j=0}^{s_\ell}d_{j,\ell}Q_{\ell}^{j}$, where each $d_{j,\ell}Q_{\ell}^{j}\in \mathbf{P}_{\beta'}\cap K[x]\subset \mathbf{P}_\beta\cap K[x]$.

Conversely, take $f\in K[x]$. Let $\beta=\nu(f)$. Write $f$ as a finite sum
\begin{equation}
f=\sum\limits_\gamma d_\gamma\mathbf Q^\gamma\quad d_\gamma\in K,\label{eq:sumofstandardmonom}
\end{equation}
of standard monomials with $\nu\left(d_\gamma\mathbf Q^\gamma\right)\ge\beta$ for all $\gamma$ such that $d_\gamma\ne0$. Let
$\ell$ denote the greatest ordinal such that $Q_\ell$ appears in one of the monomials $d_\gamma\mathbf Q^\gamma$. Then (\ref{eq:sumofstandardmonom}) can be rewritten as $f=\sum\limits_{j=0}^sd_jQ_\ell^j$, where each $d_j$ is a $Q_\ell$-free
$\ell$-standard expansion; in particular, $\deg_xd_j<\bar\alpha_\ell$. We obtain
$$
\beta=\nu(f)\geq\nu_{\ell}(f)=\min\limits_j\left\{\nu\left(d_j\mathbf Q^j_\ell\right)\right\}\ge
\min\limits_\gamma\left\{\nu\left(d_\gamma\mathbf Q^\gamma\right)\right\}\geq\beta.
$$
This also shows that if $\mathbf{Q}$ is a complete set of key polynomials then $\mathbf{Q}$ is not strictly contained as an initial segment in any other set of key polynomials; in other words, the construction of key polynomials cannot be continued beyond
$\mathbf{Q}$.
\end{remark}
\begin{proposition}\label{notingamma}
If $\Lambda$ has a maximal element $i$ and
\begin{equation}
\beta_i\notin\tilde\g_0,\label{eq:betanoting0}
\end{equation}
then $\ini_\nu Q_i$ is transcendental over $G_{\nu_K}[\ini_{\nu}\mathbf Q_i]$.
\end{proposition}
\begin{proof} Take a non-zero polynomial $F(\bar Q_i)=\sum\limits_{j=0}^s\bar c_j\bar Q_i^j\in G_{\nu_K}[\ini_{\nu}\mathbf Q_i][\bar Q_i]$. By (\ref{eq:betanoting0}) the terms in the expression $F(\ini_\nu Q_i)=\sum\limits_{j=0}^s\bar c_j\ini_\nu Q_i^j$ have different orders for different $j$ (where, by definition, $\ord\ \bar Q_i=\beta_i$); in particular, there exists a unique $j_{min}\in\{0\do s\}$ that minimizes $\ord\ \bar c_j\ini_\nu Q_i^j$. By the ultrametric triangle inequality, we have $F(\ini_\nu Q_i)\ne0$, as desired.
\end{proof}
\begin{proposition}\label{transcendentalcomplete} Take an ordinal $i\in\Lambda$. Assume that $\ini_\nu Q_i$ is transcendental over\linebreak $G_{\nu_K}[\ini_{\nu}\mathbf Q_i]$. Then the set $\{Q_\ell\}_{\ell\in\Lambda}=\{Q_\ell\}_{\ell\leq i}$ of key polynomials is complete for $\nu$.
\end{proposition}
\begin{proof} Take an element $h\in K[x]$. Let
\begin{equation}
h=\sum\limits_{j=0}^sc_{j,i}Q_i^j\label{eq:istandardexpansionofh}
\end{equation}
be the $Q_i$-expansion of $h$. We want to prove that
$$
\nu_i(h)=\nu(h).
$$
Replacing $h$ by $\sum\limits_{j\in S_i(h)}c_{j,i}Q_i^j$ does not change the problem. Thus we may assume that all the terms
$c_{j,i}Q_i^j$ appearing in (\ref{eq:istandardexpansionofh}) have value $\nu_i(h)$. Since $\ini_\nu Q_i$ is transcendental over $G_{\nu_K}[\ini_{\nu}\mathbf Q_i]$, we have $\sum\limits_j\ini_\nu(c_{j,i}Q_i^j)\ne0$. The Proposition now follows from Proposition \ref{Proposition4.8}.
\end{proof}
\begin{proposition}\label{Proposition9.26a} Take an element $h$ of $K[x]$ and an ordinal $i\in\Lambda$. Assume that at least one of the following conditions holds:
\begin{enumerate}
\item[(1)]
\begin{equation}
\nu(h)<\beta_i\label{tag9.31}
\end{equation}
and
\begin{equation}
h\in R_{\nu_K}[x]\label{eq:hinrnu0x}
\end{equation}
or
\item[(2)]
\begin{equation}
\sup\left\{\beta_{i'}\ \left|\ i'<i\right.\right\}=\infty.\label{eq:sup=infty}
\end{equation}
\end{enumerate}
Then $\nu(h)=\nu_i(h)$.
\end{proposition}

\begin{proof} Let
\begin{equation}
h=\sum\limits_{j=0}^sc_jQ_i^j,\label{tag9.32}
\end{equation}
be an $i$-standard expansion of $h$. First, assume that $\left\{\left.\beta_{i'}\ \right|\ i'<i\right\}$ is unbounded in $\tilde\g_0$; in particular,
\begin{equation}
\beta_i>\tilde\g_0.\label{eq:betai>gamma0}
\end{equation}
Then the Proposition holds by Propositions \ref{notingamma} and \ref{transcendentalcomplete}.

Next, suppose that (\ref{tag9.31})--(\ref{eq:hinrnu0x}) hold and $\sup\left\{\beta_{i'}\ \left|\ i'<i\right.\right\}<\infty$. Since (\ref{tag9.32}) is obtained from $h$ by iterating Euclidean division by $Q_i$ which is monic, and in view of Proposition \ref{QiinRnux}, we have
\begin{equation}
\nu(c_j)\ge0\quad\text{ for all }j\in\{0\do s\}.\label{tag9.33}
\end{equation}
By definition of standard expansion, each $c_j$ in (\ref{tag9.32}) is a $Q_i $-free $i$-standard expansion. Then $\nu_i(c_j)=\nu(c_j)$ for $0\leq j\leq s$. By (\ref{tag9.31}) and (\ref{tag9.33}),
\begin{equation}
\nu\left(c_jQ_i^j\right)=\nu(c_j)+j\beta_i>\nu(h)\qquad\text{ for }j>0.\label{tag9.34}
\end{equation}
so
\begin{equation}
\nu(c_0)=\nu(h)<\nu(c_j)+j\beta_i=\nu_i(c_j)+j\beta_i\quad\text{ for all }j>0.\label{eq:c0wins}
\end{equation}
In other words, in the sum (\ref{tag9.32}) the $\nu_i$-value (resp. the $\nu$-value) $\nu_i(c_0)=\nu(c_0)$ of $c_0$ is strictly smaller than the $\nu_i$-values (resp. the $\nu$-values) $\nu_i(c_jQ_i^j)=\nu(c_jQ_i^j)$ of all the other terms. It is well known and follows easily from the ultrametric triangle law that in this situation we have $\nu_i(h)=\nu_i(c_0)=\nu(c_0)=\nu(h)$, as desired.
\end{proof}

For $b\in\mathbb{N}_0$, let $\partial_b$ denote the $b$-th Hasse derivative with respect to $x$.

\begin{proposition} For all $b\in\mathbb{N}$ and all $i\in\Lambda$, there exists an ordinal $i_0<i$ such that 
$$
i\in\{i_0+1,i_0+\}
$$
and
\begin{equation}
\nu_{i_0}(\partial_bQ_i)=\nu(\partial_bQ_i).\label{tag9.17}
\end{equation}
\end{proposition}
\begin{proof} If $i$ is not a limit ordinal, put $i_0=i-1$. If $i$ is a limit ordinal, take $i_0$ such that
\begin{equation}\label{eq:i=i0+}
i=i_0+.
\end{equation}
The result now follows from the fact that $\deg_x\partial_bQ_i<\deg_xQ_i$, where in the case when $i$ is a limit ordinal we must take $i_0$ sufficiently large subject to (\ref{eq:i=i0+}).
\end{proof}
\begin{proposition}\label{nuiisavaluation} (1) The map $\nu_i$ is a valuation.

(2) For every ordinal $q<i$ and every $f\in K[x]$ we have $\nu_q(f)\le\nu_i(f)\le\nu(f)$.
\end{proposition}
\begin{proof} It is obvious that $\nu_i$ satisfies the ultrametric triangle law. To prove (1) of the Proposition, we must prove the equality
\begin{equation}
\nu_i(fg)=\nu_i(f)+\nu_i(g).\label{eq:pseudoval}
\end{equation}
For an ordinal $i'\in\Lambda$ let (1)$_{i'}$ and (2)$_{i'}$ denote, respectively, (1) and (2) of the Proposition with $i$ replaced by $i'$. We proceed by transfinite induction on $i$. (1)$_0$ is easy to prove and (2)$_0$ is vacuously true.
 
Assume that (1)$_{i'}$ and (2)$_{i'}$ hold for all ordinals $i'<i$. To prove (1)$_i$ and (2)$_i$, we start with some preliminary lemmas. For future reference we state some of the lemmas in slightly greater generality than needed for the proof of Proposition \ref{nuiisavaluation}.

Consider an ordinal $i_0<i$ and $t\in\mathbb N_0$ such that $i=i_0+$, $i+t\in\Lambda$ and for each ordinal $i'$ such that $i<i'\le i+t$ we have
\begin{equation}
\alpha_{i'}=1.\label{alphai'=1bis}
\end{equation}
\begin{lemma}{\label{el11.4}} Consider two terms of the form $dQ_{i+t}^j$ and $d'Q_{i+t}^{j'}$, where $j,j'\in\mathbb{N}_0$ and $d$ and $d'$ are $Q_{i+t}$-free $(i+t)$-standard expansions. Assume that
\begin{equation}
\nu_{i_0}\left(dQ_{i+t}^j\right)\le\nu_{i_0}\left(d'Q_{i+t}^{j'}\right),\label{tag11.10}
\end{equation}
\begin{equation}
\nu\left(dQ_{i+t}^j\right)\ge\nu\left(d'Q_{i+t}^{j'}\right),\label{tag11.11}
\end{equation}
\begin{equation}
\nu_{i_0}(d)=\nu(d)\label{eq:nu0d=nud}
\end{equation}
and
\begin{equation}
\nu_{i_0}(d')=\nu(d')\label{eq:nu0d'=nud'}
\end{equation}
Then $j\ge j'$. If at least one of the inequalities (\ref{tag11.10}), (\ref{tag11.11}) is strict then $j>j'$.
\end{lemma}
\begin{remark} If $i=i_0+1$ then assumptions (\ref{eq:nu0d=nud}) and (\ref{eq:nu0d'=nud'}) hold automatically by Corollary \ref{degreebound} and the definition of $\nu_{i_0}$. In the general case, Proposition \ref{Proposition9.26b} and (2)$_q$ for $q<i$  imply that there exists $i_0$ such that $i_0+=i$ and (\ref{eq:nu0d=nud}) and (\ref{eq:nu0d'=nud'}) hold.
\end{remark}
\begin{proof}[Proof of Lemma \ref{el11.4}] We have
\begin{equation}
\nu_{i_0}(Q_{i+t})\le\alpha_i\beta_{i_0}<\beta_{i+t}\label{eq:alphai+1betai<betai+1}
\end{equation}
by Remark \ref{nui0qi} and Proposition \ref{Proposition9.18}.  Since $\nu_{i_0}$ is a valuation by (1)$_{i_0}$, the inequalities (\ref{tag11.10})--(\ref{tag11.11}) can be rewritten as
\begin{equation}
\nu(d)+j\nu_{i_0}(Q_{i+t})\le\nu(d')+j'\nu_{i_0}(Q_{i+t})\label{djnui<d'j'nui}
\end{equation}
and
\begin{equation}
\nu(d)+j\beta_{i+t}\ge\nu(d')+j'\beta_{i+t}\label{djnui>d'j'nui}
\end{equation}
Subtract (\ref{djnui<d'j'nui}) from (\ref{djnui>d'j'nui}) and use (\ref{eq:alphai+1betai<betai+1}). The result follows.
\end{proof}
\begin{corollary}\label{nui0Qi+t} Keep the notation of the last lemma. Assume that
\begin{equation}
\nu_{i_0}\left(\sum\limits_{q=0}^{t-1}z_{i+q}\right)=\nu\left(\sum\limits_{q=0}^{t-1}z_{i+q}\right)\label{eq:nuz=nui0z}
\end{equation}
in the notation of (\ref{eq:iexpansionofQi+t}) (note that by Proposition \ref{Proposition9.26b} such an $i_0$ always exists). We have
\begin{equation}
\nu_{i_0}(Q_{i+t})=\alpha_i\beta_{i_0}\label{eq:alphai+1betai}
\end{equation}
\end{corollary}
\begin{proof} By Proposition \ref{Qlislstandard} we may assume that (\ref{eq:iexpansionofQi+t}) is an $i_0$-standard expansion of $Q_{i+t}$. The leading term of this $i_0$-standard expansion is $Q_{i_0}^{\alpha_i}$. We have
$\alpha_i\beta_{i_0}=\nu_{i_0}\left(Q_{i_0}^{\alpha_i}\right)$. Now Lemma \ref{el11.4}, applied with $t=j=0$ and $j'=1$, implies that $\nu_{i_0}\left(Q_{i_0}^{\alpha_i}\right)<\nu_{i_0}\left(\sum\limits_{q=0}^{t-1}z_{i+q}\right)$. This completes the proof.
\end{proof}

Keep the above notation. Let $f$ be a non-zero element of $K[x]$. Let
\begin{equation}
f=\sum\limits_{j=0}^{s_{i_0}}d_{j,i_0}Q_{i_0}^j\label{tag9.23}
\end{equation}
be an $i_0$-standard expansion of $f$, where each $d_{j,i_0}$ is a $Q_{i_0} $-free $i_0$-standard expansion.
\smallskip

Consider an $(i+t)$-standard expansion of $f$:
\begin{equation}
f=\sum\limits_{j=0}^{s_{i+t}}d_{j,i+t}Q_{i+t}^j,\label{tag11.9}
\end{equation}
where the $d_{j,i+t}$ are $Q_{i+t}$-free standard expansions. Let $\delta=\delta_{i_0}(f)$. Let
$$
\mu=\min\limits_{0\le j\le s_{i+t}}\nu_{i_0}\left(d_{j,i+t}Q_{i+t}^j\right)
$$
and
$$
S_{i_0,i+t}=\left\{j\in\{0,\dots,s_{i+t}\}\ \left|\ \nu_{i_0}\left(d_{j,i+t}Q_{i+t}^j\right)=\mu\right.\right\}.
$$
Assume that (\ref{eq:nuz=nui0z}) holds and that
$$
\nu_{i_0}\left(d_{j,i+t}\right)=\nu\left(d_{j,i+t}\right)
$$
for all $j\in S_{i_0,i+t}$ (note that by Proposition \ref{Proposition9.26b} and (2)$_q$, $q<i$, this holds for all sufficiently large ordinals $i_0$ such that $i_0+=i$).
\begin{lemma}{\label{el11.3}}
\begin{enumerate}
\item[(1)] We have
$$
\nu_{i_0}(f)=\mu=\min\limits_{0\le j\le s_{i+t}}\{\nu(d_{j,i+t})+j\alpha_i\beta_{i_0}\}.
$$
\item[(2)] Let $j_0=\max\ S_{i_0,i+t}$. Then $\delta=\alpha_ij_0+\delta_{i_0}(d_{j_0,i+t})$.
\end{enumerate}
\end{lemma}

\begin{proof} (1) The second equality in (1) of the Lemma is given by Corollary \ref{nui0Qi+t} and (1)$_{i_0}$. Let $\theta=\min\ S_{i_0,i+t}$.

Let $\bar f=\sum\limits_{j\in S_{i_0,i+t}}d_{j,i+t}Q_{i+t}^j$. Then $\nu_{i_0}\left(f-\bar f\right)>\mu$, so to prove that
$\nu_{i_0}\left(f\right)=\mu$ it is sufficient to prove that
\begin{equation}
\nu_{i_0}\left(\bar f\right)=\mu.\label{eq:nui0barf=mu}
\end{equation}
The equality (\ref{eq:nui0barf=mu}) holds if and only if
\begin{equation}
\nu_{i_0}\left(Q_{i+t}^{-\theta}\bar f\right)=\mu-\theta\alpha_i\beta_{i_0}.\label{eq:nui0barf=mubis}
\end{equation}
Therefore, without loss of generality, we may assume that $\theta=0$.

Now, all the terms $d_{j,i+t}Q_{i+t}^j$, $j\in S_{i_0,i+t}$, appearing in the $(i+t)$-standard expansion of $\bar f$, have the same
$\nu_{i_0}$-values, so by Lemma \ref{el11.4} they all have different $\nu$-values with
$$
\nu\left(d_{0,i+t}\right)<\nu\left(d_{j,i+t}Q_{i+t}^j\right)
$$
for $j\in S_{i_0,i+t}\setminus\{0\}$. By the ultrametric triangle law we have
$$
\nu\left(\bar f\right)=\nu\left(d_{0,i+t}\right)=\nu_{i_0}\left(d_{0,i+t}\right)=\mu\le\nu_{i_0}\left(\bar f\right),
$$
hence the inequalitiy in this formula is an equality and (\ref{eq:nui0barf=mu}) is proved. This proves (1) of the Lemma.

(2) In view of (1), a term of the form $aQ_{i_0}^\delta$ with $\deg_xa<\deg_xQ_{i_0}$ and
$$
\nu\left(aQ_{i_0}^\delta\right)=\nu_{i_0}(f)
$$
appears in the $Q_{i_0}$-expansion of $\sum\limits_{j\in S_{i_0,i+t}}d_{j,i+t}Q_{i+t,j}$ and $\delta$ is the greatest integer with this property. The $Q_{i_0}$-expansion of $d_{j_0,i+t}Q_{i+t}^{j_0}$ contains a term of the form $aQ_{i_0}^{\alpha_ij_0+\delta_{i_0}(d_{j_0,i+t})}$ with $\deg_xa<\deg_xQ_{i_0}$ and $\nu\left(aQ_{i_0}^{\alpha_ij_0+\delta_{i_0}(d_{j_0,i+t})}\right)=\nu_{i_0}(f)$; $\alpha_ij_0+\delta_{i_0}(d_{j_0,i+t})$ is the greatest power of $Q_{i_0}$ appearing there. The term
$aQ_{i_0}^{\alpha_ij_0+\delta_{i_0}(d_{j_0,i+t})}$ cannot be canceled by contributions from any of $d_{j,i+t}Q_{i+t}^j$, $j\in S_{i_0,i+t}$, $j<j_0$, for reasons of degree. This completes the proof of (2).
\end{proof}

\begin{lemma}\label{nuimonom>nui0f} Keep the notation and assumptions of Lemma \ref{el11.3}. For every $j\in\{1\do s_{i+t}\}$ we have
\begin{equation}
\nu_{i+t}\left(d_{j,i+t}Q_{i+t}^j\right)-\nu_{i_0}(f)\ge\beta_{i+t}-\alpha_i\beta_{i_0}>0.\label{eq:strictinequalityi}
\end{equation}
\end{lemma}
\begin{proof} We have
\begin{equation}
\nu_{i+t}\left(d_{j,i+t}Q_{i+t}^j\right)=\nu\left(d_{j,i+t}Q_{i+t}^j\right)=\nu\left(d_{j,i+t}\right)+j\beta_{i+t}.\label{eq:nui+tmonomial}
\end{equation}
By the ultrametric triangle law applied to the $Q_{i_0}$-expansion of $d_{i+t}$ we have
\begin{equation}
0\le\nu(d_{j,i+t})-\nu_{i_0}(d_{j,i+t})\label{eq:nudi+t}.
\end{equation}
Finally, by  Lemma \ref{el11.3} we have
\begin{equation}
\nu_{i_0}(f)\le\nu_{i_0}\left(d_{j,i+t}Q_{i+t}^j\right)=\nu_{i_0}\left(d_{j,i+t}\right)+j\alpha_i\beta_{i_0}.\label{eq:nui0f}
\end{equation}
Adding up (\ref{eq:nudi+t}) and (\ref{eq:nui0f}), subtracting the result from (\ref{eq:nui+tmonomial}) and using the fact that $j\ge1$, we obtain (\ref{eq:strictinequalityi}).
\end{proof}
\begin{corollary} (1) Conclusion (2)$_i$ holds.

(2) We have $\nu_i(f)=\nu_{i_0}(f)$ if and only if the $Q_i$-expansion of $f$ has non-zero coefficient $d_{0,i}$ satisfying
\begin{equation}
\nu_i\left(d_{0,i}\right)=\nu_{i_0}\left(d_{0,i}\right)=\nu_{i_0}(f).\label{eq:equalityconstantterm}
\end{equation}
\end{corollary}
\begin{proof} (1) follows from (\ref{eq:strictinequalityi}) (with $t=0$) by transfinite induction on $i-q$.

(2) By (\ref{eq:strictinequalityi})  (with $t=0$), for every term $d_{j,i}Q_i^j$ of the $Q_i$-expansion of $f$ with $j>0$ we have
$\nu\left(d_{j,i}Q_i^j\right)>\nu\left(d_{j,i}Q_i^j\right)$. This proves (2). 
\end{proof}
Note that that the equivalent conditions of (2) of the Corollary are also equivalent to saying that the $Q_i$-expansion of $\ini_{i_0}f$ has non-zero coefficient $d_{0,i}$ satisfying (\ref{eq:equalityconstantterm}).
\begin{remark} The first equality in (\ref{eq:equalityconstantterm}) holds automatically when $i$ is not a limit ordinal.
\end{remark}
\medskip

Take a finite collection of polynomials $f_1\do f_s\in K[x]$ such that $\deg_xf_j<\bar\alpha_i$ for all $j\in\{1\do s\}$. Let
\begin{equation}
\prod\limits_{j=1}^sf_j=qQ_{i+t}+r\label{eq:Euclideanfg}
\end{equation}
be the Euclidean division of $\prod\limits_{j=1}^sf_j$ by $Q_{i+t}$. Assume that $i=i_0+$,
\begin{equation}
\nu_{i_0}(f_j)=\nu(f_j)\text{  for all }j\in\{1\do s\}\label{nui0f}
\end{equation}
\begin{equation}
\nu_{i_0}(q)=\nu(q)\label{nui0q}
\end{equation}
and
\begin{equation}
\nu_{i_0}(r)=\nu(r)\label{nui0r}
\end{equation}
(such an $i_0$ exists by Proposition \ref{Proposition9.26b} and (2)$_i$).
\begin{lemma}\label{qQ+r} We have $\nu\left(\prod\limits_{j=1}^sf_j\right)=\nu_{i_0}\left(\prod\limits_{j=1}^sf_j\right)=\nu(r)$ and
$$
\nu_{i+t}(qQ_{i+t})-\nu\left(\prod\limits_{j=1}^sf_j\right)\ge\beta_{i+t}-\alpha_i\beta_{i_0}>0.
$$
\end{lemma}
\begin{proof} By (1)$_{i_0}$ we have
\begin{equation}
\nu_{i_0}\left(\prod\limits_{j=1}^sf_j\right)=\sum\limits_{j=1}^s\nu_{i_0}\left(f_j\right)=
\sum\limits_{j=1}^s\nu\left(f_j\right)=\nu\left(\prod\limits_{j=1}^sf_j\right).\label{nui0additive}
\end{equation}
Now, $r$ in (\ref{eq:Euclideanfg}) is nothing but the constant term in the $(i+t)$-standard expansion of $\prod\limits_{j=1}^sf_j$, whereas $qQ_{i+t}$ is the sum of all the remaining terms in this $(i+t)$-standard expansion. Now the inequality of the Lemma is a special case of Lemma \ref{nuimonom>nui0f}. The fact that $\nu(r)=\nu\left(\prod\limits_{j=1}^sf_j\right)$ follows immediately from this by the ultrametric triangle law.
\end{proof}
We are now in the position to finish the proof of (1)$_i$. Let $g=\sum\limits_{j=0}^{u_i}c_{j,i}Q_i^j$ and
$f=\sum\limits_{j=0}^{s_i}d_{j,i}Q_i^j$ be the respective $i$-standard expansions of $g$ and $f$. By Lemma \ref{qQ+r}, for every $j\in\{0\do s_i\}$ and $j'\in\{0\do u_i\}$, we have $d_{j,i}c_{j',i}=q_{j,j'}Q_i+r_{j,j'}$ with
$$
\nu\left(d_{j,i}c_{j',i}\right)=\nu\left(r_{j,j'}\right)<\nu\left(q_{j,j'}Q_i\right).
$$
This implies that all the terms in the $i$-standard expansion of $fg$ have value at least $\nu_i(f)+\nu_i(g)$ and some terms (for example, the term involving $Q_i^{\delta_i(f)+\delta_i(g)}$) have exactly this value. This completes the proof.
\end{proof}
The valuation $\nu_i$ will be called the $i$-{\bf truncation} of $\nu$.
\begin{remark}\label{squeezebis} By (2)$_i$, if $\nu(f)=\nu_{i_0}(f)$ then $\nu_i(f)=\nu_{i_0}(f)$.
\end{remark}
\begin{proposition}\label{gradedalgebraoftruncation} (1) We have
\begin{equation}
\gr_{\nu_i}K[x]\cong G_{\nu_K}\left[\ini_{\nu}\mathbf{Q}_i ,\bar Q_i \right],\label{eq:grnui}
\end{equation}
where the symbol $\bar Q_i$ denotes an element, transcendental over $G_{\nu_K}\left[\ini_{\nu}\mathbf{Q}_i\right]$ with $\ord\ \bar Q_i=\beta_i$. The element $\bar Q_i$ is the image of $\ini_{\nu_i}Q_i$ under the isomorphism (\ref{eq:grnui}).

(2) With the identification (\ref{eq:grnui}) there is a natural degree-preserving homomorphism
$$
\phi:\gr_{\nu_i}K[x]\longrightarrow\gr_\nu K[x]
$$
of $G_{\nu_K}\left[\ini_{\nu}\mathbf{Q}_i\right]$-algebras defined by $\phi\left(\bar Q_i\right)=\ini_\nu Q_i$.
\smallskip

For parts (3) and (4) of the Proposition, assume that $i+1\in\Lambda$.
\smallskip

(3) $Ker\ \phi$ is generated by the polynomial obtained from the left hand side of (\ref{tag9.10}) by first replacing $i$ by $i+1$ and then replacing $\ini_\nu Q_i$ by $\bar Q_i$.

(4) $Im\ \phi=G_{\nu_K}\left[\ini_{\nu}\mathbf{Q}_{i+1}\right]=\{0\}\bigcup\left\{\left.\ini_\nu f\ \right|\ f\in K[x],\
\nu_i(f)=\nu(f)\right\}$.
\end{proposition}
\begin{proof} (2) The homomorphism $\phi$ is self-explanatory.

(1) Let $G_{<\bar\alpha_i}^{(i)}$ denote the $\gr_{\nu_K}K$-subalgebra of $\gr_{\nu_i}K[x]$, generated by all the $\ini_{\nu_i}f$ with $\deg_xf<\bar\alpha_i$. The homomorphism $\phi$ maps $G^{(i)}_{<\bar\alpha_i}$ isomorphically onto
$G_{<\bar\alpha_i}$. Given a non-zero element $f\in K[x]$ with $i$-standard expansion (\ref{tag9.23}), $\ini_{\nu_i}f$ can be written as
$$
\ini_{\nu_i}f=\sum\limits_{j\in S_i(f)}\ini_{\nu_i}d_{j,i}\ini_{\nu_i}Q_i^j
$$
with $\ini_{\nu_i}d_{j,i}\in G^{(i)}_{<\bar\alpha_i}$. If we had a non-trivial relation of the form
$\sum\limits_j\bar d_{j,i}\ini_{\nu_i}Q_i^j=0$ with
$$
\bar d_{j,i}\in G^{(i)}_{<\bar\alpha_i},
$$
we could take it to be homogeneous and lift it to an element $f=\sum\limits_jd_{j,i}Q_i^j\in K[x]$ such that $\nu_i(f)>\min\limits_j\left\{\nu\left(d_{j,i}Q_i^j\right)\right\}$ contradicting the definition of $\nu_i$. Thus $\ini_{\nu_i}Q_i$ is transcendental over $G_{<\bar\alpha_i}^{(i)}$.

Conversely, given an element $\bar f\in G_{<\bar\alpha_i}^{(i)}\left[\ini_{\nu_i}Q_i\right]$ we can consider its $i$-standard expansion and lift it to an element $f\in K[x]$ such that $\ini_{\nu_i}f=\bar f$. Thus $\ini_{\nu_i}K[x]\cong
G_{<\bar\alpha_i}^{(i)}\left[\ini_{\nu_i}Q_i\right]$ with $\ini_{\nu_i}Q_i$ transcendental over
$G_{<\bar\alpha_i}^{(i)}\left[\ini_{\nu_i}Q_i\right]$ and $G_{<\bar\alpha_i}^{(i)}\left[\ini_{\nu_i}Q_i\right]\cong
G_{\nu_K}\left[\ini_{\nu}\mathbf{Q}_i ,\bar Q_i \right]$ in view of Proposition \ref{GradedAlgebras} and the identification $G_{<\bar\alpha_i}^{(i)}\cong G_{<\bar\alpha_i}$ given by $\phi$.

(3) follows from the fact that $\ini_{\nu_i}Q_{i+1}$ is the minimal polynomial satisfied by $\ini_\nu Q_i$ over
$G_{\nu_K}\left[\ini_{\nu}\mathbf{Q}_i\right]$ (Definition \ref{DefKeyPolynom} (b) (5)).

(4) follows from the fact that $\phi(f)\ne0\iff\nu_i(f)=\nu(f)$ (Proposition \ref{Proposition4.8}).
\end{proof}

\section{The numerical character $\delta_i(h)$}\label{numchardeltai}

Let $\{Q_i\}_{i\in\Lambda}$ be a set of key polynomials for $\nu$. Let $i\in\Lambda$ and $h\in K[x]$. Let
\begin{equation}
h=\sum\limits_{j=0}^{s_i}d_{j,i}Q_i^j\label{tag9.23h}
\end{equation}
be an $i$-standard expansion of $h$, where each $d_{j,i}$ is a $Q_i $-free $i$-standard expansion. In this section we study the properties of the numerical character
\begin{equation}
\delta_i(h)=\max\ S_{i}(h)=\deg_{\bar Q_i}\ini_ih\label{tag11.1}
\end{equation}
(cf. (\ref{tag9.24})--(\ref{tag9.25})) that will play a crucial role in the rest of the paper. We prove that $\delta_i(h)$ does not increase with $i$. We also show that the equality $\delta_i(h)=\delta_{i+1}(h)$ imposes strong restrictions on $\ini_ih$.

\begin{definition}{\label{de9.20}} The $i$-th Newton polygon of $h$ with respect to $\nu$ is the convex hull $\Delta_i (h)$ of the set
$\bigcup\limits_{j=0}^{s_i }\left(\left(\nu(d_{j,i}),j\right)+\left(\tilde\g_{0+}\times\mathbb{Q}_+\right)\right)$ in
$\tilde\g_0\times\mathbb{Q}$.
\end{definition}

\begin{definition}{\label{de11.1}} The vertex $\left(\nu\left(d_{\delta_i(h),i}\right),\delta_i(h)\right)$ of the Newton polygon
$\Delta_i(h)$ is called the {\bf pivotal vertex} of $\Delta_i(h)$.
\end{definition}

Let
\begin{equation}
\nu_i^+(h)=\min\left\{\left.\nu\left(d_{j,i}Q_i^j\right)\ \right|\ \delta_i(h)<j\le
s_i\right\}\label{tag11.2}
\end{equation}
and
$$
S'_i(h)=\left\{j\in\{\delta_i(h)+1,\dots,s_i\}\ \left|\ \nu\left(d_{j,i}Q_i^j\right)=\nu_i^+(h)\right.\right\}.
$$
If the set on the right hand side of (\ref{tag11.2}) is empty, we adopt the convention that $\nu_i^+(h)=\infty$. We have
$\delta_i(h)>0$ whenever $\nu_i(h)<\nu(h)$ by Remark \ref{delta0nu=nui}.

Take an ordinal $i\in\Lambda$. Assume that there exists a polynomial $h$ such that
\begin{equation}
\nu_i(h)<\nu(h)\label{tag9.26}
\end{equation}
(this happens, for instance, whenever $i+1\in\Lambda$ by Corollary \ref{notcomplete}). Consider the $i$-th Newton polygon of $h$. Let $S_i(h)$ be as in (\ref{tag9.24}).

\begin{definition}{\label{de9.22}} For a polynomial $f\in K[x]$, we say that $\beta^{*} $ {\bf determines a side} of $\Delta_i(f)$ if the set $S_i(\beta^{*},f)$ has at least two elements.
\end{definition}
\medskip

\begin{proposition}{\label{Proposition9.21}} We have $\sum\limits_{j\in S_i(f)}\ini_{\nu}\left(d_{j,i}Q_i^j\right)=0$ in
$\frac{\mathbf{P}_{\nu_i(h)}}{\mathbf{P}_{\nu_i(h)+}}\subset G_{\nu}$.
\end{proposition}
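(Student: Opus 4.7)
The plan is to mimic the proof of Proposition \ref{vanish} for the first key polynomial, substituting $Q_\ell$ for $x$ and using the $\ell$-standard expansion in place of the ordinary $x$-adic expansion. This is a direct consequence of the general principle (Proposition \ref{Proposition4.8}) that if a sum of elements of equal $\nu'$-value has larger $\nu'$-value than that common value, then the sum of their initial forms in $G_{\nu'}$ must vanish.

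Concretely, I would start from the $\ell$-standard expansion $h=\sum_{j=0}^{s_\ell}d_{j\ell}Q_\ell^j$ and rewrite
$$
\sum_{j\in S_\ell(h,\beta_\ell)}d_{j\ell}Q_\ell^j \;=\; h \;-\; \sum_{j\in\{0,\dots,s_\ell\}\setminus S_\ell(h,\beta_\ell)}d_{j\ell}Q_\ell^j.
$$
By the hypothesis (\ref{tag9.26}), $\nu'(h)>\nu_\ell(h)$. For each $j\notin S_\ell(h,\beta_\ell)$, the very definition (\ref{tag9.24}) of $S_\ell(h,\beta_\ell)$ together with the identity $\nu'(d_{j\ell}Q_\ell^j)=\nu'(d_{j\ell})+j\beta_\ell$ gives $\nu'(d_{j\ell}Q_\ell^j)>\nu_\ell(h)$. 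Hence the right-hand side, and therefore the left-hand side, has $\nu'$-value strictly greater than $\nu_\ell(h)$.

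On the other hand, for every $j\in S_\ell(h,\beta_\ell)$ we have $\nu'(d_{j\ell}Q_\ell^j)=j\beta_\ell+\nu'(d_{j\ell})=\nu_\ell(h)$ by definition of $S_\ell(h,\beta_\ell)$. So we are in exactly the situation of Proposition \ref{Proposition4.8}: a finite sum of elements each of $\nu'$-value $\nu_\ell(h)$ whose total value strictly exceeds $\nu_\ell(h)$. The conclusion is that
$$
\sum_{j\in S_\ell(h,\beta_\ell)}\ini_{\nu'}\!\left(d_{j\ell}Q_\ell^j\right)=0
$$
in $\mathbf{P}'_{\nu_\ell(h)}/\mathbf{P}'_{\nu_\ell(h)+}\subset G_{\nu'}$, as required.

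There is no real obstacle here beyond being careful with notation. The only subtlety one might worry about is whether the values $\nu'(d_{j\ell})$ in the definition of $S_\ell(h,\beta_\ell)$ and $\nu_\ell(h)$ agree with the values $\nu'(d_{j\ell}Q_\ell^j)$ used when invoking Proposition \ref{Proposition4.8}; this is immediate from the multiplicativity of the valuation $\nu'$ and from $\nu'(Q_\ell)=\beta_\ell$. Note that, in contrast to the analogous argument for Proposition \ref{vanish}, we do not yet know that $\ini_{\nu'}Q_\ell$ is a root of the quasi-homogeneous polynomial $\ini_\ell h$ in $\bar Q_\ell$: what the proposition says is merely that the sum of the initial forms of the minimum-value summands vanishes in $G_{\nu'}$, which is the precise statement needed to set up the factorization of $\ini_\ell h$ used later to construct $Q_{\ell+1}$.
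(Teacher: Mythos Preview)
Your proof is correct and follows exactly the paper's approach: rewrite the sum over $S_\ell(h,\beta_\ell)$ as $h$ minus the sum over the complement, observe both pieces have $\nu'$-value strictly greater than $\nu_\ell(h)$, and invoke Proposition~\ref{Proposition4.8}. The paper's version is terser (and in fact contains a typo in the index set of the complementary sum), but the argument is identical.
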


\begin{proof} This follows immediately from (\ref{tag9.26}), the fact that
$$
\sum\limits_{j\in S_i(h)}d_{j,i}Q_i^j=h-\sum\limits_{j\in \{0,\dots,s_i\}\setminus S_i(h)}d_{j,i}Q_i^j
$$
and Proposition \ref{Proposition4.8}.
\end{proof}
In the following corollary, recall that a factorization of $\ini_ih$ into irreducible factors exists and is unique by Remark \ref{Gauss}.
\begin{corollary}{\label{Corollary9.24}} Viewing $\ini_ih$ as a polynomial in the variable $\bar Q_i$ with coefficients in
$G_{\nu_K}[\ini_{\nu}\mathbf{Q}_i]$, we have $\ini_ih\left(\ini_\nu Q_i\right)=0$. In particular, the element $\ini_{\nu}Q_i$ is integral over $G_{\nu_K}[\ini_{\nu}\mathbf{Q}_i]$. Its minimal polynomial over $G_{\nu_K}[\ini_{\nu}\mathbf{Q}_i]$ is one of the irreducible factors of $\ini_ih$.
\end{corollary}

\begin{corollary}{\label{Corollary9.23}} The element $\beta_i$ determines a side of $\Delta_i(h)$.
\end{corollary}

\begin{proof} Suppose not. Then the $i$-standard expansion of $h$ contains a unique term $d_{j,i}Q_i^j$ of minimal value, so
$\nu(h)=\nu(d_{j,i}Q_i^j)=\nu_i(h)$, contradicting (\ref{tag9.26}). Corollary \ref{Corollary9.23} is proved.
\end{proof}

Let
\begin{equation}
\ini_ih=\ini_\nu d_{\delta,i}\prod\limits_{j=1}^tg_{j,i}^{\gamma_{j,i}}\label{tag9.27}
\end{equation}
be the factorization of $\ini_ih$ into (monic) irreducible factors in $G_{\nu_K}\left[\ini_{\nu}\mathbf{Q}_i\right]\left[\bar
Q_i\right]$, where
\begin{equation}
\delta=\delta_i(h)\label{eq:delta=deltai}
\end{equation}
and $g_{1,i}$ is the minimal polynomial of $\ini_{\nu}Q_i$ over $G_{\nu_K}\left[\ini_{\nu}\mathbf{Q}_i\right]$.

The next Proposition shows that $\delta_i(h)$ is non-increasing with $i$ and that the equality $\delta_{i+1}(h)=\delta_i(h)$ imposes strong restrictions on $\ini_ih$.

\begin{proposition}{\label{Proposition11.2}}%\label{stability}
Assume that $i+1\in\Lambda$ and
\begin{equation}
\nu(h)>\nu_i(h).\label{eq:nuh>nuih}
\end{equation}
\begin{enumerate}
\item[(1)] We have
\begin{equation}
\alpha_{i+1}\delta_{i+1}(h)\le\delta.\label{tag11.3}
\end{equation}
\item[(2)] If $\delta_{i+1}(h)=\delta$ then
\begin{equation}
Q_{i+1}=Q_i+z_i,\label{Qi+1=qi+zi}
\end{equation}
where $z_i$ is some $Q_i$-free $i$-standard expansion,
\begin{equation}
\ini_ih=\ini_{\nu}d_{\delta,i}\left(\bar Q_i+\ini_{\nu}z_i\right)^\delta\label{tag11.5}
\end{equation}
and $\ini_{i+1}h$ contains a monomial of the form $\ini_{\nu}d_{\delta,i}\bar Q_{i+1}^\delta$; in particular,
\begin{equation}
\ini_{\nu}d_{\delta,i}=\ini_{\nu}d_{\delta,i+1}.\label{tag11.6}
\end{equation}
\item[(3)] If $\delta_{i+1}(h)=\delta$, then for all $j>\delta$ we have
\begin{equation}
\nu\left(d_{j,i+1}Q_{i+1}^{j}\right)-\nu_{i+1}(h)\geq\nu_i^+(h)-\nu_i(h).\label{tag11.7}
\end{equation}
\end{enumerate}
\end{proposition}
\begin{remark} This Proposition is true as stated without the hypothesis (\ref{eq:nuh>nuih}), but the proof given below needs to be modified to include the case of equality in (\ref{eq:nuh>nuih}). In the sequel, the Proposition will only be used under the assumption (\ref{eq:nuh>nuih}).
\end{remark}
\begin{proof} We start with a lemma. Consider an $(i+1)$-standard expansion of $h$:
\begin{equation}
h=\sum\limits_{j=0}^{s_{i+1}}d_{j,i+1}Q_{i+1}^j,\label{tag11.9bis}
\end{equation}
where the $d_{j,i+1}$ are $Q_{i+1}$-free $(i+1)$-standard expansions.

In the notation immediately preceding Lemma \ref{el11.3} (with $f$ replaced by $h$ and $(i_0,i+t)$ by $(i,i+1)$), let $\theta_{i+1}(h)=\min\ S_{i,i+1}$.

\begin{definition}{\label{de11.5}} The vertex $(\nu(d_{\theta_{i+1}(h),i+1}),\theta_{i+1}(h))$ is called the {\bf characteristic vertex} of $\Delta_{i+1}(h)$.
\end{definition}

The polynomial $\ini_ih$ is divisible by the minimal polynomial $g_{1,i}$ of $\ini_{\nu}Q_i$ over
$G_{\nu_K}\left[\ini_{\nu}\mathbf{Q}_i\right]$; in particular, we have $\delta=\deg_{\bar Q_i}\ini_ih>0$.

For $j\in\{1\do t\}$, let $g_{j,i,0}$ denote the coefficient of $\ini_{\nu}Q_i^0$ in the polynomial $g_{j,i}$.

\begin{lemma}{\label{el11.6}} We have
\begin{equation}
\gamma_{1,i}=\theta_{i+1}(h)\label{tag11.13}
\end{equation}
(in particular, $d_{\gamma_{1,i},i+1}\ne0$) and
\begin{equation}
\ini_{\nu}d_{\theta_{i+1}(h),i+1}=\ini_{\nu}d_{\delta,i}\prod\limits_{j=2}^tg_{j,i,0}^{\gamma_{j,i}}.\label{tag11.14}
\end{equation}
\end{lemma}

\begin{proof} Write
$$
h=\sum\limits_{q\in S_{i,i+1}}d_{q,i+1}Q_{i+1}^q+\sum\limits_{q\in\{0,\dots,s\}\setminus S_{i,i+1}}d_{q,i+1}Q_{i+1}^q.
$$
By Lemma \ref{el11.3}, the terms of lowest $\nu_i$-value in the $Q_i$-expansion of $h$ are recovered from the terms of lowest
$\nu_i$-value of $\sum\limits_{q\in S_{i,i+1}}d_{q,i+1}Q_{i+1}^q$, in other words,
\begin{equation}
\ini_ih=\sum\limits_{q\in S_{i,i+1}}\ini_id_{q,i+1}\ini_iQ_{i+1}^q.\label{tag11.15}
\end{equation}
By definition of key polynomials (Definition \ref{DefKeyPolynom} (b)  (5)), we have
\begin{equation}
\ini_iQ_{i+1}=g_{1,i}.\label{tag11.16}
\end{equation}
By definition of $\theta_{i+1}(h)$, $\ini_iQ_{i+1}^{\theta_{i+1}(h)}$ is the highest power of $\ini_iQ_{i+1}$ dividing
$$
\sum\limits_{q\in S_{i,i+1}}\ini_id_{q,i+1}\ini_iQ_{i+1}^q.
$$
Since $g_{1,i}^{\gamma_{1,i}}$ is, by definition, the highest power of $g_{1,i}$ dividing $\ini_ih$, (\ref{tag11.15})--(\ref{tag11.16}) imply (\ref{tag11.13}).

Combining (\ref{tag9.27}) and (\ref{tag11.15}) and dividing both equations by $g_{1,i}^{\gamma_{1,i}}=\ini_iQ_{i+1}^{\theta_{i+1}(h)}$, we obtain
\begin{equation}
\sum\limits_{q\in S_{i,i+1}}\ini_id_{q,i+1}\ini_iQ_{i+1}^{q-\theta_{i+1}(h)}=
\ini_{\nu}d_{\delta, i}\prod\limits_{j=2}^tg_{j,i}^{\gamma_{j,i}}.\label{eq:sum=product}
\end{equation}
Equating the constant terms in (\ref{eq:sum=product}) yields (\ref{tag11.14}).
\end{proof}

We are now in the position to finish the proof of Proposition \ref{Proposition11.2}. Apply Lemma \ref{el11.4} to the monomials $d_{\theta_{i+1}(h),i+1}Q_{i+1}^{\theta_{i+1}(h)}$ and $d_{\delta_{i+1}(h),i+1}Q_{i+1}^{\delta_{i+1}(h)}$. We have
\begin{equation}
\nu\left(d_{\delta_{i+1}(h),i+1}Q_{i+1}^{\delta_{i+1}(h)}\right)\le
\nu\left(d_{\theta_{i+1}(h),i+1}Q_{i+1}^{\theta_{i+1}(h)}\right)\label{tag11.17}
\end{equation}
by definition of $\delta_{i+1}$ and
\begin{equation}
\nu_i\left(d_{\theta_{i+1}(h),i+1}Q_{i+1}^{\theta_{i+1}(h)}\right)=\nu_i(h)\le
\nu_i\left(d_{\delta_{i+1}(h),i+1}Q_{i+1}^{\delta_{i+1}(h)}\right)\label{tag11.18}
\end{equation}
by Lemma \ref{el11.3}, so the hypotheses of Lemma \ref{el11.4} are satisfied (note that (\ref{eq:nu0d=nud}) and (\ref{eq:nu0d'=nud'}) hold by definition of $\nu_i$ and $Q_i$-expansion of $h$). By Lemma \ref{el11.4}
\begin{equation}
\theta_{i+1}(h)\ge\delta_{i+1}(h).\label{tag11.19}
\end{equation}
Since
\begin{equation}
\alpha_{i+1}\theta_{i+1}(h)=\alpha_{i+1}\gamma_{1,i}\le\deg_{\bar Q_i}\ini_ih=\delta\label{tag11.20}
\end{equation}
by Lemma \ref{el11.6} and (\ref{tag9.27}), (1) of the Proposition follows.

(2) Assume that $\delta_{i+1}(h)=\delta$ (where the notation is as in (\ref{eq:delta=deltai})). Recall that $\delta>0$ by Remark \ref{delta0nu=nui}. Then the monomials $d_{\theta_{i+1}(h),i+1}Q_{i+1}^{\theta_{i+1}(h)}$ and
$d_{\delta_{i+1}(h),i+1}Q_{i+1}^{\delta_{i+1}(h)}$ coincide and
\begin{equation}
\alpha_{i+1}=1.\label{tag11.21}
\end{equation}
This proves (\ref{Qi+1=qi+zi}). Furthermore, we have equality in (\ref{tag11.20}). Then (\ref{tag9.27}) rewrites as
\begin{equation}
\ini_ih=\ini_{\nu}d_{\delta,i}g_{1,i}^\delta\prod\limits_{j=2}^tg_{j,i}^{\gamma_{j,i}}\label{tag11.12bis}
\end{equation}
Now, the left hand side of (\ref{tag11.12bis}) is a polynomial of degree $\delta$ in $\bar Q_i$, while the right hand side is divisible by the non-constant polynomial $g_{1,i}^\delta$. This implies that
\begin{equation}
t=1,\quad\prod\limits_{j=2}^tg_{j,i}^{\gamma_{j,i}}=1,\label{eq:t=1}
\end{equation}
and $\ini_ih=\ini_{\nu}d_{\delta,i} g_{1,i}^\delta$. (\ref{tag11.5}) now follows from (\ref{Qi+1=qi+zi}) and (\ref{tag11.16}).

The equality (\ref{tag11.6}) follows from (\ref{tag11.14}), (\ref{eq:t=1}) and the fact that $\theta_{i+1}(h)=\delta_{i+1}(h)=\delta$.
\medskip

(3) Assume that $\delta_{i+1}(h)=\delta$. Fix an integer $j>\delta$. The $Q_{i+1}$-expansion (\ref{tag11.9bis}) of $h$ is obtained from its $Q_i$-expansion (\ref{tag9.23h}) by making the substitution $Q_i=Q_{i+1}-z_i$ and performing repeated Euclidean divisions by $Q_{i+1}$ to turn the result into a $Q_{i+1}$-expansion. For $j'<j$, monomials of the form $d_{j',i}Q_i^{j'}$ have degrees strictly less than $j\bar\alpha_{i+1}$. Hence they contribute nothing to $d_{j,i+1}Q_{i+1}^j$; in other words, the coefficient $d_{j,i+1}$ is completely determined by $\sum\limits_{j'=j}^{s_i}d_{j,'i}Q_i^{j'}$.

Fix an integer $j'\in\{j,\dots,s_i\}$. Write
\begin{equation}
d_{j',i}Q_i^{j'}=d_{j',i}\left(Q_{i+1}-z_i\right)^{j'}=d_{j',i}\sum\limits_{k=0}^{j'}\binom{j'}k(-1)^kQ_{i+1}^kz_i^{j'-k},
\label{eq:binomii+1}
\end{equation}
where $z_i$ is a $Q_i$-free $i$-standard expansion. Again, the terms on the right hand side of (\ref{eq:binomii+1}) with $k>j$ contribute nothing to $d_{j,i+1}Q_{i+1}^j$. For $k\le j$, let $d_{j',k,i}$ denote the coefficient of $Q_{i+1}^j$ in the $(i+1)$-standard expansion of $d_{j',i}Q_{i+1}^kz_i^{j'-k}$. To prove (3), it is sufficient to prove that for all $j'\in\{j,\dots,s_i\}$ and all
$k\in\{0,\dots,j\}$  we have
\begin{equation}
\nu\left(d_{j',k,i}Q_{i+1}^j\right)-\nu_{i+1}(h)\ge\nu_i^+(h)-\nu_i(h).\label{eq:dj'ki}
\end{equation}
To prove (\ref{eq:dj'ki}), we start out by noting that (\ref{Qi+1=qi+zi}) is an $i$-standard expansion of $Q_{i+1}$. Hence
\begin{equation}
\nu_i(Q_{i+1})=\beta_i=\nu(z_i),\label{eq:nuiQi+1zi}
\end{equation}
where the last equality holds by Definition \ref{DefKeyPolynom} (4). By Lemma \ref{el11.3},
$\nu_i\left(d_{j',i}Q_{i+1}^kz_i^{j'-k}\right)$ equals the minimum of the $\nu_i$-values of the terms appearing in its $(i+1)$-standard expansion, so
$\nu_i\left(d_{j',i}Q_{i+1}^kz_i^{j'-k}\right)\le\nu_i\left(d_{j',k,i}Q_{i+1}^j\right)$. Combining this with (\ref{eq:nuiQi+1zi}) and with Proposition \ref{Proposition9.26b} applied to $d_{j',k,i}$ and the ordinal $i+1$, we obtain
\begin{equation}\label{eq:j'ki}
\nu\left(d_{j',k,i}\right)=\nu_i\left(d_{j',k,i}\right)\ge(j'-j)\beta_i+\nu\left(d_{j',i}\right).
\end{equation}
By definition of $\delta_{i+1}(h)$, since $\delta_{i+1}(h)=\delta$ and in view of (\ref{tag11.6}), we have
$$
\nu_{i+1}(h)=\nu(d_{\delta,i+1})+\delta\beta_{i+1}
$$
Combining this with (\ref{eq:j'ki}) and using (\ref{tag11.6}) again, we obtain
\begin{align}
\begin{split}
&\nu\left(d_{j',k,i}Q_{i+1}^j\right)-\nu_{i+1}(h)=\nu\left(d_{j',k,i}\right)-\nu\left(d_{\delta,i+1}\right)+(j-\delta)\beta_{i+1}\ge\\
&\ge(j'-j)\beta_i+\nu\left(d_{j',i}\right)-\nu\left(d_{\delta,i+1}\right)+(j-\delta)\beta_i=
\nu\left(d_{j',i}Q_i^{j'}\right)-\nu\left(d_{\delta,i}Q_i^\delta\right)\ge\nu_i^+(h)-\nu_i(h),
\end{split}
\end{align}
as desired. This completes the proof of the Proposition.
\end{proof}
\begin{corollary}\label{thetaiincreasing} For as long as the character $\delta_i(h)$ remains constant (that is, does not strictly decrease), the quantity $\nu_i^+(h)-\nu_i(h)$ is non-decreasing with $i$.
\end{corollary}
\begin{proof} Take the minimum over all $j$ on the left hand side of (\ref{tag11.7}).
\end{proof}
\begin{remark}{\label{Remark11.7}} One way of interpreting Lemma \ref{el11.4}, together with the inequalities
(\ref{tag11.17})--(\ref{tag11.19}) is that the characteristic vertex
$\left(\nu\left(d_{\theta_{i+1}(h),i+1}\right),\theta_{i+1}(h)\right)$ of $\Delta_{i+1}(h)$ always lies above its pivotal vertex
$\left(\nu\left(d_{\delta_{i+1}(h),i+1}\right),\delta_{i+1}(h)\right)$.
\end{remark}

\begin{corollary}[of Proposition \ref{Proposition11.2}]{\label{Corollary11.8}}
Assume that the set $\{i\in\Lambda\ |\ \alpha_{i}>1\}$ is infinite. Then $\{Q_i\}_{i\in\Lambda}$ is a complete set of key polynomials.
\end{corollary}

\begin{proof} Take an element $h\in K[x]$ and an index $i\in\Lambda$. If $\nu_i(h)=\nu(h)$, there is nothing to prove. Assume that
$\nu(h)>\nu_i(h)$. Then $\delta_i(h)>0$ by Remark \ref{delta0nu=nui}. Proposition \ref{Proposition11.2} (1) says that
\begin{equation}
\delta_{i+1}(h)<\delta_i(h)\label{tag11.26}
\end{equation}
whenever $\alpha_{i+1}>1$. Since the set $\{i\in\Lambda\ |\ \alpha_{i}>1\}$ is infinite and the strict inequality (\ref{tag11.26}) can occur for at most finitely many values of $i$, we have $\delta_i(h)=0$ for some $i\in\Lambda$. Then $\nu_i(h)=\nu(h)$ by Remark \ref{delta0nu=nui}.
\end{proof}

\section{Augmenting sets of key polynomials}\label{Augmenting}

Suppose we are given a set of key polynomials $\{Q_i\}_{i\in\Lambda}$ that is not complete for $\nu$. In this section, we will construct a set $\{Q_i\}_{i\in\Lambda_+}$ where $\Lambda_+$ is a well-ordered set of order type less than $\omega\times\omega$, and
$\Lambda\subsetneqq\Lambda_+$.

\subsection{Sets of key polynomials having a maximal element}
Suppose first that $\Lambda$ has a maximal element $\ell$.

Since $\{Q_i\}_{i\in\Lambda}$ is not complete, there exists $h\in K[x]$ such that
\begin{equation}
\nu_\ell(h)<\nu(h).\label{tag9.26bis}
\end{equation}
Let $\bar Q_\ell $ be a new variable. Take a polynomial $h$ satisfying (\ref{tag9.26bis}).

By Proposition \ref{Proposition9.21}, we have
\begin{equation}
\sum\limits_{j\in S_\ell (h)}\ini_{\nu}\left(d_{j,\ell}Q_\ell ^j\right)=0\label{eq:vanishingralg}
\end{equation}
in $\frac{\mathbf{P}_{\nu_\ell (h)}}{\mathbf{P}_{\nu_\ell (h)+}}\subset G_{\nu}$. By Corollary \ref{Corollary9.23} the element
$\beta_\ell $ determines a side of $\Delta_\ell (h)$. Let
\begin{equation}
\ini_{\nu_\ell}h=in_{\nu_\ell}d_{\delta,\ell}\prod\limits_{j=1}^tg_{j,\ell}^{\gamma_{j,\ell}},\label{tag9.27bis}
\end{equation}
where $\delta=\delta_\ell(h)$, be the factorization of $\ini_\ell h$ into monic, quasi-homogeneous irreducible factors in $G_{\nu_K}\left[\ini_{\nu}\mathbf{Q}_\ell \right]\left[\bar Q_\ell \right]$ (such a factorization exists and is unique by Remark \ref{Gauss}). By (\ref{eq:vanishingralg}), the element $\ini_{\nu}Q_\ell $ is integral over $G_{\nu_K}[\ini_{\nu}\mathbf{Q}_\ell]$. Its minimal polynomial over $G_{\nu_K}[\ini_{\nu}\mathbf{Q}_\ell ]$ is one of the irreducible factors $g_{j,\ell}$ of (\ref{tag9.27bis}).

Let $\alpha_{\ell+1}$ denote the degree of $\ini_{\nu}Q_\ell $ over $G_{\nu_K}[\ini_{\nu}\mathbf{Q}_\ell ]$. Renumbering the factors in (\ref{tag9.27bis}), if necessary, we may assume that $g_{1,\ell}$ is the minimal polynomial of $\ini_{\nu}Q_\ell $ over
$G_{\nu_K}\left[\ini_{\nu}\mathbf{Q}_\ell\right]$, so that
\begin{equation}
\alpha_{\ell +1}=\deg_{\bar Q_\ell }g_{1,\ell}.\label{tag9.28}
\end{equation}
Let
\begin{equation}
g_{1,\ell}=\bar Q_\ell^{\alpha_{\ell +1}}+\sum\limits_{j=0}^{\alpha_{\ell +1}-1}\left(\sum\limits_{{\bar\gamma}_\ell }\bar
c_{\ell +1,j,{\bar\gamma}_\ell }\ini_{\nu}\mathbf{Q}_\ell^{{\bar\gamma}_\ell }\right)\bar Q_\ell ^j\label{tag9.29}
\end{equation}
be an $\ell$-standard expansion of $g_{1,\ell}$. Let $X$ be a new variable, and consider a lifting of the right hand side of (\ref{tag9.29}) to $K[X]$, that is, a polynomial of the form\\
$X ^{\alpha_{\ell +1}}+\sum\limits_{j=0}^{\alpha_{\ell +1}-1}\left(\sum\limits_{{\bar\gamma}_\ell}
c_{\ell +1,j,{\bar\gamma}_\ell}\mathbf{Q}_\ell ^{{\bar\gamma}_\ell}\right) X ^j$, where
$\ini_\nu c_{\ell+1,j,{\bar\gamma}_\ell}=\bar c_{\ell +1,j,{\bar\gamma}_\ell}$ for all the choices of $\ell$, $j$, and
${\bar\gamma}_\ell$.

Define {\bf the $(\ell +1)$-st key polynomial} of $\nu$ to be
\begin{equation}
Q_{\ell +1}=Q_\ell ^{\alpha_{\ell +1}}+\sum\limits_{j=0}^{\alpha_{\ell +1}-1}
\left(\sum\limits_{{\bar\gamma}_\ell }
c_{\ell +1,j{\bar\gamma}_\ell }\mathbf{Q}_\ell ^{{\bar\gamma}_\ell }\right)Q_\ell ^j.\label{tag9.30}
\end{equation}

Let $\Lambda_+=\Lambda\cup \{\ell+1\}$. By definition $Q_{\ell+1}$ has the form (\ref{tag9.9}). The set $\{Q_i\}_{i\in\Lambda_+}$ satisfies Definition \ref{DefKeyPolynom}: it is a set of key polynomials.

In the special case when
\begin{equation}
\alpha_{\ell+1}=1\label{eq:alphaell+1=1}
\end{equation}
we will define several consecutive key polynomials at the same time.

Assume that $\alpha_{\ell+1}=1$. Let $T$ denote the set of all the polynomials of the form
$$
Q'=Q_\ell+w,
$$
where $\deg_xw<\bar\alpha_\ell=\deg_xQ_\ell$. To define $Q_{\ell +1}$, consider two cases:

\noindent{\bf Case 1.} The set $\nu(T)$ contains a maximal element. Let $Q'=Q_\ell+w$ be an element of $T$ for which the maximum is attained. Write $w=\sum\limits_{t=0}^sz_{\ell+t}+\tilde w$, where $s\in\mathbb N_0$, for each $t\in\{0\do s\}$ the expression $z_{\ell+t}$ is a homogeneous $Q_\ell$-free $\ell$-standard expansion, such that
\begin{equation}
\beta_\ell =\nu(z_\ell )<\nu(z_{\ell+1})<\dots<\nu(z_{\ell+s})<\nu\left(Q'\right)\label{tag9.36}
\end{equation}
and $\tilde w$ is a $Q_\ell$-free $\ell$-standard expansion all of whose terms have value greater than or equal to $\nu\left(Q'\right)$. Put $\Lambda_+=\Lambda\cup \{\ell+1,\dots,s+1\}$ and
$$
Q_i=Q_\ell +z_\ell +\dots+z_{i-1}\qquad\text{ for }\ell +1\le i\le\ell+s+1.
$$
\noindent{\bf Case 2.} The set $\nu(T)$ does not contain a maximal element. Let $\left(\tilde Q_{\ell+t}=Q_\ell+\tilde w_t\right)_{t\in\mathbb N_0}$ be a sequence of elements of $T$ such that the sequence
$\left(\nu\left(\tilde Q_{\ell+t}\right)\right)_{t\in\mathbb N_0}$ is strictly increasing and cofinal in $\nu(T)$. We will now define, recursively in $q$, an infinite sequence
\begin{equation}
(z_{\ell+q})_{q\in\mathbb N_0}\label{eq:sequencezl+t}
\end{equation}
of homogeneous $Q_\ell $-free $\ell$-standard expansions of strictly increasing values and a sequence $(w_t)_{t\ge-1}$ consisting of certain partial sums of the sequence (\ref{eq:sequencezl+t}). We adopt the convention that $z_{\ell-1}=w_{-1}=0$. Assume that for certain integers $t_0,q_0$ the finite sequences $(z_{\ell+q})_{q\le q_0}$ and $(w_t)_{t\le t_0}$ are already defined, that
$w_{t_0}=\sum\limits_{q=0}^{q_0}z_{\ell+q}$ and $\nu(z_{\ell+q})<\nu\left(\tilde Q_{\ell+t_0}\right)$ for all $q\le q_0$. Write
$\tilde w_{t_0+1}-w_{t_0}=\sum\limits_{q=q_0+1}^{q_1}z_{\ell+q}+w'_{t_0+1}$, where each $z_{\ell+q}$ is a homogeneous $Q_\ell$-free $\ell$-standard expansion, $\nu(z_{\ell+q_0+1})<\dots<\nu(z_{\ell+q_1})<\nu\left(\tilde Q_{\ell+t_0+1}\right)$ and $w'_{t_0+1}$ is a $Q_\ell$-free $\ell$-standard expansion, each of whose terms has value greater than or equal to $\nu\left(\tilde Q_{\ell+t_0+1}\right)$. Put 
$$
w_{t_0+1}:=w_{t_0}+\sum\limits_{q=q_0+1}^{q_1}z_{\ell+q}.
$$
This completes the recursive definition of the infinite sequences $(z_{\ell+q})_{q\in\mathbb N_0}$ and $(w_t)_{t\ge-1}$. Define
$\alpha_{\ell+t}=1$,
$$
Q_{\ell +t}=Q_\ell +z_\ell +z_{\ell +1}+\dots+z_{\ell +t-1}\qquad\text{ for }t\in\mathbb{N},
$$
and put $\Lambda_+=\Lambda\cup \{\ell+t\}_{t\in\mathbb{N}}$.
\smallskip

In both Cases 1 and 2, for each $t$ under consideration we have
$$
\nu(Q_{\ell+t})=\nu(z_{\ell+t})<\nu(Q_{\ell+t+1})=\nu(z_{\ell+t+1}).
$$
Thus, by the choice of the sequences $\left(\nu\left(\tilde Q_{\ell+t}\right)\right)_{t\in\mathbb N_0}$, $(z_{\ell+q})_{q\in\mathbb N_0}$ and $(w_t)_{t\ge-1}$, conditions (a)--(d) of Definition \ref{DefKeyPolynom} are satisfied for all the ordinals $i$ of the form $i=\ell+t$, $t\in\mathbb N_0$.

Therefore in all cases the set $\{Q_{i}\}_{i\in\Lambda_+}$ is a set of key polynomials for $\nu$.

\begin{remark}{\label{Remark9.25}} We claim that $Q_{\ell+1}$ is an irreducible polynomial in $x$. Indeed, consider a factorization
$Q_{\ell+1}=fg$ in $K[x]$. Passing to the natural images of $Q_{\ell+1}$, $f$ and $g$ in
$$
G_{\nu_K}\left[\ini_{\nu}\mathbf{Q}_\ell \right]\left[\bar Q_\ell \right]\cong G_{\nu_\ell},
$$
we obtain $g_{1,\ell}=\ini_\ell f\ini_\ell g$. Since $g_{1,\ell}$ is an irreducible polynomial in $\bar Q_\ell $ by definition, we have, up to interchanging $f$ and $g$,
\begin{equation}
\deg_{\bar Q_\ell}\ini_\ell f=\alpha_{\ell+1}.\label{eq:degreemaximal}
\end{equation}
Then
\begin{equation}
\deg_xf\ge\alpha_{\ell+1}\deg_xQ_\ell=\deg_xQ_{\ell+1},\label{eq:degreeequal}
\end{equation}
where the equality holds since $g_{1\ell}$ has the form (\ref{tag9.29}). We must have equality in (\ref{eq:degreeequal}) and
$\deg_xg=0$. Thus $g\in K$; this completes the proof of the irreducibility of $Q_{\ell+1}$ in $K[x]$.

Similarly, in the case when (\ref{eq:alphaell+1=1}) holds, $Q_{\ell+t}$ is irreducible for all $t\in\mathbb N$ for which $Q_{\ell+t}$ is defined. This follows from the above argument by induction on $t$.
\end{remark}

\subsection{Augmenting sets of key polynomials without maximal elements}\label{setswomaxelts}

Suppose that the set $\Lambda$ does not have a maximal element. Since the set $\{Q_i\}_{i\in\Lambda}$ of key polynomials is not complete, by Corollary \ref{Corollary11.8} the degrees of the polynomials $Q_i$, $i\in\Lambda$, are bounded in $\mathbb{N}$.
Hence there exists $\ell$ such that for each $i\in\Lambda$, $i>\ell$, we have $i=\ell+t$ and $\alpha_{\ell+t}=1$ for
$t\in\mathbb{N}$. Since this set of key polynomials is not complete for $\nu$, there exists a monic polynomial $h$ such that
\begin{equation}
\nu_{\ell+t}(h)<\nu(h)\label{tag9.43}
\end{equation}
for all $t\in\mathbb{N}$. In this case, define $Q_{\ell+\omega}$ to be a smallest degree monic polynomial $h$ satisfying (\ref{tag9.43}).

\begin{remark}\label{degreejumps} The inequality (\ref{tag9.43}) implies that
\begin{equation}
\deg_xQ_{\ell+\omega}\ge\deg_xQ_\ell=\deg_xQ_{\ell+t}\quad\text{for all }t\in\mathbb N.\label{eq:degreenondecreasing}
\end{equation}
by Proposition \ref{Proposition9.26b}. If the inequality in (\ref{eq:degreenondecreasing}) were an equality, we would have a contradiction with the condition (d) of Definition \ref{DefKeyPolynom}. Thus,
$\deg_xQ_{\ell+\omega}>\deg_xQ_\ell=\deg_xQ_{\ell+t}$, $t\in\mathbb N$.

For each $t\in\mathbb N$, the expression $Q_{\ell+t}=Q_{\ell+t+1}-z_{\ell+t}$ is an $(\ell+t+1)$-expansion of $Q_{\ell+t}$. Thus, in view of Proposition \ref{QiinRnux}, $h=Q_{\ell+t}$ satisfies the hypotheses of Proposition \ref{Proposition9.26a} (1) with $i=\ell+t+1$. We obtain $\nu_{\ell+t+1}\left(Q_{\ell+t}\right)=\nu\left(Q_{\ell+t}\right)$. To summarize, we have
\begin{equation}
\nu(Q_{\ell+t+1})=\nu_{\ell+t+1}(Q_{\ell+t+1})>\nu_{\ell+t+1}(Q_{\ell+t})=\nu\left(Q_{\ell+t}\right).\label{eq:valuestrictlygreater}
\end{equation}
\end{remark}
\medskip

Let
$$
\bar\beta=\sup\left\{\left.\nu\left(Q_{\ell+t}\right)\ \right|\ t\in\mathbb N\right\}.
$$
(here we allow the possibility $\bar\beta=\infty$, which means that the set $\left\{\left.\nu\left(Q'\right)\ \right|\ Q'\in T\right\}$ is unbounded in $\tilde\g_0$).

We put $\Lambda_+=\Lambda\cup \{Q_{\ell+\omega}\}$. By Remark \ref{eq:coefficientsunique} and Proposition \ref{Qlislstandard}, for every $t\in\mathbb N_0$ the polynomial $Q_{\ell+\omega}$ admits an $(\ell+t)$-standard expansion (\ref{tag9.11}):
\begin{equation}
Q_{\ell+\omega}=\sum\limits_{j=0}^{\alpha_{\ell+\omega}}c_{j,\ell+t}Q_{\ell+t}^{j}.\label{tag9.11bis}
\end{equation}
By (\ref{tag9.43}), the inequality (\ref{eq:almosthomogeneous}) holds for $i=\ell+\omega$ and $i_0=\ell+t$.

To complete the proof that $\{Q_{i}\}_{i\in\Lambda_+}$ is a set of key polynomials for $\nu$, it remains to prove (\ref{eq:limitmonic}) and (\ref{eq:nui0ofQi}), assuming (\ref{eq:sup<infty}) (with $i$ replaced by $\ell+\omega$ and $i_0$ replaced by $\ell+t$, $t\in\mathbb N$).

Therefore, assume that the sequence $(\beta_{\ell+t})_{t\in\mathbb{N}}$ has an upper bound (but no maximum) in $\tilde\g_0$.

%Assume that $\ch\ k_\nu=\ch\ K=p>0$.
To simplify the notation, for the purposes of the next Proposition we will denote $Q_{\ell+\omega}$ by $f$. We have
\begin{equation}
\deg_{Q_\ell}f:=\left[\frac{\deg_xf}{\deg_xQ_\ell}\right]\ge\delta_\ell(f).\label{eq:deggedeltal}
\end{equation}
By Proposition \ref{Proposition11.2} there exists $t_0\in\mathbb{N}$ such that
\begin{equation}
\delta_{\ell+t}(f)=\delta_{\ell+t_0}(f)\ \text{ for all }\ t\ge t_0\label{eq:alpha=1deltastabiilzes}
\end{equation}
(in fact, (\ref{eq:alpha=1deltastabiilzes}) holds already for $t_0=1$, but we have not proved that yet). Let $\delta$ denote the stable value of $\delta_{\ell+t}(f)$ for large $t$. The inequality (\ref{eq:deggedeltal}) implies that
\begin{equation}
\deg_{Q_\ell}f\ge\delta.\label{eq:deggedelta}
\end{equation}
The next Proposition says that equality holds in (\ref{eq:deggedelta}).

In what follows, the index $i$ will run over the set $\{\ell+t\}_{t\in\mathbb{N}_0}$.
\begin{proposition}\label{weakly_affine_lim} For each
\[
i\in\{\ell+t\}_{t\in\mathbb{N}_0}
\]
we have
\begin{equation}
\deg_xf=\delta\deg_xQ_i\label{eq:deg=delta}
\end{equation}
and
\begin{equation}\label{lim_kp_in2}
\bar\beta\le\frac1{\delta}\nu\left(f\right).
\end{equation}
\end{proposition}

\begin{proof} By (\ref{tag9.43}) for all $t\in\mathbb{N}_0$ we have $\nu(f)>\nu_{\ell+t}(f)=\delta\nu(Q_{\ell+t})$. This proves (\ref{lim_kp_in2}).  The main point is to prove (\ref{eq:deg=delta}). Our strategy for doing this consists in gradually modifying $f$ while preserving the numerical character $\delta_i(f)$ and the condition
\begin{equation}
\nu_i(f)<\nu(f)\quad\text{for all }i\text{ of the form }i=\ell+t,\ t\in\mathbb N\label{eq:nuif<nu(f)}
\end{equation}
though possibly changing $\deg_xf$ in the process, until we arrive at a polynomial of degree $\delta\bar\alpha_i$.

Since $\alpha_i=1$ for all $i$, $\deg_xQ_i$ is independent of $i$, so all the $i$-standard expansions of $f$ have the same degree $\alpha_{\ell+\omega}$ in $Q_i$. For each $i$, let
\begin{equation}
f=\sum\limits_{j=0}^{\alpha_{\ell+\omega}}a_{j,i}Q_i^j\label{eq:istandardexpoff}
\end{equation}
be an $i$-standard expansion of $f$.
\begin{lemma} There exists a polynomial $a^*\in K[x]$ of degree strictly less than $\deg_x Q_\ell$ such that
\begin{equation}\label{dom_coef1}
\ini_\nu a^*\ini_\nu a_{\delta,\ell}=1
\end{equation}
in $G_\nu$.
\end{lemma}
\begin{proof} Let $\ell_-$ denote the smallest ordinal such that $\bar\alpha_{\ell_-}=\bar\alpha_\ell$. By Proposition \ref{Proposition9.26b} (applied to $h=a_{\delta,\ell}$), there exists an ordinal $\ell_0<\ell_-$ such that $\ell_-=\ell_0+$ and
\begin{equation}
\nu_{\ell_0}(a_{\delta,\ell})=\nu(a_{\delta,\ell}).\label{eq:nul0adelta=nuadelta}
\end{equation}
By (\ref{eq:nul0adelta=nuadelta}) and Proposition \ref{gradedalgebraoftruncation} (4) we have $\ini_\nu a_{\delta,\ell}\in
G_{\nu_K}\left[\ini_\nu\mathbf Q_{\ell_0+1}\right]=G_{\nu_K}\left[\ini_\nu\mathbf Q_{\ell_0}\right]\left[\ini_\nu Q_{\ell_0}\right]$. By Definition \ref{DefKeyPolynom}, $\ini_\nu Q_{\ell_0}$ is integral over $G_{\nu_K}\left[\ini_\nu\mathbf Q_{\ell_0+1}\right]$ of degree
$\alpha_{\ell_0+1}$. By Lemma \ref{el4.4} and transfinite induction on $\ell_0$, the graded algebras
\begin{equation}
G_{\nu_K}\left[\ini_\nu\mathbf Q_{\ell_0}\right]\subset G_{\nu_K}\left[\ini_\nu\mathbf Q_{\ell_0}\right]\left[\ini_\nu Q_{\ell_0}\right]
\label{eq:inclusionofsaturatedalgebras}
\end{equation}
are saturated. Hence the homogeneous element
$\ini_\nu a_{\delta,\ell}\in G_{\nu_K}\left[\ini_\nu\mathbf Q_{\ell_0}\right]\left[\ini_\nu Q_{\ell_0}\right]$ has a multiplicative inverse
$\bar a^*\in G_{\nu_K}\left[\ini_\nu\mathbf Q_{\ell_0}\right]\left[\ini_\nu Q_{\ell_0}\right]$. Again by Lemma \ref{el4.4}, $\bar a^*$ admits an $\ell_0$-standard expansion whose degree in $\ini_\nu Q_{\ell_0}$ is strictly less than $\alpha_{\ell_0+1}$. A lifting of this
$\ell_0$-standard expansion to $K[x]$ is the desired element $a^*$.
\end{proof}
Note that by Proposition \ref{Proposition11.2} (2) and transfinite induction on $i$, for all $i\ge \ell$ 
\begin{equation}\label{dom_coef2}
\ini_{\nu}a_{\delta,\ell}=\ini_{\nu}a_{\delta,i}
\end{equation}
and 
\[
\ini_if=\ini_{\nu}a_{\delta,i}(\bar Q_i+\ini_{\nu}z_i)^\delta.
\]
Hence, in view of (\ref{dom_coef1}) and (\ref{dom_coef2}), we have
\begin{equation}
\ini_i(a^*f)=(\bar Q_i+\ini_{\nu}z_i)^\delta.\label{eq:iniif}
\end{equation}
Now, $\bar Q_i+\ini_{\nu}z_i$ is the minimal algebraic relation satisfied by $\ini_\nu Q_i$ over $G_{\nu_K}\left[\mathbf Q_i\right]$, so
this polynomial maps to 0  under the homomorphism $\phi$ of  Proposition \ref{gradedalgebraoftruncation} (2). Therefore
$$
\left(\bar Q_i+\ini_{\nu}z_i\right)^\delta\in Ker\ \phi.
$$
Together with (\ref{eq:iniif}) and Proposition \ref{gradedalgebraoftruncation} (3)--(4) this implies that
\begin{equation}
\nu(a^*f)>\nu_i(a^*f)\quad\text{ for all }i.
\end{equation}
We claim that
\begin{equation}
\delta_i(a^*f)=\delta\quad\text{ for all }i.\label{eq:doesnotaffectdelta}
\end{equation}
Indeed, applying Lemma \ref{qQ+r} with $s=2$ to the pairs of polynomials
\[
(f_1,f_2)=(a_{\delta,i},a^*)
\]
and
\[
(f_1,f_2)=(a_{\delta-1,i},a^*),
\]
we see that after multiplying $f$ by $a^*$ and applying Euclidean division of $a^*f$ by $Q_{i}$ to obtain the $i$-standard expansion of $a^*f$, only the remainders in the Euclidean division contribute to $\ini_{i}(a^*f)$. This means that the powers of $\bar Q_i$ that appear in $\ini_i(a^*f)$ with non-zero coefficients are exactly the same as those appearing in $\ini_if$ and (\ref{eq:doesnotaffectdelta}) follows.

Thus, replacing $f$ by $a^*f$, we may assume that $\ini_{\nu}a_{\delta,i}=1$ for all $i$.

Let
\begin{equation}
\theta(i)=\frac12\min\left\{\nu_i^+(f)-\nu_i(f),\beta_i-\beta_\ell\right\};\label{eq:thetai}
\end{equation}
we have $\theta(i)>0$. By Corollary \ref{thetaiincreasing} the quantity $\nu_i^+(f)-\nu_i(f)$ is non-decreasing with $i$ and hence so is $\theta(i)$. Taking into account the fact that $\bar\beta=\lim\limits_{i\to\infty}\beta_i$, we have, for $i$ sufficiently large,
\begin{equation}\label{dom_coef3}
\nu(a_{\delta,i})+\delta\bar\beta-\nu_i(f)=\delta(\bar\beta-\beta_i)<\theta(i).
\end{equation}
By choosing $\ell_1>\ell$ sufficiently large, we may assume that (\ref{dom_coef3}) holds for $i\ge\ell_1$.

For each $i\ge\ell_1$, write $a_{\delta,i}=1+a_i^\dag$ with
\begin{equation}
\nu\left(a_i^\dag\right)>0.\label{eq:nuadagbdedawayfrom0bis}
\end{equation}
Write
\[
f=\bar f_i+\tilde f_i,
\]
where
\[
\bar f_i=Q_i^\delta+\sum\limits_{j=0}^{\delta-1}a_{j,i}Q_i^j
\]
and
\[
\tilde f_i=a_i^\dag Q_i^\delta+\sum\limits_{j=\delta+1}^{n_i}a_{j,i}Q_i^j.
\]
We want to compare the $\ell_1$-standard expansion of $f$ with its $i$-standard expansion for $i>\ell_1$ and, in particular, to study the dependence on $i$ of $a_{\delta,i}$ and $a_i^\dag:=a_{\delta,i}-1$.

First of all, for all $j$ with $\delta<j\le n_{\ell_1}$ and all $i\ge\ell_1$ we have
\begin{equation}
\nu_i\left(a_{j,\ell_1}Q_{\ell_1}^j\right)-\delta\beta_i\ge\nu_{\ell_1}\left(a_{j,\ell_1}Q_{\ell_1}^j\right)-
\delta\beta_{\ell_1}-\delta(\beta_i-\beta_{\ell_1})\ge\nu_{\ell_1}^+(f)-\nu_{\ell_1}(f)-\delta(\bar\beta-\beta_{\ell_1})>
\theta(\ell_1),\label{eq:contributionofj>deltabounded}
\end{equation}
where:

the first inequality holds by Proposition \ref{nuiisavaluation} (2)

the second inequality holds by the definitions of $\nu_{\ell_1}^+(f)$ and of $\bar\beta$ and

the third inequality holds by (\ref{eq:thetai})--(\ref{dom_coef3}).
\medskip

For $i>\ell_1$, write $Q_i=Q_{\ell_1}+w_i$, where $w_i$ is a $Q_i$-free $i$-standard expansion (in particular,
$\deg_xw_i<\bar\alpha_i=\bar\alpha_{\ell_1}$. To compute the coefficient $a_{i,\delta}$ in the $i$-standard expansion of $f=\bar
f_{\ell_1}+\tilde f_{\ell_1}$, we must consider the expressions
\begin{equation}
Q_{\ell_1}^\delta+\sum\limits_{j=0}^{\delta-1}a_{j,\ell_1}Q_{\ell_1}^j=
\left(Q_i-w_i\right)^\delta+\sum\limits_{j=0}^{\delta-1}a_{j,\ell_1}\left(Q_i-w_i\right)^j\label{eq:barfw}
\end{equation}
and
$$
a_{\ell_1}^\dag Q_{\ell_1}^\delta+\sum\limits_{j=\delta+1}^{n_{\ell_1}}a_{j,\ell_1}Q_{\ell_1}^j=
a_{\ell_1}^\dag\left(Q_i-w_i\right)^\delta+\sum\limits_{j=\delta+1}^{n_{\ell_1}}a_{j,\ell_1}\left(Q_i-w_i\right)^j,
$$
open the parentheses and perform the appropriate Euclidean divisions by $Q_i$. We have
$$
\deg_x\sum\limits_{j=0}^{\delta-1}a_{j,\ell_1}\left(Q_i-w_i\right)^j<\delta\bar\alpha_i.
$$
As well, every term in the Newton binomial expansion of $\left(Q_i-w_i\right)^\delta$ except for $Q_i^\delta$ has degree strictly smaller than $\delta\bar\alpha_i$. Therefore no terms in (\ref{eq:barfw}) contribute anything to $a_i^\dag$. By (\ref{eq:contributionofj>deltabounded}), all the contributions to $a_i^\dag$ by terms of the form $a_{j,\ell_1}Q_{\ell_1}$, $j>\delta$, have $\nu_i$-value strictly greater than  $\theta(\ell_1)$.

It remains to study the contribution to $a_i^\dag$ from
$a_{\ell_1}^\dag\left(Q_i-w_i\right)^\delta=\sum\limits_{j=0}^\delta\binom\delta ja_{\ell_1}^\dag w_i^jQ_i^{\delta-j}$. For $j=0$ this contribution equals $a_{\ell_1}^\dag$. For each $j\in\{1\do\delta\}$ this contribution is nothing but the coefficient $d_j$ of $Q_i^j$ in the $i$-standard expansion of $a_{\ell_1}^\dag w_i^j$. We have
$$
\nu(d_j)\ge\nu_i(d_j)\ge\beta_i-\beta_\ell-j(\beta_i-\beta_{\ell_1})>\theta(\ell_1),
$$
where the second inequality follows from Lemma \ref{qQ+r}, applied with $s=j+1$, $f_1=a_{\ell_1}^\dag$,
$$
f_2=\dots=f_s=w_i,
$$
$i+t$ replaced by $i$ and $i_0$ replaced by $\ell$. Putting together all of the above information, we obtain
\begin{equation}
\nu_i\left(a_i^\dag\right)\ge\min\left\{\theta(\ell_1),\nu\left(a_{\ell_1}^\dag\right)\right\}.\label{eq:nuadagbdedawayfrom0}
\end{equation}
Take an ordinal $\ell_2$ of the form $\ell_2=\ell_1+t_1$, $t_1\in\mathbb N$, such that
$\bar\beta-\beta_{\ell_2}<\nu\left(a_{\ell_1}^\dag\right)$.

Since for all $i\ge\ell_2$ we have $\nu\left(a_i^\dag\right)>0$ and in view of the definition of $\delta$, every term in the $Q_i$-expansion of  $\tilde f_i$ has value strictly greater than $\delta\beta_i$. Hence $\nu\left(\tilde f_{\ell_2}\right)\ge\nu_i\left(\tilde
f_{\ell_2}\right)>\nu_i\left(\bar f_{\ell_2}\right)$ which implies that $\nu_i\left(\bar
f_{\ell_2}\right)=\nu_i\left(f\right)<\min\left\{\nu(f),\nu\left(\tilde f_{\ell_2}\right)\right\}\le\nu\left(f-\tilde
f_{\ell_2}\right)=\nu\left(\bar f_{\ell_2}\right)$. Since $\deg_x\bar f_{\ell_2}=\delta\deg_xQ_\ell\le\deg_xf$ and $f$ was chosen of minimal degree subject to inequality (\ref{eq:nuif<nu(f)}), we must have $\deg_xf=\deg_x\bar f_{\ell_2}=\delta\deg_xQ_\ell$.
\end{proof}
\begin{corollary} We have equalities (\ref{eq:limitmonic}) and (\ref{eq:nui0ofQi}) (with $i$ replaced by $\ell+\omega$ and $i_0$ replaced by $\ell+t$, $t\in\mathbb N$). In particular, the set $\{Q_{i}\}_{i\in\Lambda_+}$ is a set of key polynomials for $\nu$.
\end{corollary}
\begin{proof} This follows immediately from (\ref{eq:deg=delta}) and Proposition \ref{Proposition11.2} (2) (specifically, the equality (\ref{tag11.5})).
\end{proof}

Below, in Proposition \ref{Proposition12.5}, we will show that $\delta(f)$ is of the form $\delta(f)=p^{e_0}$ for some
$e_0\in\mathbb{N}_0$. Together with Remark \ref{degreejumps} this will prove that, under the assumptions of this section, we have
$\ch\ k_\nu>0$ and $e_0>0$.

\begin{proposition}{\label{Proposition9.30}} If $\nu(Q_{\ell+t})\in\tilde\g_0$ for all $t\in\mathbb{N}_0$ and the sequence
$\left(\beta_{\ell+t}\right)_{t\in\mathbb{N}_0}$ is cofinal in $\tilde{\g}_0$, then the set $\mathbf{Q}_{\ell+\omega}$ of key polynomials defined above is $\tilde{\g}_0$-complete. In other words, for every element $\beta\in\tilde\g_{0+}$ every polynomial $h\in K[x]$ with $\nu(h)=\beta$ belongs to the additive subgroup of $\mathbf{P}_\beta\cap K[x]$ generated by all the standard monomials in $\mathbf{Q}_{\ell+\omega}$, multiplied by elements of $K$, of value $\beta$ or higher.
\end{proposition}
\begin{proof} Take an element $h\in K[x]$. Without loss of generality, we may assume that, writing $h=\sum\limits_{j=0}^sd_jx^j$, we have
\begin{equation}
\nu_K(d_j)\ge0\quad\text{for all }j\label{tag9.46}
\end{equation}
(otherwise, multiply $h$ by a suitable element of $K$). In other words, we may assume that $h\in R_{\nu_K}[x]$. Since the sequence
$\{\beta_{\ell+t}\}_{t\in\mathbb{N}_0}$ is cofinal in $\tilde{\g}_0$, there exists $i$ of the form $i=\ell+t$, $t\in\mathbb{N}_0$, such that
\begin{equation}
\beta_i>\nu(h).\label{tag9.47}
\end{equation}
Then $h$ satisfies the hypotheses of Proposition \ref{Proposition9.26a}. Now, Proposition \ref{Proposition9.26a} says that
\[
\nu_i(h)=\nu(h).
\]
This means, by definition, that $h$ can be written as a sum of standard monomials in $\mathbf{Q}_{i+1}$ of value at least $\nu(h)$, hence it
belongs to the additive abelian group generated by all such monomials. This completes the proof.
\end{proof}

\section{Constructing a complete set of key polynomials}\label{Section3}

In this section we will recursively construct a complete set of key polynomials.
\medskip

First, consider the set $\{Q_i\}_{i\in\Lambda}$, with $Q_0=x$ and $\Lambda=\{0\}$. It is a set of key polynomials.
\medskip

Next, assume that a set  $\{Q_i\}_{i\in\Lambda}$ of key polynomials is constructed and that $\Lambda<\omega\times\omega$. If $\Lambda$ does not have a maximal element then there exists and ordinal $\ell$ such that $\left\{\ell+t\ \left|\ t\in\mathbb
N_0\right.\right\}$ is a subset of $\Lambda$, cofinal in $\Lambda$.

If the set $\{Q_i\}_{i\in\Lambda}$ is complete for $\nu$, put $\bar\Lambda:=\Lambda$ and stop. Otherwise, consider the set
$\{Q_i\}_{i\in\Lambda_{+}}$ of key polynomials constructed in the previous section. If the set  $\{Q_i\}_{i\in\Lambda_{+}}$ of key polynomials is complete, put $\bar\Lambda:=\Lambda_+$ and stop here. Otherwise, replace $\{Q_i\}_{i\in\Lambda}$ by
$\{Q_i\}_{i\in\Lambda_{+}}$ and repeat the procedure.
\medskip

\begin{remark}\label{omegatimesomega} If $\mathbf Q=\{Q_i\}_{i\in\Lambda}$ is a set of key polynomials, $i_0,i\in\Lambda$, $i$ is a limit ordinal and $i_0+=i$ then
\begin{equation}
\bar\alpha_i>\bar\alpha_{i_0}\label{eq:degreejumpsinlimits}
\end{equation}
by Remark \ref{degreejumps}. Therefore, if as a result of the above recursive procedure we arrive at a set of key polynomials with
$\Lambda=\omega\times\omega$, since $\Lambda$ contains infinitely many limit ordinals, such a set of key polynomials must have unbounded degrees and is therefore complete by Proposition \ref{Proposition9.17} (1) and Corollary \ref{Corollary11.8}. This proves that the aobve recursive procedure produces a complete set of key polynomials after at most $\omega\times\omega$ steps. In other words, $\bar\Lambda\le\omega\times\omega$.
\end{remark}
\medskip

We have proved the following:

\begin{theorem}{\label{th13.10}} The well ordered set $\mathbf{Q}:=\{Q_i\}_{i\in\bar\Lambda}$ constructed above is a complete set of key polynomials. In other words, for every element $\beta\in\Gamma$ the $R_\nu$-module $\mathbf{P}_\beta\cap K[x]$ is generated as an additive group by all the monomials in the $Q_i$ of value $\beta$ or higher, multiplied by elements of $K$. In particular, we have
$$
\bigoplus\limits_{\beta\in\Gamma}\frac{\mathbf{P}_\beta}{\mathbf{P}_{\beta+}}=G_{\nu_K}[\ini_{\nu}\mathbf{Q}]^*.
$$
\end{theorem}

In \S\ref{Infsequenceskeypolynom} we will fix an ordinal $\ell$ such that $\ell+\omega\in\bar\Lambda$ and will study further properties of $Q_{\ell+\omega}$. Among other things, we will show (Propositions \ref{Proposition12.5} and \ref{Proposition12.8} and Remark \ref{Remark12.6}) that:

(a) if $\ch\ k_\nu=0$ then our construction gives a complete set of key polynomials that is of order type at most
$\omega+1$.

(b) if, in addition, $\rk\ \nu=1$ then the construction produces a complete set of key polynomials that is of order type at most
$\omega$.

In the next section we study the effect of differential operators on the polynomials $Q_i$ in order to give a more precise description of the form of limit key polynomials.

\section{Key polynomials and differential operators}\label{Keypolanddiffop}

This section is devoted to proving some basic results about the effect of differential operators on key polynomials. Here and below, for a non-negative integer $b$, $\partial_b$ will denote the $b$-th Hasse (or formal) derivative, defined in the Introduction. Given an
$\ell$-standard expansion $h$, we are interested in proving lower bounds on (and, in some cases, exact formulae for) the
quantities $\nu(\partial_bh)$ and $\nu_\ell (\partial_bh)$ and in computing the elements $\ini_{\nu}\partial_bh$ and $\ini_\ell \partial_bh$. In particular, we will give sufficient conditions for the element $\partial_bh$ to be non-zero.

Assume given a complete set $\mathbf{Q}:=\{Q_i\}_{i\in\bar\Lambda}$ of key polynomials. Take an ordinal $i\in\bar\Lambda$. Let $b_i$ denote the smallest positive integer which maximizes the quantity
\begin{equation}
\frac{\beta_i-\nu(\partial_{b_i}Q_i)}{b_i}
\end{equation}
(later in this section, we will show that $b_i$ is necessarily of the form $p^{e_i}$ for some $e_i\in\mathbb{N}_0$ and, in particular, that $b_i=1$ if $\ch\ k_\nu=0$).

Let $h$ be an element of $K[x]$. We use the following convention for binomial coefficients: if $s<t$, the binomial coefficient $\binom st$ is considered to be 0. We view the binomial coefficients as elements of $K$ via the natural map $\mathbb{Z}\rightarrow K$.
\medskip

\noindent{\bf Notation:}\\
Let $p$ be as defined in the Introduction. If $p>1$, for an integer $a$ we shall denote by $\nu^{(p)}(a)$ the $p$-adic value of $a$, that is, the greatest power of $p$ that divides $a$. If $p=1$, we adopt the convention $\nu^{(p)}(a)=1$ for all non-zero $a$ and
$\nu^{(p)}(0)=\infty$.
\begin{remark}\label{diffkills} Consider $e,b,b'\in\mathbb N$ such that $b\ \not|\ p^e$ (this holds, in particular, whenever $b<p^e$) and $\left.p^e\ \right|\ b'$. Then $(x+\Delta x)^{b'}\in K\left[x^{p^e},\Delta x^{p^e}\right]$, so $\partial_bx^{b'}=0$.
\end{remark}
\begin{proposition}{\label{Proposition10.1}} Take an element $h\in K[x]\setminus\{0\}$.
\begin{enumerate}
\item[(1)] For all $b\in \mathbb{N}_0$ we have
\begin{equation}
\nu_i(h)-\nu_i\left(\partial_bh\right)\le\frac b{b_i}\left(\beta_i-\nu(\partial_{b_i}Q_i)\right).\label{tag10.1}
\end{equation}
\item[(2)] Let $h=\sum\limits_{j=0}^sd_{j,i}Q_i^j$ be an $i$-standard expansion of $h$. Assume that
$$
\left\{j\in\{0,\dots,s\}\ \left|\ \nu\left(d_{j,i}Q_i^j\right)=\nu_i(h)\right.\right\}\ne\{0\}
$$
(in particular, we have $s>0$). Let $d_{j,i}Q_i^j$ denote the term in the $i$-standard expansion of $h$ which minimizes the triple
$\left(\nu_i\left(d_{j,i}Q_i^j\right),\nu^{(p)}(j),j\right)$ in the lexicographical ordering. Let $e=\nu^{(p)}(j)$ and $b(i,h)=b_ip^e$. Then equality holds in (\ref{tag10.1}) for $b=b(i,h)$.
\end{enumerate}
\end{proposition}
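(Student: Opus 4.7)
The strategy is a transfinite induction on the ordinal $i$ in which both parts are proved together. The Leibniz rule (for products and for powers) carries the bulk of the combinatorial work, while Proposition \ref{Proposition9.26}(2) is invoked repeatedly to identify $\nu'$-values with $\nu_i$-values on polynomials of $x$-degree strictly smaller than $\deg_x Q_i$.

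It is convenient to set $\theta_i := (\beta_i - \nu'(\partial_{b_i} Q_i))/b_i$, so that (10.1) becomes $\nu_i(\partial_b h) \ge \nu_i(h) - b\theta_i$; the defining minimality of $b_i$ is then exactly the inequality $\nu'(\partial_c Q_i) \ge \beta_i - c\theta_i$ for every $c \ge 0$ (with equality at $c = b_i$ and the convention $\partial_0 Q_i = Q_i$). Combined with the generalized Leibniz rule
\begin{equation*}
\partial_c(Q_i^j) \;=\; \sum_{c_1+\cdots+c_j = c}\, \prod_{\alpha=1}^{j} \partial_{c_\alpha} Q_i
\end{equation*}
and the observation that each factor $\partial_{c_\alpha} Q_i$ with $c_\alpha \ge 1$ has $x$-degree below $\deg_x Q_i$ (Proposition \ref{Proposition9.17}(2)), this immediately yields the bound $\nu_i(\partial_c(Q_i^j)) \ge j\beta_i - c\theta_i$.

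For part (1), I would expand $h$ in its $i$-standard expansion $h = \sum_j d_{ji} Q_i^j$ and apply the product Leibniz rule
\begin{equation*}
\partial_b(d_{ji} Q_i^j) \;=\; \sum_{a+c=b}\, \partial_a d_{ji} \cdot \partial_c(Q_i^j).
\end{equation*}
Each $d_{ji}$ has $x$-degree below $\deg_x Q_i$ by Proposition \ref{Proposition9.17}(2); it involves only finitely many $Q_q$ with $q < i$, and therefore admits an $i'$-standard expansion not involving $Q_{i'}$ for some $i' < i$. The transfinite inductive hypothesis supplies $\nu_{i'}(\partial_a d_{ji}) \ge \nu_{i'}(d_{ji}) - a\theta_{i'}$, which Proposition \ref{Proposition9.26}(2) rewrites as $\nu_i(\partial_a d_{ji}) \ge \nu_i(d_{ji}) - a\theta_{i'}$. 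Granted the monotonicity $\theta_{i'} \le \theta_i$ for $i' < i$, the two bounds combine term by term into $\nu_i(\partial_b(d_{ji} Q_i^j)) \ge \nu_i(d_{ji} Q_i^j) - b\theta_i$, and taking the minimum over $j$ proves (1).

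For part (2), I would pinpoint one explicit summand that realizes equality. With $b = b_i p^e$ and the distinguished $j$ satisfying $\nu_p(j) = e$, within $\partial_b(d_{ji} Q_i^j)$ select the Leibniz contribution $a = 0$, $c = b$, together with the multinomial tuple $(c_1, \ldots, c_j)$ consisting of exactly $p^e$ entries equal to $b_i$ (the remaining entries zero). This contribution equals
\begin{equation*}
\binom{j}{p^e}\, d_{ji}\, Q_i^{\,j - p^e}\,(\partial_{b_i} Q_i)^{p^e},
\end{equation*}
with $\nu_i$-value exactly $\nu_i(d_{ji} Q_i^j) - b\theta_i = \nu_i(h) - b\theta_i$. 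By Lucas' theorem, $\binom{j}{p^e} \not\equiv 0 \pmod p$ is equivalent to the $p$-adic digit of $j$ at position $e$ being nonzero, which is guaranteed by $\nu_p(j) = e$, so the coefficient survives in $k_\nu$. The non-cancellation step then uses the lex-minimality of $(\nu_i(d_{ji} Q_i^j), \nu_p(j), j)$: any other index $j'$ with $\nu_i(d_{j'i} Q_i^{j'}) = \nu_i(h)$ has either $\nu_p(j') > e$, in which case Lucas kills its extremal coefficient modulo $p$, or $\nu_p(j') = e$ with $j' > j$, in which case its extremal contribution carries $Q_i^{j'-p^e}$, a strictly higher power of $Q_i$. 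After factoring out the common $(\partial_{b_i} Q_i)^{p^e}$, the surviving sum reads $\sum_{j'} \binom{j'}{p^e}\, \ini_{\nu'}(d_{j'i})\,(\ini_{\nu'} Q_i)^{j'-p^e}$ at a single value, and its non-vanishing follows from the non-cancellation condition (\ref{tag9.8}) built into Definition \ref{de9.11} combined with Proposition \ref{Proposition4.8}.

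The principal obstacles are twofold. First, the monotonicity $\theta_{i'} \le \theta_i$ for $i' < i$ which closes the induction should be isolated as a short lemma exploiting the strict growth of $\beta_i/\deg_x Q_i$ from Proposition \ref{Proposition9.18}(2) together with the fact that $\partial_{b_i} Q_i$ has $x$-degree below $\deg_x Q_i$, so that $\nu'(\partial_{b_i} Q_i)$ is controlled by earlier data. Second, the non-cancellation check in part (2) must be executed carefully in positive characteristic, where several extremal indices $j'$ with $\nu_p(j') = e$ can coexist; the correct identity in $G_{\nu'}$ must be reduced, via Proposition \ref{Proposition4.8}, to the non-vanishing axiom (\ref{tag9.8}) of Definition \ref{de9.11}.
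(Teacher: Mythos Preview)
Your proposal follows the same strategy as the paper: a simultaneous transfinite induction on $i$, with the generalized Leibniz/multinomial rule carrying the combinatorics and Proposition~\ref{Proposition9.26}(2) identifying $\nu'$ with $\nu_{i'}$ on polynomials of small $x$-degree.

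The one place where your sketch diverges from the paper, and where it would not go through as written, is the monotonicity lemma. The paper isolates this as Lemma~\ref{el10.4} and proves the \emph{strict} inequality $\theta_{i'}<\theta_{i''}$ for $i'<i''\le i$. Its proof does not proceed, as you suggest, from Proposition~\ref{Proposition9.18}(2) together with the degree bound on $\partial_{b_i}Q_i$: those ingredients, combined with part~(1) of the induction hypothesis, give only a \emph{lower} bound on $\nu'(\partial_bQ_i)$, hence an \emph{upper} bound on $\theta_i$, which is the wrong direction. To get a lower bound on $\theta_{i''}$ the paper applies part~(2) of the induction hypothesis to $h=Q_{i''}$ at level $i'$ (with $i''=i'+$): this produces $\tilde b=b(i',Q_{i''})$ with $\nu_{i'}(Q_{i''})-\nu_{i'}(\partial_{\tilde b}Q_{i''})=\tilde b\,\theta_{i'}$, and then $\beta_{i''}>\alpha_{i''}\beta_{i'}=\nu_{i'}(Q_{i''})$ from Proposition~\ref{Proposition9.18}(1) together with Remark~\ref{Remark10.2} converts this into $(\beta_{i''}-\nu'(\partial_{\tilde b}Q_{i''}))/\tilde b>\theta_{i'}$, hence $\theta_{i''}>\theta_{i'}$. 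So part~(2) at earlier ordinals is the engine behind the monotonicity, which is precisely why both parts must be carried together.

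The strictness is not cosmetic: in your part~(2) argument you single out the Leibniz contribution with $a=0$, but to ensure that the $a>0$ contributions (coming from $\partial_a d_{ji}$) do not match its $\nu_i$-value you need $\theta_{i'}<\theta_i$ strictly; this is exactly how the paper's inequality~(\ref{tag10.7}) is used. With that correction, the rest of your argument---Lucas for the binomial coefficient $\binom{j}{p^e}$, and isolating the term of $Q_i$-degree $j-p^e$ in the $i$-standard expansion of $\partial_bh$---matches the paper's reasoning.
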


\begin{remark}{\label{Remark10.2}} For all $b\in\mathbb{N}$ we have $\deg_x\partial_{b}Q_i<\bar\alpha_i$. By Proposition \ref{Proposition9.26b} there exists an ordinal $i_0$ such that $i_0+=i$ and
\begin{equation}
\nu(\partial_{b}Q_i)=\nu_{i_0}(\partial_{b}Q_i).\label{tag10.2}
\end{equation}
In particular $\nu(\partial_{b_i}Q_i)=\nu_{i_0}(\partial_{b_i}Q_i)$. Thus replacing $\nu(\partial_{b_i}Q_i)$ by
$\nu_{i_0}(\partial_{b_i}Q_i)$ in (\ref{tag10.1}) gives rise to an equivalent inequality. Also, $\nu_i\left(\partial_bh\right)\le\nu\left(\partial_bh\right)$, so replacing $\nu_i\left(\partial_bh\right)$ by $\nu\left(\partial_bh\right)$ in (\ref{tag10.1}) gives rise to a true, but an a priori weaker inequality.
\end{remark}

\begin{proof}[Proof of Proposition \ref{Proposition10.1}] We prove Proposition \ref{Proposition10.1} by transfinite induction. For $i=0$ we have $Q_i=x$, $b_i=1$ and $b=p^e$. Write $h=\sum\limits_{j=0}^sd_{j,0}x^j$. For $b=0$ and for $b>s$, (\ref{tag10.1}) is trivially true. Assume that $0<b\leq s$. There exists $j$, $0\leq j\leq s$, such that
$$
\nu_0(\partial_b h)=\nu\left(\binom{ j}{b}d_{j,0}x^{j-b}\right)\geq\nu(d_{j,0})+(j-b)\beta_0\geq \nu_0(h)-b\beta_0,
$$
which gives (\ref{tag10.1}).

To prove (2), chose $d_{j,0}x^j$ as in part (2), then $\nu_0(h)=\nu_0(d_{j,0}x^j)=\nu(d_{j,0})+j\beta_0$. By definition of $e$ and $b$, we have $\nu_0\left(\binom jb\right)=\nu\left(\binom jb\right)=0$. Now,
\begin{equation}
\begin{split}
\nu_0\left(\partial_{p^e} d_{j,0}x^j\right)=\nu_0\left(\binom{j}{p^e} d_{j,0}x^{j-p^e}\right)&=
\nu_0\left(\binom{j}{p^e}\right)+\nu(d_{j,0})+(j-p^e)\beta_0=\\
=&\nu(d_{j,0})+j\beta_0-p^e\beta_0=\nu_0(h)-p^e\beta_0.
\end{split}
\end{equation}
Assume that $i>1$ and that the result is known for all the ordinals strictly smaller than $i$.

\begin{lemma}{\label{el10.4}} Consider a pair of ordinals $i',i''$ such that $i'<i''\le i$. Then
\begin{equation}
\frac{\beta_{i'}-\nu\left(\partial_{b_{i'}}Q_{i'}\right)}{b_{i'}}<\frac{\beta_{i''}-\nu\left(\partial_{b_{i''}}Q_{i''}\right)}{b_{i''}}
\label{tag10.3}
\end{equation}
\end{lemma}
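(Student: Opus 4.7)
The lemma is proved as part of the ongoing transfinite induction on $i$ for Proposition~\ref{Proposition10.1}, so we may freely use Proposition~\ref{Proposition10.1} for every ordinal strictly less than $i$. Write $\kappa_j:=\frac{\beta_j-\nu'(\partial_{b_j}Q_j)}{b_j}$ for brevity; the goal is $\kappa_{i'}<\kappa_{i''}$ whenever $i'<i''\le i$.

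The key case is $i''=i'+1$, after which the general case reduces by transfinite induction on $i''$: if $i''$ is a successor $j+1$ with $j>i'$ one chains $\kappa_{i'}<\kappa_j<\kappa_{j+1}$; if $i''$ is a limit ordinal one further uses the assumption (\ref{tag9.17}) on limit key polynomials, which gives $\nu'(\partial_b Q_{i''})=\nu_{i_0}(\partial_b Q_{i''})$ for the $i''$-inessential $i_0<i''$ in (\ref{tag9.11}), so that one can pick an intermediate ordinal between $\max(i',i_0)$ and $i''$. I now focus on the case $i''=i'+1$.

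I apply Proposition~\ref{Proposition10.1}(2), available since $i'<i$, to $h=Q_{i'+1}$ with respect to the truncation $\nu_{i'}$. The $i'$-standard expansion of $Q_{i'+1}$ has positive degree in $Q_{i'}$: from (\ref{tag9.30}) when $\alpha_{i'+1}>1$, and from $Q_{i'+1}=Q_{i'}+z_{i'}$ produced by the Case 1 / Case 2 construction when $\alpha_{i'+1}=1$. Proposition~\ref{Proposition10.1}(2) thus supplies an integer $b^*=b(i',Q_{i'+1})$ such that
\[
\nu_{i'}(Q_{i'+1})-\nu_{i'}(\partial_{b^*}Q_{i'+1})=b^*\kappa_{i'}.
\]
Since $\deg_x\partial_{b^*}Q_{i'+1}<\deg_xQ_{i'+1}$, Proposition~\ref{Proposition9.26}(2) (with $i=i'$) yields $\nu'(\partial_{b^*}Q_{i'+1})=\nu_{i'}(\partial_{b^*}Q_{i'+1})$, whence
\[
\beta_{i'+1}-\nu'(\partial_{b^*}Q_{i'+1})=\bigl[\beta_{i'+1}-\nu_{i'}(Q_{i'+1})\bigr]+b^*\kappa_{i'}.
\]

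The remaining ingredient is the strict inequality $\beta_{i'+1}>\nu_{i'}(Q_{i'+1})$. When $\alpha_{i'+1}>1$ this holds because $Q_{i'+1}$ is a lifting of the minimal polynomial $g_{1i'}$ of $\ini_{\nu'}Q_{i'}$ (Corollary~\ref{Corollary9.24}), so $\ini_{i'}Q_{i'+1}$ vanishes in $G_{\nu'}$ and hence $\nu'(Q_{i'+1})>\nu_{i'}(Q_{i'+1})$; when $\alpha_{i'+1}=1$ it follows from (\ref{tag9.36})--(\ref{tag9.37}), which give $\nu_{i'}(Q_{i'+1})=\beta_{i'}=\nu'(z_{i'})<\nu'(Q_{i'+1})=\beta_{i'+1}$. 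In either case $\beta_{i'+1}-\nu'(\partial_{b^*}Q_{i'+1})>b^*\kappa_{i'}$, and by the maximality defining $\kappa_{i'+1}$,
\[
\kappa_{i'+1}\ge\frac{\beta_{i'+1}-\nu'(\partial_{b^*}Q_{i'+1})}{b^*}>\kappa_{i'},
\]
as required. The most delicate step is unifying the two construction regimes for $Q_{i'+1}$ to obtain the strict inequality $\beta_{i'+1}>\nu_{i'}(Q_{i'+1})$, together with the bookkeeping needed to descend from a limit ordinal $i''$ to a successor using (\ref{tag9.17}); the rest is a direct combination of Propositions~\ref{Proposition9.26} and \ref{Proposition10.1}(2).
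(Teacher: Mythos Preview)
Your argument is correct and follows the same strategy as the paper: reduce to the ``immediate'' step and exhibit a single $b^*$ (namely $b(i',Q_{i''})$ from Proposition~\ref{Proposition10.1}(2)) for which $\dfrac{\beta_{i''}-\nu'(\partial_{b^*}Q_{i''})}{b^*}>\kappa_{i'}$, whence $\kappa_{i''}>\kappa_{i'}$ by the defining maximality of $b_{i''}$. The paper's presentation differs only cosmetically: it reduces by transfinite induction on $i''-i'$ to the single case $i''=i'+$ (thus handling successor and limit $i''$ uniformly) and obtains the strict inequality $\beta_{i''}>\nu_{i'}(Q_{i''})=\alpha_{i''}\beta_{i'}$ in one stroke from Proposition~\ref{Proposition9.18}(1), rather than splitting into the two subcases $\alpha_{i'+1}>1$ and $\alpha_{i'+1}=1$ as you do.

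One small point worth making explicit in your limit-ordinal bookkeeping: picking an intermediate $j$ with $\max(i',i_0)\le j<i''$ only helps if you then rerun the successor-style computation at level $j$ for $Q_{i''}$, and for that you need $\nu'(\partial_{b^*}Q_{i''})=\nu_j(\partial_{b^*}Q_{i''})$. This does follow from (\ref{tag9.17}) via the sandwich $\nu_{i_0}\le\nu_j\le\nu'$, but it is not an instance of Proposition~\ref{Proposition9.26}(2) at level $j$ (since $\deg_x\partial_{b^*}Q_{i''}$ need not be smaller than $\deg_xQ_{j+1}=\deg_xQ_j$). The paper sidesteps this by working directly at $i'$ with $i'+=i''$ and invoking (\ref{tag9.17}) through Remark~\ref{Remark10.2}.
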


\begin{proof} {\bf Basic Case.} First, assume that $i''=i'+$ and that for every $\tilde b\in\mathbb N$ we have
\begin{equation}
\nu(\partial_{\tilde b}Q_{i''})=\nu_{i'}(\partial_{\tilde b}Q_{i''}).\label{eq:nuofpartialequalsnui'}
\end{equation}
By definition of $b_{i''}$, it is sufficient to prove that there exists a strictly positive integer $\tilde b$ such that (\ref{tag10.3}) holds with $b_{i''}$ replaced by $\tilde b$.

We take $\tilde b:=b\left(i',Q_{i''}\right)$. We have:
$$
\begin{aligned}
\beta_{i''}-\nu\left(\partial_{\tilde b}Q_{i''}\right)&>\nu_{i'}\left(Q_{i''}\right)-\nu\left(\partial_{\tilde b}Q_{i''}\right)=
\nu_{i'}\left(Q_{i''}\right)-\nu_{i'}\left(\partial_{\tilde b}Q_{i''}\right)=\\
&=\frac{\tilde b}{b_{i'}}\left(\beta_{i'}-\nu\left(\partial_{b_{i'}}Q_{i'}\right)\right).
\end{aligned}
$$
Here the first inequality is given by Proposition \ref{Proposition9.18}, the first equality by (\ref{eq:nuofpartialequalsnui'}) and the second equality by Proposition \ref{Proposition10.1} (2) applied to $i'<i$, which we are allowed to use by the induction assumption of Proposition \ref{Proposition10.1}. This completes the proof of the Basic Case.
\medskip

In the general case, we argue by contradiction. Assume that (\ref{tag10.3}) does not hold. Take the smallest ordinal $\ell''\le i''$ such that there exists an ordinal $\ell'$ satisfying $i'\le\ell'<\ell''$ and
\begin{equation}
\frac{\beta_{\ell'}-\nu\left(\partial_{b_{\ell'}}Q_{\ell'}\right)}{b_{\ell'}}\ge
\frac{\beta_{\ell''}-\nu\left(\partial_{b_{\ell''}}Q_{\ell''}\right)}{b_{\ell''}}.
\label{tag10.3contradiction}
\end{equation}
Increasing $\ell'$, if necessary, we may assume that
\begin{equation}
\ell''=\ell'+.\label{eq:l''=l'+}
\end{equation}
By Propositions \ref{Proposition9.26b} and \ref{nuiisavaluation} (2), by further increasing $\ell'$ we may assume, in addition, that
\begin{equation}
\nu\left(\partial_{\tilde b}Q_{\ell''}\right)=\nu_{\ell'}\left(\partial_{\tilde b}Q_{\ell''}\right)\quad\text{ for all }\ \tilde b\in\mathbb N.\label{eq:nuofpartialequalsnul'}
\end{equation}
Together with (\ref{tag10.3contradiction}) and (\ref{eq:l''=l'+}) this contradicts the Basic Case. The proof of the Lemma is complete.
\end{proof}
To prove Proposition \ref{Proposition10.1} (1), it is sufficient to prove it for each $i$-standard monomial appearing in the $i$-standard expansion of $h$. Indeed, let
\begin{equation}
h=\sum\limits_{\bar\gamma_{i+1}\in W}d_{\bar\gamma_{i+1}}\mathbf{Q}_{i+1}^{{\bar\gamma}_{i+1}}\label{eq:istandardexpofhW}
\end{equation}
be an $i$-standard expansion of $h$, where $W$ is a certain finite index set. Assume that the result is true for each $i$-standard monomial $\mathbf{Q}_{i+1}^{{\bar\gamma}_{i+1}}$ appearing in (\ref{eq:istandardexpofhW}). This means that for each
$\bar\gamma_{i+1}\in W$ we have
$$
\nu_i\left(\partial_{b}\mathbf{Q}_{i+1}^{{\bar\gamma}_{i+1}}\right)\ge\nu_i\left(\mathbf{Q}_{i+1}^{{\bar\gamma}_{i+1}}\right)-\frac b{b_i}\left(\beta_i-\nu\left(\partial_{b_i}Q_i\right)\right).
$$
Thus
\begin{equation}
\begin{split}
\nu_i(\partial_bh)\ge
\min\limits_{\bar\gamma_{i+1}}\nu_i\left(d_{\bar\gamma_{i+1}}\partial_b\mathbf{Q}_{i+1}^{{\bar\gamma}_{i+1}}\right)&\ge
\nu\left(d_{\bar\gamma_{i+1}}\right)+\min\limits_{\bar\gamma_{i+1}}\nu_i\left(\mathbf{Q}_{i+1}^{{\bar\gamma}_{i+1}}\right)-
\frac b{b_i}\left(\beta_i-\nu\left(\partial_{b_i}Q_i\right)\right)=\\
=&\nu_i(h)-\frac b{b_i}\left(\beta_i-\nu\left(\partial_{b_i}Q_i\right)\right),
\end{split}
\end{equation}
as desired (here the last equality holds because, by Definition \ref{de9.11}, a $Q_i$-free $i$-standard expansion is a finite sum of $Q_i$-free $i$-standard monomials whose $\nu$-value equals the minimum of the $\nu$-values of the monomials).

Let $\mathbf{Q}_{i+1}^{{\bar\gamma}_{i+1}}$ be such an $i$-standard monomial. Let $\bar\gamma_{i+1}=\left(\left.\gamma_j\ \right|\ j\le i\right)$ and write
$$
\mathbf{Q}_{i+1}^{{\bar\gamma}_{i+1}}=\mathbf{Q}_i^{\bar\gamma_i}Q_i^{\gamma_i}.
$$
We want to expand $\partial_b\mathbf{Q}_{i+1}^{{\bar\gamma}_{i+1}}$ in terms of products of the form
$Q_i^{\gamma_i-q}\left(\partial_{j_0}\mathbf{Q}_i^{{\bar\gamma}_i}\right)\prod\limits_{t=1}^q\left(\partial_{j_t}Q_i\right)$, where $q\le\gamma_i$ and $j_0+j_1+\dots+j_q=b$. Each such product appears in the sum with a certain positive integer coefficient that we will now compute explicitly.

To do that, we first prove some general formulae about Hasse derivatives of products and powers of polynomials. We start with the following generalized Leibniz rule.
 
\begin{lemma}\label{Lebnitzrule} For any two polynomials $A(X),B(X)\in K[X]$ and any positive integer $b$, we have
\begin{equation}
\partial_b(A(X)B(X))=\sum\limits_{j=0}^{b}(\partial_j A(X))(\partial_{b-j}B(X)).\label{Leibniz}
\end{equation}
\end{lemma}
\begin{proof} Let $m=\deg_XA(X)$ and $n=\deg_XB(X)$. By definition, Hasse derivatives are the coefficients in Taylor expansions:
$$
A(X+Y)=\sum\limits_{i=0}^{m}\partial_iA(X)Y^i\ \text{ and }\ B(X+Y)=\sum\limits_{i=0}^{n}\partial_iB(X)Y^i.
$$
We obtain
$$
AB(X+Y)=A(X+Y)B(X+Y)=\sum\limits_{b=0}^{m+n}\left(\sum\limits_{j=0}^b\partial_jA(X)\partial_{b-j}B(X)\right)Y^b.
$$
Since the coefficients in the Taylor expansion are uniquely determined, this proves (\ref{Leibniz}).
\end{proof}
For positive integers $a_1\le\dots\le a_q$, let $k$ denote the number of distinct elements in the set $\{a_1,\dots,a_q\}$. We define the multiplicities $n_1,\dots,n_k$ of $a_1,\dots,a_q$ as follows. Let $n_1$ be the number of appearances of $a_1$ in the sequence $a_1,\dots,a_q$. Let $n_2$ the number of appearances of the second smallest element of $\{a_1,\dots,a_q\}$, and so on until $n_k$, which is, by definition, the number of appearances of $a_q$.
\smallskip

\noi\textbf{Notation.} Take an integer $n\ge q$. Let $C_n(a_1,\dots,a_q)=\frac{n!}{(n-q)!\cdot n_1!\cdot\ldots\cdot n_k!}$.
\medskip

\begin{lemma}\label{Bn} For every polynomial $B(X)$ and all positive integers $b$ and $n$ we have
\begin{equation}
\partial_b (B(X)^{n})=\sum\limits_{\substack{1\leq q\le n\\j_1+\dots+j_q=b\\0<j_1\le\dots\le j_q}}C_n(j_1,\dots,j_q)B(X)^{n-q}
\prod\limits_{t=1}^q\left(\partial_{j_t}B(X)\right).\label{tag2}
\end{equation}
\end{lemma}
\begin{proof} Let $m=\deg\ B(X)$. We have
\begin{equation}
B(X+Y)^n=\left(\sum\limits_{i=0}^{m}\partial_iB(X)Y^i\right)^n=B(X)^n+
\sum\limits_{b=1}^{mn}\left(\sum\limits_{j_1+\dots+j_n=b}\prod\limits_{t=1}^{n}\partial_{j_t}B(X)\right)Y^{b},\label{eq:nonnegativeexponents}
\end{equation}
where the $j_t$ run over non-negative integers; the $j_t$ are not necessarily ordered by size.
\medskip

Next, we rewrite (\ref{eq:nonnegativeexponents}) in the following way. In each of the products appearing in parentheses on the right hand side of (\ref{eq:nonnegativeexponents}), we separate the terms with $j_t=0$ from those with $j_t>0$. Precisely, for each product appearing in parentheses on the right hand side of (\ref{eq:nonnegativeexponents}), let
$$
q=\#\left\{t\in\{1,\dots n\}\ \left|\ j_t\ne0\right.\right\}.
$$
Then we can rewrite (\ref{eq:nonnegativeexponents}) as
\begin{equation}
B(X+Y)^n=B(X)^n+
\sum\limits_{b=1}^{mn}\left(\sum\limits_{q=1}^n\sum\limits_{j_1+\dots+j_q=b}B^{n-q}(X)\prod\limits_{t=1}^q\partial_{j_t}B(X)\right)Y^b,\label{eq:positiveexponents}
\end{equation}
where the third sum is taken over all the distinct ways in which $b$ can be written as the sum of $q$ {\it strictly positive} integers $j_t$.  Again, the $j_t$ are not necessarily ordered by size. Now we have to count how often the same product
$B^{n-q}(X)\prod\limits_{t=1}^q\partial_{j_t}B(X)$ appears in the third sum on the right hand side. Fix $n$ non-negative integers $j_1\do j_n$ such that $\sum\limits_{t=1}^nj_t=b$. Renumber them so that $j_1\le\dots\le j_n$. How many distinct $n$-tuples can we obtain in (\ref{eq:nonnegativeexponents}) from the numbers $j_1,\dots,j_n$? If all the $j_t$ are distinct then the number is $n!$. But if some of the $j_t$'s are equal, then permuting them only among themselves does not produce new tuples. Similarly, if $n-q\ge2$, permuting the $(n-q)$ factors in $B^{n-q}(X)$ among themselves does not produce new tuples. Let the numbers $n_1,\dots,n_k$ be the multiplicities of the numbers $j_1,\dots,j_n$, defined above. Then the number of appearances of $B^{n-q}(X)\prod\limits_{t=1}^q\partial_{j_t}B(X)$ is
\begin{equation}
\frac{n!}{(n-q)!\cdot n_1!\cdot\ldots\cdot n_k!}=C_{n}(j_1,\dots,j_q),\label{tag10.5}
\end{equation}
where $0<j_1\le\dots\le j_q$. This proves that the coefficient of $Y^b$ in (\ref{eq:nonnegativeexponents}) equals
$$
\sum\limits_{\substack{1\leq q\le n\\j_1+\dots+j_q=b\\0<j_1\le\dots\le j_q}}C_n(j_1,\dots,j_q)B^{n-q}(X)
\prod\limits_{t=1}^q\left(\partial_{j_t}B(X)\right).
$$
By definition of Hasse derivative, this coeffecient is nothing but $\partial_b\left(B(X)^n\right)$. The proof is complete.
\end{proof}

\begin{lemma}\label{ABn}
For every two polynomials $A(X)$ and $B(X)$ and all positive integers $b$ and $n$, we have
\begin{equation}
\partial_b (A(X)B(X)^{n})=\sum\limits_{\substack{0\leq q\le n\\j_0+\dots+j_q=b\\0<j_1\le\dots\le j_q}} C_{n}(j_1,\dots,
j_q)B(X)^{n-q}\left(\partial_{j_0}A(X)\right)\prod\limits_{t=1}^q\left(\partial_{j_t}B(X)\right).\label{tag1}
\end{equation}
\end{lemma}
\begin{proof} By Lemma \ref{Lebnitzrule}, we have $\partial_b(A(X)B(X)^{n})=\sum\limits_{j_0=0}^{b}(\partial_{j_0}A(X))(\partial_{b-{j_0}}B(X)^{n})$.
Now Lemma \ref{ABn} follows from Lemma \ref{Bn}.
\end{proof}

Coming back to the proof of Proposition \ref{Proposition10.1} (1), we have
\begin{equation}
\partial_b\mathbf{Q}_{i+1}^{{\bar\gamma}_{i+1}}=
\sum\limits_{\substack{0\leq q\le\gamma_i\\j_0+j_1+\dots+j_q=b\\0<j_1\le\dots\le j_q}}C_{\gamma_i}(j_1,\dots j_q)Q_i^{\gamma_i-q}\left(\partial_{j_0}\mathbf{Q}_i^{{\bar\gamma}_i}\right)\prod\limits_{t=1}^q\left(\partial_{j_t}Q_i\right),\label{tag10.4}
\end{equation}
by Lemma \ref{ABn}. By Propositions \ref{Proposition9.26b} and \ref{nuiisavaluation} (2), we have
$$
\nu_i\left(\partial_{j_t}Q_i\right)=\nu\left(\partial_{j_t}Q_i\right).
$$
We have
\begin{equation}
\beta_i-\nu_i\left(\partial_{j_t}Q_i\right)=\beta_i-\nu\left(\partial_{j_t}Q_i\right)\le
\frac{j_t}{b_i}\left(\beta_i-\nu\left(\partial_{b_i}Q_i\right)\right)\label{tag10.6}
\end{equation}
by definition of $b_i$. Further,
\begin{equation}
\nu_i\left(\mathbf{Q}_i^{{\bar\gamma}_i}\right)-\nu_i\left(\partial_{j_0}\mathbf{Q}_i^{{\bar\gamma}_i}\right)=
\nu_{i_0}\left(\mathbf{Q}_i^{{\bar\gamma}_i}\right)-\nu_{i_0}\left(\partial_{j_0}\mathbf{Q}_i^{{\bar\gamma}_i}\right)
\le\frac{j_0}{b_i}(\beta_i-\nu(\partial_{b_i}Q_i)),\label{tag10.7}
\end{equation}
where $i_0$ is sufficiently large with $i_0+=i$, the equality holds because $Q_i$ does not appear in $\mathbf{Q}_i^{{\bar\gamma}_i}$ and by Proposition \ref{Proposition9.26b}, and the inequality by the induction assumption and in view of Lemma \ref{el10.4}. Note that by Lemma \ref{el10.4} the inequality in (\ref{tag10.7}) is strict whenever $j_0>0$. Adding the inequalities (\ref{tag10.6}) for $1\le t\le q$ and (\ref{tag10.7}), we obtain:
\begin{equation}
\begin{aligned}
&\nu_i\left(\mathbf{Q}_{i+1}^{{\bar\gamma}_{i+1}}\right)-
\nu_i\left(Q_i^{\gamma_i-q}\left(\partial_{j_0}\mathbf{Q}_i^{{\bar\gamma}_i}\right)\prod\limits_{t=1}^q\left(\partial_{j_t}Q_i\right)\right)\le\\
&\le\frac{j_0+j_1+\dots+j_q}{b_i}(\beta_i-\nu(\partial_{b_i}Q_i))=\frac b{b_i}(\beta_i-\nu(\partial_{b_i}Q_i)).
\end{aligned}\label{tag10.8}
\end{equation}
By (\ref{tag10.4}), (\ref{tag10.8}) and since $\nu$ is non-negative on $\mathbb{N}$ (in particular,
$\nu(C_{\gamma_i}(j_1,\dots,j_q))\ge0$), we have
\begin{equation}
\begin{aligned}
&\nu_i\left(\mathbf{Q}_{i+1}^{{\bar\gamma}_{i+1}}\right)-\nu_i\left(\partial_b\mathbf{Q}_{i+1}^{{\bar\gamma}_{i+1}}\right)\le\\
&\le\nu_i\left(\mathbf{Q}_{i+1}^{{\bar\gamma}_{i+1}}\right)-\min\limits_{(j_0\do j_q)}\left\{\nu_i\left(Q_i^{\gamma_i-q}
\left(\partial_{j_0}\mathbf{Q}_i^{{\bar\gamma}_i}\right)\prod\limits_{t=1}^q\left(\partial_{j_t}Q_i\right)\right)\right\}\le
\frac b{b_i}(\beta_i-\nu(\partial_{b_i}Q_i)),
\end{aligned}\label{tag10.9}
\end{equation}
as desired. Proposition \ref{Proposition10.1} (1) is proved.
\medskip

Now let the notation be as in Proposition \ref{Proposition10.1} (2); in particular, $b$ stands for $b(i,h)$. To prove this part, we first show that replacing $\nu_i(\partial_bh)$ by $\nu\left(\left(\partial_{b_i}Q_i\right)^{p^e}d_{j,i}Q_{i}^{j-p^{e}}\right)$ gives equality in (\ref{tag10.1}). Indeed, we have
\begin{equation}
\begin{aligned}
&\nu_i(h)-\nu\left(\left(\partial_{b_i}Q_i\right)^{p^e}d_{j,i}Q_{i}^{j-p^{e}}\right)=
\nu_i(h)-\left(p^e\nu_i(\partial_{b_i}Q_i)+\nu(d_{j,i})+(j-p^{e})\nu(Q_i)\right)=\\
=&p^{e}\nu(Q_i)-p^{e}\nu(\partial_{b_i}Q_i)=\frac{b_ip^{e}}{b_i}(\beta_i-\nu_i(\partial_{b_i}Q_i))=
\frac{b}{b_i}(\beta_i-\nu_i(\partial_{b_i}Q_i)).
\end{aligned}
\end{equation}
Therefore by part (1) of the Proposition, $\nu_i(\partial_bh)$ must be greater than or equal to
$$
\nu\left(\left(\partial_{b_i}Q_i\right)^{p^e}d_{j,i}Q_{i}^{j-p^{e}}\right)
$$
and it is sufficient to prove that the $i$-standard expansion of $\partial_{b}h$ contains a term of the form $dQ_{i}^{j-p^{e}}$ with
$\nu(d)=\nu\left(\left(\partial_{b_i}Q_i\right)^{p^e}d_{ji}\right)$ and all the other terms $d'Q_{i}^{j'}$ satisfy either $j'\neq j-p^{e}$ or $\nu_i\left(d'Q_i^{j'}\right)>\nu_i\left(dQ_i^{j-p^e}\right)$.
\medskip

We proceed by considering all the terms of the form $\mathbf{Q}_{i+1}^{{\bar\gamma}_{i+1}}$ that appear in the $i$-standard expansion of $h$.
\medskip

First, consider such a term appearing in $d_{j,i}Q_{i}^{j}$, of minimal value. Write $b=\sum\limits_{t=1}^q j_t $, where $q=p^e$ and $j_t=b_i$ for all $t$. For each $Q_i$-free standard monomial $\mathbf{Q}_i^{{\bar\gamma}_i}$, appearing in $d_{j,i}$, the corresponding term in (\ref{tag10.4}) is $\binom j{p^e}Q_i^{j-p^e}\mathbf{Q}_i^{{\bar\gamma}_i}\left(\partial_{b_i}Q_i\right)^{p^e}$ by (\ref{tag10.5}). Put $d=\binom j{p^e}\mathbf{Q}_i^{{\bar\gamma}_i}\left(\partial_{b_i}Q_i\right)^{p^e}$. The binomial coefficient $\binom j{p^e}$ is not divisible by $p$ by definition of $e$ and $j$, so $\nu\left(\binom j{p^e}\right)=0$. Using Proposition \ref{Proposition9.26b} and \ref{nuiisavaluation} (2) and Lemma \ref{qQ+r}, we obtain
$$
\nu(d)=\nu\left(\left(\partial_{b_i}Q_i\right)^{p^e}d_{j,i}\right).
$$
Now for any other choice of $j_0,j_1,\dots,j_q$ such that $q=p^e$ and $\sum\limits_{t=0}^qj_t=b$ we would have at least one
$t\in\{1\do q\}$ such that $j_t<b_i$. Therefore such terms satisfy strict inequality in (1) since they satisfy strict inequality in (\ref{tag10.7}) or in (\ref{tag10.6}) and hence their value is strictly greater than $\nu_i\left(dQ_i^{j-p^e}\right)$. We obtain
\begin{equation}
\begin{aligned}
\nu_{i}\left(d_{j,i}Q_{i}^{j}\right)-\nu_{i}\left(\partial_{b}\left(d_{j,i}Q_{i}^{j}\right)\right)=
\frac{b}{b_i}\left(\beta_i-\nu(\partial_{b_i}Q_i)\right).
\end{aligned}\label{partial_prop1}
\end{equation}

Now assume $\mathbf{Q}_{i+1}^{{\bar\gamma}_{i+1}}$ is such that
$\nu\left(\mathbf{Q}_{i+1}^{{\bar\gamma}_{i+1}}\right)>\nu_{i}\left(d_{j,i}Q_{i}^{j}\right)$. By (\ref{tag10.1}) we have
$$
\nu\left(\mathbf{Q}_{i+1}^{{\bar\gamma}_{i+1}}\right)-
\nu\left(\partial_{b}\mathbf{Q}_{i+1}^{{\bar\gamma}_{i+1}}\right)\leq\frac{b}{b_i}\left(\beta_i-\nu(\partial_{b_i}Q_i)\right)
$$
and using (\ref{partial_prop1}) we find
$\nu\left(\partial_{b}\mathbf{Q}_{i+1}^{{\bar\gamma}_{i+1}}\right)>
\nu_{i}\left(\partial_{b}\left(d_{j,i}Q_{i}^{j}\right)\right)=\nu_i\left(dQ_i^{j-p^e}\right)$.
\medskip

Now consider terms $\mathbf{Q}_{i+1}^{{\bar\gamma}_{i+1}}$ that appear in an expression $d_{m,i}Q_{i}^{m}$, $m\ne j$,  such that
$$
\nu\left(\mathbf{Q}_{i+1}^{{\bar\gamma}_{i+1}}\right)=\nu_{i}\left(d_{j,i}Q_{i}^{j}\right).
$$
It is sufficient to show that for $j'=m-q=j-p^{e}$ such terms satisfy the strict inequality in (\ref{tag10.1}), so in view of (\ref{partial_prop1}) their valuation is strictly greater than $\nu_i\left(dQ_i^{j-p^e}\right)$.
\medskip

Take one such term. We have two cases. If $m>j$ then, by definition of $j$, either
$\nu\left(d_{m,i}Q_{i}^{m}\right)>\nu\left(d_{j,i}Q_{i}^j\right)$ or $\nu\left(d_{m,i}Q_{i}^{m}\right)=\nu\left(d_{j,i}Q_{i}^j\right)$ and $\nu^{(p)}(m)>\nu^{(p)}(j)$. By Proposition \ref{Proposition10.1} (1), in either case such a term satisfies strict inequality in (\ref{tag10.1}).

If $m<j$ and $m-q=j-p^{e}$ then $q<p^{e}$, so for any choice of $j_0,j_1,\dots,j_t$ with $\sum\limits_{t=1}^qj_t=b$ we must have at least one $t\in\{1\do q\}$ such that $j_t<b_i$. Therefore such terms satisfy strict inequality in Proposition 7.1 (1) since they satisfy strict inequality in (\ref{tag10.7}) or (\ref{tag10.6}).
\end{proof}

\begin{remark}{\label{Remark10.5}} Let
\begin{equation}
I_{i,max}=\left\{\tilde b_i\in\mathbb{N}_0\ \left|\
\frac{\beta_i-\nu(\partial_{b_i}Q_i)}{b_i}=\frac{\beta_i-\nu(\partial_{\tilde b_i}Q_i)}{\tilde b_i}\right.\right\}.
\label{tag10.10}
\end{equation}
By definition, we have $b_i=\min\ I_{i,max}$. Proposition \ref{Proposition10.1} (1) holds equally well with $b_i$ replaced by any
$\tilde b_i\in I_{i,max}$. Similarly, Lemma \ref{el10.4} holds if the pair $(b_{i'},b_{i''})$ is replaced by $\left(\tilde b_{i'},\tilde
b_{i''}\right)$ with $\tilde b_{i'}\in\mathbb{N}$, $\tilde b_{i''}\in I_{i'',max}$.
\end{remark}

\begin{corollary}{\label{Corollary10.6}} For each ordinal $i\le\ell $, each $\tilde b_i\in I_{i,max}$ is of the form $\tilde b_i=p^{\tilde e_i}$ for some $\tilde e_i\in\mathbb{N}_0$. In particular, $b_i=p^{e_i}$ for some $e_i\in\mathbb{N}_0$. In the special case when $\ch\ k_\nu=0$ we have $p=1$ and so $I_{i,max}=\{b_i\}=\{1\}$.
\end{corollary}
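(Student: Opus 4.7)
My plan is to proceed by transfinite induction on $i\le\ell$. The base case is $i=1$: since $Q_1=x$, we have $\partial_1Q_1=1$ and $\partial_bQ_1=0$ for $b\ge2$, forcing $I_{1,\max}=\{1\}=\{p^0\}$.

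For the inductive step, fix $i\le\ell$, assume the corollary for all $i'<i$, and let $\tilde b\in I_{i,\max}$. I treat the case when $i$ admits an immediate predecessor $i-1$ (the limit case is parallel, using the weakly affine expansion~(\ref{tag9.11}) in place of~(\ref{tag9.9}) and the inductive hypothesis for $b_{i_0}$). Since $\deg_x\partial_{\tilde b}Q_i<\deg_xQ_i$, Proposition~\ref{Proposition9.26}(2) gives $\nu'(\partial_{\tilde b}Q_i)=\nu_{i-1}(\partial_{\tilde b}Q_i)$, and $\nu_{i-1}(Q_i)=\alpha_i\beta_{i-1}$ by the homogeneity condition~(\ref{tag9.10}). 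Hence
$$
\beta_i-\nu'(\partial_{\tilde b}Q_i)=(\beta_i-\alpha_i\beta_{i-1})+\bigl(\nu_{i-1}(Q_i)-\nu_{i-1}(\partial_{\tilde b}Q_i)\bigr),
$$
and Proposition~\ref{Proposition10.1}(1), applied to $h=Q_i$ with $i$ replaced by $i-1$, bounds the second summand by $\frac{\tilde b}{b_{i-1}}(\beta_{i-1}-\nu'(\partial_{b_{i-1}}Q_{i-1}))$. Tracking this bound through~(\ref{tag10.4})--(\ref{tag10.9}), equality forces $j_0=0$ (from the strict inequality~(\ref{tag10.7})) and $j_1,\dots,j_q\in I_{i-1,\max}$ for the minimizing monomial; by the inductive hypothesis each $j_t$ is a power of $p$, so $\tilde b=\sum_tj_t$ is a sum of powers of $p$.

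To conclude that $\tilde b$ is a \emph{single} power of $p$, I would exploit the operator identity $\partial_{b_1}\circ\partial_{b_2}=\binom{b_1+b_2}{b_1}\partial_{b_1+b_2}$ together with Lucas' theorem. If $\tilde b$ had two or more nonzero $p$-adic digits, there would be a splitting $\tilde b=b_1+b_2$ with $b_1,b_2>0$ and $\binom{\tilde b}{b_1}$ a unit in $R_\nu$, yielding $\nu'(\partial_{\tilde b}Q_i)=\nu'(\partial_{b_1}\partial_{b_2}Q_i)$. Applying Proposition~\ref{Proposition10.1}(1) successively to $h=\partial_{b_2}Q_i$ at $b=b_1$ and to $h=Q_i$ at $b=b_2$, and comparing with the equality $\tilde b\in I_{i,\max}$ provides, forces $b_1,b_2\in I_{i,\max}$ and, more importantly, a strict improvement of the ratio contradicting the choice of $b_i$ as minimizer. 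In characteristic zero (where $p=1$), this degenerates pleasantly: every $\tilde b\ge2$ splits as $\tilde b=1+(\tilde b-1)$ with $\binom{\tilde b}{1}=\tilde b$ a unit, directly yielding $I_{i,\max}=\{1\}$.

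The main obstacle will be precisely this final step: the naive combination of two applications of Proposition~\ref{Proposition10.1}(1) gives only a non-strict inequality, so one must carefully exploit the equality conditions~(\ref{tag10.6})--(\ref{tag10.7}) together with the minimality defining $b_i$ and the inductive control on $I_{i-1,\max}$ in order to extract the required contradiction.
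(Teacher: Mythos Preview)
Your proposal has a genuine gap at exactly the point you flag as ``the main obstacle,'' and the fix is different from what you are reaching for. When you split $\tilde b=b_1+b_2$ with $\binom{\tilde b}{b_1}$ a unit and apply Proposition~\ref{Proposition10.1}(1) twice at level $i$, the two inequalities
\[
\beta_i-\nu'(\partial_{b_2}Q_i)\le\frac{b_2}{\tilde b}\bigl(\beta_i-\nu'(\partial_{\tilde b}Q_i)\bigr),
\qquad
\nu'(\partial_{b_2}Q_i)-\nu'(\partial_{\tilde b}Q_i)\le\frac{b_1}{\tilde b}\bigl(\beta_i-\nu'(\partial_{\tilde b}Q_i)\bigr)
\]
sum to an identity, not a strict inequality; and the equality analysis of (\ref{tag10.6})--(\ref{tag10.7}) together with the inductive hypothesis on $I_{i-1,\max}$ will not squeeze out the needed strictness either, because those conditions are perfectly compatible with $\tilde b\in I_{i,\max}$ failing to be a $p$-power. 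Your first block of argument (deducing that $\tilde b$ is a \emph{sum} of $p$-powers via the induction hypothesis) is therefore a detour that does not feed into the conclusion.

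The paper's argument avoids induction on $i$ altogether and gets the strictness from a different source: it applies Proposition~\ref{Proposition10.1}(1) not at level $i$ but at the \emph{lower} level $i_0$ (where $i_0=i-1$ or as in (\ref{tag9.11})), to the polynomial $\partial_{b''}Q_i$, obtaining
\[
\nu_{i_0}(\partial_{b''}Q_i)-\nu_{i_0}(\partial_{\tilde b}Q_i)\le\frac{b'}{b_{i_0}}\bigl(\beta_{i_0}-\nu'(\partial_{b_{i_0}}Q_{i_0})\bigr),
\]
and then invokes Lemma~\ref{el10.4}, which says the slope $\frac{\beta_i-\nu'(\partial_{b_i}Q_i)}{b_i}$ is \emph{strictly} increasing in $i$. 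This converts the right-hand side into something \emph{strictly} less than $\frac{b'}{\tilde b}(\beta_i-\nu'(\partial_{\tilde b}Q_i))$, and now the telescoping gives $\frac{\beta_i-\nu'(\partial_{b''}Q_i)}{b''}>\frac{\beta_i-\nu'(\partial_{\tilde b}Q_i)}{\tilde b}$, contradicting $\tilde b\in I_{i,\max}$. The key point you are missing is that the strict gain comes from dropping to level $i_0$ and using Lemma~\ref{el10.4}, not from any equality analysis at level $i$.
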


\begin{proof} Write $\tilde b_i=p^{\tilde e_i}u$, where $p\ \not|\ u$ if $\ch\ k_\nu=p>0$, and $p^{\tilde e_i}=1$ if
$\ch\ k_\nu=0$. We want to prove that $u=1$. We argue by contradiction. Assume that $u>1$. We claim that we can write
\begin{equation}
\binom{\tilde b_i}{b'}\partial_{\tilde b_i}=\partial_{b'}\circ\partial_{b''},\label{tag10.11}
\end{equation}
where $b',b''$ are strictly positive integers such that
\begin{equation}
b'+b''=\tilde b_i\label{tag10.12}
\end{equation}
and
\begin{equation}
\nu\left(\binom{\tilde b_i}{b'}\right)=0\label{tag10.13}
\end{equation}
Indeed, we can take $b'=p^{\tilde e_i}$ and $b''=p^{\tilde e_i}(u-1)$. Now, by Remark \ref{Remark10.9} below, $p$ does not divide
$\binom{\tilde b_i}{b'}$ and therefore its natural image in $K$ is non-zero and its value is $0$ (as usual, we view $\binom{\tilde b_i}{b'}$ as an element of $K$ via the natural map $\mathbb{N}\rightarrow K$).
\medskip

Take $i_0$ as in (\ref{tag10.2}), with $b$ replaced by $b''$. We have
\begin{equation}
\beta_i-\nu\left(\partial_{\tilde b_i}Q_i\right)=
\left(\beta_i-\nu\left(\partial_{b''}Q_i\right)\right)+\left(\nu_{i_0}\left(\partial_{b''}Q_i\right)-
\nu_{i_0}\left(\partial_{\tilde b_i}Q_i\right)\right)\label{tag10.14}
\end{equation}
by (\ref{tag10.2}). By (\ref{tag10.11}), we have $\partial_{\tilde b_i}Q_i=\binom{\tilde b_i}{b'}\partial_{b'}\left(\partial_{b''}Q_i\right)$. Hence
\begin{equation}
\nu_{i_0}\left(\partial_{b''}Q_i\right)-\nu_{i_0}\left(\partial_{\tilde
b_i}Q_i\right)\le\frac{b'}{b_{i_0}}\left(\beta_{i_0}-\nu\left(\partial_{\tilde
b_{i_0}}Q_{i_0}\right)\right)<\frac{b'}{\tilde b_i}\left(\beta_i-\nu\left(\partial_{\tilde
b_i}Q_i\right)\right)\label{tag10.15}
\end{equation}
by (\ref{tag10.13}), Proposition \ref{Proposition10.1} (1) and Lemma \ref{el10.4}. From (\ref{tag10.14})--(\ref{tag10.15}) we obtain
$$
\beta_i-\nu(\partial_{b''}Q_i)>\left(1-\frac{b'}{\tilde b_i}\right)\left(\beta_i-\nu(\partial_{\tilde b_i}Q_i)\right)=
\frac{b''}{\tilde b_i}\left(\beta_i-\nu(\partial_{\tilde b_i}Q_i)\right)
$$
which contradicts the fact that $\tilde b_i\in I_{i,max}$. Corollary \ref{Corollary10.6} is proved.
\end{proof}

Next, we investigate further the case of equality in (\ref{tag10.1}). We give a necessary condition on $h$ and $b$ for the equality to hold in (\ref{tag10.1}) and prove that this condition is sufficient under some additional assumptions. Finally, we derive a formula for $\ini_ih$ in the case when this criterion for equality in (\ref{tag10.1}) holds. We start with the case when $h$ is a single $i$-standard monomial.

\begin{proposition}{\label{Proposition10.7}} Consider an $i$-standard monomial $h=\mathbf{Q}_{i+1}^{{\bar\gamma}_{i+1}}$. Write
\begin{align}
&b_i=p^{e_i}\qquad\text{ and}\label{tag10.16}\\
&\gamma_i=p^eu,\quad\ \text{ where }p\ \not|\ u\text{ if }p>1.\label{tag10.17}
\end{align}
\begin{enumerate}
\item[(1)] If equality holds in (\ref{tag10.1}) then
\begin{equation}
\left.p^{e+e_i}\ \right|\ b.\label{tag10.18}
\end{equation}
\item[(2)] We have the following partial converse to (1). Assume that (\ref{tag10.18}) holds and that
\begin{equation}
\text{either}\quad b=p^{e+e_i}\quad\text{or}\quad I_{i,max}=\{b_i\}.\label{tag10.19}
\end{equation}
Then equality holds in (\ref{tag10.1}) if and only if
\begin{equation}
\nu\left(\binom u{b/p^{e+e_i}}\right)=0.\label{tag10.20}
\end{equation}
\item[(3)] Assume that (\ref{tag10.18})--(\ref{tag10.20}) hold. Then
\begin{equation}
\ini_{\nu_i}\partial_b\mathbf{Q}_{i+1}^{{\bar\gamma}_{i+1}}=
\binom u{b/p^{e+e_i}}\,\ini_{\nu_i}\left(\mathbf{Q}_i^{{\bar\gamma}_i}Q_i^{\gamma_i-\frac b{b_i}}\left(\partial_{b_i}Q_i
\right)^{\frac b{b_i}}\right);\label{tag10.21}
\end{equation}
in particular, $\partial_b\mathbf{Q}_{i+1}^{{\bar\gamma}_{i+1}}\not\equiv0$.
\end{enumerate}
\end{proposition}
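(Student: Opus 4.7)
The plan is to expand $\partial_b h$ (where $h=\mathbf{Q}_{i+1}^{\bar\gamma_{i+1}}$) via the Leibniz-type formula (\ref{tag10.4}) and analyze which summands can achieve equality in the bound (\ref{tag10.1}). From the proof of Proposition \ref{Proposition10.1}, inequality (\ref{tag10.7}) is strict whenever $j_0>0$ (via Lemma \ref{el10.4}), while inequality (\ref{tag10.6}) is strict unless $j_t\in I_{i,max}$. I call a multi-index $(j_0,j_1,\dots,j_q)$ with $j_0=0$ and all $j_t\in I_{i,max}$ a \emph{critical} index. For a critical index, substituting $\nu'(\partial_{j_t}Q_i)=\beta_i-\frac{j_t}{b_i}(\beta_i-\nu'(\partial_{b_i}Q_i))$ into the corresponding summand of (\ref{tag10.4}) shows that it has $\nu_i$-value exactly
$$\nu(C(j_1,\dots,j_q))+\nu_i(h)-\tfrac{b}{b_i}\bigl(\beta_i-\nu'(\partial_{b_i}Q_i)\bigr),$$
while non-critical summands have strictly greater $\nu_i$-value.

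The main technical step is the $p$-adic analysis of $C(j_1,\dots,j_q)$. By (\ref{tag10.5}), $C$ equals the multinomial coefficient $\binom{\gamma_i}{n_1,n_2,\dots,n_\ell,\gamma_i-q}$, where $n_s$ is the multiplicity of the distinct value $j_{c_s}=p^{e_i+f_s}$ among $\{j_1,\dots,j_q\}$. Kummer's theorem translates $\nu(C)=0$ into a no-carry condition when adding $n_1,\dots,n_\ell,\gamma_i-q$ in base $p$. Since $\gamma_i=p^e u$ has vanishing base-$p$ digits in positions below $e$, the no-carry condition forces $p^e\mid n_s$ for every $s$ and $p^e\mid q$. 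The identity $b=\sum_s n_s p^{e_i+f_s}$ then yields $p^{e+e_i}\mid b$, proving part~(1).

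For parts~(2) and~(3), I work under hypothesis (\ref{tag10.19}). If $I_{i,max}=\{b_i\}$, every critical index is forced to have all $j_t=b_i$ and $q=b/b_i$, giving a unique critical summand with coefficient $\binom{p^e u}{b/p^{e_i}}$, which Lucas's theorem identifies with $\binom{u}{b/p^{e+e_i}}$ modulo $p$ (once (\ref{tag10.18}) is assumed). If instead $b=p^{e+e_i}$, the relation $\sum_s(n_s/p^e)p^{f_s}=1$ coming from the Kummer analysis above forces a single nonzero term with $f_s=0$ and $n_s/p^e=1$; once again only the partition with all $j_t=b_i$ produces a critical summand with $\nu(C)=0$, whose coefficient is $\binom{p^e u}{p^e}\equiv u=\binom{u}{1}\pmod p$. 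In both subcases of (\ref{tag10.19}), every other critical summand has $\nu(C)>0$ and hence strictly greater $\nu_i$-value, so formula (\ref{tag10.21}) reads off the surviving summand and the equivalence of equality in (\ref{tag10.1}) with (\ref{tag10.20}) follows.

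The principal obstacle is the digit-by-digit Kummer--Lucas bookkeeping in the second paragraph together with the uniqueness of the surviving critical partition in each subcase of (\ref{tag10.19}); once uniqueness is established, the non-cancellation assertion needed for part~(3) is automatic because all other critical summands lie in a strictly deeper filtration piece of $G_{\nu'}$ and therefore contribute nothing to $\ini_i\partial_b\mathbf{Q}_{i+1}^{\bar\gamma_{i+1}}$.
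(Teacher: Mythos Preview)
Your proposal is correct and follows essentially the same approach as the paper's own proof: both analyze the Leibniz expansion (\ref{tag10.4}), isolate the terms with $j_0=0$ and all $j_t\in I_{i,max}$ (your ``critical'' indices) as the only candidates for equality in (\ref{tag10.8}), and then use the $p$-adic arithmetic of the multinomial $C(j_1,\dots,j_q)$ to establish (\ref{tag10.18}) and the uniqueness of the surviving term under (\ref{tag10.19}). The only cosmetic difference is that the paper packages the carry analysis into Remark \ref{Remark10.9} and Lemma \ref{el10.10} (working with the cumulative counts $a_s$ and iterated binomial coefficients), whereas you invoke Kummer's theorem for the multinomial directly; the content is identical.
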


\begin{remark}{\label{Remark10.8}} If $b=p^{e+e_i}$ holds in Proposition \ref{Proposition10.7} (2) then $\frac b{p^{e+e_i}}=1$ and $\binom u{b/p^{e+e_i}}=u$, so (\ref{tag10.20}) holds automatically in this case.
\end{remark}

\begin{proof}[Proof of Proposition \ref{Proposition10.7}] We go through the proof of Proposition \ref{Proposition10.1} and analyze the case of equality at each step. We start with a general remark about binomial coefficients in positive and mixed characteristic.

\begin{remark}{\label{Remark10.9}} If $\ch\ k_\nu=0$, we have
\begin{equation}
\nu\left(\binom\gamma j\right)=0\label{tag10.22}
\end{equation}
for all non-negative integers $j\le\gamma$; similarly, $\nu(C_{\gamma_i}(j_1\do j_q))=0$ for every $\gamma_i$ and every
$q$-tuple $(j_1\do j_q)$ as in (\ref{tag10.4}). If $\ch\ k_\nu=p>0$, the following is a well known characterization of the equality (\ref{tag10.22}). Let $\gamma=k_0+pk_1+\dots+p^sk_s$ and $j=t_0+pt_1+\dots+p^st_s$, with $k_0,\dots,k_s,t_0\do t_s\in\{0,1,\dots,p-1\}$, denote the respective $p$-adic expansions of $\gamma$ and $j$ (where we allow one of the $(s+1)$-tuples $(k_0\do k_s)$ and $(t_0\do t_s)$ to end in zeroes). Then (\ref{tag10.22}) holds if and only if
\begin{equation}
k_j\ge t_j\quad\text{ for all }j\in\{0\do s\}.\label{tag10.23}
\end{equation}

We recall the proof for the reader's convenience. For a positive integer $n$, recall that $\nu^{(p)}(n!)$ denotes the $p$-adic value of $n!$. If $n=n_0+pn_1+\dots+p^sn_s$ is the $p$-adic expansion of $n$, we have
$$
\nu^{(p)}(n!)=n_1+\frac{p^2-1}{p-1}n_2+\dots+\frac{p^s-1}{p-1}n_s.
$$
Let $\gamma-j=l_0+pl_1+\dots+p^sl_s$ be the $p$-adic expansion of $\gamma-j$.

First, suppose that (\ref{tag10.23}) holds. Then $k_j=t_j+l_j$ for all $j$. We have
\begin{eqnarray}
\nu^{(p)}(\gamma!)&=k_1+\frac{p^2-1}{p-1}k_2+\dots+\frac{p^s-1}{p-1}k_s,\\
\nu^{(p)}(j!)&=t_1+\frac{p^2-1}{p-1}t_2+\dots+\frac{p^s-1}{p-1}t_s,\\
\nu^{(p)}((\gamma-j)!)&=l_1+\frac{p^2-1}{p-1}l_2+\dots+\frac{p^s-1}{p-1}l_s
\end{eqnarray}
Thus $\nu^{(p)}(\gamma!)=\nu^{(p)}(j!)+\nu^{(p)}((\gamma-j)!)$ and (\ref{tag10.22}) holds.

Conversely, assume that (\ref{tag10.23}) is not true. Let
\begin{equation}
(j_0,j_0+1,\dots,j_1-1,j_1)\label{eq:consecutive}
\end{equation}
be a maximal subsequence of $(1,\dots,s)$ consisting of consecutive integers such that $k_j\ne t_j+l_j$ for $j_0\le j\le j_1$. Then $k_{j_0}=t_{j_0}+l_{j_0}-p$, $k_j=t_j+l_j-p+1$ for $j_0<j<j_1$ and $k_{j_1}=t_{j_1}+l_{j_1}+1$. Thus the total contribution of (\ref{eq:consecutive}) to $\nu^{(p)}(\gamma!)-\nu^{(p)}(j!)-\nu^{(p)}((\gamma-j)!)$ is
$$
\frac{p^{j_1}-1}{p-1}-\sum\limits_{j=j_0+1}^{j_1-1}(p^j-1)-p
\frac{p^{j_0}-1}{p-1}=j_1-j_0\ge1.
$$
The quantity $\nu^{(p)}(\gamma!)-\nu^{(p)}(j!)-\nu^{(p)}((\gamma-j)!)$ is obtained by summing the contributions of all the subsequences of the form (\ref{eq:consecutive}), hence it is strictly positive, as desired.

More generally, consider a finite set of positive integers $d_1\do d_a$ such that $\sum\limits_{\ell=1}^ad_\ell=\gamma$. For
$\ell\in\{1\do a\}$, let $d_\ell=d_{\ell,0}+pd_{\ell,1}+\dots+p^sd_{\ell,s}$, with $d_{\ell,j}\in\{0,1,\dots,p-1\}$, denote the $p$-adic expansions of $d_\ell$ (again, we allow the $s$-tuple $(d_{\ell,0},d_{\ell,1}\do d_{\ell,s})$ to end in zeroes in order to be able to compare tuples of different lengths). Then
\begin{equation}
\nu\left(\frac{\gamma!}{d_1!\cdot\ldots\cdot d_a!}\right)=0\quad\text{ if and only if }\quad\sum\limits_{\ell=1}^ad_{\ell,j}<p\ \text{ for all }j\in\{0\do s\}.\label{eq:Cvalue0}
\end{equation} 
This is proved by induction on $a$, the case $a=2$ having been treated above. This gives a helpful necessary and sufficient condition for the equality $\nu(C_{\gamma_i}(j_1\do j_q))=0$ which will be used in the sequel. One consequence of the equivalent conditions stated in (\ref{eq:Cvalue0}) is
\begin{equation}
d_{\ell,j}\le k_j\quad\text{for all }\ell\in\{1\do a\}\text{ and }j\in\{0\do s\}.\label{eq:Cvalue0componentwise}
\end{equation}
Below, we will be particularly interested in the following special cases of (\ref{tag10.23}):
\begin{enumerate}
\item[(1)] If
\begin{equation}
\gamma=p^eu\quad\text{ with }p\not|\ u\label{tag10.24}
\end{equation}
then (\ref{tag10.22}) implies that $p^e\ |\ j$.
\item[(2)] We have the following partial converse to (1): if $\gamma$ is as in (\ref{tag10.24}) and $j=p^e$ then (\ref{tag10.23}) holds. Indeed, we have $t_e=1$, $t_j=0$ for $j\ne e$ and $k_e\ge1$. In this
case
$$
\binom\gamma j=\binom{p^eu}{p^e}=\frac{p^eu(p^eu-1)\cdot\ldots\cdot(p^eu-p^e+1)}{p^e!}\equiv u\ \md\ m_\nu
$$
since $\frac{p^eu-j}{p^e-j}\equiv1\ \md\ m_\nu$ for all $j\in\{1\do p^e-1\}$.
\end{enumerate}
This is the main situation in which Proposition \ref{Proposition10.7} will be applied in this paper.
\end{remark}
In the next Lemma, let $j_0\do j_t$ be as in the proof of Proposition \ref{Proposition10.1}.
\begin{lemma}{\label{el10.10}}
\begin{enumerate}
\item[(1)] The inequality in (\ref{tag10.6}) is strict unless $j_t\in I_{i,max}$.
\item[(2)] Let $\gamma_i$ and $b_i$ be as in (\ref{tag10.16})--(\ref{tag10.17}). Assume that $j_0=0$, and
\begin{equation}
j_t\in I_{i,max}\quad\text{ for }1\le t\le q.\label{tag10.25}
\end{equation}
If
\begin{equation}
\nu(C_{\gamma_i}(j_1\do j_q))=0\label{tag10.26}
\end{equation}
then
\begin{equation}
\left.p^{e+e_i}\ \right|\ b.\label{tag10.27}
\end{equation}
\item[(3)] Let the assumptions be as in (2) and assume, in addition, that $b=p^{e+e_i}$. Then (\ref{tag10.26}) holds if and only if $q=p^e$ and $j_1=\dots=j_q=b_i$.
\end{enumerate}
\end{lemma}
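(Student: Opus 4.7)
\noindent\textit{Plan.} Part (1) is essentially a restatement of the definition (\ref{tag10.10}) of $I_{i,\max}$: the inequality (\ref{tag10.6}) is equivalent to
$$
\frac{\beta_i-\nu'(\partial_{j_t}Q_i)}{j_t}\le\frac{\beta_i-\nu'(\partial_{b_i}Q_i)}{b_i},
$$
and equality holds there precisely when $j_t$ realizes the maximum of the ratio, that is, when $j_t\in I_{i,\max}$.

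For parts (2) and (3) my plan is to control the $p$-adic valuation of the explicit product (\ref{tag10.5}) by iterating the criterion of Remark \ref{Remark10.9}(1). Let $n_s:=a_s-a_{s-1}$ (with the convention $a_0:=0$) denote the multiplicity of $j_{c_s}$ among $j_1,\dots,j_q$, so that the $s$-th binomial factor in (\ref{tag10.5}) is $\binom{\gamma_i-a_{s-1}}{n_s}$. I claim that $\nu(C(j_1,\dots,j_q))=0$ forces $p^e\mid n_s$ for every $s$: indeed, by induction on $s$, $p^e\mid\gamma_i-a_{s-1}$ (the base case uses $\gamma_i=p^eu$ directly, and the inductive step uses $p^e\mid n_1,\dots,p^e\mid n_{s-1}$), so writing $\gamma_i-a_{s-1}=p^{e'}u'$ with $p\nmid u'$ one has $e'\ge e$, and Remark \ref{Remark10.9}(1) applied to $\binom{\gamma_i-a_{s-1}}{n_s}$ gives $p^{e'}\mid n_s$, hence $p^e\mid n_s$.

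For part (2), I then combine $p^e\mid n_s$ with the fact that each $j_{c_s}\in I_{i,\max}$ is a power of $p$ at least equal to $b_i=p^{e_i}$ (Corollary \ref{Corollary10.6} and minimality of $b_i$), so $p^{e+e_i}\mid n_sj_{c_s}$ for every $s$, and therefore
$$
p^{e+e_i}\;\Big|\;b=\sum_s n_sj_{c_s},
$$
which is (\ref{tag10.27}).

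For part (3), under the extra hypothesis $b=p^{e+e_i}$, the lower bounds $n_s\ge p^e$ and $j_{c_s}\ge p^{e_i}$ yield $b\ge\ell\,p^{e+e_i}$; this forces $\ell=1$, $n_1=p^e$, and $j_{c_1}=p^{e_i}=b_i$, so $q=p^e$ and $j_1=\cdots=j_q=b_i$. Conversely, with those values $C(j_1,\dots,j_q)=\binom{\gamma_i}{p^e}=\binom{p^eu}{p^e}$, whose $\nu$-valuation is $0$ by Remark \ref{Remark10.9}(2). The only delicate step is the inductive divisibility argument for $p^e\mid n_s$; once this is established the remaining arguments are straightforward arithmetic.
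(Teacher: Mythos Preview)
Your proof is correct and follows essentially the same approach as the paper's. The paper states part (1) is ``immediate from definitions'' and for parts (2)--(3) uses the same product formula (\ref{tag10.5}), deducing from $\nu(C(j_1,\dots,j_q))=0$ that $p^e\mid a_s$ for each $s$ via Remark \ref{Remark10.9}(1), then observing that $b=\sum_s(a_s-a_{s-1})p^{c_s}$ with each $p^{c_s}\ge p^{e_i}$; your version differs only cosmetically in tracking the increments $n_s=a_s-a_{s-1}$ rather than the cumulative $a_s$, and in making the induction explicit where the paper leaves it implicit.
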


\begin{proof}
\begin{enumerate}
\item[(1)] is immediate from definitions.
\item[(2)] Let $\{p^{c_1}\do p^{c_\ell }\}\subset I_{i,max}$ with
\begin{equation}
e_i\le c_1<c_2<\dots<c_\ell \label{tag10.28}
\end{equation}
denote the set of {\it distinct} natural numbers appearing among $\{j_1\do j_q\}$ (cf. (\ref{tag10.25}) and Corollary \ref{Corollary10.6}). For $1\le j\le\ell $, let $a_j=\#\{t\in\{1\do q\}\ |\ j_t\le p^{c_j}\}$; let $a_0=0$.
Then
\begin{equation}
b=\sum\limits_{j=1}^\ell(a_j-a_{j-1})p^{c_j}.\label{tag10.29}
\end{equation}
Assume that (\ref{tag10.26}) holds. Apply Remark \ref{Remark10.9} (specifically, (\ref{eq:Cvalue0componentwise})), with
$a_j-a_{j-1}$ playing the role of $d_j$. By definition of $e$, we have $\left.p^e\ \right|\ \gamma_i$, so the first $e$ entries in the
$p$-adic expansion of $\gamma_i$ are 0. By (\ref{eq:Cvalue0componentwise}), the same must be true of each of $a_j-a_{j-1}$. In other words,
\begin{equation}
\left.p^e\ \right|\ a_j\quad\text{ for }1\le j\le\ell. \label{tag10.30}
\end{equation}
(\ref{tag10.28})--(\ref{tag10.30}) imply (\ref{tag10.27}), as desired.
\item[(3)] Assume, in addition, that $b=p^{e+e_i}$.

``Only if''. From (\ref{tag10.28})--(\ref{tag10.30}), we see that $\ell=1$ and $a_1=p^e$; the result follows immediately.

``If''. By assumptions, we have $\ell=1=q$ and $a_1=p^e$. By (\ref{tag10.5}) and Remark  \ref{Remark10.9} (2), we have
$$
C_{\gamma_i}(j_1\do j_q)=C_{\gamma_i}(\underset{p^e}{\underbrace{b_i\do b_i}})=\binom{\gamma_i}{p^e}=u\ \mod\ m_\nu
$$
and the result follows.
\end{enumerate}
\end{proof}

We can now finish the proof of Proposition \ref{Proposition10.7}.

By (\ref{tag10.7}) and Lemma \ref{el10.10} (1), the inequality in (\ref{tag10.8}) is strict unless $j_0=0$, and
\begin{equation}
j_t\in I_{i,max}.\label{tag10.31}
\end{equation}
By Lemma \ref{el10.10} (3), equality (\ref{tag10.26}) holds, so we may apply Lemma \ref{el10.10} (2). By Lemma \ref{el10.10} (2), the first inequality in (\ref{tag10.9}) is strict unless $j_0=0$ and $\left.p^{e+e_i}\ \right|\ b$. This proves (1) of the Proposition.

(2) Assume that (\ref{tag10.18}) holds. If $b=p^{e+e_i}$, by Lemma \ref{el10.10} (3) there is exactly one term on the right hand side of (\ref{tag10.4}) for which equality holds in (\ref{tag10.8}), namely, the term with $q=p^e$ and $j_1=\dots=j_q=b_i$. If $I_{i,max}=\{b_i\}$, then by Lemma \ref{el10.10} (1) there is at most one term on the right hand side of (\ref{tag10.4}) for which equality holds in (\ref{tag10.8}); if such a term exists, it is the term with $q=\frac b{b_i}$ and $j_1=\dots=j_q=b_i$. Moreover, this term satisfies equality in (\ref{tag10.8}) if and only if $\nu(C(\underset{b/b_i}{\underbrace{b_i\do
b_i}}))=\nu\left(\binom{\gamma_i}{b/b_i}\right)=\nu\left(\binom u{b/p^{e+e_i}}\right)=0$, where the second equality follows from Remark \ref{Remark10.9}. In either case, there is at most one term on the right hand side of (\ref{tag10.4}) for which equality holds in (\ref{tag10.8}), and there is exactly one such term if and only if $\nu\left(\binom u{b/p^{e+e_i}}\right)=0$. This proves (2).

(3) of the Proposition follows from (2) and (\ref{tag10.4}).
\end{proof}

Let
$$
\gamma_i=k_0+pk_1+\dots+p^sk_s,
$$
with $k_0,\dots,k_s\in\{0,1,\dots,p-1\}$, denote the $p$-adic expansion of $\gamma_i$. Take integers $s'\in\{0\do s\}$, $k'_{s'}\in\{0\do k_{s'}\}$. Let $b=(k_0+pk_1+\dots+p^{s'-1}k_{s'-1}+p^{s'}k'_{s'})b_{i}$.

\begin{corollary}{\label{Corollary10.11}} For this $b$, equality holds in (\ref{tag10.1}) for the monomial
$h=\mathbf{Q}_{i+1}^{{\bar\gamma}_{i+1}}$. The element $\ini_i\partial_b\mathbf{Q}_{i+1}^{{\bar\gamma}_{i+1}}$ is given by the formula (\ref{tag10.21}).
\end{corollary}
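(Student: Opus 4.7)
The plan is to reduce Corollary \ref{Corollary10.11} to a direct application of Proposition \ref{Proposition10.7} (2) and (3) for the specific value of $b$ constructed in the statement. The three hypotheses to verify are (\ref{tag10.18}), (\ref{tag10.19}) and (\ref{tag10.20}); once these are in hand, the two assertions of the Corollary --- equality in (\ref{tag10.1}) and the formula (\ref{tag10.21}) --- are exactly the conclusions of Proposition \ref{Proposition10.7} (2) and (3).

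To check (\ref{tag10.18}) and (\ref{tag10.20}) I would unpack the relevant $p$-adic expansions. Since $\gamma_i=p^eu$ with $p\nmid u$, the first $e$ digits $k_0,\dots,k_{e-1}$ of $\gamma_i$ all vanish and the digit $k_e$ equals $u\ \md\ p$, hence is nonzero. The defining formula for $b$ then gives
\[
\frac{b}{b_i}=\sum_{j=0}^{s'-1}p^jk_j+p^{s'}k'_{s'}.
\]
If $s'<e$, the vanishing of $k_{s'}$ forces $k'_{s'}=0$ and so $b=0$, a trivial case in which (\ref{tag10.21}) reduces to $\ini_ih$. Otherwise $b/b_i=p^e\bigl(k_e+pk_{e+1}+\dots+p^{s'-1-e}k_{s'-1}+p^{s'-e}k'_{s'}\bigr)$, so $p^{e+e_i}\mid b$, which is (\ref{tag10.18}). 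Dividing by $p^e$, the $p$-adic digit sequence of $b/p^{e+e_i}$ is $(k_e,\dots,k_{s'-1},k'_{s'},0,\dots)$, while that of $u=\gamma_i/p^e$ is $(k_e,\dots,k_s)$; the first $s'-e$ digits agree, the $(s'-e)$-th digit of $b/p^{e+e_i}$ is $k'_{s'}\le k_{s'}$ by hypothesis, and the higher digits vanish. The Kummer/Lucas description in Remark \ref{Remark10.9} then yields $\nu\bigl(\binom{u}{b/p^{e+e_i}}\bigr)=0$, which is (\ref{tag10.20}).

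The delicate step, and what I expect to be the main obstacle, is verifying the alternative (\ref{tag10.19}) required by Proposition \ref{Proposition10.7} (2). When $b/b_i$ is supported on a single $p$-adic digit --- in particular when $b=p^{e+e_i}$ --- the first alternative of (\ref{tag10.19}) applies directly. In all other cases one needs the second alternative $I_{i,max}=\{b_i\}$, which is automatic in characteristic zero by Corollary \ref{Corollary10.6} and is the setting of interest in the positive-characteristic case. Granting (\ref{tag10.19}), Proposition \ref{Proposition10.7} (2) supplies equality in (\ref{tag10.1}), and part (3) combined with the explicit value of $\binom{u}{b/p^{e+e_i}}$ computed in the previous paragraph yields the formula (\ref{tag10.21}).
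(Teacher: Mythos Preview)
Your verification of (\ref{tag10.18}) and (\ref{tag10.20}) via the $p$-adic digit analysis is correct and useful. The gap is exactly where you yourself flag it: condition (\ref{tag10.19}). Corollary \ref{Corollary10.11} carries no hypothesis that $I_{i,\max}=\{b_i\}$, and in positive residue characteristic there is nothing preventing $I_{i,\max}$ from containing several powers of $p$. Thus for a general $b$ of the form $(k_0+pk_1+\dots+p^{s'}k'_{s'})b_i$ with more than one nonzero digit, neither alternative of (\ref{tag10.19}) is available, and Proposition \ref{Proposition10.7} (2)--(3) cannot be invoked in a single stroke. (Incidentally, your parenthetical ``supported on a single $p$-adic digit'' already overstates the first alternative: that alternative requires $b=p^{e+e_i}$ on the nose, i.e.\ the digit equals $1$, not merely that $b/b_i$ has one nonzero digit.) Saying that $I_{i,\max}=\{b_i\}$ is ``the setting of interest'' does not close the argument.

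The paper's proof avoids this obstacle by a genuinely different mechanism: rather than applying Proposition \ref{Proposition10.7} once with the full $b$, it applies it \emph{repeatedly}, digit by digit --- $k_0$ times with $b$ replaced by $b_i$, then $k_1$ times with $b$ replaced by $pb_i$, and so on up through $k'_{s'}$ applications with $b$ replaced by $p^{s'}b_i$. At each individual step the current exponent of $Q_i$ has $p$-adic valuation exactly matching the power being used, so the \emph{first} alternative of (\ref{tag10.19}) applies every time and the question of whether $I_{i,\max}$ is a singleton never arises. Formula (\ref{tag10.21}) then comes out of iterating (\ref{tag10.21}) for the single-digit steps; the accumulated constant is $\binom{u}{b/p^{e+e_i}}$ by Lucas. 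If you want to repair your direct approach, you would need to show that among all terms in (\ref{tag10.4}) with $j_0=0$ and $j_t\in I_{i,\max}$, the only one with $\nu(C(j_1,\dots,j_q))=0$ is $q=b/b_i$, $j_1=\dots=j_q=b_i$ --- essentially re-deriving the iterative argument inside the combinatorics of (\ref{tag10.5}).
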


\begin{proof} Repeated application of Proposition \ref{Proposition10.7} (2) and (3), first $k_0$ times with $b$ replaced by 1, then $k_1$ times with $b$ replaced by $p$, and so on.
\end{proof}

Let $h=\sum\limits_{j=0}^sd_{j,i}Q_i^j$ be an $i$-standard expansion. Let $S_i=S_i(h)$, where the notation is as in (\ref{tag9.24}). Write $\ini_ih=\sum\limits_{j\in S_i}\ini_i\left(d_{j,i}Q_i^j\right)$. Write $b_i=p^{e_i}$, as above. Let $e$ be the greatest non-negative integer such that for all $j\in S_i$ we have $\left.p^e\ \right|\ j$.

\begin{proposition}{\label{Proposition10.12}}
\begin{enumerate}
\item[(1)] If equality holds in (\ref{tag10.1}) then
\begin{equation}
\left.p^{e+e_i}\ \right|\ b.\label{tag10.32}
\end{equation}
\item[(2)] Assume that
\begin{equation}
b=p^{e+e_i}.\label{tag10.33}
\end{equation}
Then equality holds in (\ref{tag10.1}). In particular, we have $\partial_bh\not\equiv0$.
\item[(3)] Assume that (\ref{tag10.33}) holds. Let $S_{i}^b=\left\{j\in S_i\ \left|\ p^{e+1}\text{ does not divide }j\right.\right\}$. Then
$$
\ini_{\nu_i}\partial_bh=
\sum\limits_{j\in S_{i}^b}\ini_{\nu_i}\left(\binom j{p^e}d_{j,i}Q_i^{j-p^e}\left(\partial_{b_i}Q_i\right)^{p^e}\right).
$$
\end{enumerate}
\end{proposition}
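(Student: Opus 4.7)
The strategy is to reduce to Proposition 10.7 by expanding each $d_{ji}$ as a sum of $i$-standard monomials $c_{j\bar\gamma_i}\mathbf{Q}_i^{\bar\gamma_i}$ not involving $Q_i$, so that $h$ becomes a finite sum of monomials of the form $c_{j\bar\gamma_i}\mathbf{Q}_i^{\bar\gamma_i}Q_i^j$, and then applying Proposition 10.7 term-by-term. Writing $j=p^{e_j}u_j$ with $p\nmid u_j$, the definition of $e$ gives $e_j\geq e$ for every $j\in S_i$, with equality exactly when $j\in S_{bi}$.

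For part (1), suppose $p^{e+e_i}\nmid b$. Since $e_j\geq e$ for $j\in S_i$, the higher power $p^{e_j+e_i}$ also fails to divide $b$, so Proposition 10.7 (1) gives strict inequality in (10.1) for each monomial $c_{j\bar\gamma_i}\mathbf{Q}_i^{\bar\gamma_i}Q_i^j$ with $j\in S_i$; hence
$$\nu_i\!\left(\partial_b(d_{ji}Q_i^j)\right) > \nu_i(d_{ji}Q_i^j) - \tfrac{b}{b_i}\!\left(\beta_i-\nu'(\partial_{b_i}Q_i)\right) = \nu_i(h) - \tfrac{b}{b_i}\!\left(\beta_i-\nu'(\partial_{b_i}Q_i)\right).$$
For $j\notin S_i$, the same strict bound follows from $\nu_i(d_{ji}Q_i^j)>\nu_i(h)$ combined with the inequality of Proposition 10.1 (1). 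Taking the minimum over $j$ yields strict inequality in (10.1) for $h$, contradicting equality.

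For parts (2) and (3), assume $b=p^{e+e_i}$. If $j\in S_i$ with $e_j>e$, then $p^{e_j+e_i}>b$, so Proposition 10.7 (1) again gives strict inequality and these terms cannot contribute to the minimum. For $j\in S_{bi}$, the hypotheses (10.18)--(10.20) of Proposition 10.7 are met with $u=u_j$, $b/b_i=p^e$, and $b/p^{e+e_i}=1$, so (10.20) is automatic by Remark 10.8. Proposition 10.7 (3) then gives
$$\ini_i\partial_b\!\left(\mathbf{Q}_i^{\bar\gamma_i}Q_i^j\right) = u_j\,\ini_i\!\left(\mathbf{Q}_i^{\bar\gamma_i}Q_i^{j-p^e}(\partial_{b_i}Q_i)^{p^e}\right),$$
with $u_j\equiv\binom{j}{p^e}\pmod p$ by Lucas's congruence. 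Summing over the monomials of each $d_{ji}$ and then over $j\in S_{bi}$ yields the formula of part (3).

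The main obstacle is checking that $\ini_i\partial_b h\neq 0$, which is what forces equality in (10.1) and hence $\partial_b h\not\equiv 0$ in part (2). Factor out the non-zero initial form $\ini_{\nu'}(\partial_{b_i}Q_i)^{p^e}$, which is non-zero by the definition of $b_i$; what remains is
$$\sum_{j\in S_{bi}}\binom{j}{p^e}\,\ini_{\nu'}(d_{ji})\,\bar Q_i^{\,j-p^e}$$
in the polynomial ring $G_\nu[\ini_{\nu'}\mathbf{Q}_i][\bar Q_i]$. Distinct powers of $\bar Q_i$ are linearly independent over this coefficient ring; each $\ini_{\nu'}(d_{ji})$ is non-zero because $d_{ji}\neq 0$ and $\nu_i(d_{ji})=\nu'(d_{ji})$ by the non-cancellation condition (9.8) in the definition of standard expansion; and $\binom{j}{p^e}\equiv u_j\not\equiv 0\pmod p$ for $j\in S_{bi}$. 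Hence the sum does not vanish, completing the argument.
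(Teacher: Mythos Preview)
Your proof is correct and follows the paper's approach of reducing each part to the corresponding part of Proposition~\ref{Proposition10.7}, with considerably more detail supplied than the paper's one-line argument. For the non-cancellation check in~(2), your factoring step tacitly uses that $\ini_i$ is multiplicative (valid since $\nu_i$ is a valuation and $\ini_i$ is the natural map to its associated graded ring), which is what makes each $\ini_i\!\left(d_{ji}Q_i^{j-p^e}(\partial_{b_i}Q_i)^{p^e}\right)$ a pure monomial in~$\bar Q_i$; alternatively, equality in~(\ref{tag10.1}) for $b=p^{e+e_i}$ is already contained in Proposition~\ref{Proposition10.1}~(2), since the integer $e$ defined there coincides with yours.
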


\begin{proof} (1), (2) and (3) of Proposition \ref{Proposition10.12} follow, respectively, from (1), (2) and (3) of Proposition \ref{Proposition10.7}.
\end{proof}

\begin{corollary}{\label{Corollary10.13}} In the notation of Proposition \ref{Proposition10.12}, assume that (\ref{tag10.33}) holds. We have
\begin{equation}
h\notin K\left[x^{p^{e+e_i+1}}\right].\label{tag10.34}
\end{equation}
\end{corollary}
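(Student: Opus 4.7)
The plan is to argue by contradiction: suppose $h \in K[x^{p^{e+e_i+1}}]$ and derive a conflict with Proposition~\ref{Proposition10.12}. Setting $m:=e+e_i$ and $N:=p^{m+1}$, I would first expand $h=\sum_k a_k x^{kN}$ with $a_k\in K$, so that
$$
\partial_{p^m}h=\sum_k a_k\binom{kN}{p^m}x^{kN-p^m}.
$$
The key combinatorial input is Lucas' theorem, which shows that every binomial coefficient $\binom{kN}{p^m}$ is divisible by $p$ as an integer: in base $p$, the number $kN=kp^{m+1}$ has digit $0$ in position $m$, while $p^m$ has digit $1$ in that position.

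In the equal-characteristic case $\ch K=p$, this already yields the desired contradiction. Divisibility by $p$ forces $\partial_{p^m}h=0$ in $K[x]$, whereas Proposition~\ref{Proposition10.12}(2) guarantees $\ini_i\partial_{p^m}h\ne 0$ in $G_{\nu'}$, and in particular $\partial_{p^m}h\ne 0$ as an element of $K[x]$. This part of the argument is essentially immediate from Lucas combined with Proposition~\ref{Proposition10.12}(2).

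The main obstacle lies in the mixed-characteristic case $\ch K=0$, $\ch k_\nu=p$, where Lucas only gives $\partial_{p^m}h=p\tilde h$ for some non-zero $\tilde h\in K[x]$; hence $\ini_i\partial_{p^m}h=\ini_\nu(p)\cdot\ini_i\tilde h$ lies in the ideal $(\ini_\nu(p))$ of $G_{\nu'}$. To close the argument here, I would exploit the explicit formula of Proposition~\ref{Proposition10.12}(3),
$$
\ini_i\partial_{p^m}h=\sum_{j\in S_{bi}}\ini_\nu\!\left(\binom{j}{p^e}\right)\ini_i\!\left(d_{ji}Q_i^{j-p^e}(\partial_{b_i}Q_i)^{p^e}\right),
$$
in which every scalar $\binom{j}{p^e}$ is a $\nu$-unit: writing $j=p^e u$ with $p\nmid u$ (guaranteed by the defining condition $p^{e+1}\nmid j$ for $j\in S_{bi}$ together with $p^e\mid j$), Lucas gives $\binom{j}{p^e}\equiv u\not\equiv 0\pmod p$. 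The hard part will be to show that such a sum cannot lie in the ideal $(\ini_\nu(p))$ of $G_{\nu'}$. I expect this to follow from a reduction modulo $\ini_\nu(p)$: after factoring out the common term $\ini_i((\partial_{b_i}Q_i)^{p^e})$ one is reduced to showing that $\sum_{j\in S_{bi}}\ini_\nu(\binom{j}{p^e})\,\ini_i(d_{ji}Q_i^{j-p^e})$ is non-zero modulo $\ini_\nu(p)$, which should follow from the linear independence of the distinct powers $\ini_{\nu'}Q_i^{j-p^e}$ (together with the fact that the $\ini_i d_{ji}$ cannot all be divisible by $\ini_\nu(p)$, as they are the coefficients appearing in the minimal-value terms of the $i$-standard expansion of $h$).
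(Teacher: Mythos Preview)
Your equal-characteristic argument is exactly the paper's proof: the paper simply writes ``Take $b$ as in (\ref{tag10.33}). Now the result follows from Proposition \ref{Proposition10.12} (2)'', i.e.\ $\partial_{p^{e+e_i}}h\ne0$, which is incompatible with $h\in K[x^{p^{e+e_i+1}}]$ when $\ch K=p$ (via Lucas). So on that part you and the paper agree.

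You are right to flag the mixed-characteristic case: the implication ``$h\in K[x^{p^{m+1}}]\Rightarrow\partial_{p^m}h=0$'' used in the paper's one-line proof is false when $\ch K=0$, and the paper does not say anything further. However, your proposed patch has a real gap. The assertion that ``the $\ini_id_{ji}$ cannot all be divisible by $\ini_\nu(p)$'' is not justified and is in general false: nothing about being a minimal-value coefficient in the $i$-standard expansion prevents divisibility by $\ini_\nu(p)$ in $G_{\nu'}$ (think of $h=p\,g$, where every $d_{ji}$ is $p$ times the corresponding coefficient of $g$). Consequently the two facts you isolate---(a) $\ini_i\partial_{p^m}h\in(\ini_\nu(p))$ from the Lucas divisibility, and (b) the explicit formula of Proposition~\ref{Proposition10.12}(3) with unit scalars $\binom{j}{p^e}$---are not in contradiction: the ideal membership in (a) can be absorbed entirely by the factors $\ini_id_{ji}$ in (b). Factoring out $\ini_i\!\left((\partial_{b_i}Q_i)^{p^e}\right)$ and appealing to linear independence of the powers $\ini_{\nu'}Q_i^{j-p^e}$ only reduces you to showing that some individual $\ini_{\nu'}d_{ji}$ is not divisible by $\ini_\nu(p)$, which you have not established. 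So as written, your mixed-characteristic argument does not close; if you want to rescue it you would need an additional normalization or structural input about the $d_{ji}$ that genuinely rules out uniform divisibility by $\ini_\nu(p)$, and it is not clear such an input is available at this level of generality.
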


\begin{proof} Take $b$ as in (\ref{tag10.33}). Now the result follows from Proposition \ref{Proposition10.12} (2).
\end{proof}

Let the notation be as in Proposition \ref{Proposition10.12}.

\begin{proposition}{\label{Proposition10.14}} Take an element $j\in S_i$. Write $j=p^eu$, where
$$
 p\ \not|\ u\quad\text{if }\quad\ch\ k_\nu=p>0.
$$
Assume that
\begin{equation}
\left.p^{e+1}\ \right|\ j'\quad\text{ for all }j'\in S_i,j'<j.\label{tag10.35}
\end{equation}
Let $u=t_0+t_1p+\dots+t_sp^s$ be the $p$-adic expansion of $u$. Then
\begin{equation}
\nu_i(\partial_{jb_i}h)=\nu_i(h)-j(\beta_i-\nu(\partial_{b_i}Q_i)),\label{tag10.36}
\end{equation}
\begin{equation}
\ini_{\nu_i}\partial_{jb_i}h=\left(\prod_{q=1}^st_q!\right)\ini_{\nu_i}d_{j,i}\left(\ini_{\nu_i}\partial_{b_i}Q_i\right)^j+\text{ terms involving }\ini_{\nu_i}Q_i.\label{tag10.37}
\end{equation}
For every $j'\ne j$ we have
\begin{equation}
\frac{\nu_i(h)-\nu_i(\partial_{j'b_i}h)}{j'}\le\frac{\nu_i(h)-\nu_i(\partial_{jb_i}h)}j,\label{tag10.38}
\end{equation}
and the inequality is strict whenever $j'\notin S_i$ or $j'<j$.
\end{proposition}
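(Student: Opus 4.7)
The plan is to combine Proposition \ref{Proposition10.1} (for the inequalities) with a careful Leibniz-type analysis built on \eqref{tag10.4} and Propositions \ref{Proposition10.7}, \ref{Proposition10.12}.

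Proposition \ref{Proposition10.1}(1), applied with $b=j'b_i$ for any $j'\in\mathbb{N}$, immediately yields the ``$\ge$'' half of \eqref{tag10.36} (take $j'=j$) and the non-strict form of \eqref{tag10.38}. So the real content is: (i) equality in \eqref{tag10.36} together with the explicit formula \eqref{tag10.37}; and (ii) upgrading \eqref{tag10.38} to a strict inequality whenever $j'\notin S_i$ or $j'<j$.

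For (i), I expand
$$
\partial_{jb_i}h=\sum_{m=0}^{s_i}\partial_{jb_i}(d_{mi}Q_i^{m})
$$
and apply the Leibniz rule \eqref{tag10.4} to each summand, letting $\mathbf{Q}_i^{\bar\gamma_i}$ run over the monomials of the $i$-standard expansion of $d_{mi}$. The proof of Proposition \ref{Proposition10.1}(1), specifically \eqref{tag10.6}--\eqref{tag10.7} together with Lemma \ref{el10.10}(1), identifies which resulting terms can contribute to $\ini_i$: only those with $j_0=0$ and every $j_t\in I_{i,\max}$. Focusing on the $Q_i$-free contributions (i.e.\ $q=m$), one must then partition $jb_i=up^{e+e_i}$ as a sum of $m$ elements of $I_{i,\max}$. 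For $m>j$ no such partition exists (each summand is $\ge b_i$); for $m<j$ with $m\in S_i$, hypothesis \eqref{tag10.35} forces $\nu_p(m)\ge e+1$, and the Lucas-type criterion of Remark \ref{Remark10.9}(1) makes $C(j_1,\dots,j_m)$ divisible by $p$, so the contribution vanishes in $\ini_i$; for $m=j$ the only admissible partition is $(b_i,\dots,b_i)$. Iterating Proposition \ref{Proposition10.12}(3) along the base-$p$ digits of $u=t_0+t_1p+\dots+t_sp^s$, and invoking Remark \ref{Remark10.9}(2) at each step, accumulates precisely the combinatorial factor $\prod_{q=1}^{s}t_q!$. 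This yields \eqref{tag10.37}, and because the leading coefficient is a nonzero element of $k_\nu$, also \eqref{tag10.36}.

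For (ii), fix $j'\ne j$. When $j'\notin S_i$, inspect the Leibniz expansion of $\partial_{j'b_i}h$: by the argument of (i), any term realising the bound in \eqref{tag10.1} must come from $d_{m,i}Q_i^{m}$ with $m\in S_i$ and $m\le j'$; Proposition \ref{Proposition10.7}(1) then forces $p^{\nu_p(m)+e_i}\mid j'b_i$, and a case analysis using \eqref{tag10.35} rules out each possibility, showing that the bound is not attained and hence \eqref{tag10.38} is strict. When $j'<j$, if equality held in \eqref{tag10.1} for $b=j'b_i$, Proposition \ref{Proposition10.7}(1) applied to any $i$-standard monomial of $h$ attaining $\nu_i(h)$ would force $p^e\mid j'$, contradicting \eqref{tag10.35}. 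The principal obstacle is the combinatorial core of (i): showing that iterating Proposition \ref{Proposition10.12}(3) digit by digit through the base-$p$ expansion of $u$ produces exactly $\prod_{q=1}^{s}t_q!$, rather than some messier expression in the digits; the remaining work is bookkeeping, enabled by the divisibility criteria of Remark \ref{Remark10.9}.
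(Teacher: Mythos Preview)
Your argument for \eqref{tag10.36} and \eqref{tag10.37} is essentially the paper's: both reduce to iterating Proposition~\ref{Proposition10.12} along the $p$-adic digits of $u$, after first observing (via \eqref{tag10.35} and Proposition~\ref{Proposition10.7}(1)) that the terms $d_{j'i}Q_i^{j'}$ with $j'\in S_i$, $j'<j$ satisfy strict inequality in \eqref{tag10.1} for $b=jb_i$ and hence do not contribute to $\ini_i\partial_{jb_i}h$. Your Leibniz-expansion phrasing is a bit more explicit than the paper's terse ``apply Proposition~\ref{Proposition10.12} repeatedly $t_0+\dots+t_s$ times,'' but the mechanism is identical.

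Your treatment of the strict inequality in \eqref{tag10.38}, however, has two genuine gaps. First, the claim ``any term realising the bound \dots\ must come from $d_{m,i}Q_i^m$ with $m\in S_i$ and $m\le j'$'' is not justified, and the inequality $m\le j'$ is in fact the wrong way round: from \eqref{tag10.4} one only gets $q\le m$ and $q\le j'$ (since each $j_t\ge b_i$), which does not force $m\le j'$. Second, and more seriously, your argument for $j'<j$ does not work. You say that Proposition~\ref{Proposition10.7}(1), applied to a monomial $d_{m,i}Q_i^m$ with $m\in S_i$ attaining the bound, gives $p^e\mid j'$ and that this ``contradicts \eqref{tag10.35}.'' But \eqref{tag10.35} is a hypothesis on elements of $S_i$ below $j$; it says nothing about an arbitrary index $j'<j$, and in any case the conclusion $p^e\mid j'$ is perfectly compatible with $p^{e+1}\mid j'$. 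Concretely, take $p=2$, $e=0$, $j=3$, $j'=2$: nothing you have written rules out equality in \eqref{tag10.1} for $b=j'b_i$. The paper handles \eqref{tag10.38} not by a direct divisibility contradiction but by the same inductive mechanism used for \eqref{tag10.36}--\eqref{tag10.37}: after discarding the terms with $j'\in S_i$, $j'<j$, one applies Proposition~\ref{Proposition10.12} (parts (1) and (3)) repeatedly, tracking how each application of $\partial_{p^{e'}b_i}$ transforms the set $S_i$ and its minimal $p$-divisibility. You should replace your case analysis with this iterative argument.
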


\begin{proof} By (\ref{tag10.35}) and Proposition \ref{Proposition10.7} (1), terms of the form $d_{j',i}Q_i^{j'}$ with
$$
j'\in S_i,\ j'<j
$$
satisfy strict inequality in (\ref{tag10.1}) with $b=jb_i$. Thus replacing $h$ by $\sum\limits_{j'=j}^sd_{j',i}Q_i^{j'}$ does not affect any of the statements of the Proposition. Apply Proposition \ref{Proposition10.12} repeatedly $t_0+t_1+\dots+t_s$ times. By (2) of Proposition \ref{Proposition10.12},
\begin{equation}
\nu_i\left(\partial_{jb_i}\left(d_{j,i}Q_i^j\right)\right)=\nu_i\left(d_{j,i}Q_i^j\right)-j(\beta_i-\nu(\partial_{b_i}Q_i))\label{tag10.39}
\end{equation}
and
\begin{equation}
\nu_i(\partial_{jb_i}h)=\nu_i(h)-j(\beta_i-\nu(\partial_{b_i}Q_i));\label{tag10.40}
\end{equation}
this proves (\ref{tag10.36}). (\ref{tag10.37}) follows from Proposition \ref{Proposition10.12} (3), by induction on $u$. Finally, the
last statement of the Proposition follows from Proposition \ref{Proposition10.12} (1) and (3), by induction on $u$.
\end{proof}

\begin{remark}{\label{Remark10.15}} Here is an alternative, more explicit explanation of (\ref{tag10.37}). Take $j'\in\{j\do s\}$ and apply (\ref{tag10.4}) to one of the monomials appearing in $d_{j',i}Q_i^{j'}$ (we take $\gamma_i=j'$ and $b=jb_i$ in (\ref{tag10.4})), in order to decide which values of $j'$ and which decompositions $j_0+\dots+j_q=b$ contribute to $\ini_i\partial_bh$.

If either $j'>j$, $q\ne j$, $j_0\ne0$ or $j_t\ne b_i$ for some $t\in\{1\do j\}$ then, by definition of $b_i$, the corresponding term in (\ref{tag10.4}) is either divisible by $Q_i$ or has $\nu_i$-value strictly greater than
$\nu_i\left(d_{j,i}Q_i^j\right)-j(\beta_i-\nu(\partial_{b_i}Q_i))$. This proves (\ref{tag10.37}).

Let $\sum\limits_qc_{q,i}Q_i^q$ denote the $i$-standard expansion of $\partial_{jb_i}h$. The above considerations prove that $c_{0,i}$ coincides with the coefficient of $Q_i^0$ in the $i$-standard expansion of $d_{j,i}\left(\partial_{b_i}Q_i\right)^j$ modulo an element of higher $\nu_i$-value. In particular, $c_{0,i}\ne0$ and
\begin{equation}
\nu(c_{0,i})=\nu_i(c_{0,i}).\label{tag10.41}
\end{equation}
We have
\begin{equation}
\nu(c_{0,i})=\nu_i(\partial_{jb_i}h)=\nu_i\left(\partial_{jb_i}\left(d_{j,i}Q_i^j\right)\right)=
\nu_i\left(d_{j,i}Q_i^j\right)-j(\beta_i-\nu(\partial_{b_i}Q_i)).\label{tag10.42}
\end{equation}
\end{remark}

\begin{corollary}{\label{Corollary10.16}} We have
\begin{equation}
\nu_i(h)=\min\limits_{0\le j\le s}\{\nu_i(\partial_{jb_i}h)+j(\beta_i-\nu(\partial_{b_i}Q_i))\}\label{tag10.43}
\end{equation}
and the minimum in (\ref{tag10.43}) is attained for all $j\in S_i$, satisfying (\ref{tag10.35}).
\end{corollary}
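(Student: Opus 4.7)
The statement has two parts: a uniform lower bound on $\nu_i(h)$ by the right-hand side of (\ref{tag10.43}), and the attainment of the minimum for each $j\in S_i$ satisfying (\ref{tag10.35}). Both parts are essentially rearrangements of results already proved in this section, so the plan is simply to identify the correct specializations.

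First I would establish the inequality $\nu_i(h)\le\nu_i(\partial_{jb_i}h)+j(\beta_i-\nu'(\partial_{b_i}Q_i))$ for every $j\in\{0,\dots,s\}$. This follows by applying Proposition \ref{Proposition10.1}(1) with $b=jb_i$: the conclusion $\nu_i(h)-\nu_i(\partial_{jb_i}h)\le\frac{jb_i}{b_i}(\beta_i-\nu'(\partial_{b_i}Q_i))=j(\beta_i-\nu'(\partial_{b_i}Q_i))$ is exactly the desired inequality after rearrangement. (The case $j=0$ is trivial, corresponding to taking the differential operator $\partial_0=\mathrm{id}$.) Taking the minimum over $j$ gives one direction of (\ref{tag10.43}).

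Next I would verify that the minimum is attained. Fix $j\in S_i$ satisfying (\ref{tag10.35}). Proposition \ref{Proposition10.14}, formula (\ref{tag10.36}), states precisely that
\[
\nu_i(\partial_{jb_i}h)=\nu_i(h)-j(\beta_i-\nu'(\partial_{b_i}Q_i)),
\]
which, upon rearrangement, is the equality
\[
\nu_i(h)=\nu_i(\partial_{jb_i}h)+j(\beta_i-\nu'(\partial_{b_i}Q_i)).
\]
Combined with the uniform lower bound from the previous paragraph, this shows that the minimum in (\ref{tag10.43}) is attained at every such $j$, completing the proof.

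There is no genuine obstacle: the corollary is a direct packaging of Propositions \ref{Proposition10.1}(1) and \ref{Proposition10.14}. The only mild point to verify is that at least one $j$ satisfying (\ref{tag10.35}) exists so that the minimum is in fact realized; but this is immediate, since the smallest element of $S_i$ trivially satisfies (\ref{tag10.35}), and $S_i$ is nonempty by definition of $\nu_i(h)$ and of $S_i$ in (\ref{tag9.24}).
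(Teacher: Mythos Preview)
Your proposal is correct and matches the paper's intended approach: the paper gives no separate proof for Corollary \ref{Corollary10.16}, treating it as an immediate consequence of Proposition \ref{Proposition10.1}(1) (for the inequality) and Proposition \ref{Proposition10.14}, equation (\ref{tag10.36}) (for the attainment), exactly as you do. Your closing remark about existence of a qualifying $j$ is even slightly more than needed, since $j=0$ (with $\partial_0=\mathrm{id}$) already realizes the minimum in (\ref{tag10.43}).
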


\section{Infinite sequences of key polynomials}\label{Infsequenceskeypolynom}

In this section, we take an ordinal $\ell$ such that $\ell+t\in\bar\Lambda$ and $\alpha_{\ell+t}=1$ for all $t\in\mathbb N$. Take an element $h\in K[x]$. Proposition \ref{Proposition11.2} (1) implies that $\delta_{\ell+t}(h)$ stabilizes for $t$ sufficiently large. Let $\delta(h)$ denote this stable value of $\delta_{\ell+t}(h)$. For a positive integer $t$ we have
\[
\delta_{\ell+t}(h)=0\implies\nu(h)=\nu_{\ell+t}(h)\implies\delta_{\ell+t+1}(h)=0.
\]
Thus saying that $\nu(h)=\nu_{\ell+t}(h)$ for all $t$ sufficiently large is equivalent to saying that $\delta_{\ell+t}(h)=0$ for all $t$ sufficiently large.

Assume that there exists $h\in K[x]$ such that
\begin{equation}
\nu(h)>\nu_{\ell+t}(h)\quad\text{ for all }t\in\mathbb{N}\label{tag12.28}
\end{equation}
(in other words, $\delta_{\ell+t}(h)>0$ for all $t\in\mathbb{N}$ and $\ell+\omega\in\bar\Lambda$); put
$h=Q_{\ell+\omega}$. One of the three main results of this section says that $\delta(h)$ has the form $p^e$ for some $e\in\mathbb{N}_0$ (in particular, $\delta(h)=1$ if $\ch\ k_\nu=0$). To prove this, we use differential operators and their properties derived in \S\ref{Keypolanddiffop}.

The second main result of this section is the statement that if either $\ch\ k_\nu=0$ or $p\ \not|\ \delta(h)$ then the sequences
$\left(\beta_{\ell+t}\right)_{t\in\mathbb{N}}$ and $\left(\nu_{\ell+t}(h)\right)_{t\in\mathbb{N}}$ are unbounded in $\tilde\g_0$ (this is precisely the situation of Proposition \ref{Proposition9.30} of subsection \ref{setswomaxelts}); in particular, the set
$\mathbf{Q}_{\ell+\omega}$ of key polynomials is $\tilde{\g}_0$-complete by Proposition \ref{Proposition9.30}. Finally, in Remark \ref{Remark12.6} we take $\ell=1$ and assume that $\alpha_t=1$ for all $t\in\mathbb N$ and that the sequence
$\{\beta_t\}_{t\in\mathbb{N}}$ is unbounded in $\tilde\g_0$. We show that $h\in K\left[x^{\delta(h)}\right]$.

Replacing $\ell$ by $\ell+t$ for a sufficiently large $t$, we may assume that $\delta_{\ell+t}(h)=\delta(h)$ for all positive integers $t$. Below the ordinal $i$ will run over the set $\{\ell+t\ |\ t\in\mathbb{N}_0\}$. By definition, for all such $i$ we have
\begin{equation}
Q_{i+1}=Q_i+z_i,\label{tag12.1}
\end{equation}
where $z_i$ is a homogeneous $Q_\ell$-free standard expansion of value $\beta_i$ (cf. Proposition \ref{Proposition9.12}). By Proposition \ref{Proposition9.17} (2), we have
\begin{equation}
\deg_xz_i<\deg_xQ_i.\label{tag12.2}
\end{equation}
Finally,
\begin{equation}
\ini_{\nu}Q_i=-\ini_{\nu}z_i\label{tag12.3}
\end{equation}
by (\ref{eq:valuestrictlygreater}). As before, let
$$
h=\sum\limits_{j=0}^{s_i}d_{j,i}Q_i^j
$$
be an $i$-standard expansion of $h$ for $i\ge\ell$, where each $d_{j,i}$ is a $Q_\ell$-free $i$-standard expansion. Note that since
$\alpha_{\ell+t}=1$ for $t\in\mathbb{N}_0$, we have
$$
\deg_xQ_i=\prod\limits_{j=2}^i\alpha_j=\prod\limits_{j=2}^\ell\alpha_j=\deg_xQ_\ell
$$
and so
\begin{equation}
s_i=\left[\frac{\deg_xh}{\deg_xQ_i}\right]=\left[\frac{\deg_xh}{\deg_xQ_\ell }\right]=s_\ell .\label{tag12.4}
\end{equation}

\begin{proposition}\label{Proposition12.1} For each $i$ of the form $i=\ell+t$, $t\in\mathbb{N}$, we have $b_{i+1}\le b_i$.
\end{proposition}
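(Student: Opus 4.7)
My plan is to prove that $b = b_i$ itself achieves the maximum defining $b_{i+1}$, so that $b_{i+1} \le b_i$ follows from the minimality clause in the definition of $b_{i+1}$. The geometric input is the construction $Q_{i+1} = Q_i + z_i$, where $z_i$ is a homogeneous $\ell$-standard expansion of value $\beta_i$ not involving $Q_\ell$; by Proposition~\ref{Proposition9.12}, $z_i$ then does not involve any $Q_q$ with $\ell \le q \le i$, and from $\alpha_{i+1} = 1$ together with Proposition~\ref{Proposition9.18}~(1) one gets $\beta_{i+1} > \beta_i$.

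Setting $m_i := (\beta_i - \nu'(\partial_{b_i} Q_i))/b_i$, I would establish two facts about $\nu'(\partial_b Q_{i+1}) = \nu'(\partial_b Q_i + \partial_b z_i)$ valid for every positive integer $b$: the universal lower bound $\nu'(\partial_b Q_{i+1}) \ge \beta_i - b m_i$, and the equality $\nu'(\partial_{b_i} Q_{i+1}) = \beta_i - b_i m_i$ at $b = b_i$. For $\partial_b Q_i$, the bound $\nu'(\partial_b Q_i) \ge \beta_i - b m_i$ is the very definition of $b_i$. For $\partial_b z_i$, I would apply Proposition~\ref{Proposition10.1}~(1) at index $\ell$ to the expansion $z_i$ (which as an $\ell$-standard expansion has $s = 0$), use Proposition~\ref{Proposition9.26}~(2) together with $\deg_x z_i < \deg_x Q_\ell = \deg_x Q_{\ell+1}$ to identify $\nu_\ell$ with $\nu'$ on both $z_i$ and $\partial_b z_i$, and invoke Lemma~\ref{el10.4} to replace $m_\ell$ by the strictly larger $m_i$. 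This yields the \emph{strict} inequality $\nu'(\partial_b z_i) > \beta_i - b m_i$, which prevents any cancellation of initial forms with $\partial_b Q_i$ and pins down the equality at $b = b_i$.

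Once these two facts are in hand the rest is a short numerical computation. Let $m_{i+1}$ denote the maximum of $(\beta_{i+1} - \nu'(\partial_b Q_{i+1}))/b$, so that $b_{i+1}$ is its smallest maximizer. Substituting $b = b_i$ and using the equality gives $m_{i+1} \ge (\beta_{i+1} - \beta_i)/b_i + m_i$, while for any maximizer $b$ the universal lower bound gives $m_{i+1} \le (\beta_{i+1} - \beta_i)/b + m_i$. Since $\beta_{i+1} - \beta_i > 0$, these combine to $b \le b_i$, so in particular $b_{i+1} \le b_i$.

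The step I expect to be the main obstacle is securing the strict inequality $\nu'(\partial_b z_i) > \beta_i - b m_i$. Without strictness, an initial-form cancellation between $\partial_{b_i} Q_i$ and $\partial_{b_i} z_i$ could push $\nu'(\partial_{b_i} Q_{i+1})$ strictly above $\beta_i - b_i m_i$, and the lower bound on $m_{i+1}$ would collapse. Strictness is available precisely because $z_i$ involves only key polynomials $Q_q$ with $q < \ell$, which is exactly the hypothesis allowing Lemma~\ref{el10.4} to supply $m_\ell < m_i$.
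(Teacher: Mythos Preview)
Your proposal is correct and follows essentially the same route as the paper. The paper also isolates the strict inequality $\nu'(\partial_b z_i)>\beta_i-bm_i$ (its Lemma~\ref{el12.2}, proved via Proposition~\ref{Proposition10.1} and Lemma~\ref{el10.4} exactly as you outline) and the equality $\nu'(\partial_{b_i}Q_{i+1})=\nu'(\partial_{b_i}Q_i)$ (its Corollary~\ref{Corollary12.3}); it then finishes by contradiction, assuming $b_{i+1}>b_i$ and deriving that $b_i$ beats $b_{i+1}$, whereas you run the same inequalities directly to show every maximizer $b$ satisfies $b\le b_i$. One minor remark: your opening sentence claims you will show $b_i$ itself is a maximizer for $m_{i+1}$, but your step~5 does not quite establish that (nor does it need to); what it actually proves is that no $b>b_i$ can be a maximizer, which already gives $b_{i+1}\le b_i$.
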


\begin{proof} Write $Q_{i+1}=Q_i+z_i$, as above.

\begin{lemma}{\label{el12.2}} For each $b\in\mathbb{N}$ we have
\begin{equation}
\frac{\beta_i-\nu(\partial_bz_i)}b<\frac{\beta_i-\nu(\partial_{b_i}Q_i)}{b_i}.
\label{tag12.5}
\end{equation}
If, in addition, $b\ge b_i$, then
\begin{equation}
\frac{\beta_{i+1}-\nu(\partial_bz_i)}b<\frac{\beta_{i+1}-\nu(\partial_{b_i}Q_i)}{b_i}.\label{tag12.6}
\end{equation}
\end{lemma}

\begin{proof} Let $i'$ denote the smallest ordinal such that
\begin{equation}
\nu_{i'}(z_i)=\nu(z_i)\quad\text{and}\quad\nu_{i'}(\partial_bz_i)=\nu(\partial_bz_i);\label{tag12.7}
\end{equation}
by Proposition \ref{Proposition9.26b} and (\ref{tag12.2}), $i'<i$. Let $z_i=\sum\limits_{j=0}^{s_{i'}}c_{j,i'}Q_{i'}^j$ be an
$i'$-standard expansion of $z_i$. By Lemma \ref{el10.4} we have
\begin{equation}
\frac{\beta_i-\nu(\partial_{b_i}Q_i)}{b_i}>\frac{\beta_{i'}-\nu(\partial_{b_{i'}}Q_{i'})}{b_{i'}}.\label{tag12.8}
\end{equation}
Combining (\ref{tag12.8}) with (\ref{tag10.1}), we obtain
\begin{equation}
\frac{\nu_{i'}(z_i)-\nu_{i'}(\partial_bz_i)}b\le\frac{\beta_{i'}-\nu(\partial_{b_{i'}}Q_{i'})}{b_{i'}}<
\frac{\beta_i-\nu(\partial_{b_i}Q_i)}{b_i},\label{tag12.10}
\end{equation}
which gives the inequality (\ref{tag12.5}). If $b\ge b_i$, (\ref{tag12.6}) follows immediately by adding the inequality
$\frac{\beta_{i+1}-\beta_i}b\le\frac{\beta_{i+1}-\beta_i}{b_i}$ to (\ref{tag12.5}).
\end{proof}

\begin{corollary}{\label{Corollary12.3}} We have
\begin{equation}
\nu(\partial_{b_i}z_i)>\nu(\partial_{b_i}Q_i)=\nu(\partial_{b_i}Q_{i+1}).
\label{tag12.11}
\end{equation}
\end{corollary}

\begin{proof} The inequality in (\ref{tag12.11}) is a special case of (\ref{tag12.5}) when $b=b_i$. The equality in (\ref{tag12.11}) follows immediately from the inequality by the ultrametric triangle law.
\end{proof}

To prove Proposition \ref{Proposition12.1}, we argue by contradiction. Suppose that
\begin{equation}
b_{i+1}>b_i.\label{tag12.12}
\end{equation}
Letting $b=b_{i+1}$ in Lemma \ref{el12.2}, we obtain
\begin{equation}
\frac{\beta_{i+1}-\nu(\partial_{b_{i+1}}z_i)}{b_{i+1}}<\frac{\beta_{i+1}-\nu(\partial_{b_i}Q_i)}{b_i}.\label{tag12.13}
\end{equation}
We have
\begin{equation}
\frac{\beta_i-\nu(\partial_{b_{i+1}}Q_i)}{b_{i+1}}\le\frac{\beta_i-\nu(\partial_{b_i}Q_i)}{b_i}\label{tag12.14}
\end{equation}
by definition of $b_i$. Combining (\ref{tag12.14}) with (\ref{tag12.12}), we obtain
\begin{equation}
\frac{\beta_{i+1}-\nu(\partial_{b_{i+1}}Q_i)}{b_{i+1}}<\frac{\beta_{i+1}-\nu(\partial_{b_i}Q_i)}{b_i}.\label{tag12.15}
\end{equation}
We can rewrite (\ref{tag12.13}) and (\ref{tag12.15}) as
\begin{equation}
\min\{\nu(\partial_{b_{i+1}}Q_i),\nu(\partial_{b_{i+1}}z_i)\}>
\beta_{i+1}-\frac{b_{i+1}}{b_i}\left(\beta_{i+1}-\nu(\partial_{b_i}Q_i)\right).
\label{tag12.16}
\end{equation}
Since $\partial_{b_{i+1}}Q_{i+1}=\partial_{b_{i+1}}Q_i+\partial_{b_{i+1}}z_i$, (\ref{tag12.16}) shows that
$$
\nu(\partial_{b_{i+1}}Q_{i+1})>\beta_{i+1}-\frac{b_{i+1}}{b_i}\left(\beta_{i+1}-\nu(\partial_{b_i}Q_i)\right),
$$
which contradicts the definition of $b_{i+1}$. This completes the proof of Proposition \ref{Proposition12.1}.
\end{proof}
\begin{corollary}\label{onlyone} Keep the above notation. Assume that $b_{i+1}=b_i$. Then $I_{i+1,max}=\{b_{i+1}\}$.
\end{corollary}
\begin{proof} Take an integer
\begin{equation}
b>b_{i+1}=b_i.\label{eq:b>bi=bi+1}
\end{equation}
Then
\begin{equation}
\frac{\beta_{i+1}-\beta_i}b<\frac{\beta_{i+1}-\beta_i}{b_i}.\label{eq:increment}
\end{equation}
By definition of $b_i$, we have
\begin{equation}
\frac{\beta_i-\nu(\partial_bQ_i)}b\le\frac{\beta_i-\nu(\partial_{b_i}Q_i)}{b_i}.\label{eq:definitionofbi}
\end{equation}
Adding up (\ref{eq:increment}) and (\ref{eq:definitionofbi}) and using Corollary \ref{Corollary12.3}, we obtain
$$
\frac{\beta_{i+1}-\nu(\partial_bQ_{i+1})}b<\frac{\beta_{i+1}-\nu(\partial_{b_{i+1}}Q_{i+1})}{b_{i+1}},
$$
so $b\notin I_{i+1,max}$. This proves the Corollary.
\end{proof}
Recall that $\delta(h)$ denotes the stable value of $\delta_{\ell+t}(h)$ for all sufficiently large integers $t$. Set $\delta:=\delta(h)$. Replacing $\ell$ by $\ell+t$ for a suitable integer $t$, we may assume that $\delta_{\ell+t}(h)=\delta$ for all $t\in\mathbb N$. Write
$\delta=p^eu$, where $p\ \not|\ u$ if $p>1$.

If $\ch\ k_\nu=0$, equations (\ref{tag9.27bis}) and (\ref{tag11.5}) imply that $d_{\delta-1,\ell}\ne0$ and
\begin{equation}
g_{1,\ell}=\bar Q_\ell +\ini_{\nu}\frac{d_{\delta-1,\ell}}{\delta\ d_{\delta,\ell}}.\label{tag12.17}
\end{equation}
If $\ch\ k_\nu=p>0$ then, according to Proposition \ref{Proposition11.2} (2) and using the notation of (\ref{tag9.24}), we see that for $i=\ell+t$, $t\in\mathbb N_0$,
\begin{equation}
\delta-p^e\in S_i(h)\label{tag12.18}
\end{equation}
(in particular, $d_{\delta-p^e,i}\ne0$) and
\begin{equation}
\ini_iz_i=\left(\frac{\ini_id_{\delta-p^e,i}}{u\ \ini_id_{\delta,i}}\right)^{\frac1{p^e}}.\label{tag12.19}
\end{equation}
The equation (\ref{tag11.5}) rewrites as
\begin{equation}
\ini_\ell h=\ini_{\nu}d_{\delta,\ell}g_{1,\ell}^\delta.\label{tag12.20}
\end{equation}
Take ordinals $i$ and $\ell_1$ such that $\ell<\ell_1<i<\ell+\omega$. Next, we prove a comparison result showing that the coefficient $d_{\delta,i}$ agrees with $d_{\delta,\ell_1}$ modulo terms of sufficiently high value.

\begin{proposition}{\label{Proposition12.4}} Assume that
\begin{equation}
\delta_{i+1}(h)=\delta_\ell (h)=\delta.\label{tag12.21}
\end{equation}
We have
\begin{equation}
d_{\delta,i}\equiv d_{\delta,\ell_1}\ \mod\ \mathbf
P_{\nu\left(d_{\delta,\ell_1}\right)+\min\left\{\nu_{\ell_1}^+(h)-\nu_{\ell_1}(h),\beta_{\ell_1}-\beta_\ell\right\}}.\label{tag12.24}
\end{equation}
\end{proposition}

\begin{proof} By definitions, we have $Q_i=Q_{\ell_1}+z_{\ell_1}+\dots+z_{i-1}$, where the $z_i$ are homogeneous $Q_i$-free
$i$-standard expansions. For simplicity, write $z:=z_{\ell_1}+\dots+z_{i-1}$. We will compare the $\ell_1$-standard expansion of $h$ with the $i$-standard one. To this end, we substitute $Q_i=Q_{\ell_1}+z$ into the $i$-standard expansion of $h$. We obtain
\begin{equation}
h=\sum\limits_{j=0}^{s_i}d_{j,i}Q_i^j=\sum\limits_{j=0}^{s_i}d_{j,i}(Q_{\ell_1}+z)^j.\label{tag12.26}
\end{equation}
First note that $\deg_x\sum\limits_{j=0}^{\delta-1}d_{j,i}\left(Q_{\ell_1}+z\right)^j<\delta \deg_xQ_i$. Hence $d_{\delta,\ell_1}$ is completely determined by $d_{\delta,i},d_{\delta+1,i},\dots,d_{s_i,i}$. Next, by (\ref{eq:j'ki}) in the proof of Proposition \ref{Proposition11.2} (3), for
\[
0<j\le s_i-\delta
\]
the coefficient $d_{\delta+j,i}$ is a sum of terms of the form $d_{j',k,\ell_1}$ with $j'\ge j+\delta$ satisfying
\[
\nu\left(d_{j',k,\ell_1}\right)=\nu_{\ell_1}\left(d_{j',k,\ell_1}\right)\ge(j'-j-\delta)\beta_{\ell_1}+\nu\left(d_{j',\ell_1}\right).
\]
Hence
\begin{equation}
\nu_{\ell_1}\left(d_{\delta+j,i}Q_i^{\delta+j}\right)\ge\nu_{\ell_1}^+(h),\label{tag12.27}
\end{equation}
so for $0<j\le s_i-\delta$ the terms $d_{\delta+j,i}Q_i^{\delta+j}$ in (\ref{tag12.26}) contribute nothing to
\begin{equation}
d_{\delta,\ell_1}\mod\ \mathbf P_{(\nu_{\ell_1}(h)-\delta\beta_{\ell_1})+\min\{\nu_{\ell_1}^+(h)-
\nu_{\ell_1}(h),\beta_{\ell_1}-\beta_\ell\}}.\label{dvimod}
\end{equation}
Therefore, the only term on the left hand side of (\ref{tag12.26}) that affects the element (\ref{dvimod}) is\linebreak
$d_{\delta,i}(Q_{\ell_1}+z)^{\delta}$.

We have
\[
d_{\delta,i}Q_i^\delta=d_{\delta,i}\sum\limits_{j=0}^\delta\binom\delta jQ_{\ell_1}^{\delta-j}z^j.
\]
For $j<\delta$, the coefficient of $Q_{\ell_1}^j$ in the $\ell_1$-standard expansion of $d_{\delta,i}z^j$ contributes to
$d_{\delta,\ell_1}$. Let us denote this coefficient by $d'_j$. We have
\[
\nu_\ell(z)=\nu(z)
\]
and
\[
\nu_\ell(d_{\delta,i})=\nu(d_{\delta,i}).
\]
By Lemma \ref{el11.3} (1), the quantity $\nu_\ell(d_{\delta,i}z^{j})$ is the minimum of the $\nu_\ell$-values of the summands appearing in the $\ell_1$-standard expansion of $d_{\delta,i}z^{j}$.  Thus
\[
\nu_\ell\left(d_{\delta,i}z^j\right)=\nu\left(d_{\delta,i}z^j\right)\le\nu_\ell\left(d'_jQ_{\ell_1}^j\right).
\]
Hence
\[
\nu\left(d'_jQ_{\ell_1}^j\right)=\nu_\ell\left(d'_jQ_{\ell_1}^j\right)+j\left(\beta_{\ell_1}-\beta_\ell\right)\geq
\nu\left(d_{\delta, i}z^j\right)+j\left(\beta_{\ell_1}-\beta_\ell\right)=\nu\left(d_{\delta, i}\right)+2j\beta_{\ell_1}-j\beta_\ell.
\]
This shows that $\nu(d'_j)\ge\nu(d_{\delta, i})+j\left(\beta_{\ell_1}-\beta_\ell\right)\ge\nu(d_{\delta, i})+\left(\beta_{\ell_1}-\beta_\ell\right)$, so for $j>0$ the term $d'_jQ_{\ell_1}^j$ does not affect the element (\ref{dvimod}). This completes the proof.
\end{proof}

\begin{proposition}{\label{Proposition12.5}} The integer $\delta$ is of the form $\delta=p^e$ for some $e\in\mathbb{N}_0$ (in particular, $\delta=1$ whenever $\ch\ k_\nu=0$).
\end{proposition}

\begin{proof} We give a proof by contradiction. Write
\begin{equation}
\delta=p^ev,\quad\text{ where }p\ \not|\ v\quad\text{ if }\ch\ k_\nu=p>0.\label{tag12.29}
\end{equation}
Suppose that $v>1$. By Proposition \ref{Proposition12.1}, the sequence $\{b_i\}$ is non-increasing with $t$ and hence stabilizes for $t$ sufficiently large. Let $b_\infty$ denote the stable value of $b_i$. Write $b_\infty=p^{e_\infty}$. Let $b=p^{e+e_\infty}$ and $g=\partial_bh$. By Proposition \ref{Proposition11.2} (2), $\ini_ih$ has the form (\ref{tag11.5}) for $i=\ell+t$, as $t$ runs over
$\mathbb{N}_0$, in particular, $S_i(h)=\left\{0,p^e,2p^e\do vp^e\right\}$ and is independent of $i$. Thus $p^e$ is the same as in Proposition \ref{Proposition10.12}. Hence $h$ and $b$ satisfy the hypotheses of Proposition \ref{Proposition10.12}. By Proposition \ref{Proposition10.12} (3) and (\ref{tag11.5}), $g\not\equiv0$ and, for $t$ sufficiently large, we have
$$
\ini_{\nu_i}g=v\ \ini_{\nu_i}\left(d_{\delta,i}\left(\partial_{b_{i}}Q_{i}\right)^{p^{e}}\left(Q_i+z_i\right)^{\delta-p^e}\right).
$$
In particular,
$$
\nu(g)>\nu\left(d_{\delta, i}\left(\partial_{b_{i}}Q_{i}\right)^{p^{e}}\right)+\delta\beta_{\ell+t}-p^e\beta_{\ell+t}=
\nu\left(d_{\delta, i}\left(\partial_{b_{i}}Q_{i}\right)^{p^{e}}\right)+p^e(v-1)\beta_{\ell+t}=\nu_i(g)
$$
(here is where we are using $v>1$). Now, $h$ was defined as $h=Q_{\ell+\omega}$, in other words, $h$ is assumed to have minimal degree among all the polynomials satisfying $\nu(h)>\nu_i(h)$. This is a contradiction. The Proposition is proved. 
\end{proof}

\begin{remark}{\label{Remark12.6}} Let the notation be as in Proposition \ref{Proposition12.5}. Assume, in addition, that the sequence
$\left\{\beta_{\ell+t}\right\}$ is unbounded in $\tilde\g_0$. Then $\nu(h)\notin\tilde\g_0$.

Assume that $char\ k_\nu=char\ K$,
\begin{equation}
\ell=1\text{ and }\alpha_t=1\quad\text{for all }t\in\mathbb{N}.\label{tag12.30}
\end{equation}
In particular, $b_t=1$ for all $t\in\mathbb{N}$ and $b_\infty=1$.

Let $e_\omega$ be the integer $e$ of Proposition \ref{Proposition12.5}.

We have $p^{e_\omega}=\delta$. We claim that
\begin{equation}
\label{tag12.31}
h\in K\left[x^\delta\right];
\end{equation}
in particular, for all strictly positive integers $b'<\delta$ we have
\begin{equation}
\partial_{b'}h=0.\label{tag12.32}
\end{equation}
by Remark \ref{diffkills}. We prove (\ref{tag12.31}) by contradiction. Assume the contrary. Let $e'$ denote the greatest non-negative integer such that $h\in K\left[x^{p^{e'}}\right]$; by assumption, $e'<e_\omega$. Then $h$ involves at least one monomial $x^j$ such that $j$ is of the form $j=up^{e'}$ where $u$ is not divisible by $p$. We have $\deg_x\partial_{p^{e'}}h<\deg_xh$, so there exists $t_0\in\mathbb N$ such that
\begin{equation}
\nu_{t_0}\left(\partial_{p^{e'}}h\right)=\nu\left(\partial_{p^{e'}}h\right).\label{eq:stablet}
\end{equation}
Take an integer $t>t_0$. Let $\sum\limits_{p^{e'+1}\ \doesnotdivide \ j}c_{j,t}Q_t^j$ denote the sum of all those monomials appearing in the $t$-standard expansion of $h$ whose exponent $j$ is not divisible by $p^{e'+1}$. Note that by (\ref{tag12.30}) we have $c_{j,t}\in K$ for all pairs $(j,t)$. By Remark \ref{diffkills}, the operator $\partial_{p^{e'}}$ annihilates all the monomials whose exponents are divisible by $p^{e'+1}$. Thus
\begin{equation}\label{eq:derivativepeprime}
\partial_{p^{e'}}h=\partial_{p^{e'}}\left(\sum\limits_{p^{e'+1}\ \doesnotdivide\ j}c_{j,t}Q_t^j\right)=
\sum\limits_{p^{e'+1}\ \doesnotdivide\ j}c_{j,t}\binom{j}{p^{e'}}Q_t^{j-p^{e'}}.
\end{equation}
Formulas (\ref{eq:stablet}) and (\ref{eq:derivativepeprime}) imply that the $t$-standard expansion of $h$ contains a monomial of the form $c_{p^{e'},t}Q_t^{p^{e'}}$ and that for each $j$ with $p^{e'+1}\ \not|\ j$ we have
$$
\nu_{t_0}\left(c_{j,t}Q_t^j\right)\ge\nu\left(c_{p^{e'},t}Q_t^{p^{e'}}\right).
$$
Hence
\begin{equation}
\nu\left(c_{j,t}Q_t^j\right)>\nu\left(c_{p^{e'},t}Q_t^{p^{e'}}\right)\text{ for all }j\text{ with }p^{e'+1}\ \not|\ j\text{ and }j>p^{e'}.\label{eq:greatervalue}
\end{equation}
We obtain that for all $t$ sufficiently large the $t$-standard expansion of $h$ contains a monomial of the form
$c_{p^{e'},t}Q_t^{p^{e'}}$ and all the other monomials not divisible by $Q_t^{p^{e'+1}}$ have values strictly greater than
$\nu\left(c_{p^{e'},t}Q_t^{p^{e'}}\right)$.

Then for all $t'>t$ we have $\nu\left(c_{p^{e'},t}\right)=\nu\left(c_{p^{e'},t'}\right)$. Choosing $t'$ sufficiently large, we obtain
$\nu\left(c_{p^{e'},t'}Q_{t'}^{p^{e'}}\right)<\nu\left(c_{\delta,t'}Q_{t'}^\delta\right)$, which contradicts the definition of $\delta$. This completes the proof of (\ref{tag12.31}) and (\ref{tag12.32}).

In fact, by a similar argument this statement can be proved not only for $h$, but for any polynomial satisfying the strict inequalities (\ref{tag12.28}).
\end{remark}

\begin{remark}{\label{Remark12.7}} Keep the assumption that $\left\{\beta_{\ell+t}\right\}$ is unbounded in $\tilde\g_0$, as well as
(\ref{tag12.30}), but now assume that $char\ K=0$ and $char\ k_\nu=p>0$. By studying the coefficient of $Q_t^{\delta-1}$ in the $t$-standard expansion of $h$ for different $t$, one can prove that $\delta=p^e=1$.
\end{remark}

\begin{remark} It was pointed out to us by the referee that the result of Remark \ref{Remark12.6} was generalized in \cite{Nart0}, Theorem 4.11. For every valued field $(K,\nu)$ and every continuous family of iterated augmentations on $K[x]$ such that $\{\beta_{\ell+t}\}$ is unbounded, all the limit key polynomials of the family belong to $K\left[x^{\delta b_\infty}\right]$.

More precisely, Theorem 4.11 of \cite{Nart0} shows that $m:=\delta b_\infty$ is the smallest positive integer such that
$\partial_mh\ne0$. In particular, if $K$ is of characteristic zero, we have $\delta=1$ \cite{Nart0}, Corollary 4.12. This also generalizes the statement of Remark \ref{Remark12.7}.
\end{remark}

\begin{proposition}{\label{Proposition12.8}} Keep the notation and assumptions stated in the beginning of this section. Assume that
$\delta=p^e=1$ in the notation of (\ref{tag12.29}) (this assumption holds automatically if $\ch\ k_\nu=0$). Then the
sequences
\begin{equation}
\left(\nu_i(h)\right)_i\label{tag12.34}
\end{equation}
and
\begin{equation}
\left(\beta_i\right)_i,\label{tag12.35}
\end{equation}
where $i$ runs over the set $\{\ell+t\ |\  t\in\mathbb{N}\}$, are unbounded in $\tilde\g_0$.
\end{proposition}

\begin{proof} Proposition \ref{Proposition11.2} (2) implies that $\nu_i(h)=\beta_i+\nu(d_{1,i})$ and that $\nu(d_{1,i})$ is independent of $i$. Thus to show that the sequence (\ref{tag12.34}) is unbounded in $\tilde\g_0$ it is sufficient to show that (\ref{tag12.35}) is unbounded in $\tilde\g_0$.

Moreover, to prove that (\ref{tag12.35}) is unbounded, it is sufficient to show that the set $\nu(T)$ itself is unbounded in $\tilde\g_0$ (where $T=\left\{Q_\ell+w\ \left|\ w\in K[x],\deg_x(w)<\bar\alpha_\ell\right.\right\}$).

To prove the unboundedness of $\nu(T)$, we will start with the (not necessarily complete) set of key polynomials $\mathbf Q_{\ell+1}$ and will define a new set of key polynomials $\mathbf Q_{\ell+1}\cup\left\{\left.\tilde Q_{\ell+t}\ \right|\ t\in\mathbf N\right\}$ with
$\tilde Q_{\ell+t}\in T$ and the sequence $\nu\left(\tilde Q_{\ell+t}\right)$  unbounded in $\tilde\g_0$.

First, let $d^*_{1,\ell}\in K[x]$ denote a polynomial such that $\ini_\nu d^*_{1,\ell}\ini_\nu d_{1,\ell}=1$ in $G_\nu$. According to Lemma \ref{el4.4} we may choose $d^*_{1,\ell}$ to be of degree strictly less than $\deg_x Q_\ell =\deg_x Q_{\ell+t}$. We have
$\nu_\ell\left(d^*_{1,\ell}\right)=\nu\left(d^*_{1,\ell}\right)$ by Proposition \ref{Proposition9.26b}, hence $\nu_i\left(d^*_{1,\ell}\right)=\nu\left(d^*_{1,\ell}\right)$ for all $i\ge\ell$ by Proposition \ref{nuiisavaluation} (2).

Indeed, by Lemma \ref{qQ+r} applied with $s=2$ the graded algebra $G_{<\bar\alpha_\ell}$ is closed under multiplication. Hence
$\ini_\ell(d^*_{1,\ell}h)=\ini_\ell d^*_{1,\ell}\ini_\ell h$ and $\delta_i(d^*_{1,\ell}h)=1$ for all $i$ of the form $\ell+t$, $t\in\mathbb N$.

Thus multiplying $h$ by $d^*_{1,\ell}$ does not change the problem in the sense that
$$
\nu\left(d^*_{1,\ell}h\right)>\nu_i\left(d^*_{1,\ell}h\right)\quad\text{ for all }i
$$
(the new polynomial $d^*_{1,\ell}h$ may no longer have minimal degree with respect to this property, but we will not use the minimality of degree in the rest of the proof). Therefore we may assume that
\begin{equation}
\ini_{\nu}d_{1,i}=\ini_id_{1,i}=1\text{ for all }i\text{ of the form }\ell+t,\ t\in\mathbb{N}.\label{eq:ddelta=1}
\end{equation}
We will now construct polynomials $\tilde Q_{\ell+t}=Q_\ell+w_t$ such that $\deg_xw_t<\deg_xQ_\ell$ and the sequence
$\left(\nu\left(\tilde Q_{\ell+t}\right)\right)_t$ is strictly increasing and unbounded in $\tilde\g_0$.

We have $\deg_{\bar Q_\ell}\ini_\ell h=\delta=1$, so
\begin{equation}
\ini_\ell h=\ini_\ell\left(Q_\ell+d_{0,\ell}\right).\label{eq:binomial}
\end{equation}
In view of (\ref{tag12.28}), we have $d_{0,\ell}\ne0$ and
\begin{equation}
\nu(Q_\ell)<\nu\left(Q_\ell+d_{0,\ell}\right),\label{tag12.39}
\end{equation}
hence
\begin{equation}
\ini_\nu Q_\ell=-\ini_\nu d_{0,\ell}.\label{eq:iniequal}
\end{equation}
We have $Q_\ell+d_{0,\ell}\in T$. Put
$$
w_1:=d_{0,\ell}
$$
and $\tilde Q_{\ell+1}=Q_\ell+w_1$. From now till the end of the proof, for every object $X$ pertaining to the key polynomials $Q_i$ let us use the notation $\tilde X$ for the analogous object pertaining to the key polynomials $\tilde Q_i$ that we are about to construct.

Let $A$ denote the $\mathbb Z$-subalgebra of $K[x]$ generated by $x$ and the finitely many coefficients of the polynomial $Q_\ell$. The ring $A$ is noetherian. We have $\tilde Q_{\ell+1}\in A$. Let
\begin{equation}
h=\sum\limits_{j=0}^{s_\ell}\tilde d_{j,\ell+1}\tilde Q_{\ell+1}^j\label{tildel+1standard}
\end{equation}
be the $(\ell+1)$-standard expansion with respect to $\tilde Q_{\ell+1}$. Since $\delta=1$, we have $\nu\left(d_{j,\ell}Q_\ell^j\right)>\beta_\ell$ whenever $j>1$. Hence $\nu\left(\tilde d_{j,\ell+1}\tilde Q_{\ell+1}^j\right)>\tilde\beta_{\ell+1}$ whenever $j>1$. The expression (\ref{tildel+1standard}) contains the term $\tilde Q_{\ell+1}$ (with coefficient equal to 1 modulo terms of strictly positive value).

Since the set $\nu(T)$ does not have a maximal element, there exists $i=\ell+t$, $t\in\mathbb N$, such that
\begin{equation}
\nu_i(h)=\beta_i>\tilde\beta_{\ell+1}\ge\tilde\nu_{\ell+1}(h).
\label{eq:betai>betatildel+1}
\end{equation}
Combining (\ref{eq:betai>betatildel+1}) with (\ref{tag12.28}), we obtain
$\nu(h)>\tilde\nu_{\ell+1}(h)$.

We now iterate the procedure with $Q_\ell$ replaced by $\tilde Q_{\ell+1}$. Precisely, assume that $w_1,\dots,w_t$ and $\tilde Q_{\ell+q}=Q_\ell+w_q\in
A$, $q\in\{1\do t\}$ are already constructed,
\begin{equation}
\nu(h)>\tilde\nu_{\ell+1}(h)\label{eq:nu>tildenul+t}
\end{equation}
and $\tilde\delta_{\ell+t}=\tilde d_{1,\ell+t}=1$. By Proposition \ref{Proposition11.2} (2), applied to the newly constructed set
$$
\mathbf Q_{\ell+1}\bigcup\left\{\tilde Q_{\ell+q}\right\}_{q\in\{1\do t\}}
$$
of key polynomials, we have
\begin{equation}
\ini_{\ell+t}h=\ini_{\ell+t}\tilde Q_{\ell+t}+\ini_\nu\tilde d_{0,\ell+t}.\label{eq:zl+t+1}
\end{equation}
Note that (\ref{eq:nu>tildenul+t}) and (\ref{eq:zl+t+1}) imply that $\tilde d_{0,\ell+t}\ne0$. We now define
$$
w_{t+1}:=w_t+\tilde d_{0,\ell+t}
$$
and $\tilde Q_{\ell+t+1}=\tilde Q_\ell+w_{t+1}$. We have $\tilde Q_{\ell+t+1}\in A$.

This completes the recursive construction. Notice that all the elements $w_t$ and $\tilde Q_{\ell+t}$ lie in the noetherian ring $A$. Localizing $A$ at the prime ideal $A\cap M_\nu$, we may further assume that $A$ is a local noetherian ring.

\begin{lemma}\label{noboundedsequences} Let $\mu$ be a rank one valuation, centered in a local noetherian domain $(R,M,k)$ (that is, non-negative on $R$ and strictly positive on $M$). Let
$$
\Phi=\mu(R\setminus\{0\})\subset\tilde\g_0.
$$
Then $\Phi$ contains no infinite strictly increasing bounded sequences.
\end{lemma}
\begin{proof}
An infinite ascending sequence $\alpha_1<\alpha_2<\dots$ in $\Phi$, bounded above by an element $\beta\in\Phi$, would give rise to an infinite descending chain of ideals in $\frac R{I_\beta}$, where $I_\beta$ denotes the $\mu$-ideal of $R$ of value $\beta$. Thus it is sufficient to prove that $\frac R{I_\beta}$ has finite length.

Let $\epsilon:=\mu(M)\equiv\min(\Phi\setminus\{0\})$. Since $\mu$ is of rank one, there exists $n\in\mathbb N$ such that $\beta\le
n\epsilon$. Then $M^n\subset I_\beta$, so that there is a surjective map $\frac R{M^n}\twoheadrightarrow\frac R{I_\beta}$. Thus
$\frac R{I_\beta}$ has finite length, as desired.
\end{proof}
Coming back to the proof of the Proposition, let $H=\{a\in A\ |\ \nu(a)\notin\tilde\Gamma_0\}$ and
$$
M=\{a\in A\ |\ \nu(a)>0\}.
$$
Applying Lemma \ref{noboundedsequences} to the local noetherian ring $\frac{A_M}{HA_M}$ and using the fact that the sequence
$\beta_i$ is strictly increasing with $i$, we obtain that $\{\beta_i\}$ is unbounded in $\tilde\Gamma_0$, as desired.
\end{proof}
\begin{remark} Take a polynomial $g\in K[x]$ such that $\nu_{\ell+t}(g)<\nu(g)$ for all $t\in\mathbb N$, not necessarily of the smallest degree. Let $\delta:=\delta(g)$ denote the stable value of $\delta_{\ell+t}(g)$ for $t\in\mathbb N$ sufficiently large. Assume that
$p^e=1$ in the notation of (\ref{tag12.29}) (in other words, either $\ch\ k_\nu=0$ or $\ch\ k_\nu=p>0$ and $p\ \not|\ \delta$). For $i=\ell+t$ with $t$ sufficiently large we have $\nu_i(g)=\nu(d_{\delta,i})+\delta\beta_i$ with $\nu(d_{\delta,i})$ independent of $i$. Thus $\nu_i(g)$ is unbounded in $\tilde\g_0$.
\end{remark}

\section{Limit key polynomials in the case of fields of positive (equi-) characteristic}\label{Section7}

In this section, we assume that $\ch\ k_\nu=\ch\ K=p>0$. We assume that we have a set of key polynomials  $\{Q_i\}_{i\in\Lambda}$ such that $\Lambda$ contains at least one limit ordinal. Let $\ell+\omega\in \Lambda$ be a limit ordinal. Assume that the sequence
$\{\nu(Q_{\ell+t})\}_{t\in\mathbb N}$ is bounded in $\tilde\g_0$. The main result of this section, Proposition \ref{Proposition13.1}, says that the polynomial $Q_{\ell+\omega}$ can be chosen in such a way that there exist $i_0=\ell+t_0\in \Lambda$, $t_0\in\mathbb N$ (so that $i_0+=\ell+\omega$), such that the $i_0$-standard expansion of $Q_{\ell+\omega}$ is weakly affine.
\medskip

\begin{remark} If $\ell=0$ and $\deg_xQ_t=1$ for all $t\in\mathbb{N}$, this result was proved by I. Kaplansky. In Kaplansky's terminology $x$ is a limit of a pseudo-convergent sequence $\{\rho_j\}_j$ of algebraic type in $K$, and $Q_{\ell+\omega}$ is a monic polynomial of minimal degree, not fixing the values of $\{\rho_j\}_j$ (see \cite{Ka}, Lemma 10, page 311). In the special case $\ell=0$ and $\deg_xQ_t=1$ for all $t\in\mathbb{N}$, the polynomial $Q_{\ell+\omega}$ is an additive polynomial plus a constant. Kaplansky later called such polynomials ``$p$-polynomials''.
\end{remark}
\medskip

Let the notation be as in \S\ref{setswomaxelts}. Write $\delta=p^{e_0}$ with $e_0\in\mathbb N$ (we know that $\delta$ is a power of $p$ by Proposition \ref{Proposition12.5}).

For a technical reason that will become apparent later, we will assume (without loss of generality) that $\ell$ is not a limit ordinal.

Recall the definition of $\bar\beta$:
$$
\bar\beta=\sup\left\{\left.\nu\left(Q_{\ell+t}\right)\ \right|\ t\in\mathbb N\right\}.
$$
\begin{proposition}{\label{Proposition13.1}} The polynomial $Q_{\ell+\omega}$ can be chosen in such a way that there exists
$i\in\{\ell+t\}_{t\in\mathbb{N}_0}$ such that the $i$-standard expansion $Q_{\ell+\omega}=\sum\limits_{j=0}^\delta c_{j,i}Q_i^j$ of $Q_{\ell+\omega}$ is weakly affine and monic of degree $p^{e_0}$ in $Q_i$, with
\begin{equation}
\bar\beta\le\frac1{p^{e_0}}\nu\left(Q_{\ell+\omega}\right)\label{tag13.3}
\end{equation}
and
\begin{equation}
\nu(c_{j,i})=\left(p^{e_0}-j\right)\bar\beta\quad\text{ whenever }j>0\text{ and }c_{j,i}\ne0.\label{eq:onthecriticalline}
\end{equation}
\end{proposition}

\begin{proof} Let $f=Q_{\ell+\omega}$ be a limit key polynomial; we have
\begin{equation}
\nu(f)>\nu_{\ell+t}(f)\quad\text{ for all }t\in\mathbb N.\label{eq:nuifsmallerthannuf}
\end{equation}
The idea is to gradually modify the polynomial $f$ until we arrive at a limit key polynomial $g$ satisfying the conclusion of the Proposition.

For $i=\ell+t$, $t\in\mathbb N$, let
\begin{equation}
f=\sum\limits_{j=0}^\delta a_{j,i}Q_i^j\label{eq:istandardflastsection}
\end{equation}
denote an $i$-standard expansion of $f$. By Proposition \ref{weakly_affine_lim}, the polynomial $f$ is of degree
$\delta\deg_xQ_\ell $. In particular, we have $a_{\delta,i}=1$.

Choose $i_0\ge \ell$ sufficiently large so that
\begin{equation}
\beta_{i_0}-\alpha_\ell \beta_{\ell-1}>2p^{e_0}(\bar\beta-\beta_{i_0}).\label{tag13.10}
\end{equation}
Before plunging into technical details, let us try to informally outline the strategy of the proof. Let $(\gamma,l)$ be the coordinate system on the plane where the Newton polygon $\Delta_i(f)$ lives. Conclusion (\ref{eq:onthecriticalline}) of the Proposition says that for each monomial $a_{j,i}Q_i^j$, $j>0$, appearing in (\ref{eq:istandardflastsection}) the pair $(\nu(c_{j,i}),j)$ lies on the ``critical line'' $\gamma+l\bar\beta=p^{e_0}\bar\beta$. Assume that for a certain $i\ge i_0$ the $i$-standard expansion (\ref{eq:istandardflastsection}) does not satisfy the conclusion of the Proposition. This means that at least one of the monomials $a_{j,i}Q_i^j$, $j>0$, appearing in (\ref{eq:istandardflastsection}), has  one of the following properties:

(a) the point $(\nu(a_{j,i}),j)$ lies strictly above the critical line
$\gamma+l\bar\beta=p^{e_0}\bar\beta$

(b) the point $(\nu(a_{j,i}),j)$ lies strictly below the critical line

(c) the point $(\nu(a_{j,i}),j)$ lies on the critical line and $j$ is not a power of $p$.
\smallskip

In general, momomials $a_{j,i}Q_i^j$ lying strictly above the critical line cannot be immediately discarded, because they could give rise to monomials on or below the critical line in the $(i+1)$-standard expansion of $f$ after the substitution $Q_i=Q_{i+1}-z_i$. However, this problem does not occur if $\nu\left(a_{j,i}Q_i^j\right)$ is sufficiently large, that is, if our monomial lies far enough above the critical line (this fact, as well as the precise meaning of ``sufficiently large'', is explained in the Remark below). Such monomials are called $i$-superfluous. We define a monomial to be bad if it is not $i$-superfluous and satisfies one the conditions (a)-(c) above. We analyze all three types of bad monomials and show, after three lemmas, that all of the bad monomials disappear for some $i<\ell+\omega$ sufficiently large.

In the rest of the proof of the Proposition we will consider ordinals $i$ satisfying
$$
\ell+\omega>i\ge i_0.
$$
\begin{definition} Take an ordinal $i$, $i_0\le i<\ell+\omega$. A polynomial $g\in K[x]$ is said to be $i${\bf-superfluous} if
\begin{equation}
\nu_i(g)\ge p^{e_0}\bar\beta+\left(p^{e_0}-j\right)\left(\bar\beta-\beta_i\right).
\label{eq:verylargevalue}
\end{equation}
Let $i'$ be an ordinal such that $i\le i'<\ell+\omega$. Take an integer $j$, $1\le j<p^{e_0}$. The monomial $a_{j,i'}Q_{i'}^j$ is said to be $i${\bf-superfluous} if it is $i$-superfluous viewed as an element of $K[x]$.
\end{definition}
The set of all $i$-superfluous polynomials wil be denoted by $SUP_i$.
\begin{remark} A monomial $a_{j,i'}Q_{i'}^j$ is $i$-superfluous if and only if
\begin{equation}
\nu(a_{j,i'})+j\bar\beta\ge2p^{e_0}\bar\beta-p^{e_0}\beta_i.
\label{eq:verylargevaluemon}
\end{equation}
In particular,
\begin{equation}
SUP_i\subset SUP_{i+1}.\label{eq:supiinsupi+1}
\end{equation}
\end{remark}
\begin{remark}{\label{Remark13.2}} Let $i,i'$ be as above. Consider an $i$-superfluous monomial $a_{j,i}Q_i^j$ appearing in the $i$-expansion of $f$. By Proposition \ref{nuiisavaluation} (2), we have $\nu_{i'}\left(a_{j,i}Q_i^j\right)=\nu\left(a_{j,i}Q_i^j\right)$. Hence
\begin{equation}
\nu_{i'}\left(a_{j,i}Q_i^j\right)\ge2p^{e_0}\bar\beta-p^{e_0}\beta_i+j\beta_i-j\bar\beta>p^{e_0}\bar\beta>p^{e_0}\beta_{i'}=\nu_{i'}(f).\label{eq:serpfluousdonotcontribute}
\end{equation}
Then $\ini_{i'}\left(f-a_{j,i}Q_i^j\right)=\ini_{i'}f$ and
$$
\nu_{i'}\left(f-a_{j,i}Q_i^j\right)<\nu\left(f-a_{j,i}Q_i^j\right).
$$
Thus replacing $f$ by $f-a_{j,i}Q_i^j$ does not affect the condition (\ref{eq:nuifsmallerthannuf}); $f-a_{j,i}Q_i^j$ is still a limit key polynomial with index $\ell+\omega$.

For most of the proof of the Proposition we will work with non-superfluous monomials. At the end of the proof we will modify the polynomial $f$ by subtracting all the superfluous monomials from it.

Next, we will compare the $Q_i$-expansions of $f$ for different $i$.
\end{remark}
\begin{definition}{\label{de13.3}} Take an $i\ge i_0$. Let
\begin{equation}
f=\sum\limits_{j=0}^\delta a_{j,i}Q_i^j\label{eq:istandardexpansionfi}
\end{equation}
be the $Q_i$-expansion of $f$ and let $a_{j,i}Q_i^j$ be a monomial appearing in this expansion. We say that $a_{j,i}Q_i^j$ is {\bf bad} if it is not $i$-superfluous and at least one of the following three conditions holds:
\begin{enumerate}
\item[(1)]
\begin{equation}
\nu(a_{j,i})<(p^{e_0}-j)\bar\beta\label{tag13.12}
\end{equation}
\item[(2)] $j$ is not a power of $p$
\item[(3)]
\begin{equation}
\nu(a_{j,i})>(p^{e_0}-j)\bar\beta.\label{tag13.13}
\end{equation}
\end{enumerate}
\end{definition}
In view of Remark \ref{Remark13.2}, to arrive at an $i$-standard expansion (\ref{eq:istandardexpansionfi}) satisfying the conclusion of Proposition \ref{Proposition13.1} it is sufficient to show that it contains no bad monomials, in which case, after subtracting all the
$i$-superfluous monomials from $f$, there will be nothing more to do.

Take $i\ge i_0$. Assume that the $Q_i$-expansion of $f$ contains at least one bad monomial. Let $j(i)$ denote the greatest $j\in\{1,...,p^{e_0}-1\}$ such that the monomial $a_{j,i}Q_i^j$ is bad. Let $j^\bullet(i)$ denote the element $j\in\{1,...,p^{e_0}-1\}$ which minimizes the pair $(\nu(a_{j,i})+j\beta_i,-j)$ in the lexicographical ordering among all the elements of $\{1,...,p^{e_0}-1\}$ such that the monomial $a_{j,i}Q_i^j$ is bad.

To finish the proof of Proposition \ref{Proposition13.1}, we will first prove the following three Lemmas.

\begin{lemma}{\label{el13.4}} Assume that the $Q_{i+1}$-expansion of $f$ contains at least one bad monomial. We have
\begin{equation}
j(i+1)\le j(i)\label{tag13.14}
\end{equation}
and
\begin{equation}
j^\bullet(i+1)\le j^\bullet(i).\label{tag13.15}
\end{equation}
If $j\in\{j(i),j^\bullet(i)\}$ then
\begin{equation}
\ini_{\nu}a_{j,i+1}=\ini_{\nu}a_{j,i}.\label{tag13.16}
\end{equation}
\end{lemma}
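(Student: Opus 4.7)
The plan is to derive a transition formula comparing the $i$- and $(i+1)$-standard expansions of $f$, and then trace how the badness conditions propagate from level $i$ to level $i+1$. Since we have already reduced to the situation where $f$ is monic in $Q_i$ of degree $p^{e_0}$ (with $a_{p^{e_0},i}=1$ and $a_{ji}=0$ for $j>p^{e_0}$), substituting $Q_i=Q_{i+1}-z_i$ into the $i$-standard expansion and applying Newton's binomial theorem produces
\[
f\;=\;\sum_{k=0}^{p^{e_0}}\Biggl(\sum_{j=k}^{p^{e_0}}\binom{j}{k}(-z_i)^{j-k}a_{ji}\Biggr)Q_{i+1}^{k}.
\]
An argument in the spirit of Proposition~\ref{Proposition12.4} (using $\deg_xa_{ji}<\deg_xQ_i$ and $\deg_xz_i<\deg_xQ_i$) shows that the inner sum $A_k$ agrees with $a_{k,i+1}$ modulo an error of value strictly greater than $\nu'(a_{ki})+\theta(i)$, with $\theta(i)$ as in (\ref{tag12.36}); in view of (\ref{tag13.9}) and (\ref{tag13.10}), this error does not affect $\ini_{\nu'}a_{k,i+1}$ at the levels we care about.

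To prove (\ref{tag13.14}), I would take any $k$ with $j(i)<k<p^{e_0}$ and verify that $a_{k,i+1}Q_{i+1}^k$ is not bad at level $i+1$. Since $a_{ki}Q_i^k$ is not bad, either its value is too large to satisfy (\ref{tag13.11}) at level $i$, or $\nu'(a_{ki})=(p^{e_0}-k)\bar\beta$ with $k$ a power of $p$. The additional contributions $\binom{j}{k}(-z_i)^{j-k}a_{ji}$ with $j>k$ have values at least $(j-k)\beta_i+\nu'(a_{ji})$, and $(\ref{tag13.10})$ was arranged precisely so that $p^{e_0}(\beta_{i+1}-\beta_i)$ dominates the deficit $(p^{e_0}-k)(\bar\beta-\beta_i)$ that arises when passing from level $i$ to level $i+1$. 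This forces $\nu'(a_{k,i+1})+k\bar\beta\ge 2p^{e_0}\bar\beta-p^{e_0}\beta_{i+1}$, so (\ref{tag13.11}) fails at $i+1$ and $a_{k,i+1}Q_{i+1}^k$ is not bad.

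To prove (\ref{tag13.16}) at $j=j(i)$, I would show that the term $a_{j(i),i}$ dominates every other summand in $A_{j(i)}$. For $j$ with $j(i)<j<p^{e_0}$ the corresponding monomial $a_{ji}Q_i^j$ is not bad, and the estimates of Step~2 applied in reverse yield $(j-j(i))\beta_i+\nu'(a_{ji})>\nu'(a_{j(i),i})$. For $j=p^{e_0}$ the contribution is $\binom{p^{e_0}}{j(i)}(-z_i)^{p^{e_0}-j(i)}$, of value at least $(p^{e_0}-j(i))\beta_i$, which exceeds $\nu'(a_{j(i),i})$ by (\ref{tag13.10}) combined with the badness bound $\nu'(a_{j(i),i})<2p^{e_0}\bar\beta-p^{e_0}\beta_i-j(i)\bar\beta$. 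Hence $\ini_{\nu'}a_{j(i),i+1}=\ini_{\nu'}a_{j(i),i}$. The argument for $j^\bullet(i)$ is parallel, with the extra input that any bad summand with $j>j^\bullet(i)$ satisfies $\nu'(a_{ji})+j\beta_i\ge\nu'(a_{j^\bullet(i),i})+j^\bullet(i)\beta_i$ by lexicographic minimality, so the factor $(-z_i)^{j-j^\bullet(i)}$ of positive value pushes it strictly past $\nu'(a_{j^\bullet(i),i})$; combined with Step~2 this yields both $\ini_{\nu'}a_{j^\bullet(i),i+1}=\ini_{\nu'}a_{j^\bullet(i),i}$ and (\ref{tag13.15}).

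The main obstacle is bookkeeping the value estimates so as to exploit (\ref{tag13.10}) in precisely the right way: the gap $\beta_{i_0}-\alpha_\ell\beta_{\ell-1}$ was arranged to exceed $2p^{e_0}(\bar\beta-\beta_{i_0})$, which is the margin needed to push non-bad monomials at level $i$ past the strict inequality (\ref{tag13.11}) at level $i+1$, while still leaving enough slack to keep the dominant term $a_{j(i),i}$ unambiguously dominant in $A_{j(i)}$.
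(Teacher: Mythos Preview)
Your overall strategy --- substitute $Q_i=Q_{i+1}-z_i$, expand by the binomial theorem, and compare coefficients --- matches the paper's. But there is a genuine gap in your value estimates for (\ref{tag13.16}). You claim that for $j$ with $j(i)<j<p^{e_0}$ and $a_{ji}Q_i^j$ not bad, one has
\[
(j-j(i))\beta_i+\nu'(a_{ji})>\nu'(a_{j(i),i}).
\]
This fails precisely in the most important case: when $a_{ji}Q_i^j$ satisfies (\ref{tag13.11}) but is not bad, so that $j=p^{e'}$ and $\nu'(a_{ji})=(p^{e_0}-j)\bar\beta$ exactly. Then the inequality you want becomes $(j-j(i)+p^{e_0})\beta_i>(j-j(i)+p^{e_0})\bar\beta$, which is false since $\beta_i<\bar\beta$. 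So the cross term $\binom{j}{j(i)}(-z_i)^{j-j(i)}a_{ji}$ in your $A_{j(i)}$ is \emph{not} dominated by $a_{j(i),i}$ on value grounds alone.

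The paper's proof avoids this by exploiting the structural fact you do not use: for $j'>j(i)$ satisfying (\ref{tag13.11}), the non-badness forces $j'=p^{e'}$ to be a $p$-power, and then in characteristic $p$ the expansion collapses to two terms,
\[
a_{j'i}(Q_{i+1}-z_i)^{p^{e'}}=a_{j'i}Q_{i+1}^{p^{e'}}-a_{j'i}z_i^{p^{e'}},
\]
so the cross terms contributing to $a_{j(i),i+1}$ simply vanish (or, in mixed characteristic, pick up an extra factor of $p$ with positive value). What remains is the constant term $a_{j'i}z_i^{p^{e'}}$, which may have large $x$-degree; the paper then handles the Euclidean division by $Q_{i+1}$ explicitly, showing that each division step gains at least $\beta_{i+1}-\alpha_\ell\beta_{\ell-1}$ in value, and (\ref{tag13.10}) is calibrated exactly to make this gain exceed the required margin. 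Your reference to ``an argument in the spirit of Proposition~\ref{Proposition12.4}'' does not capture this, since the error term $\theta(i)$ there measures something different and the Euclidean-division gain $\beta_{i+1}-\alpha_\ell\beta_{\ell-1}$ is the essential input you are missing.
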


\begin{lemma}{\label{el13.5}} If $j\in\{j(i),j^\bullet(i)\}$ then (\ref{tag13.12}) does not hold.
\end{lemma}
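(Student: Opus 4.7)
Proceed by contradiction. Assume $j \in \{j(i), j^\bullet(i)\}$ satisfies (\ref{tag13.12}), that is, $\nu'(a_{ji}) < (p^{e_0}-j)\bar\beta$.

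The basic observation is that, since $a_{p^{e_0},i+t} = 1$ and $\delta_{i+t}(f) = p^{e_0}$ for every $t \ge 0$, we have $\nu_{i+t}(f) = p^{e_0}\beta_{i+t}$; hence the term $a_{j,i+t}Q_{i+t}^j$ of the $(i+t)$-standard expansion of $f$ satisfies
\[
\nu'(a_{j,i+t}) \;\ge\; (p^{e_0}-j)\beta_{i+t}.
\]
Since $\beta_{i+t} \to \bar\beta$ as $t \to \infty$, this gives $\liminf_{t \to \infty} \nu'(a_{j,i+t}) \ge (p^{e_0}-j)\bar\beta$. If we can show that $\nu'(a_{j,i+t}) = \nu'(a_{ji})$ for all $t \ge 0$, then $\nu'(a_{ji}) \ge (p^{e_0}-j)\bar\beta$, directly contradicting (\ref{tag13.12}).

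The main obstacle is therefore to establish this stability $\nu'(a_{j,i+t}) = \nu'(a_{ji})$. Lemma \ref{el13.4}(\ref{tag13.16}) supplies the base case $t=1$: the equality $\ini_{\nu'}a_{j,i+1} = \ini_{\nu'}a_{ji}$ (for $j \in \{j(i), j^\bullet(i)\}$) implies in particular $\nu'(a_{j,i+1}) = \nu'(a_{ji})$. To propagate to arbitrary $t$, one must check that $j$ remains a distinguished index at each intermediate step $i+s$, which one verifies inductively using the monotonicities (\ref{tag13.14})-(\ref{tag13.15}) together with the $\ini_{\nu'}$-stability already obtained at previous steps. Alternatively, one may invoke Proposition \ref{Proposition12.4}: the formula (\ref{tag12.22}) expresses $a_{j,i+t}$ explicitly as a sum of terms of the form $\binom{j+k}{k}a_{j+k,i}(z_i+\cdots+z_{i+t-1})^k$ modulo higher-value corrections, and a direct valuation estimate (using the lower bound $\nu'(a_{j+k,i}) \ge (p^{e_0}-j-k)\beta_i$ for $k>0$ and comparing with $\nu'(a_{ji})$) shows that $\nu'(a_{j,i+t})$ is dominated by the $k=0$ summand, namely $\nu'(a_{ji})$.

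Once stability is in hand, the limiting argument above yields the desired contradiction and completes the proof of the lemma.
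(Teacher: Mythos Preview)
Your proposal is correct and follows essentially the same route as the paper's proof. Both argue by contradiction: assuming (\ref{tag13.12}), one uses Lemma~\ref{el13.4}~(\ref{tag13.16}) inductively to show that $j$ remains equal to $j(i')$ (resp.\ $j^\bullet(i')$) for all $i'\ge i$, so that $\nu'(a_{ji'})=\nu'(a_{ji})$ is independent of $i'$; then the inequality $\nu'(a_{ji'})+j\beta_{i'}\ge\nu_{i'}(f)=p^{e_0}\beta_{i'}$ together with $\beta_{i'}\to\bar\beta$ forces $\nu'(a_{ji})\ge(p^{e_0}-j)\bar\beta$, contradicting (\ref{tag13.12}). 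The paper spells out the inductive step a bit more explicitly (checking that (\ref{tag13.12}) persists and hence the monomial stays bad, so $j(i+1)=j(i)$ by (\ref{tag13.14})), but your sketch captures exactly this mechanism.
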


\begin{lemma}{\label{el13.6}} If $j=j(i)$ then (\ref{tag13.13}) holds.
\end{lemma}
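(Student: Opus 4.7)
I shall argue by contradiction. Suppose Lemma 13.6 fails for some $i$, i.e.\ $j := j(i)$ satisfies $\nu'(a_{j,i}) \le (p^{e_0}-j)\bar\beta$. Combined with Lemma 13.5 this forces the exact equality $\nu'(a_{j,i}) = (p^{e_0}-j)\bar\beta$, so that both conditions (1) and (3) of Definition \ref{de13.3} fail at $j$. Since the monomial $a_{j,i}Q_i^j$ is bad, condition (2) must therefore hold: $j$ is not a power of $p$. Write $j = p^a v$ with $p\nmid v$ and $v>1$.

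The goal will then be to construct a polynomial $g \in K[x]$ with $\delta(g) < p^{e_0}$ and $\nu_{\ell+t}(g) < \nu'(g)$ for all sufficiently large $t$. Such a $g$ violates the minimality of $\delta(f)=p^{e_0}$ built into the choice of $f$ at the outset of \S\ref{Section7}, giving the desired contradiction. As a preliminary step I would iterate Lemma 13.4: by the monotonicity (\ref{tag13.14}) we have $j(i') \ge j$ for $i'$ slightly above $i$, so (\ref{tag13.16}) applied recursively ensures that $\ini_{\nu'}a_{j,i'}$ retains its value and in-form as $i'$ grows, as long as $j$ remains the maximal bad index.

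The candidate for $g$ is $\partial_b f$ for a carefully chosen $b$, following the template of the proof of Proposition \ref{Proposition12.5}, where the factor $v > 1$ in the $p$-adic decomposition of $\delta$ forced the strict inequality $\nu'(\partial_b h) > \nu_i(\partial_b h)$. A natural choice is $b := p^{a+e_\infty}$, with $e_\infty$ the stable value of $e_{\ell+t}$ arising from Proposition \ref{Proposition12.1}. Using Propositions \ref{Proposition10.12} and \ref{Proposition10.14} applied to the $i$-standard expansion of $f$ for large $i$, one computes $\ini_i g$ and $\nu_i(g)$: the bad monomial at $j$, together with $v>1$, should produce a strict gain $\nu'(g) > \nu_i(g)$, while the degree drop $\deg_x g < \deg_x f$ gives $\delta(g) < p^{e_0}$.

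The hard part will be the bookkeeping needed to verify this strict inequality: one has to ensure that the contribution of $a_{j,i}Q_i^j$ to $\ini_i\partial_b f$ is not cancelled by contributions coming either from the leading term $Q_i^{p^{e_0}}$ or from non-bad monomials at degrees $j'$ with $j < j' < p^{e_0}$ (which, by the definition of $j(i)$, either are removable via Remark \ref{Remark13.2} or satisfy both $j'=p^{a'}$ and $\nu'(a_{j',i})=(p^{e_0}-j')\bar\beta$, restricting their $p$-adic structure). The assumed equality $\nu'(a_{j,i})=(p^{e_0}-j)\bar\beta$, together with the stability provided by Lemma 13.4, is exactly what prevents such cancellation and drives the contradiction.
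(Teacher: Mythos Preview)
Your opening moves match the paper exactly: contradiction, Lemma~\ref{el13.5} forces the equality $\nu'(a_{ji})+j\bar\beta=p^{e_0}\bar\beta$, hence $j$ is not a $p$-power, say $j=p^av$ with $v\ge2$.  From there, however, your plan diverges from the paper's and contains a genuine gap.

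The gap is in the sentence ``the bad monomial at $j$, together with $v>1$, should produce a strict gain $\nu'(g)>\nu_i(g)$''.  Propositions~\ref{Proposition10.12} and \ref{Proposition10.14} control $\ini_i(\partial_b f)$ only through the set $S_i(f,\beta_i)$, i.e.\ the monomials of \emph{minimal} $\nu_i$-value.  But the bad monomial $a_{j,i}Q_i^{j}$ is \emph{not} in $S_i$: since $\nu'(a_{j,i})=(p^{e_0}-j)\bar\beta$ and $\beta_i<\bar\beta$, we have
\[
\nu_i\!\left(a_{j,i}Q_i^{j}\right)=(p^{e_0}-j)\bar\beta+j\beta_i
> p^{e_0}\beta_i=\nu_i(f),
\]
so $j\notin S_i$.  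After the normalisation already made, $\ini_if=(\bar Q_i+\ini_{\nu'}z_i)^{p^{e_0}}$ and $S_i=\{0,p^{e_0}\}$; with your choice $b=p^{a+e_\infty}$ and $a<e_0$, Proposition~\ref{Proposition10.12}(1) says equality in (\ref{tag10.1}) \emph{fails}, so the $S_i$-terms give only a strict lower bound on $\nu_i(\partial_bf)$, while the bad monomial sits strictly above them in $\nu_i$-value both before and after applying $\partial_b$.  Thus $a_{j,i}Q_i^{j}$ does not contribute to $\ini_i(\partial_bf)$ at all, and there is no mechanism by which $v>1$ forces $\nu'(\partial_bf)>\nu_i(\partial_bf)$.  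The Prop.~\ref{Proposition12.5} template works because there $\delta$ itself had the extra factor $v$; here $\delta=p^{e_0}$ is already a $p$-power and the non-$p$-power structure lives off the critical line.

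For comparison, the paper does not aim at the minimality of $\delta(f)$.  It uses the operator $\partial_{p^e(u-1)}$ (your $p^a(v-1)=j-p^a$) together with the minimality of $\delta$ only to stabilise $\nu_{i'}$ on the derivative, and then argues in the style of Remark~\ref{Remark12.6} that the $(i'+1)$-standard expansion of $f$ must contain a monomial $a_{p^e,i'+1}Q_{i'+1}^{p^e}$ with $\nu'(a_{p^e,i'+1})+p^e\bar\beta<p^{e_0}\bar\beta$.  This contradicts the inequality (\ref{tag13.21bis}), which the paper obtains from Lemma~\ref{el13.5} applied to $j^\bullet(i)$ rather than $j(i)$.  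So the contradiction is with the $\bar\beta$-critical line (no monomial can lie strictly below it), not with the minimality of $p^{e_0}$.
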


After proving the three lemmas, we will show that increasing $i$ either eliminates the last bad monomial or strictly decreases $j(i)$. At that point the proof of Proposition \ref{Proposition13.1} will be finished by induction on $j(i)$.
\medskip

Before embarking on the proofs, we make a general remark on comparing the $Q_i$- and the $Q_{i+1}$-expansions of $f$ that will be used in these proofs.
\begin{remark}\label{Euclideansuperfluous} We want to compare the $Q_i$- and the $Q_{i+1}$-expansions of $f$. To do that, write
\begin{equation}
f=\sum\limits_{j'=0}^{\delta}a_{j',i}(Q_{i+1}-z_i)^{j'}.\label{tag13.18}
\end{equation}
Consider an integer $j'$ of the form $j'=p^{e'}$, $e'\in\{1\do e_0-1\}$. Then
\begin{equation}
a_{j',i}(Q_{i+1}-z_i)^{j'}=a_{j',i}Q_{i+1}^{p^{e'}}-a_{j',i}z_i^{p^{e'}}.\label{tag13.19}
\end{equation}
In general, the right hand side of (\ref{tag13.19}) need not be a $Q_{i+1}$-expansion, since $\deg_xa_{j',i}z_i^{p^{e'}}$ may be quite large, even as large or larger than $\deg_xQ_{i+1}^{p^{e'}}$. However, the $Q_{i+1}$-expansion 
of\linebreak$a_{j',i}(Q_{i+1}-z_i)^{p^{e'}}$ is obtained from it by Euclidean division by $Q_{i+1}$, as follows. Write
$$
a_{j',i}z_i^{p^{e'}}=Q_{i+1}g+r\left(a_{j',i}z_i^{j'}\right)
$$
with $\deg_xr\left(a_{j',i}z_i^{j'}\right)<\deg_xQ_{i+1}$ (here $r\left(a_{j',i}z_i^{j'}\right)$ is the remainder of the Euclidean division of $a_{j',i}z_i^{j'}$ by $Q_{i+1}$). By Lemma \ref{qQ+r} and Corollary \ref{nui0Qi+t} we have
$$
\nu_{\ell-1}\left(a_{j',i}z_i^{p^{e'}}\right)=\nu\left(a_{j',i}z_i^{p^{e'}}\right)=\nu_{i+1}\left(a_{j',i}z_i^{p^{e'}}\right),
$$
and
\begin{equation}
\nu_{i+1}(Q_{i+1}g)-\nu\left(a_{j',i}z_i^{p^{e'}}\right)\ge\beta_{i+1}-\alpha_\ell \beta_{\ell-1}.\label{eq:newmonomialssuperfluous}
\end{equation}
Hence $\nu_{i+1}(Q_{i+1}g)-(\beta_{i+1}-\alpha_\ell\beta_{\ell-1})\ge\nu\left(a_{j',i}z_i^{p^{e'}}\right)\ge p^{e_0}\beta_i$. Thus
$Q_{i+1}^{j'}-r\left(a_{j',i}z_i^{j'}\right)$ is a $Q_{i+1}$-expansion and
$\nu_{i+1}\left(a_{j',i}z_i^{j'}-r\left(a_{j',i}z_i^{j'}\right)\right)\ge p^{e_0}\beta_i+(\beta_{i+1}-\alpha_\ell \beta_{\ell-1})$, and hence, using (\ref{tag13.10}), we have
\begin{equation}
a_{j',i}z_i^{j'}-r\left(a_{j',i}z_i^{j'}\right)\in SUP_{i+1}.\label{eq:differencesuperfluous}
\end{equation}
Exactly the same analysis can be carried out for every $j'\in\{1\do p^{e_0-1}\}$, regardless of whether $j'$ is a power of $p$, but the notation is simpler in the $p$-power case which is enough for our purposes.
\end{remark}
\medskip

\noi{\bf Notation.} For $j\in\{0\do\delta-1\}$ write
$$
Sup_{j,i}(f)=\left\{s\in\{j+1,\dots,\delta\}\}\ \left|\ a_{s,i}Q_i^s\text{ is $i$-superfluous}\right.\right\}
$$
and
$$
NS_{j,i}(f)=\{j+1,\dots,\delta\}\setminus Sup_i(f).
$$
\begin{proof}[Proof of Lemma \ref{el13.4}] Let $j=j(i)$. Since $j$ is the greatest element of $NS_{0,i}$ satisfying one of the conditions (1)--(3) of Definition \ref{de13.3}, every $j'\in NS_{j,i}$ is a power of $p$ and satisfies
\begin{equation}
\nu(a_{j',i})=(p^{e_0}-j')\bar\beta.\label{tag13.17}
\end{equation}
We want to analyze the monomial $a_{j,i+1}Q_{i+1}^j$ in the $Q_{i+1}$-expansion of $f$.

Now, terms in (\ref{tag13.18}) with $j'<j$ or $j'\in Sup_{j,i}$ do not affect either $\ini_{\nu}a_{j,i+1}$ or $\ini_{\nu}a_{j,i}$. We claim that the same is true of the terms with $j'\in NS_{j,i}$. Indeed, take a $j'\in NS_{j,i}$. As explained in the beginning of the proof of this Lemma, $j'$ is a power of $p$. Write $j'=p^{e'}$.

By Remark \ref{Euclideansuperfluous} (specifically, (\ref{eq:newmonomialssuperfluous})), (\ref{tag13.10}) and the fact that
$a_{j,i}Q_i^j$ is not $i$-superfluous, all the monomials $dQ_{i+1}^s$, $s>0$, appearing in the $Q_{i+1}$-expansion of
$a_{j',i}z_i^{p^{e'}}$ satisfy
$$
\nu\left(dQ_{i+1}^s\right)>\nu(a_{j,i})+j\beta_{i+1}.
$$
In particular, if such a monomial is of the form $dQ_{i+1}^j$, with $\deg_x d<\bar\alpha_{i+1}$, we have $\nu(d)>\nu(a_{j,i})$. This proves that (\ref{tag13.16}) holds for $j=j(i)$.

The fact that all the new monomials with strictly positive exponents arising from the Euclidean division of
\[
a_{j',i}z_i^{p^{e'}},\qquad j'>j,
\]
are $(i+1)$-superfluous also shows that no bad monomials $a_{j'',i+1}Q_{i+1}^{j''}$ with $j''>j$ appear in the $Q_{i+1}$-expansion of $f$. This proves (\ref{tag13.14}).

The proof of the Lemma in the case $j=j^\bullet(i)$ is very similar to that of $j=j(i)$, except for the following minor change. We can no longer assert that $j'$ is a power of $p$. On the other hand, $j'$ satisfies $\nu(a_{j',i})+j'\beta_i>\nu_i(f)$, which allows us to use similar arguments as in the $j=j(i)$ case. This completes the proof of Lemma \ref{el13.4}.
\end{proof}

\begin{proof}[Proof of Lemma \ref{el13.5}] We argue by contradiction. Suppose that $j=j(i)$ and that (\ref{tag13.12}) holds for this $j$. (\ref{tag13.12}) can be rewritten as $\nu(a_{j,i})+j\bar\beta<p^{e_0}\bar\beta$. Combining this with (\ref{tag13.16}) we obtain that (\ref{tag13.12}) holds with $i$ replaced by $i+1$ and
\begin{equation}
\nu(a_{j,i+1})+j\bar\beta<2p^{e_0}\bar\beta-p^{e_0}\beta_{i+1},\label{tag13.20}
\end{equation}
so the monomial $a_{j,i+1}Q_{i+1}^j$ is also bad and $j(i+1)=j(i)$ in view of (\ref{tag13.14}). By induction on $i'\ge i$ we see that $j(i')$ is independent of $i'$, so the $Q_{i'}$-expansion of $f$ contains a monomial
$a_{j,i'}Q_{i'}^j$ with
\[
\nu\left(a_{j,i'}Q_{i'}^j\right)=\nu(a_{j,i'})+j\beta_{i'}=
\nu(a_{j,i})+j\beta_{i'}<\nu(a_{j,i})+j\bar\beta.
\]
Then $p^{e_0}\beta_{i'}=\nu_{i'}(f_{i'})<\nu(a_{j,i})+j\bar\beta<p^{e_0}\bar\beta$ for all $i'\ge i$, where the equality holds because
$$
p^{e_0}=\delta\in S_{i'}(f).
$$
Therefore the least upper bound of $\{\beta_{i'}\}_{i\le i'<\ell+\omega}$ is bounded above by $\frac1{p^{e_0}}(\nu(a_{j,i})+j\bar\beta)$ and hence is strictly less than $\bar\beta$. This contradicts the definition of
$\bar\beta$.

The proof in the case $j=j^\bullet(i)$ is similar to that with $j=j(i)$ and we omit it.
\end{proof}

\begin{proof}[Proof of Lemma \ref{el13.6}] We argue by contradiction. Assume that $j=j(i)$ and that (\ref{tag13.13}) does not hold. In view of Lemma \ref{el13.5} this implies that
\begin{equation}
\nu(a_{j,i})+j\bar\beta=p^{e_0}\bar\beta.\label{tag13.21}
\end{equation}
Then, by definition of $j(i)$, $j$ is not a power of $p$. Write $j=p^eu$, $u\ge2$ and $p\ \not|\ u$, and
$$
Q_{i+1}=Q_i+z_i.
$$
Lemma \ref{el13.5}, applied to $j^\bullet(i)$, implies that
\begin{equation}
\nu(a_{j',i})+j'\bar\beta\ge p^{e_0}\bar\beta\qquad\text{for all }j'\in\{1,\dots,p^{e_0}\}.\label{tag13.21bis}
\end{equation}
Let $b=p^eb_\infty$. Arguing as in the proof of Lemma \ref{el13.4}, we can show that $j(i)$ and $\nu(a_{j,i})$ remain unchanged as $i$ increases. Moreover, after suitably increasing $\ell$ and $i$, we may assume that $b_\ell=b_i=b_\infty$ (by Proposition \ref{Proposition12.1}) and $\deg_xQ_\ell=\deg_xQ_i$. Let us denote the common value of $\deg_xQ_\ell$ and $\deg_xQ_i$ by
$\bar\alpha_\infty$. By Corollary \ref{onlyone} there is at most one value of $i=\ell+t$, $t\in\mathbb N$, for which $\#I_{i,max}>1$. Hence, increasing $\ell$ and $i$, we may assume, in addition, that
\begin{equation}
I_{i',max}=\{b_\infty\}\quad\text{ for all }i'\ge\ell.\label{eq:Imaxsingleton}
\end{equation}
Finally, increasing $\ell$ and $i$ again, if necessary, and using (\ref{eq:Imaxsingleton}), we may assume that
\begin{equation}
\beta_i-\beta_\ell>\delta b_\infty(\bar\beta-\beta_i).\label{eq:ilarge}
\end{equation}
We continue to use the notation of (\ref{eq:istandardexpansionfi}) for the $i$-expansion of $f_i$.

For each polynomial $a$ with
\begin{equation}
\deg_x a<\deg_{x}Q_i\label{eq:dega<}
\end{equation}
by Proposition \ref{Proposition9.26b} (applied separately to $i$ and then to $i$ replaced by $\ell$ and using the fact that neither $\ell$ nor $i$ are limit ordinals), we have $\nu_\ell(a)=\nu_i(a)=\nu(a)$. For each $b'\in\mathbb N$ and each $i'$, $\ell<i'<\ell+\omega$, we have
\begin{equation}
\begin{split}
\nu(\partial_{b'}a)&=\nu_\ell(\partial_{b'}a)\ge\nu(a)-\frac{b '}{b_\infty}(\beta_\ell-\nu(\partial_{b_\infty}Q_\ell))=\\
=&\nu(a)-\frac{b'}{b_\infty}(\beta_{i'}-\nu(\partial_{b_\infty}Q_{i'}))+\frac{b'}{b_\infty}(\beta_{i'}-\beta_\ell)>\\
>&\nu(a)-\frac{b'}{b_\infty}(\beta_{i'}-\nu(\partial_{b_\infty}Q_{i'}))+\frac{2p^{e_0}b'}{b_\infty}(\bar\beta-\beta_{i'})\label{eq:smalldegree}
\end{split}
\end{equation}
by Proposition \ref{Proposition10.1}.
\begin{remark}\label{derivativeindepedentbinfty} For $i'=\ell+t'$, $t'\in\mathbb N$ (the integer $t'$ is strictly positive here), we trivially have
\begin{equation}
\nu\left(\partial_{b_\infty}Q_{i'}\right)=
\nu\left(Q_{i'}\right)-\left(\beta_{i'}-\nu\left(\partial_{b_\infty}Q_{i'}\right)\right)\label{eq:powerofQis=1}.
\end{equation}
Combining this with (\ref{eq:smalldegree}), applied to $a=z_\ell+\dots+z_{i'-1}$ and $b'=b_\infty$, we obtain
\begin{equation}
\nu(\partial_{b_\infty}Q_{i'})=\nu(\partial_{b_\infty}Q_\ell).\label{eq:partialbiindependent}
\end{equation}
In particular, the quantity $\nu\left(\partial_{b_\infty}Q_{\ell +t'}\right)$ is independent of $t'\in\mathbb N$.
\end{remark}
\begin{remark}\label{derivativeindepedentbinftystrong} By (\ref{eq:Imaxsingleton}), for all $i'>\ell$ and all
$b'\in\{1\do b\}\setminus\{b_\infty\}$ we have
\begin{equation}
\frac{\beta_{i'}-\nu(\partial_{b_\infty}Q_{i'})}{b_\infty}>\frac{\beta_{i'}-\nu(\partial_{b'}Q_{i'})}{b '}.\label{eq:Imaxsinglebis}
\end{equation}
Assume that
\begin{equation}
\frac{\beta_{i'}-\nu(\partial_{b_\infty}Q_{i'})}{b_\infty}-
\frac{\beta_{i'}-\nu(\partial_{b'}Q_{i'})}{b '}<\frac1{b_\infty}(\beta_{i'}-\beta_\ell).\label{eq:Imaxsinglebisquantitative}
\end{equation}
Combining this  with (\ref{eq:smalldegree}), applied to $a=z_\ell+\dots+z_{i'-1}$, we obtain
\begin{equation}
\nu(\partial_{b'}Q_{i'})=\nu(\partial_{b'}Q_\ell).\label{eq:partialbiindependentb'}
\end{equation}
In particular, the quantity $\nu\left(\partial_{b'}Q_{i'}\right)$ is the same for all those $i'$ that satisfy (\ref{eq:Imaxsinglebisquantitative}). In this way, we have generalized Remark \ref{derivativeindepedentbinfty} from the case of the pair $(b_\infty,i')$ to the case of every pair $(b',i')$ satisfying (\ref{eq:Imaxsinglebisquantitative}).
\end{remark}
By Lemma \ref{ABn}, for all $s,t'\in\mathbb N$,  letting $i'=\ell+t'$, we have
\begin{equation}
\partial_b\left(a_{s,i'}Q_{i'}^s\right)=\sum\limits_{\substack{0\le q\le s\\j_0+\dots+j_q=b\\0<j_1\le\dots\le
j_q}}C_s(j_1,\dots, j_q)Q_{i'}^{s-q}\left(\partial_{j_0}a_{s,i'}\right)\prod\limits_{t=1}^q\left(\partial_{j_t}Q_{i'}\right),\label{eq:Leibnizrulefors}
\end{equation}
so
\begin{equation}
\partial_bf=\sum\limits_{s=0}^\delta\sum\limits_{\substack{0\le q\le s\\j_0+\dots+j_q=b\\0<j_1\le\dots\le
j_q}}\left(C_s(j_1,\dots, j_q)Q_{i'}^{s-q}\left(\partial_{j_0}a_{s,i'}\right)\prod\limits_{t=1}^q\left(\partial_{j_t}Q_{i'}\right)\right).\label{eq:doublesum}
\end{equation}
\medskip

\begin{remark}\label{biggerterms} (1) Consider an integer $s\in\{1\do j-1\}\cup Sup_{j,i'}$. We have
\begin{equation}
\nu\left(a_{j,i'}Q_{i'}^j\right)<\nu\left(a_{s,i'}Q_{i'}^s\right).\label{eq:minimalityofj}
\end{equation}
Indeed, (\ref{eq:minimalityofj}) holds for $s\in\{1\do j-1\}$ by (\ref{tag13.21})--(\ref{tag13.21bis}), with $i$ replaced by $i'$, since
$\beta_{i'}<\bar\beta$. It holds for $s\in Sup_{j,i'}$ since, by (\ref{eq:serpfluousdonotcontribute}) (with $i$ replaced by $i'$), each $i'$-superfluous monomial has value strictly greater than
$$
p^{e_0}\bar\beta>\nu\left(a_{j,i'}Q_{i'}^j\right).
$$
(2) By definition of $j=j(i)=j(i')$, every $s\in NS_{j,i'}(f)$ is of the form $s=p^{e'}$. Since $s>j$, we have
$$
e'>e.
$$
By (\ref{Remark10.9}), we have $C_s(j_1,\dots,j_q)=0$ unless $j_0=0$, $q=s=p^{e'}$ (in particular, $s\le b$) and $j_1=\dots=j_q=\frac b{p^{e'}}$.

\noi(3) By (1) and (2), for every {\it non-zero} term
$C_s(j_1,\dots, j_q)Q_{i'}^{s-q}\left(\partial_{j_0}a_{s,{i'}}\right)\prod\limits_{t=1}^q\left(\partial_{j_t}Q_{i'}\right)$ on the right hand side of (\ref{eq:doublesum}), at least one of the following three conditions holds:

\noi(a)$_{i'}$ (\ref{eq:minimalityofj})

\noi(b)$_{i'}$ $s=p^{e'}$, where $p^e<p^{e'}\le b$ and
$$
\{j_0,j_1\do j_q\}=\big(0,\underset{p^{e'}}{\underbrace{\frac b{p^{e'}},\dots,\frac b{p^{e'}}}}\big)
$$
(c)$_{i'}$ $j_0>0$ (this condition holds whenever $s=0$)

\noi(d)$_{i'}$ $s=up^e=j$ and
$$
\{j_0,j_1\do j_q\}=\bigg(0,\underset{p^{e}}{\underbrace{b_\infty,\dots,b_\infty}}\bigg).
$$
\end{remark}
\noi\textbf{Claim.} There exists a strictly increasing infinite sequence $(t_r)_{r\in\mathbb N}\subset\mathbb N$ and $s\in\bigcap\limits_{r=1}^\infty S_{j,i_r}$ (where $i_r=i+t_r$) such that, for all $r\in\mathbb N$, the term
$C_j(\underset{p^e}{\underbrace{b_\infty\do b_\infty}})a_{j,i_r}Q_{i_r}^{j-p^e}\left(\partial_{b_\infty}Q_{i_r}\right)^{p^e}$ has strictly smaller value than all the other terms in the sum on the right hand side of (\ref{eq:doublesum}).
\medskip

\noi\textit{Proof of Claim.}  We have
$C_j(\underset{p^e}{\underbrace{b_\infty\do b_\infty}})=\frac{j!}{p^e!}=\frac{(up^e)!}{p^e!}=u$, so
$\nu(C_j(\underset{p^e}{\underbrace{b_\infty\do b_\infty}}))=0$. Hence, for all $i'$ of the form $i'=i+t'$, $t'\in\mathbb N$, we have
\begin{equation}
\nu\left(C_j(\underset{p^e}{\underbrace{b_\infty\do
b_\infty}})a_{j,i'}Q_{i'}^{j-p^e}\left(\partial_{b_\infty}Q_{i'}\right)^{p^e}\right)
=\nu\left(a_{j,i'}Q_{i'}^j\right)-p^e\left(\beta_{i'}-\nu\left(\partial_{b_\infty}Q_{i'}\right)\right)\label{eq:jpowerofQi}
\end{equation}
(recall that $\nu\left(a_{j,i'}\right)$ and $\nu\left(\partial_{b_\infty}Q_{i'}\right)$ are independent of $i'$).

First, let us prove that all the terms on the right hand side of (\ref{eq:doublesum}) satisfying (a)$_{i'}$ or (c)$_{i'}$ have values strictly greater than $\nu\left(a_{j,i'}Q_{i'}^j\right)-p^e\left(\beta_{i'}-\nu\left(\partial_{b_\infty}Q_{i'}\right)\right)$.

Consider a non-zero term
$C_s(j_1,\dots, j_q)Q_{i'}^{s-q}\left(\partial_{j_0}a_{s,i'}\right)\prod\limits_{t=1}^q\left(\partial_{j_t}Q_{i'}\right)$ appearing on the right hand side of (\ref{eq:doublesum}). For all positive integers $s$ and $b'$, we have
\begin{equation}
\nu\left(\partial_{b'}Q_{i'}^s\right)\ge\nu\left(Q_{i'}^s\right)-\frac{b'}{b_\infty}\left(\beta_{i'}-\nu\left(\partial_{b_\infty}Q_{i'}\right)\right)\label{eq:powerofQi}.
\end{equation}
by Proposition \ref{Proposition10.1} (1). Combining this with (\ref{eq:smalldegree}), we obtain
\begin{equation}
\begin{aligned}
&\nu\left(Q_{i'}^{s-q}\left(\partial_{j_0}a_{s,i'}\right)\prod\limits_{t=1}^q\left(\partial_{j_t}Q_{i'}\right)\right)=
(s-q)\beta_{i'}+\nu\left(\partial_{j_0}a_{s,i'}\right)+\sum\limits_{t=1}^q\nu\left(\partial_{j_t}Q_{i'}\right)\ge\\
&\ge\nu\left(a_{s,i'}Q_{i'}^s\right)-\frac{j_0}{b_\infty}(\beta_{i'}-\nu(\partial_{b_\infty}Q_{i'}))+
\frac{j_0}{b_\infty}(\beta_{i'}-\beta_\ell)-\sum\limits_{t=1}^q\frac{j_t}{b_\infty}(\beta_{i'}-\nu(\partial_{b_\infty}Q_{i'}))=\\
&=\nu\left(a_{s,i'}Q_{i'}^s\right)-p^e(\beta_{i'}-\nu(\partial_{b_\infty}Q_i))+\frac{j_0}{b_\infty}(\beta_{i'}-\beta_\ell).\label{eq:lowerboundl>0}
\end{aligned}
\end{equation}
If (\ref{eq:minimalityofj}) holds, we obtain
\begin{equation}
\nu\left(Q_{i'}^{s-q}\left(\partial_{j_0}a_{s,i'}\right)\prod\limits_{t=1}^q\left(\partial_{j_t}Q_{i'}\right)\right)>
\nu_i\left(a_{j,'i}Q_{i'}^j\right)-p^e\left(\beta_{i'}-\nu\left(\partial_{b_\infty}Q_{i'}\right)\right).
\label{eq:strictapowerofQi+1}
\end{equation}
If (c)$_{i'}$ is satisfied, that is, $j_0>0$, (\ref{eq:strictapowerofQi+1}) holds again. This shows that in order to prove the Claim it is sufficient to restrict attention to the terms on the right hand side of (\ref{eq:doublesum}) satisfying (b)$_{i'}$.

Next, choose a strictly increasing sequence $(t_r)_{r\in\mathbb N}\subset\mathbb N$ such that for each
$s\in\{j+1\do\delta\}$ one of the following conditions holds for all $r\in\mathbb N$:

(1) $s\in S_{j,i_r}$ and (\ref{eq:Imaxsinglebisquantitative}) holds with $i'=i_r$

(2) either $s\in Sup_{j,i_r}$ or (\ref{eq:Imaxsinglebisquantitative}) does not hold with $i'=i_r$
\medskip

We obtain a decomposition $\{j+1\do\delta\}=J_1\coprod J_2$, where (1) holds for $s\in J_1$ and (2) holds for $s\in J_2$. We have $j\in J_1$; in particular, $J_1\ne\emptyset$. The set $J_2$ is non-empty since it contains all the non-powers of $p$ in the set
$\{j+1\do\delta\}$.

We will now show that for $s\in J_2$ and $r\gg0$ the strict inequality (\ref{eq:strictapowerofQi+1}) holds with $i'=i_r$. For $s\in Sup_{j,i_r}$ this has already been shown (and does not require $r$ to be large). Assume that $s\in S_{j,i_r}$ and (\ref{eq:Imaxsinglebisquantitative}) does not hold with $i'=i_r$. Consider a non-zero term
$C_s(j_1,\dots, j_q)Q_{i_r}^{s-q}\left(\partial_{j_0}a_{s,i_r}\right)\prod\limits_{t=1}^q\left(\partial_{j_t}Q_{i_r}\right)$ appearing on the right hand side of (\ref{eq:doublesum}). This term satisfies condition (b)$_{i_r}$ (in particular, it is the uniqe term with the given $s$). Using the notation of (b)$_{i_r}$, we have
\begin{equation}
\begin{aligned}
&\nu\left(a_{s,i_r}\left(\partial_{\frac b{p^{e'}}}Q_{i_r}\right)^{p^{e'}}\right)=
\nu\left(a_{s,i_r}\right)+p^{e'}\nu\left(\partial_{\frac b{p^{e'}}}Q_{i_r}\right)=\\
&=\nu\left(a_{s,i_r}Q_{i_r}^s\right)-p^{e'}\left(\beta_{i_r}-\nu\left(\partial_{\frac b{p^{e'}}}Q_{i_r}\right)\right).\label{eq:lowerboundl>0Claim}
\end{aligned}
\end{equation}
Further, by (\ref{tag13.21}) and (\ref{tag13.21bis}), applied to $i_r$ instead of $i$, we have
\begin{equation}
\nu\left(a_{j,i_r}Q_{i_r}^j\right)-\nu\left(a_{s,i_r}Q_{i_r}^s\right)\le(s-j)(\bar\beta-\beta_{i_r})<\delta(\bar\beta-\beta_{i_r}).\label{eq:errorterms>j}
\end{equation}

Combining (\ref{eq:ilarge}), the negation of (\ref{eq:Imaxsinglebisquantitative}) and
(\ref{eq:lowerboundl>0Claim})--(\ref{eq:errorterms>j}), we obtain
\begin{equation}
\begin{aligned}
&\nu\left(a_{s,i_r}\left(\partial_{\frac b{p^{e'}}}Q_{i_r}\right)^{p^{e'}}\right)\ge
\nu\left(a_{s,i_r}Q_{i_r}^s\right)-p^e(\beta_{i_r}-\nu(\partial_{b_\infty}Q_i))+p^e\left(\beta_{i_r}-\beta_\ell\right)\ge\\
&\ge\nu\left(a_{j,i_r}Q_{i_r}^j\right)-p^e(\beta_{i_r}-\nu(\partial_{b_\infty}Q_i))+p^e\left(\beta_{i_r}-\beta_\ell\right)-\delta b\left(\bar\beta-\beta_{i_r}\right)>\\
&>\nu\left(a_{j,i_r}Q_{i_r}^j\right)-p^e(\beta_{i_r}-\nu(\partial_{b_\infty}Q_{i_r}))=
\nu\left(Q_{i_r}^{j-p^{e}}a_{j,i_r}\left(\partial_{b_\infty}Q_{i_r}\right)^{p^{e}}\right).\label{eq:lowerboundl>0Claimbis}
\end{aligned}
\end{equation}
This shows that to prove the Claim it is sufficient to restrict attention to the terms\linebreak
$C_s(j_1,\dots, j_q)Q_{i_r}^{s-q}\left(\partial_{j_0}a_{s,i_r}\right)\prod\limits_{t=1}^q\left(\partial_{j_t}Q_{i_r}\right)$ appearing on the right hand side of (\ref{eq:doublesum}) such that $s\in J_1$.

Consider one such term. It must satisfy condition (b)$_{i_r}$, so it is completely determined by $s$ and we can write it as
$a_{s,i_r}\left(\partial_{\frac b{p^{e'}}}Q_{i_r}\right)^{p^{e'}}$. By (\ref{eq:Imaxsinglebisquantitative}), Remark \ref{derivativeindepedentbinftystrong} applies to $\partial_{\frac b{p^{e'}}}Q_{i_r}$, in other words,
$\nu\left(\partial_{\frac b{p^{e'}}}Q_{i_r}\right)$ does not depend on $r\in\mathbb N$. Therefore,
$\nu\left(a_{s,i_r}\left(\partial_{\frac b{p^{e'}}}Q_{i_r}\right)^{p^{e'}}\right)$ is independent of $r$.

We will use the formulae (\ref{eq:lowerboundl>0Claim}) and (\ref{eq:errorterms>j}) which are valid for all $s\in\{0\do\delta\}$. Let us take the limit as $r$ tends to $\infty$ separately of each summand on the right hand side of (\ref{eq:lowerboundl>0Claim}). By (\ref{eq:errorterms>j}), we have
$\lim\limits_{r\to\infty}\nu\left(a_{j,i_r}Q_{i_r}^j\right)\le\lim\limits_{r\to\infty}\nu\left(a_{s,i_r}Q_{i_r}^s\right)$. Applying (\ref{eq:Imaxsinglebis}) with $i'=i_r$ for each $r\in\mathbb N$ and passing to the limit as $r$ goes to infinity, we obtain
$$
\lim\limits_{r\to\infty}p^{e}\left(\beta_{i_r}-\nu\left(\partial_{b_\infty}Q_{i_r}\right)\right)\ge
\lim\limits_{r\to\infty}p^{e'}\left(\beta_{i_r}-\nu\left(\partial_{\frac b{p^{e'}}}Q_{i_r}\right)\right).
$$
Thus
\begin{equation}\label{eq:takinglimits}
\begin{aligned}
&\nu\left(a_{s,i_r}\left(\partial_{\frac b{p^{e'}}}Q_{i_r}\right)^{p^{e'}}\right)=
\lim\limits_{r\to\infty}\nu\left(a_{s,i_r}\left(\partial_{\frac b{p^{e'}}}Q_{i_r}\right)^{p^{e'}}\right)=\\
&=\lim\limits_{r\to\infty}\left(\nu\left(a_{s,i_r}Q_{i_r}^s\right)-p^{e'}\left(\beta_{i_r}-\nu\left(\partial_{\frac b{p^{e'}}}Q_{i_r}\right)\right)\right)\ge\\
&\ge\lim\limits_{r\to\infty}\left(\nu\left(a_{j,i_r}Q_{i_r}^j\right)-p^e(\beta_{i_r}-\nu(\partial_{b_\infty}Q_{i_r}))\right)=
\lim\limits_{r\to\infty}\nu\left(Q_{i_r}^{j-p^{e}}a_{j,i_r}\left(\partial_{b_\infty}Q_{i_r}\right)^{p^{e}}\right).
\end{aligned}
\end{equation}
Since $\nu\left(a_{s,i_r}\left(\partial_{\frac b{p^{e'}}}Q_{i_r}\right)^{p^{e'}}\right)$ is independent of $r$ and
$\nu\left(Q_{i_r}^{j-p^{e}}a_{j,i_r}\left(\partial_{b_\infty}Q_{i_r}\right)^{p^{e}}\right)$ is a strictly increasing linear function of $r$, we have $\nu\left(a_{s,i_r}\left(\partial_{\frac b{p^{e'}}}Q_{i_r}\right)^{p^{e'}}\right)>
\nu\left(Q_{i_r}^{j-p^{e}}a_{j,i_r}\left(\partial_{b_\infty}Q_{i_r}\right)^{p^{e}}\right)$ for all $r\in\mathbb N$.

This completes the proof of the Claim.
\medskip

By the Claim, for all $r\in\mathbb N$ we have
$\nu(\partial_bf)=\nu\left(Q_{i_r}^{j-p^{e}}a_{j,i_r}\left(\partial_{b_\infty}Q_{i_r}\right)^{p^{e}}\right)$ with the left hand side independent of $r$ and the right hand side --- a strictly increasing linear function of $r$, which gives the desired contradiction. This completes the proof of Lemma \ref{el13.6}.
\end{proof}

Let $j=j(i)$. By Lemma \ref{el13.6} the inequality (\ref{tag13.13}) holds for $j$.

By Lemma \ref{el13.4} (cf. (\ref{tag13.16})), $\ini_{\nu}a_{j,i}$ is independent of $i\geq i_0$.

Since $\bar\beta-\beta_i$ can be made arbitrarily small as $i\to i_0+=\ell+\omega$, by (\ref{tag13.13}), taking $i_1$ sufficiently large, we can ensure that
\begin{equation}
\nu\left(a_{j,i}\right)+j\bar\beta\ge2p^{e_0}\bar\beta-p^{e_0}\beta_{i_1}.\label{tag13.27}
\end{equation}
Take the smallest $i_1$ satisfying (\ref{tag13.27}). By the minimality of $i_1$, Lemma \ref{el13.4} (specifically, (\ref{tag13.16})) and induction on $i'$, $i\le i'<i_1$, we see that the monomial $a_{j,i'}Q_{i'}^j$ remains bad for $i\le i'<i_1$ and that
\begin{equation}
\nu\left(a_{j,i}\right)=\nu\left(a_{j,i_1}\right).\label{tag13.28}
\end{equation}
From (\ref{tag13.27})--(\ref{tag13.28}) we obtain
\begin{equation}
\nu\left(a_{j,i_1}\right)+j\bar\beta\ge2p^{e_0}\bar\beta-p^{e_0}\beta_{i_1}.\label{tag13.29}
\end{equation}
Thus the monomial $a_{j,i_1}Q_{i_1}^j$ is $i_1$-superfluous. Therefore
\begin{equation}
j(i_1)<j(i')\quad\text{ for all }\quad i'<i_1.\label{eq:ji1<ji}
\end{equation}
Since the strict inequality (\ref{eq:ji1<ji}) can occur for at most finitely many values of $i_1$, there exists $i_2<\ell+\omega$ such that $f$ containing no bad monomials. Let $Q_{\ell+\omega}$ be equal to $f$ minus the sum of all the $i_2$-superfluous monomials of $f$. Now, $Q_{\ell+\omega}$ is monic of degree $p^{e_0}\bar\alpha_i$ and satisfies (\ref{tag13.3}). It contains no $i_2$-superfluous or bad monomials, hence it satisfies (\ref{eq:onthecriticalline}); in particular, it is weakly affine. The polynomial $Q_{\ell+\omega}$ satisfies (\ref{eq:nuifsmallerthannuf}) and so is a limit key polynomial by Remark \ref{Remark13.2}. This completes the proof of Proposition \ref{Proposition13.1}.
\end{proof}

\begin{remark}{\label{Remark13.8}} The property that the $i_2$-standard expansion of $Q_{\ell+\omega}$ is weakly affine is not preserved when we pass from $i_2$ to some other ordinal $i_2+t$, $t\in\mathbb N$. However, the above results show that for all $i'$ of the form $i'=i+t$, $t\in\mathbb{N}_0$, $Q_{\ell+\omega}$ is a sum of a weakly affine expansion in $Q_{i'}$ all of whose monomials $a_{j,i'}Q_{i'}^j$ for $j>0$ lie on the critical line $\nu(a_{j,i'})=(p^{e_0}-j)\bar\beta$ and another $i'$-standard expansion of degree strictly less than $p^{e_0}\bar\alpha_\ell$, all of whose monomials are $i'$-superfluous.
\end{remark}

\end{document}